\newtheorem{theorem}{Theorem}
\newtheorem{lemma}[theorem]{Lemma}
\newtheorem{proposition}[theorem]{Proposition}
\newtheorem{corollary}[theorem]{Corollary}
\newtheorem{cor}[theorem]{Corollary}
\newtheorem{prop}{Proposition}
\newtheorem{conj}[theorem]{Conjecture}
\newcommand{\re}{\operatorname{Re}}
\newcommand{\im}{\operatorname{Im}}
\newcommand{\tr}{\operatorname{tr}}
\def\C{\mathbb{C}}
\def\Z{\mathbb{Z}}
\def\R{\mathbb{R}}
\def\H{\mathcal{H}}
\def\U{\mathcal{U}}
\def\Ai{\operatorname{Ai}}
\newcommand{\Hi}[0]{\mathcal{H}}
\newcommand{\kapi}{\omega}
\def\v{v}
\newcommand{\e}{\varepsilon}
\newcommand{\ep}{\varepsilon}
\newcommand{\eps}{\varepsilon}
\newcommand{\dimple}{\varsigma}
\begin{document}
\begin{frontmatter}

\title{Crossover distributions at the edge of~the~rarefaction fan}
\runtitle{Distributions at the edge of the rarefaction fan}

\begin{aug}
\author[A]{\fnms{Ivan} \snm{Corwin}\corref{}\ead[label=e1]{corwin@cims.nyu.edu}\thanksref{t1}}
\and
\author[B]{\fnms{Jeremy} \snm{Quastel}\ead[label=e2]{quastel@math.utoronto.ca}\thanksref{t2}}
\thankstext{t1}{Supported by an NSF graduate research fellowship and by PIRE Grant OISE-07-30136.}
\thankstext{t2}{Supported by the Natural Science and Engineering Research Council of
Canada.}
\runauthor{I. Corwin and J. Quastel}
\affiliation{New York University and University of Toronto}
\address[A]{Courant Institute of Mathematical Sciences\\
New York University\\
251 Mercer Street\\
New York, New York 10012\\
USA\\
\printead{e1}} 
\address[B]{Department of Mathematics\\
University of Toronto\\
40 St. George street\\
Toronto, Ontario\\
Canada M5S 2E4\\
\printead{e2}}
\end{aug}

\received{\smonth{11} \syear{2010}}
\revised{\smonth{10} \syear{2011}}

%
\begin{abstract}
We consider the weakly asymmetric limit of simple exclusion process
with drift to the left, starting from step Bernoulli initial data with
$\rho_-<\rho_+$ so that macroscopically one has a rarefaction fan. We
study the fluctuations of the process observed along slopes in the fan,
which are given by the Hopf--Cole solution of the Kardar--Parisi--Zhang
(KPZ) equation, with appropriate initial data. For slopes strictly
inside the fan, the initial data is a Dirac delta function and the one
point distribution functions have been computed in [\textit{Comm. Pure Appl. Math.} \textbf{64} (2011)
466--537] and
[\textit{Nuclear Phys. B} \textbf{834} (2010) 523--542].
At the edge of the rarefaction fan, the initial data is \textit{one-sided
Brownian}. We obtain a new family of crossover distributions giving the
exact one-point distributions of this process, which converge, as
$T\nearrow\infty$ to those of the Airy $\mathcal{A}_{2\to \mathrm{BM}}$ process.
As an application, we prove moment and large deviation estimates for
the equilibrium Hopf--Cole solution of KPZ. These bounds rely on the
apparently new observation that the FKG inequality holds for the
stochastic heat equation. Finally, via a Feynman--Kac path integral,
the KPZ equation also governs the free energy of the continuum directed
polymer, and thus our formula may also be interpreted in those terms.
\end{abstract}

%
\begin{keyword}[class=AMS]
\kwd{82C22}
\kwd{60H15}.
\end{keyword}
\begin{keyword}
\kwd{Kardar--Parisi--Zhang equation}
\kwd{stochastic heat equation}
\kwd{stochastic Burgers equation}
\kwd{random growth}
\kwd{asymmetric exclusion process}
\kwd{anomalous fluctuations}
\kwd{directed polymers}.
\end{keyword}

\end{frontmatter}

\section{Introduction}\label{sec1}
It is expected that a large class of one-dimensional, asymmetric,
stochastic, conservative interacting particle systems/growth models
fall into the Kardar--Parisi--Zhang (KPZ) universality class. A
manifestation of this is that the KPZ equation should appear as the
limit of such systems in the weakly asymmetric limit. The weakly
asymmetric limit means to observe the process on space scales of order
$\e^{-1}$ and time scales of order $\e^{-2}$, while simultaneously
rescaling the asymmetry of the model so that it is of order $\e^{1/2}$.
This sort of weak asymmetry zooms in on the critical transition point
between the two universality classes associated with growth models---the
KPZ class (positive asymmetry) and the Edwards Wilkinson (EW) class
(symmetry)---and thus further confirms a~mantra of statistical physics
that at critical points one expects universal scaling limits.

Bertini and Giacomin~\cite{BG} obtained the first result for the weakly
asymmetric simple exclusion process near equilibrium. This is extended
to some situations farther from equilibrium in~\cite{ACQ}, directed
random polymers in~\cite{AKQ} and partial results are now available
\cite{GJ} for speed changed asymmetric exclusion. In this article we
study the situation where asymmetry is to the left and the initial data
has an increasing step, so that in the hydrodynamic limit one sees a
rarefaction fan. We observe the process along a line $x=\v t$ within
the fan and study the fluctuations. These converge to the KPZ equation
with initial data depending on $\v$. For $\v$ strictly inside the fan,
the initial data is an appropriate scaling of a delta function, and the
distribution of the fluctuations is known exactly~\cite{ACQ,SaSp1}.
Our main interest in this article is the fluctuations at the
edge of the rarefaction fan. The scaling turns out to be a little
different, but the fluctuations are still given by KPZ. Note that since
the work of~\cite{BG} it is understood that KPZ is only a formal
equation for the fluctuation field and is rigorously defined as the
logarithm of the stochastic heat equation. The edge fluctuations
correspond to starting the stochastic heat equation with
\[
\exp\{ -B(X)\}\mathbf{1}_{X> 0},
\]
where $B(X)$ is a standard Brownian motion in $X$, with $B(0)=0$. We
will obtain an exact expression for the one-point probability
distribution of the resulting law---the \textit{edge crossover
distribution}---at any positive time. The main tool is the
Tracy--Widom determinantal formula for one-sided Bernoulli data, and
therefore we are restricted to asymmetric exclusion. The resulting law
is expected to be universal for fluctuations at the edge of the
rarefaction fan for models in the KPZ class.

\subsection{Height function fluctuations at the edge of the rarefaction
fan for ASEP}
The asymmetric simple exclusion process (ASEP) with parameters $p,q\geq
0$ (such that $p+q=1$) is a continuous time Markov process on the
discrete lattice $\Z$ with state space $\{0,1\}^{\Z}$ (the 1s are
thought of as particles and the 0s as holes). The dynamics for this
process are given as follows: Each particle has an independent
exponential alarmclock which rings at rate one. When the alarm goes
off, the particle flips a coin, and with probability $p$ attempts to
jump one site to the right, and with probability $q$ attempts to jump
one site to the left. If there is a particle at the destination, the
jump is suppressed, and the alarm is reset (see~\cite{Liggett} for a
rigorous construction of this process). If $q=1,p=0$ this process is
the totally asymmetric simple exclusion process (TASEP); if $q>p$ it is
the asymmetric simple exclusion process (ASEP); if $q=p$ it is the
symmetric simple exclusion process (SSEP). Finally, if we introduce a
parameter into the model, we can let $q-p$ go to zero with that
parameter, and then this class of processes is known as the weakly
asymmetric simple exclusion process (WASEP). It is the WASEP, that is, of
central interest in this paper since it interpolates between the SSEP
and ASEP, and is intimately connected with a stochastic partial
differential equation known as the KPZ equation. We denote the asymmetry
\[
\gamma=q-p.
\]
We consider the family of initial conditions for these exclusion
processes which are known of as \textit{two-sided Bernoulli} and which are
parametrized by densities $\rho_-,\rho_+\in[0,1]$. At time zero, each
site $x>0$ is occupied with probability $\rho_+$, and each site $x\leq
0$ is occupied with probability $\rho_-$ (all occupation random
variables are independent). These initial conditions interpolate
between the \textit{step} initial condition (where $\rho_-=0$ and
$\rho
_+=1$) and the \textit{equilibrium} or \textit{stationary} initial condition
(where $\rho_-=\rho_+=\rho$). We will focus on anti-shock initial
conditions where $\rho_-\leq\rho_+$.

Associated to an exclusion process are occupation variables $\eta(t,x)$
which equal~1 if there is a particle at position $x$ at time $t$ and 0
otherwise. From these we define spin variables $\hat{\eta}=2\eta-1$
which take values $\pm1$ and define the height function for WASEP with
asymmetry $\gamma=q-p$ by
\[
h_{\gamma}(t,x) =
\cases{
2N(t) + \displaystyle\sum_{0<y\leq x}\hat{\eta}(t,y), & \quad $x>0,$\vspace*{2pt}\cr
2N(t), &\quad $x=0,$\vspace*{4pt}\cr
2N(t)- \displaystyle\sum_{x<y\leq0}\hat{\eta}(t,y), & \quad $x<0,$}
\]
where $N(t)$ is equal to the net number of particles which crossed from
the site 1 to the site 0 in time $t$. Note that at time $t=0$,
$h_{\gamma}(0,x)$ is a two-sided simple random walk, with drift $2\rho
_+-1$ in the positive direction from the origin and drift $2\rho_--1$
in the negative direction.

\begin{proposition}[(Hydrodynamic limit)]\label{hydrolimitprop}
Let $\rho_-\leq\rho_+$, $\gamma=\e^{1/2}$ and $t=\e^{3/2}$. Then, in
probability,
%
%
\begin{eqnarray}\label{hydrodynamic_eqn}
&&\lim_{\e\rightarrow0} \frac{h_{\gamma}({t}/{\gamma},vt)}{t}
\nonumber
\\[-8pt]
\\[-8pt]
\nonumber
&&\qquad=
\cases{
2\rho_-(1-\rho_-)+(2\rho_- -1)v, & \quad $\mbox{for } v\leq2\rho_-
-1,$\vspace*{2pt}\cr
(1+v^2)/2, & \quad $\mbox{for } v\in[2\rho_- -1,2\rho_+ -1],$\vspace*{2pt}\cr
2\rho_+(1-\rho_+)+(2\rho_+ -1)v, & \quad $\mbox{for } v\geq2\rho_+ -1.$}
\end{eqnarray}
\end{proposition}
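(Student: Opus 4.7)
The plan is to derive this from the classical Euler hydrodynamic limit for ASEP with two-sided Bernoulli initial data. Since the height function splits as
\begin{equation*}
\h_\gamma(T,X)=2N(T)+\sum_{0<y\leq X}\hat{\eta}(T,y),
\end{equation*}
I would handle the spatial sum of the spin field and the time-integrated current at the origin as two separate contributions.

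First I would invoke the hydrodynamic limit for the density (Rezakhanlou for attractive systems in general, and Andjel--Vares or Ferrari--Fontes--Kohayakawa for step-Bernoulli data specifically): in the Eulerian scaling $T=NT_0$, $X=NX_0$, local averages of $\eta$ converge in probability to $R_\gamma(X_0/T_0)$, the entropy solution of $\partial_{T_0}\rho-\gamma\partial_{X_0}(\rho(1-\rho))=0$, which for $\rho_-\leq\rho_+$ is the rarefaction fan with edges $\gamma(2\rho_\pm-1)$. For the current I would use $N(T)/N\to\gamma T_0\rho_0(1-\rho_0)$ with $\rho_0=R_\gamma(0)$, which follows either from Rezakhanlou's integrated-current theorem or directly by integrating the microscopic continuity equation against the density limit above.

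Next I specialize to $N=t/\gamma$, $T_0=1$, $X_0=v\gamma$. The scaling observation that makes the whole thing work is that $R_\gamma(v\gamma)$ is $\gamma$-independent: it equals the unit-asymmetry rarefaction profile $R_1(v)$ whose fan edges sit at $2\rho_\pm-1$. The substitution $Z=\gamma z$ in the spatial integral, together with division by $\gamma$, then yields
\begin{equation*}
\frac{\h_\gamma(t/\gamma,vt)}{t}\;\longrightarrow\;2\rho_0(1-\rho_0)+\int_0^v(2R_1(w)-1)\,dw,\qquad\rho_0=R_1(0).
\end{equation*}
Since $2R_1(w)-1=w$ on the fan and equals the appropriate constant $2\rho_\pm-1$ outside, explicit integration in each of the three regions recovers the right-hand side of \eqref{hydrodynamic_eqn}, with the three pieces matching continuously at $v=2\rho_\pm-1$ as a sanity check.

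The main obstacle is that the classical hydrodynamic limit is proved for \emph{fixed} asymmetry $\gamma$, whereas here $\gamma=\epsilon^{1/2}\to 0$ jointly with $\epsilon$; uniformity of the convergence as $\gamma\to 0$ therefore has to be argued. I would obtain this from the attractiveness of ASEP: two-sided Bernoulli data is monotonically sandwiched between the two stationary Bernoulli product measures at densities $\rho_-$ and $\rho_+$, and the resulting coupling estimates can be arranged to be robust in $\gamma$. Alternatively one can invoke a WASEP hyperbolic hydrodynamic limit directly from the literature. This uniformity bookkeeping is the most delicate piece; once it is in place, the remaining ingredients are routine.
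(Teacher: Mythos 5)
You take a genuinely different route from the paper. The paper offers essentially no direct proof of Proposition \ref{hydrolimitprop}: it remarks that it could not locate a reference for the hydrodynamic limit in the weakly asymmetric regime, and then derives the proposition as a coarse corollary of the much finer fluctuation result Theorem \ref{WASEP_two_sided_thm} (which pins down the $O(t^{1/3})$ corrections), subject to the caveat of Remark \ref{cavaet} that the fluctuation theorem --- and hence Proposition \ref{hydrolimitprop} --- is actually proved there only for (a) $\rho_-=0$ with general $\rho_+,v$, and (b) general $\rho_\pm$ with $v=0$. Your proposal instead attempts a self-contained classical-hydrodynamics argument; if it went through rigorously it would be both more elementary and more general than what the paper establishes, since it would cover all $(\rho_-,\rho_+,v)$ at once.

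Your reductions are correct. The fan profile for asymmetry $\gamma$ is $R_\gamma(\xi)=\tfrac12(1+\xi/\gamma)$ inside the fan, so $R_\gamma(v\gamma)=R_1(v)$ identically, the integrated current at the origin gives $2N(t/\gamma)/t\to 2\rho_0(1-\rho_0)$ with $\rho_0=R_1(0)$, and the Riemann sum for the spin field gives $\int_0^v(2R_1(w)-1)\,dw$. Explicit integration in each of the three ranges of $v$ reproduces \eqref{hydrodynamic_eqn} (including the cases where $0$ lies outside the fan), matching continuously at the edges.

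The gap you flag is, however, more than bookkeeping, and it is precisely what blocks this route. Setting $N=t/\gamma$ puts the macroscopic observation point at $x=v\gamma\to 0$: you are testing local equilibrium on a $\gamma$-dependent window that shrinks at the same rate as the rarefaction fan itself, i.e.\ you must resolve the density profile across the entire collapsing fan rather than near a fixed smooth macroscopic point. That is strictly stronger than the fixed-test-function convergence supplied by Rezakhanlou-type theorems, and the block/entropy estimates in those proofs carry constants whose $\gamma$-dependence is not obviously controlled as $\gamma\to 0$. Attractiveness sandwiches the two-sided Bernoulli initial data between the stationary product measures, which is a natural tool, but it controls monotone observables and does not by itself deliver the required local-equilibrium statement across the fan at scale $\gamma$. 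The ``WASEP hyperbolic hydrodynamic limit from the literature'' is exactly the reference the authors report being unable to find. So the plan is sound as an alternative strategy, and the algebra is right, but the central convergence step remains open; the paper avoids it entirely by routing through its KPZ fluctuation machinery.
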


For $\gamma$ positive and not going to zero with $\e$, this result is
well known~\cite{Rez91,TS:1998h}. We were not able to find a reference
in the weakly asymmetric case. It is an easy consequence of the
fluctuation results (i.e., Theorem~\ref{WASEP_two_sided_thm}) which
make up the main contribution of this paper; see, however, Remark~\ref{cavaet}.

The region $v\in(2\rho_- -1,2\rho_+ -1)$ is the rarefaction fan, while
$v=2\rho_{\pm}-1$ is the edge of the fan. See Figure~\ref{hydro_fig}
for an illustration of this limit shape.

\begin{figure}

\includegraphics{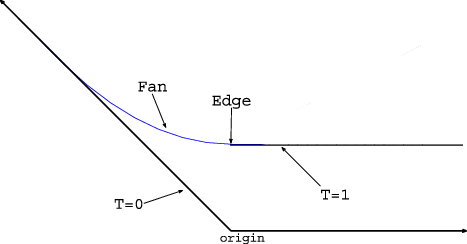}

\caption{Limit profile for the height function of ASEP with two-sided
Bernoulli initial conditions with $\rho_-=0$ and $\rho_+=1/2$. The
rarefaction fan corresponds to velocities $v<0$, and the edge to
velocity $v=0$.}\label{hydro_fig}
\end{figure}

For the purposes of this \hyperref[sec1]{Introduction} let us set $\rho_-=0$ and $\rho
_+=1/2$ so that the right edge of the rarefaction fan is at velocity
$v=0$. Around this velocity one sees a transition from a curved limit
shape for the height function to a flat limit shape. According to (\ref
{hydrodynamic_eqn}), the limit height is $t/2$.
\begin{definition}\label{defoffluc}
For $m\geq1$, $\e>0$, $T>0$ and $X_1,\ldots, X_m \in\R$ set
%
%
\begin{equation}\label{scales}
t=\e^{-3/2}T,\qquad x_k=2^{1/3}t^{2/3}X_k\quad \mbox{and}\quad \gamma=\e^{1/2}.
\end{equation}
Define the height fluctuation field $h^{\mathrm{fluc}}_{\gamma}(\frac
{t}{\gamma},x)$ by\setcounter{footnote}{2}\footnote{We attempt to use capital letters for all
variables (such as $X$, $T$) on the macroscopic level of the stochastic
PDEs and polymers. Lower case letters (such as $x$, $t$) will denote
WASEP variables, the microscopic discretization of these SPDEs.}
%
%
\begin{equation}\label{height_fluc_eqn}
h^{\mathrm{fluc}}_{\gamma}\biggl(\frac{t}{\gamma},x\biggr):=\frac{h_{\gamma
}({t}/{\gamma},x) - {t}/{2}}{t^{1/3}}.
\end{equation}
\end{definition}

Our first main result is a fluctuation theorem for the WASEP at the
edge of the rarefaction fan.

\begin{theorem}\label{hfluc_thm}
Let $\rho_-=0,\rho_+=1/2$ and $h^{\mathrm{fluc}}_{\gamma}(\frac
{t}{\gamma},x)$ be the edge fluctuation field defined in Definition~\ref{defoffluc}.
Then for $t,\gamma$ and $x$ as in (\ref{scales}), and $h
^{\mathrm{fluc}}$ as
in~(\ref{height_fluc_eqn}),
\[
\lim_{\e\rightarrow0} P\biggl(h^{\mathrm{fluc}}_{\gamma}\biggl(\frac
{t}{\gamma},x\biggr) \geq2^{-1/3}(X^2-s)\biggr) =F^{\mathrm{edge}}_{T,X}(s),\vadjust{\goodbreak}
\]
where the \textit{edge crossover distribution} $F^{\mathrm
{edge}}_{T,X}(s)$ is given by
%
%
\begin{equation}\label{eight}
F^{\mathrm{edge}}_{T,X}(s) = \int_{\tilde\mathcal{C}}e^{-\tilde\mu
}\frac{d\tilde\mu}{\tilde\mu}\det(I-K^{\mathrm
{edge}}_{s})_{L^2(\tilde
\Gamma_{\eta})}.
\end{equation}
The operator $K^{\mathrm{edge}}_{s}$, which depends on $T$ and $X$, and
the contour $\tilde\Gamma_{\eta}$, $\tilde\mathcal{C}$ are defined in
Definition~\ref{main_theorem_definition}.
Alternative formulas for the distribution function $F^{\mathrm
{edge}}_{T,X}(s)$ are given in Section~\ref{kernel_manip_sec}.
\end{theorem}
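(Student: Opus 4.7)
The plan is to follow the program of Amir–Corwin–Quastel \cite{ACQ} for one-sided step initial data, but starting from the Tracy–Widom formula for step Bernoulli initial data with right density $\rho_+=1/2$ rather than for pure step. First I would invoke the Tracy–Widom identity for the $m$-th particle position $x_m(t/\gamma)$ in ASEP with one-sided Bernoulli initial data: the cumulative distribution $P(x_m(t/\gamma)\leq x)$ can be written as a contour integral in an auxiliary parameter $\mu$ of a Fredholm determinant whose kernel has the standard Cauchy-type structure, dressed by an extra Bernoulli factor $1-\rho_+ + \rho_+\xi$ (and an analogous factor for $\eta$). Via the usual bijection between particle configurations and the height function, the event $\{h^{\mathrm{fluc}}_{\gamma}(t/\gamma,x)\geq 2^{-1/3}(X^2-s)\}$ is converted to an event of the form $\{x_m(t/\gamma)\leq \tilde x\}$ for an appropriate choice of $m$ and $\tilde x$, with $m$ and $\tilde x$ scaling according to (\ref{scales}).

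Next I would plug the WASEP scaling $\gamma=\epsilon^{1/2}$, $t=\epsilon^{-3/2}T$, $x=2^{1/3}t^{2/3}X$ into the resulting formula and perform a saddle-point analysis. The relevant steep-descent exponent is the same cubic phase as for step initial data, with critical point $\xi_c=1/2$. After rescaling $\mu=-\tilde\mu/\sigma_\epsilon$ with $\sigma_\epsilon$ the natural KPZ scaling factor, and zooming into the saddle by writing $\xi=\xi_c+\epsilon^{1/2}\tilde\xi$, $\eta=\xi_c+\epsilon^{1/2}\tilde\eta$, the Bernoulli weight $1-\rho_+ + \rho_+\xi$ becomes $\tfrac12(1+\xi)$ and evaluated at the saddle contributes an extra factor which, combined with the Jacobian from the change of variables, produces the edge kernel $K^{\mathrm{edge}}_s$ on the rescaled contour $\tilde\Gamma_\eta$. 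Simultaneously the $\mu$-contour around $0$ is mapped to $\mathcal{\tilde C}$ and the prefactor $\mu^{-1}$ becomes $e^{-\tilde\mu}\,d\tilde\mu/\tilde\mu$.

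The main obstacle will be to make this asymptotic analysis rigorous. There are three technical difficulties that must be addressed together. (i) One must deform the $\xi,\eta$ contours from their original positions to the steepest-descent contours through $\xi_c$, tracking any poles of the Bernoulli factor and of the Cauchy kernel that are crossed; the extra Bernoulli pole at $\xi=(\rho_+-1)/\rho_+=-1$ is new compared to \cite{ACQ} and its location relative to the deformed contour must be checked. (ii) One must produce uniform Gaussian/exponential tail bounds on the rescaled kernel, both in the kernel variables and in $\tilde\mu$, so that dominated convergence applies to the Fredholm expansion and to the outer $\tilde\mu$-integral; the half-Brownian behavior of the limiting initial data is reflected in only polynomial decay of some factors, which must be compensated by the cubic from the saddle. (iii) The Tracy–Widom series converges only for $q>p$, so convergence must be quantified uniformly as $q-p=\epsilon^{1/2}\to 0$, using the $q/p$-geometric bounds on the series coefficients that degenerate in the WASEP limit.

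Having controlled the kernel, the proof concludes by taking pointwise limits in the Fredholm determinant expansion, applying dominated convergence term by term, and then dominated convergence once more in the $\tilde\mu$ integration to obtain the expression (\ref{eight}). Matching with Definition \ref{main_theorem_definition} is then a direct identification of the emerging kernel and contours. The conversion from the tail of the particle distribution to the probability $P(h^{\mathrm{fluc}}_\gamma\geq 2^{-1/3}(X^2-s))$ in the statement accounts for the $X^2$ shift and the $2^{-1/3}$ factor, which arise naturally from the quadratic correction to the hydrodynamic profile around the edge $v=0$ predicted by Proposition \ref{hydrolimitprop}.
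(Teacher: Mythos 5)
Your high-level skeleton---converting the height function event to a particle-position event, invoking the Tracy--Widom step Bernoulli formula, deforming the $\mu$-contour, performing steepest descent in the kernel variables, and then justifying the $\e\to0$ limit via trace-norm estimates and dominated convergence---is exactly the structure the paper uses, and your list of technical obstacles (i)--(iii) captures real issues. But two of your specific claims are wrong in a way that removes precisely the new technical content of this theorem.

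First, the Bernoulli initial condition does \emph{not} enter the Tracy--Widom kernel through a rational dressing factor of the form $1-\rho_++\rho_+\xi$ evaluated at the saddle. It enters through an infinite product
\[
g(\zeta)=\prod_{n=0}^{\infty}\bigl(1+\tau^n\alpha\zeta\bigr),\qquad \alpha=\tfrac{1-\rho_+}{\rho_+},
\]
and the kernel $J^\Gamma_\mu$ carries the ratio $g(\eta')/g(\zeta)$. This is not a bounded multiplicative correction: $1/g(\zeta)$ has poles at $-\tau^{-n}$ for every $n\geq 0$, so the contour deformation must avoid an accumulating family of poles, not a single one at $-1$, and the correct choice of contours requires the dimple that the paper inserts to the right of $XT^{-1/3}$. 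Related to this, the saddle is not at $\xi_c=1/2$; the paper centers the analysis at $\xi=-1-2\e^{1/2}X/T$, and the proximity to the pole accumulation point $-1$ is what makes the $g$ factor nontrivial.

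Second, and this is the genuine gap: with a rational factor your argument would never produce the Gamma-function ratio $\Gamma(2^{1/3}\tilde\zeta-2^{1/3}XT^{-1/3})/\Gamma(2^{1/3}\tilde\eta'-2^{1/3}XT^{-1/3})$ that sits in the definition of $K^{\mathrm{edge}}_s$. That ratio is the whole point of the theorem; it comes from identifying $g(\eta')/g(\zeta)$ with a ratio of $q$-Pochhammer symbols, hence of $q$-Gamma functions, and taking the $q\to1$ limit uniformly on compacts. Moreover, this $q$-Gamma asymptotic produces an additional divergent factor $\exp\{2^{1/3}(\tilde\zeta-\tilde\eta')\log(2\e^{1/2})\}$; the paper shows this is exactly cancelled by a matching $z^{n_0}$ contribution obtained by recentring the doubly-infinite sum $f(\mu,z)$ at $n_0=\lfloor\log(\e^{-1/2})/\log\tau\rfloor$. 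This delicate cancellation between the $g$-term and the $f$-term is the heart of the proof and does not appear in your outline at all, because your ansatz for the Bernoulli dressing is the wrong object. Everything else you describe (the $\mu$-contour deformation, the tail estimates in $\tilde\mu$, the cubic phase, the final dominated-convergence bookkeeping) is essentially the same as the paper's, so the fix is local but essential: replace the rational factor with the infinite product $g$, center the saddle near $-1$, and carry out the $q$-Gamma analysis.
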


This theorem is proved in Section~\ref{TW_sec}. The proof uses the same
method as the proof of the main theorem of~\cite{ACQ} and relies upon a
recently discovered exact formula for the probability distribution for
the location of a fixed particle in ASEP with step Bernoulli initial
conditions (in our case $\rho_-=0$ and $\rho_+=1/2$). The main
technical modification is due to a new infinite product, which we call
$g(\zeta)$. Relating this to $q$-Gamma functions we are able to extract
the new asymptotic kernel which now also contains Gamma functions.

As we will see in Section~\ref{kpz_sec}, the $F^{\mathrm
{edge}}_{T,X}(s)$ distribution is also the one-point distribution for
the KPZ equation (\ref{KPZ0}) with specific initial data (\ref
{half_brownian}). It is clear from this result that time and space
scale differently. Specifically, the ratio of the scaling exponents for
time${}\dvtx{}$space${}\dvtx{}$fluctuations is $3\dvtx 2\dvtx 1$. This scaling ratio
was shown in~\cite{CFP10b} to hold for a wide class of $1+1$ dimensional growth
models. Finally, the $X^2$ shift with respect to the $s$ variable
reflects the parabolic curvature of the rarefaction fan nearby the edge.

The above result should be compared (see Section \ref
{height_func_two_sided_sec}) with the existing fluctuation theory for
TASEP and ASEP. In those cases, using the same centering and scaling as
in (\ref{height_fluc_eqn}),~\cite{BBP,BAC} and~\cite{TW4} (resp., for
TASEP and ASEP) obtained formulas for the one-point probability
distribution function. These formulas actually first arose in the study
of the largest eigenvalue of rank one perturbations of complex Wishart
random matrix ensembles~\cite{BBP}. Remarkably, the limiting
distributions are the same regardless of the asymmetry~$\gamma$, as
long as it is held positive as the other variables scale to infinity.
By scaling $\gamma$ as above, we focus in on the crossover between the
ASEP and the SSEP and the new family of edge crossover distribution
functions represent this transition.

For TASEP, Corwin, Ferrari and P\'{e}ch\'{e}~\cite{CFP} gave a formula
for the asymptotic equal time height function fluctuation process (in
terms of finite dimensional distributions). We paraphrase this as
Theorem~\ref{thm21}. For our present case, it says that if we fix
$\gamma=1$, $m\geq1$ (the case $m=1$ is just the~\cite{BBP,BAC} result
mentioned above), then for any choices of $T>0$, $X_1,\ldots, X_m\in
\R
$ and $s_1,\ldots, s_m\in\R$,
\[
\lim_{\e\rightarrow0} P\Biggl(\bigcap_{k=1}^{m}\biggl\{h_{\gamma}^{\mathrm
{fluc}}\biggl(\frac{t}{\gamma},x_k\biggr)\geq2^{-1/3}(X_k^2-s_k)\biggr\}\Biggr) = P
\Biggl(\bigcap_{k=1}^{m} \{\mathcal{A}_{2\to\mathrm{BM}}(X_k)\leq s_k\}\Biggr),
\]
where $\mathcal{A}_{2\to\mathrm{BM}}$ is a spatial process (defined below
in Definition~\ref{cor_and_conj_definition}) which interpolates between
the $\mathrm{Airy}_2$ process and Brownian motion.

Since in WASEP we scale the asymmetry with time and space in a critical
way, the fluctuation distributions are not the same as for TASEP or
ASEP. Rather than having $\gamma=\e^{1/2}$, one could perform
asymptotics with $\gamma=\alpha\e^{1/2}$. Doing this, it becomes
apparent that increasing $T$ is like increasing $\alpha$; hence, one
expects to recover the TASEP distributions from the WASEP edge
crossover distributions as $T\nearrow\infty$.

\begin{conj}\label{conj}
Let $\rho_-=0,\rho_+=1/2$, as well as $t,\gamma, x$ and $h^{\mathrm
{fluc}}_{\gamma}(\frac{t}{\gamma},x)$ be as in Definition \ref
{defoffluc}. Then for any $m\geq1$,
\begin{eqnarray*}
&&
\lim_{T\rightarrow\infty} \lim_{\e\rightarrow0} P\Biggl(\bigcap
_{k=1}^{m}\biggl\{h_{\gamma}^{\mathrm{fluc}}\biggl(\frac{t}{\gamma},x_k\biggr)\geq
2^{-1/3}(X_k^2-s_k)\biggr\}\Biggr)\\
&&\qquad = P\Biggl(\bigcap_{k=1}^{m} \{\mathcal
{A}_{2\to\mathrm{BM}}(X_k)\leq s_k\}\Biggr),
\end{eqnarray*}
where the joint distribution for the process $\mathcal{A}_{2\to
\mathrm{BM}}$ is given in Definition~\ref{cor_and_conj_definition} in
terms of
a Fredholm determinant.
\end{conj}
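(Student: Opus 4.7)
The plan is to split the conjecture into two independent problems: (i) establishing the multi-point analogue of Theorem \ref{hfluc_thm} for the inner $\e\to 0$ limit, thereby producing a multi-point edge crossover Fredholm determinant, and (ii) analyzing its $T\to\infty$ asymptotics so as to match the Fredholm determinant description of the $\mathcal{A}_{2\to {\rm BM}}$ process from Definition \ref{cor_and_conj_definition}.

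I would first handle the case $m=1$, which does not require a new formula. Starting from the double-contour representation (\ref{eight}), the scaling heuristic that increasing $T$ is equivalent to increasing the asymmetry coefficient $\alpha$ in $\gamma=\alpha\e^{1/2}$ suggests substituting spectral variables in $K^{\mathrm{edge}}_s$ that localize the contours near the appropriate critical point. Under this change of variables, the $q$-Gamma factors coming from $g(\zeta)$ should degenerate to ordinary Gamma functions producing the Brownian-motion coupling factor of the $\mathcal{A}_{2\to {\rm BM}}$ kernel, while the rest of the integrand contracts to the Airy kernel. A rigorous justification proceeds by identifying the critical point of the phase in the large $T$ limit, deforming to steepest descent contours, and obtaining uniform Fredholm trace-class bounds, in the spirit of the narrow-wedge analysis of \cite{ACQ} where $T\to\infty$ was shown to recover the Tracy-Widom $F_2$ distribution. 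This is heuristically consistent with the KPZ scaling invariance of one-sided Brownian initial data, which predicts the $T\to\infty$ limit of KPZ started from such data to be a scale-invariant object interpolating between the narrow-wedge and flat KPZ fixed points.

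For $m\geq 2$ the main obstacle is the absence of a multi-point Tracy-Widom formula for ASEP with step Bernoulli initial data. The technology underlying Theorem \ref{hfluc_thm} produces only the one-particle marginal. One approach is to extend the coordinate Bethe ansatz wave function integrals to joint statistics of several tagged particles, which would yield a multi-point edge crossover determinant amenable to the same steepest descent analysis. A complementary strategy is to prove tightness of the WASEP edge fluctuation field in $X$, uniformly in $\e$ and $T$, and then match finite dimensional marginals through a universality argument against the TASEP result of \cite{CFP} (which already yields $\mathcal{A}_{2\to {\rm BM}}$ directly); the FKG inequality for the stochastic heat equation noted in the abstract is a promising input here, since it provides the correlation control needed for tightness after passing to the $\e\to 0$ KPZ limit.

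The main difficulty is therefore the multi-point formula itself; both proposed routes (multi-particle Bethe ansatz, or WASEP edge tightness combined with TASEP universality) require substantial new input. Conditional on such a formula, I expect the $T\to\infty$ analysis to reduce to a joint saddle-point calculation directly generalizing the $m=1$ case, with the parabolic $X^2-s$ shift built into the definition of $\h^{\mathrm{fluc}}_\gamma$ in Theorem \ref{hfluc_thm} precisely matching the parabolic shift in the definition of $\mathcal{A}_{2\to {\rm BM}}$.
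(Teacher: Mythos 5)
The statement you are asked about is labeled a \emph{Conjecture} in the paper, and the authors do not prove it for general $m$: only the $m=1$ case is established (as Corollary~\ref{cor1}, with proof in Section~\ref{BBP_proof_sec}). Your proposal correctly recognizes this split, and your identification of the fundamental obstruction --- the absence of a multi-point Tracy--Widom formula for ASEP with step Bernoulli initial data --- is exactly the reason the paper leaves $m\geq 2$ as a conjecture. So your assessment of the state of affairs is accurate; what you have written is a reasoned account of why the conjecture resists proof, not a proof.

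For the $m=1$ case your outline is in the right spirit but considerably vaguer than the paper's actual argument. The proof of Corollary~\ref{cor1} does not proceed by a fresh saddle-point analysis in $T$. Instead one first cuts off the $\tilde\mu$ contour (using the exponential tail estimate of Proposition~\ref{reinclude_mu_lemma}), then rescales $\tilde\zeta=T^{-1/3}\zeta$, $\tilde\eta=T^{-1/3}\eta$, $\tilde\eta'=T^{-1/3}\eta'$ and sends $T\to\infty$ \emph{in the kernel}: the cosecant factor degenerates to $1/(\zeta-\eta')$ and the ratio of Gamma functions degenerates to the rational factor $(\eta'-X)/(\zeta-X)$ (not to a ``Brownian-motion coupling factor'' --- the degeneration is the opposite of what you suggest, since the $q\to 1$ limit producing Gamma functions was already taken in the $\e\to 0$ step). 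The limiting kernel $K_a$ in~\eqref{Ka_eqn} is then factored as $ABC$, the factors are cyclically reordered to $BCA$ acting on $L^2(s,\infty)$, and the $\zeta$-integral is evaluated via the identity~\eqref{BFP_Appendix_A} from~\cite{BFP}, correcting for the pole at $Z=0$ by a contour shift, to recover exactly $K_{\mathcal{A}_{2\to{\rm BM}}}$. Your ``steepest descent with uniform trace-class bounds'' description would work in principle but would reconstruct the contour-cutting and reordering machinery that the paper already has in place.

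One specific concern in your $m\geq 2$ discussion: you propose using the FKG inequality (Proposition~\ref{FKGcor}) to obtain tightness of the edge fluctuation field. FKG gives positive association of the form~\eqref{a}--\eqref{b} for solutions with independent initial data and shared noise; it gives comparison inequalities and moment bounds (as the paper uses in Section~\ref{stoch_dom_sec}), but it does not by itself provide the modulus-of-continuity or two-point moment estimates needed for process-level tightness. That route would require genuinely new estimates beyond what FKG supplies.
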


Extracting asymptotics from the result of Theorem~\ref{hfluc_thm}, we
are able to confirm this conjecture in the case of $m=1$ (see Section
\ref{BBP_proof_sec} for the proof):
\begin{cor}\label{cor1}
The $F_{T,X}^{\mathrm{edge}}$ distribution has a long time limit which
is given by
\[
\lim_{T\rightarrow\infty} F^{\mathrm{edge}}_{T,X}(s) = P\bigl(\mathcal
{A}_{2\to\mathrm{BM}}(X)\leq s\bigr),
\]
where the above one-point function for the $\mathcal{A}_{2\to\mathrm
{BM}}$ process coincides with the so-called BBP-transition~\cite{BBP} in
the study of perturbed Wishart random matrices and is given in
Definition~\ref{cor_and_conj_definition}.
\end{cor}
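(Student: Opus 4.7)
The plan is to start with the Fredholm determinant formula (\ref{eight}) and perform steepest descent asymptotics as $T \nearrow \infty$. First I would write the kernel $K^{\mathrm{edge}}_s$ in a form where the $T$-dependence is made explicit through exponential factors $e^{T\Psi(\cdot)}$, together with the Gamma function factors that come from the $g(\zeta)$ product mentioned in the proof sketch of Theorem \ref{hfluc_thm}. Following the analogous narrow-wedge analysis in \cite{ACQ}, I would locate the critical points of $\Psi$ (these are the same critical points that drive the $\mathrm{Airy}_2$ asymptotics in the interior of the fan, since the leading $T$-exponent does not see the boundary initial data), rescale the kernel variables by $T^{1/3}$ about such a critical point, and deform the contours $\tilde\Gamma_\eta$ and $\tilde{\mathcal C}$ to steepest descent paths passing through it.

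After the $T^{1/3}$ rescaling, the cubic Taylor expansion of $\Psi$ produces the Airy-type integrand, and the spatial variable $X$ enters through a subleading Gaussian factor $e^{X\cdot}$ in the exponent. The new ingredient relative to the narrow-wedge case is the collection of Gamma function factors: these must be handled via Stirling's expansion on the deformed contours, and the resulting pointwise limit furnishes exactly the rational/exponential factor of the form $1/(z-X)$ (or similar) which is the signature of the BBP kernel with one Brownian-motion boundary perturbation. The upshot of this step is that the rescaled $K^{\mathrm{edge}}_s$ converges to the one-point BBP kernel featured in Definition \ref{cor_and_conj_definition}.

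Next I would handle the outer contour integral over $\tilde\mu$. In the long-time limit, the $\tilde\mu$-dependence of the Fredholm determinant should decouple in such a way that $\int_{\tilde{\mathcal C}} e^{-\tilde\mu}\,d\tilde\mu/\tilde\mu$ reduces to $1$ (either by collapsing the contour to a small loop around $0$, or by absorbing a shift of the spectral parameter into the kernel, as happens in \cite{ACQ} for the narrow-wedge limit). Combining this collapse with the kernel limit from the previous step yields $\det(I - K^{\mathrm{BBP}}_{X,s})$, which by Definition \ref{cor_and_conj_definition} is $P(\mathcal A_{2\to\mathrm{BM}}(X)\leq s)$.

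The main obstacle I anticipate is the rigorous tail control needed to upgrade the pointwise kernel limit to a trace-norm limit of Fredholm determinants, uniformly enough in $\tilde\mu$ to pass the limit under the $\tilde\mu$ integral. The Gamma function factors introduce additional poles and rapid oscillation/decay along vertical lines in the contour, so the steepest descent paths must be chosen to avoid these poles while simultaneously giving uniform Gaussian decay of the integrand away from the critical point. Once such uniform trace-norm bounds are established (analogous to the estimates in Section \ref{kernel_manip_sec} and in the long-time analysis of \cite{ACQ}), identifying the limit kernel with the BBP kernel is an algebraic matching of integral representations, completing the proof of the corollary.
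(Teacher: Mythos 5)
Your plan follows the same route the paper takes in Section~\ref{BBP_proof_sec}: cut off the $\tilde\mu$-contour outside a compact set (with decay improving as $T$ grows), rescale $\tilde\zeta,\tilde\eta,\tilde\eta'$ by $T^{1/3}$ so the cubic exponent becomes $T$-independent, observe that the factor $(-\tilde\mu)^{-2^{1/3}T^{-1/3}(\zeta-\eta')}$ tends to $1$ so the $\tilde\mu$-dependence drops out of the kernel and the outer integral $\int_{\tilde{\mathcal C}}e^{-\tilde\mu}\,d\tilde\mu/\tilde\mu$ collapses to $1$, then identify the limiting kernel with $K_{\mathcal A_{2\to\mathrm{BM}}}$ by an operator rearrangement.

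There is one concrete error in your description of the key new step. You propose to extract the Gamma-ratio limit ``via Stirling's expansion.'' After the $T^{1/3}$ rescaling, the Gamma arguments are $2^{1/3}T^{-1/3}(\zeta-X)$ and $2^{1/3}T^{-1/3}(\eta'-X)$, which tend to \emph{zero} (not infinity) as $T\nearrow\infty$ with $\zeta,\eta'$ held fixed on their contours. Stirling is a large-argument expansion and gives the wrong behavior here; what actually governs the limit is the simple pole of $\Gamma$ at the origin, $\Gamma(w)\sim 1/w$ as $w\to 0$, which yields
\begin{equation*}
\frac{\Gamma\bigl(2^{1/3}T^{-1/3}(\zeta-X)\bigr)}{\Gamma\bigl(2^{1/3}T^{-1/3}(\eta'-X)\bigr)} \longrightarrow \frac{\eta'-X}{\zeta-X}.
\end{equation*}
This ratio is precisely the rank-one perturbation that, after the paper's $ABC\to BCA$ reordering and an application of the Airy integral identity of \cite{BFP} (Appendix A, Lemma 31(B)), produces the BBP correction term in $K_{\mathcal A_{2\to\mathrm{BM}}}$. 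Your phrasing that $X$ enters ``through a subleading Gaussian factor'' is likewise imprecise: $X$ enters quadratically in the exponent (via $s+X^2$ after the spatial rescaling) and linearly through the Gamma arguments, not as a Gaussian. These are local slips rather than structural gaps; with the small-argument Gamma asymptotics in place of Stirling, your outline is the paper's argument.
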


Thus by rescaling the edge crossover distribution for WASEP, we recover
the universal distribution at the edge of the rarefaction fan for TASEP
and ASEP. In the other direction we can also extract the small $T$
asymptotics, which are Gaussian. This is best stated in terms of the
stochastic heat equation so we delay it to Proposition~\ref{small_T_prop}.

%

\subsection{KPZ equation as the limit for WASEP height function
fluctuations at the edge}\label{kpz_sec}
Following the approach of~\cite{BG}, we prove that as $\e$ goes to
zero, a slight variant on the fluctuation field $h^{\mathrm
{fluc}}_{\gamma}(\frac{t}{\gamma},x)$ converges to the KPZ equation
with appropriate initial data.

The KPZ equation was introduced by Kardar, Parisi and Zhang in 1986 as
arguably the simplest stochastic PDE which contained\vadjust{\goodbreak} terms to account
for the desired behavior of one-dimensional interface growth~\cite{KPZ}.
%
%
\begin{equation}\label{KPZ0}
\partial_T \H= -\tfrac12(\partial_X \H)^2 + \tfrac12 \partial_X^2
\H+
\dot{\mathscr{W}},\vspace*{-1pt}
\end{equation}
where $\dot{\mathscr{W}}(T,X)$ is space--time white noise (see \cite
{ACQ}, Section 1.4, for a rigorous definition of white noise)
\[
E[\dot{\mathscr{W}}(T,X) \dot{\mathscr{W}}(S,Y)] = \delta
(T-S)\delta(Y-X).\vspace*{-1pt}
\]
Despite its simplicity, the KPZ equation has resisted analysis for
quite some time. The reason is that, even for nice initial data, the
solution at a later time $T>0$ will look locally like a Brownian motion
in $X$. Hence the nonlinear term is ill-defined.

In order to make sense of this KPZ equation, we follow~\cite{BG} and
simply define the \textit{Hopf--Cole solution} to the KPZ equation as
%
%
\begin{equation}\label{hc}
\H(T,X) = -\log\mathcal{Z} (T,X),\vspace*{-1pt}
\end{equation}
where $\mathcal{Z} (T,X)$ is the well-defined~\cite{W} solution of the
stochastic heat equation,
%
%
\begin{equation}\label{she}
\partial_T \mathcal{Z} = \tfrac12 \partial_X^2 \mathcal{Z} -
\mathcal{Z}
\dot{\mathscr{W}}.\vspace*{-1pt}
\end{equation}
Starting (\ref{KPZ0}) with initial data $\H(0,X)$ means starting
(\ref
{she}) with initial data $\mathcal{Z} (0,X)=\exp\{-\H(0,X)\}$. However,
one is best advised not to think in terms of $\H$ for the initial data
since here we will deal with initial data for $\mathcal{Z}$ (such as
Dirac-delta functions) which do not have a well-defined logarithm.

The stochastic partial differential equation (\ref{she}) is shorthand
for its integral version,
%
%
\begin{eqnarray}\label{16}
\mathcal{Z} (T,X) &=& \int_{-\infty}^\infty p(T,X-X_0) \mathcal{Z} (0,X)
\,dX_0
\nonumber
\\[-9pt]
\\[-9pt]
\nonumber
&&{}- \int_0^T \int_{-\infty}^\infty p(T-T_1,X-X_1) \mathcal{Z}
(T_1,X_1) \mathscr{W}(dX_1\,dT_1),\vspace*{-1pt}
\end{eqnarray}
where $p(T,X)=(2\pi T)^{-1/2}\exp\{-X^2/2T\}$ is the heat kernel.
Iterating, one obtains the chaos expansion (convergent in $\mathscr
{L}^2$ of the white noise $\mathscr{W}$)
%
%
\begin{equation}\label{soln}
\mathcal{Z}(T,X)= \sum_{n=0}^\infty(-1)^n I_n(T,X),\vspace*{-1pt}
\end{equation}
where
%
%
\begin{eqnarray}\label{17}I_n(T,X)&:=&
\int_{\Delta'_n(T)} \int_{\mathbb R^{n+1}}\prod_{i=0}^n
p(T_{i+1}-T_{i}, X_{i+1}-X_{i}) \mathcal{Z}(0,X_0)\,dX_0
\nonumber
\\[-9pt]
\\[-9pt]
\nonumber
&&\hspace*{54pt}{}\times\prod
_{i=1}^{n}\mathscr{W} (dT_i \,dX_i),
\\[-2pt]
\Delta'_n(T)&: =& \{(T_1,\ldots,T_n) \dvtx  0=T_0\le T_1\le\cdots\le
T_n\le
T_{n+1}=T\},\nonumber\vspace*{-1pt}
\end{eqnarray}
and $X_{n+1}=X$.\vadjust{\goodbreak}

\subsubsection{Microscopic Hopf--Cole transform}

We can now show that the WASEP height fluctuation field converges to
KPZ, in the sense that its Hopf--Cole transform converges to the
solution of the stochastic heat equation. This idea was first
implemented for equilibrium initial conditions in~\cite{BG} and is
facilitated by the fact that the Hopf--Cole transform of the
fluctuations (with lower order changes to the scalings) actually
satisfies a discrete space, continuous time stochastic heat equation
itself~\cite{G}. Specifically let
%
%
\begin{eqnarray}
\nu_\e&=& p+q-2\sqrt{qp} = \tfrac{1}{2} \e+ \tfrac{1}{8}\e^2 +
\mathcal{O}(\e^3),\\
\lambda_\e&=& \tfrac{1}{2}\log(q/p) = \e^{1/2} + \tfrac{1}{3}\e^{3/2}
+ \mathcal{O}(\e^{5/2}),
\end{eqnarray}
where we recall that with asymmetry $\gamma=\e^{1/2}$ we must have
$q=\frac{1}{2}+\frac{1}{2}\e^{1/2}$ and $p=\frac{1}{2}-\frac
{1}{2}\e^{1/2}$.

Define the random functions $Z_{\e}(T,X)$ by setting
%
%
\begin{equation}\label{scaledhgt}
Z_{\e}(T,X) = \exp\biggl\{-\lambda_{\e}h_{\gamma}\biggl(\frac{\e
^{-3/2}T}{\gamma
},\e^{-1}X\biggr) +\nu_{\e}\frac{\e^{-3/2}T}{\gamma}\biggr\}.
\end{equation}
Since $\rho_-=0,\rho_+=1/2$, $h_{\gamma}(0,x)$ is $|x|$ for $x\leq0$
and a simple symmetric random walk for $x>0$. Using this fact and the
Taylor approximation for $\lambda_{\e}\approx\e^{1/2}$, we find that
for $X<0$, $\lambda_{\e}h_{\gamma}(0,\e^{-1}X)$ is like $\e^{-1/2}X$,
and for $X\geq0$ it is converging to a standard Brownian motion
$B(X)$. Thus negating and exponentiating, we see that $Z_{\e}(0,X)$
converges to initial data $\mathbf{1}_{X\geq0}\exp\{-B(X)\}$.

\begin{definition} The solution of KPZ with half-Brownian initial data
is defined as
%
%
\begin{equation}\label{22}
\H^{\mathrm{edge}}(T,X):= -\log\mathcal{Z}^{\mathrm{edge}}(T,X),
\end{equation}
where $\mathcal{Z}^{\mathrm{edge}}(T,X)$ is the unique solution of the
stochastic heat equation
%
%
\begin{equation}
\partial_T\mathcal{Z}^{\mathrm{edge}}= \tfrac12\partial_X^2\mathcal
{Z}^{\mathrm{edge}}-\mathcal{Z}^{\mathrm{edge}}\dot{\mathscr{W}},\qquad
\label{half_brownian}
\mathcal{Z}^{\mathrm{edge}}(0,X)= \mathbf{1}_{X\geq0}\exp\{-B(X)\}.\hspace*{-35pt}
\end{equation}
The formal initial conditions for the (equally formal) KPZ equation
would be $\H^{\mathrm{edge}}(0,X)= B(X)$ for $X\geq0$ and
$\H^{\mathrm{edge}}(0,X)= -\infty$ for $X< 0$.
\end{definition}

Now observe that via the Taylor expansions of $\nu_\e$ and $\lambda
_\e
$, we have
\[
-\log Z_{\e}(T,X) = T^{1/3}h^{\mathrm{fluc}}_{\gamma}\biggl(\frac{\e
^{-3/2}T}{\gamma},\e^{-1}X\biggr) +\frac{T}{4!} + o(1).
\]
This suggests that
\[
\lim_{\e\rightarrow0} h^{\mathrm{fluc}}_{\gamma}\biggl(\frac{\e
^{-3/2}T}{\gamma},\e^{-1}X\biggr)= \frac{\H^{\mathrm{edge}}(T,X)-
{T}/{(4!)}}{T^{1/3}}.
\]
To state this precisely, observe that the random functions $Z_{\e
}(T,X)$ above have discontinuities both in
space and in time. If desired, one can linearly interpolate in space so\vadjust{\goodbreak}
that they become
a jump process taking values in the space of continuous functions. But
it does not really make things easier. The key point is that the jumps
are small, so we use instead the space $D_u([0,\infty); D_u(\mathbb
R))$, where $D$ refers to right continuous paths with left limits, and
$D_u(\R)$ indicates that in space these functions are equipped with the
topology of uniform convergence on compact sets. Let $\mathscr{P}_\e$
denote the probability measure on $D_u([0,\infty); D_u(\mathbb R))$
corresponding to the process $Z_{\e}(T,X)$.

\begin{theorem}\label{BG_thm}
$\mathscr{P}_\e$, $\ep\in(0,1/4)$, are a tight family of measures and
the unique limit point is supported on $C([0,\infty); C(\mathbb R))$
and corresponds to the solution of (\ref{she}) with initial conditions
(\ref{half_brownian}).
\end{theorem}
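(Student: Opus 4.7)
\emph{Proof plan.} The approach follows \cite{BG} and \cite{ACQ} closely; the new ingredient is the half-Brownian initial data. G\"artner's microscopic Hopf--Cole identity \cite{G} shows that $Z_\e(T,X)$ solves a discrete stochastic heat equation of the schematic form
\begin{equation*}
dZ_\e(T,X) = \tfrac{1}{2}\Delta_\e Z_\e(T,X)\, dT + Z_\e(T,X)\, dM_\e(T,X),
\end{equation*}
where $\Delta_\e$ is a discrete Laplacian on $\e\Z$ normalised so that $\Delta_\e \to \partial_X^2$, and $M_\e$ is an explicit martingale built from the compensated WASEP jumps. Duhamel's formula then expresses $Z_\e(T,X)$ as the discrete heat evolution applied to $Z_\e(0,X)$ plus a stochastic convolution against $dM_\e$, a microscopic analogue of \eqref{16}.

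Tightness of $\{\mathscr{P}_\e\}$ in $D_u([0,\infty); D_u(\R))$ reduces, as in \cite{BG}, to uniform moment bounds $E[Z_\e(T,X)^{2p}] \le C_p$ and to Kolmogorov-type temporal and spatial increment estimates, all on compact sets. These follow from the Duhamel representation together with a discrete Gr\"onwall argument applied to the second moment, provided the corresponding bounds hold on $Z_\e(0,X)$. For the initial data: on $X<0$, the Taylor expansion $\lambda_\e = \e^{1/2}+O(\e^{3/2})$ and $\h_\gamma(0,\e^{-1}X) = \e^{-1}|X|$ force $Z_\e(0,X)\to 0$ uniformly on compact subsets of $(-\infty,0)$; on $X\ge 0$, Donsker's invariance principle applied to the simple symmetric random walk $\h_\gamma(0,\cdot)$ gives $\lambda_\e\h_\gamma(0,\e^{-1}\cdot)\Rightarrow B(\cdot)$ in $D_u([0,\infty))$, whence $Z_\e(0,\cdot)\Rightarrow \mathbf{1}_{X\ge 0}\exp(-B(\cdot))$. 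Uniform annealed moment bounds on $Z_\e(0,X)$ come from the explicit moment generating function of the underlying random walk.

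Given tightness, any subsequential limit point satisfies the mild equation \eqref{16} with initial data \eqref{half_brownian}; uniqueness of this solution is classical (\cite{W}, extended to rough data in \cite{BG}), so the full sequence converges to $\mathcal{Z}^{\mathrm{edge}}$. Because the jumps of $Z_\e$ are of order $\e^{1/2}$, the limit is supported on $C([0,\infty);C(\R))$.

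The principal obstacle is the interaction between the discontinuity of $Z_\e(0,X)$ at $X=0$ and the unboundedness of $\exp(-B(X))$: annealed moments must be controlled uniformly in $\e$ across the jump. This is handled by splitting the spatial integral in the Duhamel formula at the origin and using heat-kernel smoothing to absorb the discontinuity into the mild formulation; the remaining estimates then reduce to those already developed in \cite{BG, ACQ}.
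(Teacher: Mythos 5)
Your outline has the right broad architecture (G\"artner transform, tightness via moment bounds, convergence of the initial data, identification of the limit), and the moment bound on the initial data --- $E[Z_\ep^p(0,X)]\le e^{C_pX}$, straightforward from the explicit height profile --- is indeed what feeds the tightness machinery inherited from \cite{BG}. But you pass silently over the one step that is not routine: identifying the limiting noise. In the discrete Duhamel formula, the martingale $M_\ep$ has predictable bracket
\begin{equation*}
d\langle M_\ep(X), M_\ep(Y)\rangle = \ep^{-1}\mathbf{1}(X=Y)\, b_\ep(\tau_{-[\ep^{-1}X]}\eta)\, dT,
\qquad
b_\ep(\eta) = 1 - \hat\eta(1)\hat\eta(0) + \hat b_\ep(\eta),
\end{equation*}
with $\hat b_\ep = O(\ep^{1/2})$. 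The term $\hat\eta(1)\hat\eta(0)$ is of order one and does not vanish pointwise; to obtain the limiting quadratic variation $\int_0^T\!\int\varphi^2(X) Z^2(S,X)\,dX\,dS$ one must show that the contribution of $\hat\eta(S,X+\ep)\hat\eta(S,X)Z_\ep^2(S,X)$ washes out in probability. That is exactly what the paper's key fluctuation estimate (\ref{gradbd}) on $E\left[\left|E\left[(Z_\ep(T,X+\ep)-Z_\ep(T,X))(Z_\ep(T,X)-Z_\ep(T,X-\ep))\mid\mathscr{F}(S)\right]\right|\right]$ provides, and it is the single lemma that distinguishes this result from a naive passage to the limit in the Duhamel representation. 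Your assertion that ``any subsequential limit point satisfies the mild equation'' presupposes the conclusion of this step without supplying it, and a Gr\"onwall bound on second moments does not produce it.

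Two smaller points. First, the paper does not identify the limit through the mild formulation; it shows that $N_T(\varphi)$ and $\Lambda_T(\varphi)$ are local martingales under the limit law and invokes uniqueness of the associated martingale problem (\cite{BG} Section 5 together with \cite{BC} Theorem 2.2). Your mild-equation route is a viable alternative in principle, but then citing \cite{W} for uniqueness is insufficient as stated, since the initial data here is not $L^2$ in $X$ and one needs the version adapted to data with exponential moment growth (again \cite{BC}). Second, your diagnosis of the ``principal obstacle'' is misplaced: the jump discontinuity at $X=0$ is harmless, as it is instantly smoothed by the heat kernel and does not disturb the one-sided exponential moment bound, and the unboundedness of $e^{-B(X)}$ is precisely what the allowance $C_p>0$ in that bound is for. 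The genuine difficulty is the noise identification above, which your write-up does not touch.
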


The proof of this theorem is a variation on that of~\cite{BG} and
\cite{ACQ} and is given in Section~\ref{BG_sec}.

Our second main result is a corollary of Theorems~\ref{hfluc_thm} and
\ref{BG_thm} and provides an exact formula for the one-point
distributions of KPZ with half-Brownian initial data.

\begin{cor}\label{KPZ_edge_cor}
For each fixed $T>0$, $X\in\R$ and $s\in\R$,
\[
P\biggl(\frac{\H^{\mathrm{edge}}(T,2^{1/3}T^{2/3}X)-
{T}/{(4!)}}{T^{1/3}}\geq2^{-1/3}(X^2-s)\biggr) = F^{\mathrm{edge}}_{T,X}(s),
\]
where $F^{\mathrm{edge}}_{T,X}(s)$ is given in Definition \ref
{main_theorem_definition}. As $T\nearrow\infty$ the above converges to
$P(\mathcal{A}_{2\to\mathrm{BM}}(X)\leq s)$; see Definition
\ref{cor_and_conj_definition}.
\end{cor}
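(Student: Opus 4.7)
The plan is to deduce the corollary directly from Theorem \ref{hfluc_thm} and Theorem \ref{BG_thm}, bridged by the microscopic Hopf-Cole identity already displayed in the text. Theorem \ref{BG_thm} implies that $Z_{\e}$ converges in distribution to $\mathcal{Z}^{\mathrm{edge}}$ in $D_u([0,\infty);D_u(\mathbb{R}))$; in particular, at every fixed $(T,Y)$ with $Y=2^{1/3}T^{2/3}X$, the one-point marginal converges, $Z_{\e}(T,Y)\Rightarrow \mathcal{Z}^{\mathrm{edge}}(T,Y)$. Combined with strict positivity of $\mathcal{Z}^{\mathrm{edge}}(T,Y)$ (Mueller's theorem, applicable since the half-Brownian initial datum is nonnegative and not identically zero), the continuous mapping theorem lifts this to $-\log Z_{\e}(T,Y)\Rightarrow \H^{\mathrm{edge}}(T,Y)$.

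On the other hand, substituting the Taylor expansions of $\nu_{\e}$ and $\lambda_{\e}$ into the definition (\ref{scaledhgt}) gives, at the same macroscopic space point,
\begin{equation*}
-\log Z_{\e}(T,2^{1/3}T^{2/3}X) = T^{1/3}\,\h^{\mathrm{fluc}}_{\gamma}\!\bigl(\tfrac{t}{\gamma},\,2^{1/3}t^{2/3}X\bigr) + \tfrac{T}{4!} + o(1),
\end{equation*}
with $t=\e^{-3/2}T$. Rearranging the inequality of Theorem \ref{hfluc_thm} and invoking the previous weak convergence, the probability on the left-hand side of the claimed identity equals $\lim_{\e\to 0} P\bigl(\h^{\mathrm{fluc}}_{\gamma}(\tfrac{t}{\gamma},x)\geq 2^{-1/3}(X^2-s)\bigr)$, which Theorem \ref{hfluc_thm} evaluates to $F^{\mathrm{edge}}_{T,X}(s)$.

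The one technical point that requires care is whether one-point weak convergence really descends through $-\log$: no mass may escape to $0$ (for $Z_{\e}$) or to $\pm\infty$ (for $-\log Z_{\e}$). The upper tail on $-\log Z_{\e}$ is automatic from Theorem \ref{hfluc_thm}, while the lower tail (equivalently, small-ball control of $Z_{\e}$ uniform in $\e$) is handled by the positive-moment and FKG-type bounds for the stochastic heat equation developed elsewhere in the paper. Since $s\mapsto F^{\mathrm{edge}}_{T,X}(s)$ is continuous by inspection of the Fredholm determinant (\ref{eight}), weak convergence upgrades to pointwise convergence of distribution functions at every $s\in\mathbb{R}$. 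The $T\nearrow\infty$ assertion in the corollary is not proved from scratch here: it is precisely Corollary \ref{cor1}, established in Section \ref{BBP_proof_sec} by steepest-descent analysis of (\ref{eight}). The main analytic obstacle in the overall scheme is therefore not the combinatorial bookkeeping but this uniform-in-$\e$ small-ball control on $Z_{\e}$ that legitimises passing through the logarithm.
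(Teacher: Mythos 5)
Your argument is correct and follows exactly the route the paper has in mind: the corollary is stated in the text as ``a corollary of Theorems \ref{hfluc_thm} and \ref{BG_thm},'' and you assemble it from precisely those two inputs via the microscopic Hopf--Cole bridge, with Corollary \ref{cor1} handling the $T\nearrow\infty$ claim. The chain is the intended one: Theorem \ref{BG_thm} plus continuity of the one-point evaluation map on $D_u([0,\infty);D_u(\R))$ gives $Z_\e(T,Y)\Rightarrow\mathcal{Z}^{\mathrm{edge}}(T,Y)$; Mueller's positivity plus the continuous mapping theorem pushes this through $-\log$; the affine relation $-\log Z_\e = a_\e\h^{\mathrm{fluc}}_\gamma + b_\e$ with $a_\e\to T^{1/3}$, $b_\e\to T/4!$ combined with Slutsky identifies the weak limit of $\h^{\mathrm{fluc}}_\gamma$ as $(\H^{\mathrm{edge}}(T,Y)-T/4!)/T^{1/3}$; Theorem \ref{hfluc_thm} then identifies the one-point law at continuity points, and continuity of $s\mapsto F^{\mathrm{edge}}_{T,X}(s)$ (entire dependence of the kernel on $s$ plus uniform trace-norm control, passed through the $\tilde\mu$-integral by dominated convergence) upgrades this to all $s$.

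One remark on your closing paragraph: the ``uniform-in-$\e$ small-ball control'' you flag as the main analytic obstacle is not actually needed, and invoking the paper's FKG and moment bounds for it is a detour. Once you have $Z_\e(T,Y)\Rightarrow\mathcal{Z}^{\mathrm{edge}}(T,Y)$ with the limit a.s.\ strictly positive, the continuous mapping theorem already forbids mass escape: $\limsup_\e P(Z_\e(T,Y)<\delta)\le P(\mathcal{Z}^{\mathrm{edge}}(T,Y)\le\delta)\to 0$ as $\delta\searrow 0$, so $-\log Z_\e(T,Y)$ is automatically tight with no mass at $+\infty$ in the limit. The FKG inequality (Proposition \ref{FKGcor}) and the tail estimate (Proposition \ref{LDPthm2}) serve a different purpose in the paper --- controlling $\H^{\rm eq}$ and the tails of $F^{\rm edge}_{T,0}$ --- and are not part of the logic needed to pass through the logarithm here. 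Your first paragraph already contained the correct and complete argument; the last paragraph introduces a nonexistent gap.
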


Translating Conjecture~\ref{conj} into the KPZ language gives
\begin{conj}
For $m\geq1$, $T>0$, $X_1,\ldots,X_m\in\R$ and $s_1,\ldots,s_m\in
\R$,
\begin{eqnarray*}
&&\lim_{T\rightarrow\infty} P\Biggl(\bigcap_{k=1}^{m} \biggl\{\frac{\H
^{\mathrm{edge}}(T,2^{1/3}T^{2/3}X_k)-{T}/{(4!)}}{T^{1/3}} \geq
2^{-1/3}(X_k^2-s_k)\biggr\}\Biggr)\\
&&\qquad = P\Biggl(\bigcap_{k=1}^{m} \{\mathcal
{A}_{2\to\mathrm{BM}}(X_k)\leq s_k\}\Biggr).
\end{eqnarray*}
\end{conj}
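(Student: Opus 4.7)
The natural route is to observe that this conjecture is equivalent to Conjecture \ref{conj} and to attack the latter. Theorem \ref{BG_thm} asserts tightness of $\mathscr{P}_\ep$ on $D_u([0,\infty);D_u(\R))$ with unique limit point supported on continuous paths, so for any fixed $T>0$ and any finite collection of spatial points one obtains joint convergence of $\log Z_\ep(T,\cdot)$ evaluated at those points. Combined with the Taylor expansions of $\nu_\ep$ and $\lambda_\ep$ already exploited in the one-point analysis, this yields joint convergence of $\h^{\mathrm{fluc}}_\gamma(t/\gamma,x_k)$ to the rescaled values of $\H^{\mathrm{edge}}(T,2^{1/3}T^{2/3}X_k)$. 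Hence the two conjectures are equivalent and it suffices to establish the WASEP version.

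To prove Conjecture \ref{conj} I would proceed in parallel with the proof of Theorem \ref{hfluc_thm} but starting from a multi-point Tracy--Widom-type formula for ASEP with step Bernoulli initial data. Assuming such a formula exists, the asymptotic analysis would split into three tasks: encode the joint probability as a Fredholm determinant on a product space involving the $g(\zeta)$ factors from Section \ref{TW_sec}; perform a joint steepest descent analysis identifying the same critical point at each spatial scale; and recognize the limiting multi-point kernel as that defining the process in Definition \ref{cor_and_conj_definition}. Afterwards the $T\nearrow\infty$ limit would be a secondary steepest descent in which the Gamma-function factors produced by $g(\zeta)$ degenerate into the BBP-type exponential factors of \cite{CFP}; having already carried out the one-point version in Corollary \ref{cor1}, no essentially new ideas should be required there.

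The principal obstacle is the absence of any exact joint distribution formula for particle positions in ASEP with step Bernoulli initial conditions. The Tracy--Widom approach underlying the one-point result produces genuinely single-particle identities, and passing to several particles requires new input, such as a determinantal structure that is not known to exist away from TASEP. One conceivable workaround is to compute multi-point moments of $\mathcal{Z}^{\mathrm{edge}}$ via the delta Bose gas with half-Brownian boundary condition and to attempt a Mellin--Barnes resummation, but the moments of $\mathcal{Z}^{\mathrm{edge}}$ grow too rapidly for the moment problem to be determinate, so such an approach would require a separate argument to pin down the law from its moment-generating representation.
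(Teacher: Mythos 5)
The statement you were asked to prove is labeled a \emph{conjecture} in the paper; the authors do not prove it, and only the $m=1$ case is established (as the second statement of Corollary \ref{KPZ_edge_cor}, via Corollary \ref{cor1} and the asymptotic analysis of Section \ref{BBP_proof_sec}). Your proposal is therefore, quite properly, not a proof but an assessment, and the assessment is accurate. You correctly observe that the process-level convergence of Theorem \ref{BG_thm}, together with the Taylor expansions of $\nu_\e,\lambda_\e$, makes this KPZ-language statement equivalent to the WASEP-language Conjecture \ref{conj} — precisely the reduction the paper has in mind when it says the former is obtained by ``translating'' the latter. You also pinpoint the correct missing ingredient: the Tracy--Widom formula \cite{TW4} on which the whole asymptotic analysis of Section \ref{TW_sec} rests is genuinely a one-point identity for the position of a single particle, and no determinantal (or otherwise asymptotically tractable) multi-point formula is known for ASEP with step Bernoulli data. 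This is exactly why the paper proves only $m=1$. Your remark about the delta-Bose-gas moment route is likewise well taken; the $n$-th moments of the stochastic heat equation grow on the order of $e^{cn^3}$, so the moment problem is not determinate, and a Mellin--Barnes resummation of the moment generating series would have to be accompanied by a separate argument identifying the law it produces. In short, there is no gap to find because there is no proof to compare against; your proposal matches the paper's own stance and correctly names the open obstruction.
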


We prove $m=1$ as the second statement of Corollary~\ref{KPZ_edge_cor}.

One expects~\cite{KK} that for large $y$,
%
%
\begin{equation}\label{tail}
1- F_{T,0}^{\mathrm{edge}} (y) \sim c e^{-(2/3) y^{3/2}},\qquad
F_{T,0}^{\mathrm{edge}} (-y) \sim c e^{-c y^3}.
\end{equation}
So far, we have not been able to obtain (\ref{tail}) from the
determinantal formula (\ref{eight}) for $F_{T,0}^{\mathrm{edge}}$. In fact,
using only the determinantal formulas, it is an open problem to show
that $F_{T,0}^{\mathrm{edge}}(y)\to0$ as $y\to-\infty$. We only
know it is
true because of Corollary~\ref{KPZ_edge_cor} together with (\ref{16}),
(\ref{17}),\vadjust{\goodbreak} (\ref{22}), which show that $\H^{\mathrm{edge}}(T,X)$
is a
nondegenerate random variable. The problem is that unlike the Airy
kernel used to define the Tracy--Widom distributions, the eigenvalues
of our crossover kernels are not in $[0,1]$.
We can obtain some asymptotics at the other end.

\begin{proposition}\label{LDPthm2}
There exists $c_1,c_2,c_3<\infty$ such that for $T\ge1$,
%
%
\begin{equation}\label{tail0}
1- F_{T,0}^{\mathrm{edge}} (2^{1/3} y -2^{1/3}T^{-1/3}\log2) \le
c_1T^{1/2}( e^{ - c_2 y^{3/2} } + e^{-c_3 T^{1/3} y}).
\end{equation}
\end{proposition}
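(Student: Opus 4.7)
By Corollary \ref{KPZ_edge_cor} together with the identity $\H^{\rm edge}=-\log\mathcal{Z}^{\rm edge}$, substituting $s=2^{1/3}y-2^{1/3}T^{-1/3}\log 2$ recasts the left-hand side of (\ref{tail0}) as the upper-tail probability
\[
P\!\left(\mathcal{Z}^{\rm edge}(T,0) > \tfrac{1}{2}\,e^{T^{1/3}y-T/24}\right).
\]
The plan is to bound this by Markov's inequality applied to integer moments of $\mathcal{Z}^{\rm edge}(T,0)$, and then to optimize in the moment order. For each $n\in\mathbb{Z}_{\ge 1}$,
\[
1-F^{\rm edge}_{T,0}\bigl(2^{1/3}y-2^{1/3}T^{-1/3}\log 2\bigr)\;\le\; 2^n\,e^{nT/24 - nT^{1/3}y}\,E\!\left[\mathcal{Z}^{\rm edge}(T,0)^n\right].
\]
Note that the $\log 2$ shift built into the statement of the proposition is precisely what absorbs the $2^n$ factor cleanly.

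The essential input is a moment bound of the form
\[
E\!\left[\mathcal{Z}^{\rm edge}(T,0)^n\right]\;\le\; C\sqrt{T}\,\exp\!\left(\tfrac{Tn^3}{24}+O(nT)\right).
\]
I would derive this via Feynman--Kac. Introducing $n$ independent standard Brownian motions $B^{(1)},\ldots,B^{(n)}$, first averaging out the space-time white noise $\dot{\mathscr W}$ yields the attractive delta-interaction factor $\exp(\sum_{i<j} L^{ij}_T)$, where $L^{ij}$ is the intersection local time of the $i$th and $j$th paths; then averaging out the Brownian motion $B$ appearing in the half-Brownian initial data produces the Gaussian factor $\exp(\tfrac{1}{2}\sum_{i,j}\min(B^{(i)}_T, B^{(j)}_T))$. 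The delta interaction supplies the $e^{Tn^3/24}$ blow-up via the $n$-body attractive delta Bose gas ground-state energy, the Brownian-initial-data factor contributes at most $e^{O(nT)}$ after integration against the Gaussian marginals, and the positivity constraint $B^{(i)}_T\ge 0$ contributes the $\sqrt{T}$ prefactor from a heat-kernel-at-the-origin estimate. An alternative route is to invoke the FKG inequality for the stochastic heat equation (the novel tool advertised in the abstract) to dominate moments of $\mathcal{Z}^{\rm edge}$ by moments of $\mathcal{Z}^{\rm delta}$-type quantities weighted against the initial Brownian profile, thereby reducing to moment formulas already available in \cite{ACQ}.

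Substituting this moment bound and optimizing over $n\in\mathbb{Z}_{\ge 1}$ produces the two regimes in (\ref{tail0}). The unconstrained real optimizer is $n^\ast\sim\sqrt{8y}\,T^{-1/3}$, at which the exponent evaluates to $-\tfrac{4\sqrt{2}}{3}\,y^{3/2}$ to leading order. When $y\gtrsim T^{2/3}$ so that $n^\ast\ge 1$, choosing $n=\lfloor n^\ast\rfloor$ produces the $e^{-c_2 y^{3/2}}$ contribution. When $y\lesssim T^{2/3}$, one falls back to $n=1$; the $e^{nT/24}$ and the $O(nT)$ correction combine into an $e^{O(T)}$ factor, which is absorbed into $e^{-c_3 T^{1/3}y}$ by choosing $c_3$ sufficiently small and by the freedom in the $c_1 T^{1/2}$ prefactor, over the range of $y$ in which the stated bound is nontrivial.

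The main obstacle is obtaining the moment bound with sharp constants. The cubic-in-$n$ coefficient $1/24$ must be exact because it cancels against the centering term $nT/24$ coming from Corollary \ref{KPZ_edge_cor}; any slack at leading order would spoil the $y^{3/2}$ optimization. The small-$y$ regime is the more delicate part of the argument: because the optimal real $n^\ast$ drops below unity there, the rounding up to $n=1$ is only harmless thanks to the $T^{1/2}$ prefactor and the freedom to shrink $c_3$. This is one of the places where the FKG-based comparison provides essential flexibility, and it is a concrete application of the general FKG tool for the stochastic heat equation introduced earlier in this paper.
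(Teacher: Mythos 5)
Your route is genuinely different from the paper's: the paper bounds the Fredholm determinant in (\ref{kedge}) directly, factoring the kernel into Hilbert--Schmidt pieces $A_1A_2$ (equations (\ref{aone})--(\ref{atwo})) and estimating their norms via the Gamma-transformed Airy asymptotics of Lemma \ref{lemma49}, whereas you propose Markov's inequality applied to integer moments of $\mathcal{Z}^{\rm edge}(T,0)$ computed through the attractive delta Bose gas. Unfortunately the moment route has a genuine gap in the intermediate regime of $y$, and your claim that this regime is covered by triviality is false.

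The moments of $\mathcal{Z}^{\rm edge}$ carry an unavoidable extra exponential growth coming from the half-Brownian initial data. Already for $n=1$, using $E[e^{-B(X_0)}]=e^{X_0/2}$,
\[
E\bigl[\mathcal{Z}^{\rm edge}(T,0)\bigr]=\int_0^\infty p(T,X_0)\,e^{X_0/2}\,dX_0 = e^{T/8}\int_0^\infty p(T,X_0-T/2)\,dX_0 \ge \tfrac12\,e^{T/8},
\]
and $1/8>1/24$; thus the ``$O(nT)$'' in your moment bound is really $\alpha nT$ with a strictly positive $\alpha$, not a subleading correction. After multiplying by $2^n e^{nT/24-nT^{1/3}y}$ from Markov, the $n=1$ estimate is of order $e^{T/6-T^{1/3}y}$, which is $\le c_1T^{1/2}e^{-c_3T^{1/3}y}$ only when $T^{1/3}y\gtrsim T$, i.e.\ $y\gtrsim T^{2/3}$. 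On the other hand, the right-hand side of (\ref{tail0}) drops below $1$ already for $y\gtrsim(\log T)^{2/3}$, since then $c_2 y^{3/2}\gtrsim\log(c_1T^{1/2})$. So the proposition asserts a nontrivial, superpolynomially small bound throughout the window $(\log T)^{2/3}\lesssim y\lesssim T^{2/3}$, where your Markov estimate is exponentially large in $T$. The constant $c_1$ must be independent of $T$, so it cannot absorb this discrepancy, and the FKG inequality of Proposition \ref{FKGcor} gives only positive correlations, hence lower bounds on products of expectations, not the upper bound on moments that your alternative route would require. This is precisely why the paper works with the Fredholm determinant directly: the operator-norm estimates via Lemma \ref{lemma49} decay in $s$ at the required rate $e^{-cs^{3/2}}+e^{-c\kappa_T s}$ uniformly in $T\ge 1$ without ever passing through moments, so they do not degenerate in the window where the replica exponent changes sign.
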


The proof is in Section~\ref{upper_tail_sec}. The constants and
dependence on $T$ given here are not expected to be optimal. We remark
that using the same methods as in our proof, one can compute the upper
tail for $F_{T,X}^{\mathrm{edge}}$ and, with better constants, the upper
tail for the distribution $F^{\mathrm{fan}}_{T,X}$ (which on recalls
does not depend on $X$).

In terms of moments of
\[
\frac{\H^{\mathrm{edge}}(T,2^{1/3}T^{2/3}X)-{T}/{(4!)}}{T^{1/3}},
\]
from the convergence in law of the one point distribution, and the
general lower semicontinuity, one obtains lower bounds
\[
\liminf_{T\nearrow\infty}E\biggl[\biggl( \frac{\H^{\mathrm
{edge}}(T,2^{1/3}T^{2/3}X)-{T}/{(4!)}}{T^{1/3}}\biggr)^p\biggr] \ge C_p(X)>0,
\]
where $C_p(X)= E[(\mathcal{A}_{2\to\mathrm{BM}}(X))^p]$. The
corresponding upper bounds do not come as easily, though presumably
they could be derived by an appropriate asymptotic
analysis of the Tracy--Widom formulas for ASEP.

In the other direction we can also extract the small $T$ asymptotics.
Solving the regular heat equation $\partial_T \mathcal{Z} = \frac12
\partial_X^2 \mathcal{Z} $ with initial data (\ref{half_brownian}) gives
$\int_0^\infty\frac{
e^{ -{ (X-X_0)^2 }/{(2T) }}
}
{\sqrt{2\pi T} } \times e^{-B(X_0)} \,dX_0$.

\begin{proposition}\label{small_T_prop}
As $T\searrow0$,
\begin{eqnarray*}
\mathcal{Z}^{\mathrm{edge}}(T,X) &=& \int_0^\infty\frac{ e^{ -{
(X-X_0)^2 }/{(2T) }}}{\sqrt{2\pi T} } e^{-B(X_0)} \,dX_0 + T^{1/4} \mathcal
{Z}^{\mathrm{init}}(T,T^{-1/2}X) \\
&&{}+ o( T^{1/4}),
\end{eqnarray*}
where, given $B(X), X\ge0$, $\mathcal{Z}^{\mathrm{init}}(T,X)$ is a
Gaussian process with mean zero and covariance
\[
\operatorname{Cov} (\mathcal{Z}^{\mathrm{init}}(T,X), \mathcal
{Z}^{\mathrm{init}}(T,Y)) = \int_0^\infty\int_0^\infty\Psi(X,Y,X_0, X_0')
\,dX_0\,dX_0',
\]
where
\[
\Psi(x,y,x_0,x_0') =\frac{e^{- (1/4) (x+y-x_0-x_0')^2}}{4\pi^{3/2}}
\int_0^1
\frac{ e^{ -{ (x-y)^2 }/{(4(1-s))} - { (x_0-x_0')^2 }/{(4s)} }
}{\sqrt{s(1-s)} }\,ds.\vadjust{\goodbreak}
\]
\end{proposition}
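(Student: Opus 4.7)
The plan is to use the Wiener chaos expansion (\ref{soln})--(\ref{17}), writing $\mathcal Z^{\rm edge}(T,X)=\sum_{n\ge0}(-1)^n I_n(T,X)$, and to analyze the small-$T$ contribution of each chaos piece. The $n=0$ term is exactly the deterministic heat evolution $I_0(T,X)=\int_0^\infty p(T,X-X_0)e^{-B(X_0)}dX_0$, matching the leading term in the proposition. The remaining task is to identify $I_1$ as the order-$T^{1/4}$ Gaussian correction and to show $\sum_{n\ge 2}(-1)^n I_n=o_P(T^{1/4})$.

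For the $I_1$ term, conditional on $B$ it is a mean-zero Gaussian stochastic integral. Writing $\phi(T_1,X_1):=\int_0^\infty p(T_1,X_1-X_0)e^{-B(X_0)}dX_0$, the It\^o isometry combined with representing $\phi(T_1,X_1)^2$ as a double integral over $(X_0,X_0')$ gives
\begin{equation*}
E[I_1(T,X)I_1(T,Y)\mid B] = \iint_{[0,\infty)^2} e^{-B(X_0)-B(X_0')}\Phi(T,X,Y,X_0,X_0')\,dX_0\,dX_0',
\end{equation*}
where
\begin{equation*}
\Phi := \int_0^T\!\!dT_1\!\int_{\mathbb R} p(T-T_1,X-X_1)p(T-T_1,Y-X_1)p(T_1,X_1-X_0)p(T_1,X_1-X_0')\,dX_1.
\end{equation*}
The Gaussian identity $p(a,u-v)p(a,w-v)=p(2a,u-w)\tilde p(v;\tfrac{u+w}{2},\tfrac{a}{2})$, where $\tilde p(\cdot;\mu,\sigma^2)$ is the density with the indicated mean and variance, together with $\int \tilde p(v;\mu_1,\sigma_1^2)\tilde p(v;\mu_2,\sigma_2^2)\,dv = p(\sigma_1^2+\sigma_2^2,\mu_1-\mu_2)$, collapses the $X_1$-integral. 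After substituting $s=T_1/T$ and rescaling $X=T^{1/2}x$, $Y=T^{1/2}y$, $X_0=T^{1/2}x_0$, $X_0'=T^{1/2}x_0'$, a direct computation yields
\begin{equation*}
\Phi(T,T^{1/2}x,T^{1/2}y,T^{1/2}x_0,T^{1/2}x_0') = T^{-1/2}\Psi(x,y,x_0,x_0').
\end{equation*}
Consequently
\begin{equation*}
T^{-1/2} E[I_1(T,T^{1/2}x)I_1(T,T^{1/2}y)\mid B] = \iint_{[0,\infty)^2} e^{-B(T^{1/2}x_0)-B(T^{1/2}x_0')}\Psi(x,y,x_0,x_0')\,dx_0\,dx_0',
\end{equation*}
and continuity of $B$ at the origin (recall $B(0)=0$), combined with the Gaussian decay of $\Psi$ in the center-of-mass variable $x_0+x_0'$, yields by dominated convergence that the right side tends a.s.\ to $\iint\Psi\,dx_0\,dx_0'$ as $T\searrow 0$. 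Since conditional Gaussians are determined by their covariance, $T^{-1/4}I_1(T,T^{1/2}\cdot)$ converges, conditionally on $B$, to the centered Gaussian process $\mathcal Z^{\rm init}$ with the stated covariance.

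For the higher chaos terms, the same parabolic rescaling $T_i=Ts_i$, $X_i=T^{1/2}\xi_i$ turns each heat kernel into $T^{-1/2}p(s_{i+1}-s_i,\xi_{i+1}-\xi_i)$, while the Jacobian contributes $T^{3n/2}$ and $\phi(Ts_1,T^{1/2}\xi_1)$ stays bounded (converging pointwise to $\int_0^\infty p(s_1,\xi_1-\eta)d\eta$). The multiple-It\^o isometry then yields $E[I_n(T,T^{1/2}x)^2\mid B]=O(T^{n/2})$, so orthogonality of the chaos gives $\sum_{n\ge 2} I_n(T,T^{1/2}x) = O_{L^2}(T^{1/2}) = o_P(T^{1/4})$.

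The main obstacle is making the $n\ge 2$ estimates uniform in $n$ so that the tail sum converges; this is analogous to, and somewhat weaker than, the $L^2$ bounds already available from the Feynman--Kac representation used in \cite{BG,ACQ}. A secondary technical point is the a.s.\ convergence of the $B$-dependent covariance in the $I_1$ analysis, which requires dominating $e^{-B(T^{1/2}x_0)-B(T^{1/2}x_0')}$ by an $(x_0,x_0')$-integrable envelope against $\Psi$; this follows from local modulus-of-continuity estimates for Brownian motion combined with the Gaussian decay of $\Psi$.
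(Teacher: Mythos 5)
Your proposal takes essentially the same route as the paper's proof: expand $\mathcal{Z}^{\rm edge}$ in Wiener chaos, identify $I_0$ with the deterministic heat evolution, compute the conditional covariance of $I_1$ via the It\^o isometry and the Gaussian algebra that collapses the $X_1$-integral to $T^{-1/2}\Psi$, and bound $\sum_{n\ge 2}I_n=O_{L^2}(T^{1/2})$ by the parabolic rescaling $T_i=T\tilde T_i$, $X_i=\sqrt{T}\tilde X_i$. If anything you are slightly more careful than the paper's brief sketch, since you explicitly record the Gaussian identities collapsing the $X_1$-integral and the dominated-convergence step needed to remove the $e^{-B(T^{1/2}x_0)-B(T^{1/2}x_0')}$ weights from the rescaled covariance as $T\searrow 0$.
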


The above proposition (proved in Section~\ref{small_t}) indicates that
the fluctuations scale and behave differently at the two extremes of
$T\nearrow\infty$ and $T\searrow0$. One could have started with an
asymmetry of $\gamma=b\e^{1/2}$. It turns out that the effect on the
limiting statistic of modulating this $b$ term is the same as
modulating $T$. Thus, as~$T$ goes to infinity, it is effectively like
(up to an interchange of limits) increasing the asymmetry away from the
weakly asymmetric range to the realm of positive asymmetry. This
explains the $T^{1/3}$ and $T^{2/3}$ scaling of fluctuations and space
in the large $T$ limit. On the other hand, taking $T$ to zero is like
moving toward symmetry, and this explains the $T^{1/4}$ and $T^{1/2}$
scalings of fluctuations and space in this limit. These two classes are
called the KPZ and EW universality classes. Thus we see that the KPZ
equation is, in fact, the universal mechanism for crossing between
these classes.

These results along with similar results of~\cite{BG,BQS,ACQ} and those contained in Section~\ref{full_fan_sec} below, provide
overwhelming evidence that the Hopf--Cole solution (\ref{hc}) to the
KPZ equation is the correct solution for modeling growth processes.
There do exist other interpretations of the KPZ equation, but they all
suffer from the fact that they lead to answers which do not yield the
desired scaling properties and limit distributions~\cite{TC}.

\begin{remark}
There is a Feynman--Kac formula for the solution of the stochastic heat equation
\[
\mathcal{Z}(T,X) = E_{T,X}\biggl[\mathcal{Z}(0,X) \dvtx  \exp\dvtx  \biggl\{-\int
_0^T \dot{\mathscr{W}}(t,b(t)) \,dt \biggr\}\biggr],
\]
where we make use of the Wick ordered exponential and where $E_{T,X}$
is the standard Wiener measure on $b(t)$ ending at position $X$ at time
$T$. This partition function is rigorously defined in terms of the
chaos expansion (\ref{soln}), or alternatively, as a~limit of mollified
versions of the white noise~\cite{BC}. Hence $\mathcal{Z}(T,X)$ has an
interpretation as
a partition function, and $-\H(T,X)$ an interpretation as a free
energy, of a continuum directed random polymer. These can be
shown to be universal limits of discrete polymers with rescaled
temperature~\cite{AKQ}. All of the results in this article have
alternative and immediate interpretations in terms of these polymer
models. This polymer perspective is more along the line of the approach
taken in~\cite{ACQ}.
\end{remark}

\subsection{Applications to KPZ in equilibrium}\label{stoch_dom_sec}
We now consider the Hopf--Cole solution of KPZ corresponding with
growth models with \textit{equilibrium} or \textit{stationary} initial
conditions. The initial data for KPZ is given~\cite{BG} by a two-sided
Brownian motion $B(X)$, $X\in\mathbb{R}$, or in other words, the
stochastic heat equation is given initial data
\[
\mathcal{Z}^{\mathrm{eq}}(0,X) = \exp\{-B(X)\}.
\]
As always, $B(X)$ is assumed independent of the space--time white noise
$\dot{\mathscr{W}}$. Strictly speaking, this is not an equilibrium
solution for KPZ because of global height shifts, but it is a genuine
equilibrium~\cite{BG} for the stochastic Burgers equation
\[
\partial_T \U= -\tfrac12 \partial_X \U^2 + \tfrac12 \partial_X^2
\U+
\partial_X\dot{\mathscr{W}},
\]
formally satisfied by its derivative $\U(T,X)=\partial_X \H^{\mathrm
{eq}}(T,X)$ where
\[
\H^{\mathrm{eq}}(T,X) = -\log\mathcal{Z}^{\mathrm{eq}}(T,X).
\]
In~\cite{BQS} it was shown that the variance of $\H^{\mathrm
{eq}}(T,0)$ is
of the correct order. In particular, there are constants $0<C_1\le
C_2<\infty$ such that
\[
C_1 T^{2/3}\le\operatorname{Var}( \H^{\mathrm{eq}}(T,0) ) \le C_2 T^{2/3}.
\]
At this time we are not able to obtain the distribution of $\H
^{\mathrm{eq}}(T,X) $ because the corresponding formulas of Tracy and
Widom \cite
{TW4} are not in the form of Fredholm determinants.
However, we will obtain some large deviation estimates and moment
bounds. The idea is to represent $\mathcal{Z}^{\mathrm{eq}}$ in terms of
solutions with half-Brownian initial data
\[
\mathcal{Z}^{\mathrm{eq}} = \mathcal{Z_+}+\mathcal{Z_-},
\]
where $\mathcal{Z_+}$ and $\mathcal{Z_-}$ solve the stochastic heat
equation with the same white noise and initial data
\[
\mathcal{Z_{\pm}}(0,X) = \mathbf{1}_{x\in\R_{\pm}}\exp\{-B(X)\}.
\]

Note that the two initial data are independent, and we have as an
additional tool the following
correlation inequality which is novel to our knowledge. At a heuristic
level it is clear that any two increasing functions of white noise
should be positively correlated. Using the Feynman--Kac (continuum
polymer) interpretation of the stochastic heat equation it is
physically clear that the solution is increasing in the white noise.

\begin{prop}[(FKG inequality for KPZ)]\label{FKGcor}
Let
$\mathcal
{Z}_1$, $\mathcal{Z}_2$ be two solutions of the stochastic heat
equation (\ref{she}) with the same white noise $\mathscr{\dot{W}}$, but
independent random initial data $\mathcal{Z}_1(0,X)$, $\mathcal
{Z}_2(0,X)$. We make the technical assumption that the solution to the
stochastic heat equation with initial data $\mathcal{Z}_1(0,X)$ and
$\mathcal{Z}_2(0,X)$ can be approximated, in the sense of process-level
convergence, by the rescaled exponential height functions (\ref
{scaledhgt}) for WASEP. Let $\H_i(T,X) = -\log\mathcal{Z}_i(0,X)$
denote the corresponding Hopf--Cole solutions of KPZ.
Then for any $T_1,T_2>0$, $X_1,X_2\in\R$ and $s_1,s_2\in\R$,
%
%
\begin{eqnarray}\label{a}
&& P\bigl(\mathcal{Z}_1(T_1,X_1)\leq s_1 \mbox{ and } \mathcal
{Z}_2(T_2,X_2)\leq s_2\bigr)
\nonumber
\\[-8pt]
\\[-8pt]
\nonumber
&&\qquad\geq P\bigl(\mathcal{Z}_1(T_1,X_1)\leq
s_1\bigr)P\bigl(\mathcal
{Z}_2(T_2,X_2)\leq s_2\bigr),
\end{eqnarray}
where $P$ denotes the probability with respect to the white noise as
well as the initial data. In particular, we have
%
%
\begin{eqnarray}\label{b}
&&P\bigl(\H_1(T_1,X_1)\geq s_1 \mbox{ and } \H_2(T_2,X_2)\geq s_2\bigr)
\nonumber
\\[-8pt]
\\[-8pt]
\nonumber
&&\qquad\geq
P\bigl(\H
_1(T_1,X_1)\geq s_1\bigr)P\bigl(\H_2(T_2,X_2)\geq s_2\bigr).
\end{eqnarray}
%
\end{prop}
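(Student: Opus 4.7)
The plan is to establish the FKG inequality first at the level of the WASEP approximations, where it reduces to the classical Harris--FKG inequality for Poisson product measures combined with attractivity of ASEP, and then pass to the continuum limit. By the technical hypothesis, $(\mathcal{Z}_1,\mathcal{Z}_2)$ arises as the joint distributional limit of pairs $(Z_{\e,1},Z_{\e,2})$ defined via (\ref{scaledhgt}) for two WASEP copies; the critical point is that we couple the two copies through the \emph{same} bond-graphical Poisson noise, while sampling the two initial height profiles independently from distributions approximating $\mathcal{Z}_i(0,\cdot)$. Given this coupling, it suffices to prove the WASEP version of (\ref{a}) uniformly in $\e$ and take a limit on both sides.

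At the discrete level, I would use the standard bond-graphical construction of WASEP: independent Poisson point processes $\Pi_p^{(b)}$ (rate $p$, right-swap attempts) and $\Pi_q^{(b)}$ (rate $q$, left-swap attempts) on each bond $b\subset\Z$. The central technical claim is that the height function $h_\gamma(t,x)$ is monotone in this noise, namely increasing in each $\Pi_q^{(b)}$ and decreasing in each $\Pi_p^{(b)}$, uniformly over the initial configuration. One verifies a single-ring step first: inserting an additional $\Pi_q^{(b)}$-ring at some time $t_0$ either leaves $h_\gamma(t_0^+,\cdot)$ unchanged (if the swap is disallowed by the current configuration) or raises it by exactly $2$ at the site $x_b$ and keeps it equal elsewhere; the analogous statement for $\Pi_p^{(b)}$-rings is symmetric with a decrease by $2$. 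Attractivity of ASEP, in the form that the basic coupling preserves the pointwise interface partial order $h_1\le h_2$ on $\Z$, then propagates the resulting pointwise ordering forward in time. Since $Z_\e = \exp(-\lambda_\e h_\gamma + \nu_\e t/\gamma)$ and $\lambda_\e>0$, the events $\{Z_{\e,i}(T_i,X_i)\le s_i\}$ are therefore all increasing under the combined partial order on noise: ``more $\Pi_q$-rings, fewer $\Pi_p$-rings.''

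Now condition on the graphical noise $\omega$. The two rescaled height functions are deterministic functions of the respective initial configurations and $\omega$, and the initial data are independent of each other and of $\omega$, so $P(Z_{\e,1}\le s_1,\,Z_{\e,2}\le s_2) = E_\omega[f_1(\omega) f_2(\omega)]$ with $f_i(\omega) = P(Z_{\e,i}(T_i,X_i)\le s_i\mid \omega)$. Each $f_i$ inherits monotonicity in $\omega$ from the previous step, in the same direction. Since $\omega$ is a product of independent Poisson point processes, Harris--FKG applied factorwise (directly to each $\Pi_q^{(b)}$ and in the reversed-inclusion order to each $\Pi_p^{(b)}$, which is equally admissible by swapping $f$ and $-f$ on that factor) gives $E_\omega[f_1 f_2]\ge E_\omega[f_1]E_\omega[f_2]$, which is the WASEP analogue of (\ref{a}). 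Sending $\e\to 0$ using the assumed joint process-level convergence preserves the inequality at continuity points of the limit's one-dimensional marginals, with a standard density argument covering the remaining values of $s_1,s_2$; the companion inequality (\ref{b}) is then immediate from $\mathcal{H}_i = -\log\mathcal{Z}_i$. I expect the main obstacle to lie in the attractivity step, since pointwise monotonicity of $h_\gamma$ under basic coupling is a distinct partial order from the usual componentwise ordering of $\eta$, and must be verified by a short but careful case analysis on how the four types of rings interact with the height discrepancy $h_2 - h_1 \ge 0$ rather than cited directly from the standard attractivity of ASEP.
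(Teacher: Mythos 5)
Your proof is correct and follows the same high-level strategy as the paper's: establish the inequality at the level of the WASEP graphical construction via a monotonicity-plus-FKG argument, then pass to the continuum limit using the process-level convergence hypothesis. The interesting difference is in the choice of graphical decomposition on which FKG is invoked. The paper throws a single Poisson process of jump-attempt times and then, conditionally on this \emph{attempt lattice}, treats the i.i.d.\ left/right direction assignments as a product measure to which FKG is applied; only finitely many of these matter, with probability one. You instead split the noise at the outset into two independent bond-wise Poisson processes $\Pi_q^{(b)}$ and $\Pi_p^{(b)}$, prove the height is monotone in each (increasing in $\Pi_q$, decreasing in $\Pi_p$), and apply Harris--FKG directly to that product, taking the reversed inclusion order on the $\Pi_p$ factors. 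This is a legitimate and arguably cleaner route: it makes the product measure on which FKG acts fully explicit and avoids any question of what happens when one averages the conditional (lattice-fixed) FKG inequality over the random attempt lattice -- a step that the paper passes over quickly. Your monotonicity claim (a $\Pi_q$-ring either does nothing or raises $h_\gamma$ by $2$ at the left endpoint of the bond and nowhere else, and symmetrically for $\Pi_p$) is correct, including at the bonds adjacent to the origin once one tracks $N(t)$. You are also right to single out the attractivity step as the one that requires care: what is needed is that the basic coupling preserves the pointwise interface order $h_1 \le h_2$, which is a distinct statement from the usual occupation-variable monotone coupling and does require a short case check at local minima/maxima of the interface, but it holds and is the same fact the paper is implicitly invoking when it says the local change "will only increase the associated height function." One small presentational wrinkle: the parenthetical about "swapping $f$ and $-f$ on that factor" is a slightly off description of what is really just choosing the reverse partial order on the $\Pi_p$ coordinates, which Harris--FKG permits because the product structure is indifferent to the orientation chosen on each independent factor, so long as both functionals are monotone in the \emph{same} orientation. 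The limiting step and the deduction of (\ref{b}) from (\ref{a}) via monotonicity of $-\log$ are exactly as in the paper.
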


This proof uses the FKG inequality at the level of a discrete system
which converges to the stochastic heat equation. We choose to use the
WASEP approximation for the stochastic heat equation explained in this
paper, though it would also be possible to prove this result via a
discrete polymer approximation.
The WASEP approximation assumption is not very restrictive. The work of
Bertini and Giacomin~\cite{BG}, Amir, Corwin and Quastel~\cite{ACQ} and
this paper show that a wide range of initial data fall into this class,
and one should be able to expand this even more. We also remark that
stronger forms of the above FKG inequality may be formulated and
similarly proved, though we do not pursue this further here.

\begin{pf*}{Proof of Proposition \protect\ref{FKGcor}}
By assumption we can approximate the relevant solutions to the
stochastic heat equation in terms of the WASEP as $Z_{1,\e}$ and
$Z_{2,\e}$. The graphical construction of ASEP can be thought of as a
priori setting an environment of attempted left and right jumps.
However, for our purposes we think of first throwing a Poisson point
process of attempted jumps and then assigning the jumps a direction
(left or right) independently with probability $q$ and $p$. There is a
natural monotonicity in this construction which says that changing a
right jump to a left jump will only increase the associated height
function. Taking the approximations for the initial data to be
independent of each other, this implies that the events $A_{i,\e}=\{
Z_{i,\e}\leq s_i\}$ are increasing events if one thinks of the Poisson
process of attempted jumps as giving a (random) lattice and the jump
directions as being~$1$ (left) or $-1$ (right). This jump lattice is
infinite; however, with probability one only a finite portion of it
affects the value of the two $Z_{i,\e}(T_i,X_i)$. Therefore, with
probability one the FKG inequality applies to this setting because of
the product structure of jump assignments on the attempted jump
lattice. Since the $A_{i,\e}$ are increasing events, they are
positively correlated. Taking the limit as $\varepsilon\to0$ gives the
desired result (\ref{a}) at the continuum level. Since $-\log$ is a
decreasing function, (\ref{b}) follows immediately from (\ref{a}).
\end{pf*}

\begin{proposition}\label{LDPthm}
For all $y\in\R$ and $T>0$,
%
%
\begin{eqnarray}\label{tail1}
&&\bigl(1- F_{T,0}^{\mathrm{edge}} (2^{1/3} y -2^{1/3}T^{-1/3}\log2)\bigr)^2
\nonumber\\
&&\qquad \le P\biggl( \H^{\mathrm{eq}}(T,0) -{\frac{T}{4!}} \le-
T^{1/3}y \biggr)
\\
&&\qquad \le 2\bigl(1- F_{T,0}^{\mathrm{edge}} (2^{1/3} y
-2^{1/3}T^{-1/3}\log2)\bigr),\nonumber
\end{eqnarray}
and
%
%
\begin{eqnarray}\label{tail2}
\qquad\hspace*{6pt}\bigl(F_{T,0}^{\mathrm{edge}} (-2^{1/3} y -2^{1/3}T^{-1/3}\log2)\bigr)^2 &
\le&P\biggl( \H^{\mathrm{eq}}(T,0) -{\frac{T}{4!}} \ge
T^{1/3} y
\biggr)
\nonumber
\\[-8pt]
\\[-8pt]
\nonumber
& \le& 2 F_{T,0}^{\mathrm{edge}}(-2^{1/3} y
-2^{1/3}T^{-1/3}\log2).
\end{eqnarray}
\end{proposition}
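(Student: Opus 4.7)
My plan is to decompose $\mathcal{Z}^{\rm eq}$ into half-line contributions and combine the reflection symmetry at $X=0$ with the FKG inequality of Proposition \ref{FKGcor}. By linearity of the stochastic heat equation, splitting the initial data as $\exp\{-B(X)\}=\mathbf{1}_{X\geq 0}\exp\{-B(X)\}+\mathbf{1}_{X< 0}\exp\{-B(X)\}$ produces the pointwise identity $\mathcal{Z}^{\rm eq}(T,X)=\mathcal{Z}_+(T,X)+\mathcal{Z}_-(T,X)$, where $\mathcal{Z}_\pm$ solve the SHE driven by the same noise $\dot{\mathscr{W}}$ but with disjoint initial data supported on $\R_{\pm}$. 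Reflection symmetry of $\dot{\mathscr{W}}$ and of $B$ at the origin forces $\mathcal{Z}_-(T,0)\stackrel{d}{=}\mathcal{Z}_+(T,0)$, and Corollary \ref{KPZ_edge_cor} identifies this common law with that of $\exp\{-\H^{\rm edge}(T,0)\}$. Solving $T/4!-2^{-1/3}sT^{1/3}=T/4!-T^{1/3}y+\log 2$ for $s$ gives the dictionary $P(\mathcal{Z}_+(T,0)\geq A/2)=1-F^{\rm edge}_{T,0}(2^{1/3}y-2^{1/3}T^{-1/3}\log 2)$ at $A=e^{T^{1/3}y-T/4!}$, and the analogous substitution in the other direction gives $P(\mathcal{Z}_+(T,0)\leq A'/2)=F^{\rm edge}_{T,0}(-2^{1/3}y-2^{1/3}T^{-1/3}\log 2)$ at $A'=e^{-T^{1/3}y-T/4!}$.

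For (\ref{tail1}), the target event $\{\H^{\rm eq}(T,0)-T/4!\leq -T^{1/3}y\}$ coincides with $\{\mathcal{Z}_++\mathcal{Z}_-\geq A\}$, and satisfies the two-sided sandwich
\begin{equation*}
\{\mathcal{Z}_+\geq A/2\}\cap\{\mathcal{Z}_-\geq A/2\}\;\subseteq\;\{\mathcal{Z}_++\mathcal{Z}_-\geq A\}\;\subseteq\;\{\mathcal{Z}_+\geq A/2\}\cup\{\mathcal{Z}_-\geq A/2\}.
\end{equation*}
The right inclusion combined with a union bound yields the upper estimate (with the factor $2$). For the lower estimate, the events $\{\mathcal{Z}_\pm\geq A/2\}$ are increasing in the white noise; applying (\ref{a}) of Proposition \ref{FKGcor} to their decreasing complements and rearranging by inclusion-exclusion gives $P(\mathcal{Z}_+\geq A/2,\,\mathcal{Z}_-\geq A/2)\geq P(\mathcal{Z}_+\geq A/2)\,P(\mathcal{Z}_-\geq A/2)$, which collapses to $P(\mathcal{Z}_+\geq A/2)^2$ by the distributional identity above. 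The dictionary then yields (\ref{tail1}).

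For (\ref{tail2}), rewrite $\{\H^{\rm eq}-T/4!\geq T^{1/3}y\}$ as $\{\mathcal{Z}_++\mathcal{Z}_-\leq A'\}$. Because $\mathcal{Z}_\pm\geq 0$, the mirror sandwich
\begin{equation*}
\{\mathcal{Z}_+\leq A'/2\}\cap\{\mathcal{Z}_-\leq A'/2\}\;\subseteq\;\{\mathcal{Z}_++\mathcal{Z}_-\leq A'\}\;\subseteq\;\{\mathcal{Z}_+\leq A'/2\}\cup\{\mathcal{Z}_-\leq A'/2\}
\end{equation*}
holds; the upper bound is again a union bound, and the lower bound now uses (\ref{a}) of Proposition \ref{FKGcor} directly, since $\{\mathcal{Z}_\pm\leq A'/2\}$ are already decreasing events. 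The sole subtle point I foresee is verifying the technical hypothesis of Proposition \ref{FKGcor} for these two half-Brownian initial conditions, which is supplied by Theorem \ref{BG_thm} applied to each half separately --- crucially, $\mathcal{Z}_+$ and $\mathcal{Z}_-$ must be realized simultaneously as limits of a single WASEP in which the microscopic initial occupations on $\{x>0\}$ and $\{x\leq 0\}$ are taken independently, so that the product structure of the jump assignments on the Poissonian clock lattice makes FKG available at the lattice level before passing to $\ep\to 0$.
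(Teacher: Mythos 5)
Your proof is correct and follows essentially the same route as the paper: split $\mathcal{Z}^{\rm eq}=\mathcal{Z}_+ +\mathcal{Z}_-$, use the reflection symmetry to get $\mathcal{Z}_-(T,0)\stackrel{d}{=}\mathcal{Z}_+(T,0)$, union-bound one direction, and use the FKG inequality for the other. The only cosmetic difference is that the paper transfers to the $\H$-level inequality $-\log 2+\min(\H_+,\H_-)\le \H^{\rm eq}\le -\log 2+\max(\H_+,\H_-)$ and then applies part (\ref{b}) of Proposition \ref{FKGcor} directly, whereas you stay at the $\mathcal{Z}$ level with event inclusions and, for (\ref{tail1}), pass to complements to use part (\ref{a}); those are logically interchangeable.
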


One can derive similar expressions to those above for other values of
$X\neq0$, though presently we do not state such results.

\begin{corollary}\label{LDPthm3}
There exist $c_1,c_2,c_3<\infty$ such that for $T>1$,
%
%
\begin{equation}\label{ldbd}
P\biggl( \H^{\mathrm{eq}}(T,0) -{\frac{T}{4!}} \le-T^{1/3} y
\biggr) \le c_1T^{1/2}(e^{ - c_2 y^{3/2} } + e^{-c_3 T^{1/3} y}).
\end{equation}
Furthermore, for each $p>0$, there exists $C_p>0$ such that for
sufficiently large~$T$,
%
%
\begin{equation}\label{varlb}
E\biggl[\biggl( \H^{\mathrm{eq}}(T,0) -{\frac{T}{4!}}
\biggr)^p\biggr] \ge C_pT^{p/3}.
\end{equation}
\end{corollary}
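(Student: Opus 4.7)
The plan is to derive both parts of Corollary \ref{LDPthm3} by direct assembly of three ingredients already established earlier in the paper: the two-sided comparisons with the edge distribution in Proposition \ref{LDPthm}, the stretched-exponential upper tail estimate for $F^{\rm edge}_{T,0}$ in Proposition \ref{LDPthm2}, and the convergence of $F^{\rm edge}_{T,0}$ to the nondegenerate BBP one-point distribution from Corollary \ref{KPZ_edge_cor}. No new analytic input is required; the corollary is essentially a concatenation of inequalities.

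For the large deviation bound \eqref{ldbd}, I would first apply the upper inequality in \eqref{tail1} of Proposition \ref{LDPthm} to obtain
$$P\!\left(\H^{\rm eq}(T,0) - \tfrac{T}{4!} \leq -T^{1/3}y\right) \;\leq\; 2\bigl(1 - F^{\rm edge}_{T,0}(2^{1/3}y - 2^{1/3}T^{-1/3}\log 2)\bigr),$$
and then invoke Proposition \ref{LDPthm2} with exactly the same argument of $F^{\rm edge}_{T,0}$. This bounds the right-hand side by $2c_1 T^{1/2}\bigl(e^{-c_2 y^{3/2}} + e^{-c_3 T^{1/3} y}\bigr)$, which is \eqref{ldbd} after absorbing the factor of $2$ into $c_1$. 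For $y\leq 0$ the claim is vacuous (the probability is at most $1$ and the constants can be enlarged to accommodate this regime).

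For the moment bound \eqref{varlb}, I would use the opposite direction of Proposition \ref{LDPthm}: the lower inequality in \eqref{tail2} gives, for any $y>0$,
$$P\!\left(\H^{\rm eq}(T,0) - \tfrac{T}{4!} \geq T^{1/3}y\right) \;\geq\; \bigl(F^{\rm edge}_{T,0}(-2^{1/3}y - 2^{1/3}T^{-1/3}\log 2)\bigr)^{2}.$$
By the second statement of Corollary \ref{KPZ_edge_cor}, as $T\nearrow\infty$ the right-hand side converges to $P(\mathcal{A}_{2\to BM}(0)\leq -2^{1/3}y)^{2}$, which is strictly positive because $\mathcal{A}_{2\to BM}(0)$ is a nondegenerate random variable (as noted in the paper via the chaos expansion \eqref{soln} together with \eqref{22}). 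Fixing any $y>0$ thus produces a threshold $T_y$ and a constant $c_y>0$ so that this probability exceeds $c_y$ whenever $T\geq T_y$. The elementary inequality
$$E\!\left[\bigl|\H^{\rm eq}(T,0) - \tfrac{T}{4!}\bigr|^{p}\right] \;\geq\; (T^{1/3}y)^{p}\, P\!\left(\H^{\rm eq}(T,0) - \tfrac{T}{4!} \geq T^{1/3}y\right)$$
then yields \eqref{varlb} with $C_p = y^{p} c_y$. The only mild subtlety is the reading of $(\cdot)^p$ for non-integer $p$, which is handled naturally by interpreting the left side as an absolute $p$-th moment or, equivalently, by restricting to the positive-part event on which the quantity is manifestly positive. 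Since everything reduces to inequalities already proved, the argument encounters no genuine obstacle -- the work is in the earlier propositions, particularly the FKG-based decomposition behind Proposition \ref{LDPthm}.
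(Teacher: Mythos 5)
Your first inequality \eqref{ldbd} is handled exactly as in the paper: apply the upper bound in \eqref{tail1} and then Proposition \ref{LDPthm2} with the same argument of $F^{\rm edge}_{T,0}$, absorbing the factor of $2$. Nothing to add there.

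For the moment bound \eqref{varlb} your route is genuinely different from the paper's. The paper writes the $p$-th moment via the layer-cake identity $\int x^p\,dG = p\int_0^\infty x^{p-1}\bigl(1-G(x)+G(-x)\bigr)dx$, substitutes \emph{both} inequalities of Proposition \ref{LDPthm} (one for each tail), and applies Fatou's lemma as $T\nearrow\infty$ to replace $F^{\rm edge}_{T,0}$ by the BBP distribution function; positivity of the resulting integral only requires that $\mathcal{A}_{2\to\mathrm{BM}}(0)$ is not a point mass at $0$. You instead fix a single $y>0$ and apply a Markov-type bound, which is cleaner, but as written it leaves a gap: you invoke only the direction \eqref{tail2}, and the conclusion $P\bigl(\mathcal{A}_{2\to\mathrm{BM}}(0)\le -2^{1/3}y\bigr)>0$ does \emph{not} follow from nondegeneracy alone (a nondegenerate law could in principle be supported on $[0,\infty)$, in which case your lower bound vanishes). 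Moreover the nondegeneracy you cite from \eqref{soln} and \eqref{22} concerns $\H^{\rm edge}(T,X)$ at finite $T$, not the limit $\mathcal{A}_{2\to\mathrm{BM}}(0)$. The easy repair is to also carry the upper-tail inequality from \eqref{tail1}: by nondegeneracy of $\mathcal{A}_{2\to\mathrm{BM}}(0)$, for $y>0$ small enough at least one of $P\bigl(\mathcal{A}_{2\to\mathrm{BM}}(0)\le -2^{1/3}y\bigr)>0$ or $P\bigl(\mathcal{A}_{2\to\mathrm{BM}}(0)> 2^{1/3}y\bigr)>0$ must hold, and whichever does feeds into a Markov bound on the corresponding tail of $\H^{\rm eq}(T,0)-\tfrac{T}{4!}$ to produce a $cT^{p/3}$ lower bound on the absolute $p$-th moment. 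That is morally what the paper's use of both tails in the layer-cake integral accomplishes automatically. (Like the paper, you also implicitly replace $(\cdot)^p$ by $|\cdot|^p$, which is harmless but worth stating.)
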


%
\begin{pf*}{Proof of Proposition \protect\ref{LDPthm}}
If
\[
\H^{\mathrm{eq}}(T,X)=-\log\mathcal{Z}^{\mathrm{eq}}(T,X), \qquad\H_\pm
(T,X)=-\log\mathcal{Z_\pm}(T,X),
\]
then using the increasing nature of the logarithm and the fact that
\[
2\min(\mathcal{Z_+},\mathcal{Z_-}) \leq\mathcal{Z_+}+\mathcal
{Z_-}\leq
2\max(\mathcal{Z_+},\mathcal{Z_-}),
\]
we have the following simple, yet significant inequality which
expresses the equilibrium solution to KPZ in terms of two coupled
half-Brownian solutions,\looseness=1
%
%
\begin{equation}\label{h_ineq}
-\log2 +\min(\H_+,\H_-) \leq\H^{\mathrm{eq}} \leq-\log2 +\max
(\H_+,\H_-).
\end{equation}\looseness=0
Thus
%
%
\begin{eqnarray}
&&P\biggl( \H^{\mathrm{eq}}(T,0) -{\frac{T}{4!}} \ge T^{1/3} y
\biggr)
\nonumber
\\[-8pt]
\\[-8pt]
\nonumber
&&\qquad \le P\biggl( \max(\H_+(T,0),\H_-(T,0)) -{\frac
{T}{4!}}\ge T^{1/3} y +\log2\biggr)
\\
\label{rt8}&&\qquad \le 2 P\biggl( \H_+(T,0) -{\frac{T}{4!}}\ge T^{1/3}
y+\log2 \biggr).
\end{eqnarray}
In (\ref{rt8}) we used that by symmetry, $\H_+(T,0)$ and $\H_-(T,0)$
have the same distribution, and the upper bound of (\ref{tail1})
follows by Corollary~\ref{KPZ_edge_cor}. For the lower bound in (\ref
{tail1}) we have, by (\ref{b}),
%
%
\begin{eqnarray}
&&P\biggl(\H^{\mathrm{eq}}(T,0) -{\frac{T}{4!}} \ge T^{1/3} y
\biggr)
\nonumber
\\[-8pt]
\\[-8pt]
\nonumber
&&\qquad \ge P\biggl( \min(\H_+(T,0),\H_-(T,0)) -{\frac
{T}{4!}}\ge T^{1/3} y +\log2\biggr) \\
&&\qquad \ge \biggl[P\biggl( \H_+(T,0) -{\frac{T}{4!}}\ge T^{1/3}
y+\log2 \biggr)\biggr]^2.
\end{eqnarray}
Equation (\ref{tail2}) is obtained from the lower bound of (\ref{h_ineq}) in
exactly the same way.
\end{pf*}

\begin{pf*}{Proof of Corollary \protect\ref{LDPthm3}}
The large deviation bound (\ref{ldbd}) follows from (\ref{tail1}) and
Proposition~\ref{LDPthm2}.
To prove (\ref{varlb}), suppose that $G$ and $F$ are probability
distribution functions satisfying
\[
1-G(x) \ge\bigl(1-F(c_2 x +c_3)\bigr)^2 \quad\mbox{and}\quad G(-x) \ge
F^2(-c_2 x +c_3)
\]
for all $x\in\mathbb{R}$ for some $c_1,c_2,c_3$.
We have the bound that
\begin{eqnarray*}
\int x^p \,dG(x)&=& p\int_0^\infty x^{p-1}\bigl(1-G(x) + G(-x)\bigr)\,dx \\
&\geq&2\int
_0^\infty x \bigl(1-F(c_2 x +c_3)\bigr)^2 +F^2(-c_2 x +c_3) \,dx.
\end{eqnarray*}
Hence the right-hand side of (\ref{varlb}) is bounded below by
\begin{eqnarray*}
&&p\int_0^\infty x^{p-1} \bigl\{\bigl(1- F_{T,0}^{\mathrm{edge}} (2^{1/3} x
-2^{1/3}T^{-1/3}\log2)\bigr)^2\\
&&\hspace*{34pt}\qquad{}+\bigl(F_{T,0}^{\mathrm{edge}} (-2^{1/3} x
-2^{1/3}T^{-1/3}\log2)\bigr)^2\bigr\} \,dx.
\end{eqnarray*}
By Fatou's lemma, the limit inferior as $T\nearrow\infty$ is greater
than the same integral with the distribution function
$F_{T,0}^{\mathrm{edge}}$ replaced by the distribution function of
$\mathcal
{A}_{2\to\mathrm{BM}}(0)$. Since the latter is strictly positive, this
gives (\ref{varlb}).~%
\end{pf*}
%
\subsection{Outline}
In the \hyperref[sec1]{Introduction} we have focused on the WASEP with $\rho_-=0$, $\rho
_+=1/2$ two-sided Bernoulli initial conditions and velocity $v=0$ so as
to be at the edge of the rarefaction fan. For those parameters we
described the height function fluctuations for WASEP, the link to the
solution of the KPZ equation with specific initial data, and then the
fluctuation theory for that solution. In Section~\ref{full_fan_sec} we
explain the situation for general values of $\rho_-\leq\rho_+$ and $v$
either inside the rarefaction fan or at the edge; see Remark~\ref
{cavaet}. In Section~\ref{full_fan_sec} we explain how the connection
between WASEP and KPZ generalizes as well. Section~\ref{definition_sec}
contains all of the important definitions of kernels, contours and
Airy-like processes used in the paper. Section~\ref{TW_sec} contains a
heuristic and then rigorous\vadjust{\goodbreak} proof of Theorem~\ref{hfluc_thm}. Section
\ref{BG_sec} contains a proof that the WASEP converges to the KPZ
equation (the claimed results of Section~\ref{kpz_sec}). Finally,
Section~\ref{manipSec} contains a proof of the large $T$ asymptotics of
the KPZ equation, as well as other tail and short time asymptotics of
the edge crossover distributions.

\section{The fluctuation characterization for two-sided Bernoulli WASEP
and KPZ}\label{full_fan_sec}
In the \hyperref[sec1]{Introduction} we focused on a particular choice of two-sided
Bernoulli initial conditions where $\rho_-=0$ and $\rho_+=1/2$. By
looking at velocity $v=0$ (which corresponds to the edge of the
rarefaction fan) we uncovered a~new family of edge crossover
distributions and showed that the fluctuation process near the edge
converges to the Hopf--Cole solution to the KPZ equation with
half-Brownian initial data.

In this section we consider what happens for other choices of $\rho
_-\leq\rho_+$ and~$v$. Theorem~\ref{WASEP_two_sided_thm}, the main
result of this section, shows that under the same sort of scaling as
present for TASEP (\cite{CFP} or Theorem~\ref{thm21} below), the WASEP
height function fluctuations converge to three different crossover
distributions:
\begin{enumerate}[(1)]
\item[(1)] the \textit{fan crossover distributions} for fluctuations around
velocities $v\in(2\rho_--1, 2\rho_+-1)$;
\item[(2)] the \textit{edge crossover distributions} for fluctuations around
velocities $v= 2\rho_\pm-1$;
\item[(3)] the \textit{equilibrium crossover distributions} for $\rho
_-=\rho
_+=\rho$ and fluctuations around the characteristic $v=2\rho-1$.
\end{enumerate}
The three cases above also correspond to the three different possible
KPZ limits of the WASEP height function fluctuations. The stochastic
heat equation initial data in the three cases are:
\begin{enumerate}[(1)]
\item[(1)]$\mathcal{Z}^{\mathrm{fan}}(0,X)=\delta_{X=0}$;
\item[(2)]$\mathcal{Z}^{\mathrm{edge}}(0,X)=\mathbf{1}_{X \geq0}\exp\{
-B(X)\}$;
\item[(3)]$\mathcal{Z}^{\mathrm{eq}}(0,X)=\exp\{-B(X)\}.$
\end{enumerate}
We similarly label the associated continuum height function $\H
(T,X)= -\log\mathcal{Z}(T,X)$ as $\H^{\mathrm{fan}}(T,X)$, $\H
^{\mathrm
{edge}}(T,X)$ and $\H^{\mathrm{eq}}(T,X)$.

\subsection{Height function fluctuations for two-sided Bernoulli
initial conditions}\label{height_func_two_sided_sec}
Before considering the WASEP height function fluctuations for general
values of $\rho_-<\rho_+$ and $v$, it is worth reviewing the analogous
theory developed in~\cite{J,PS02,FS,BAC,CFP} for the TASEP. For the
TASEP, Pr\"{a}hofer and Spohn~\cite{PS02} conjectured (based on
existing results~\cite{BaikRains} for the related PNG model) a
characterization of the fluctuations of the height function for TASEP
started with two-sided Bernoulli initial conditions. They identified
three different limiting one-point distribution functions which,
depending on the region's location, that is, whether the velocity $v$
is chosen so as to be: (1)~within the rarefaction fan; (2) at its edge;
(3) in\vadjust{\goodbreak} equilibrium ($\rho_-=\rho_+=\rho$), at the characteristic speed
$v=2\rho-1$. This conjecture was proved in~\cite{FS,BAC}. An analogous
theory for the multiple-point limit distribution structure was proved
in~\cite{BFP,CFP} and again only depended on regions
(1)--(3).\looseness=-1

Let us now paraphrase\vspace*{1pt} Theorem 2.1 of~\cite{CFP} (which also includes
the main result of~\cite{BFP}).
Define a positive velocity version of $h_{\gamma}^{\mathrm
{fluc}}(\frac{t}{\gamma},x)$ as
\[
h_{\gamma,v}^{\mathrm{fluc}}\biggl(\frac{t}{\gamma},x\biggr):= \frac{
h_{\gamma
}({t}/{\gamma},vt+ x(1-v^2)^{1/3}) - (({(1+v^2)}/{2}) t +
x(1-v^2)^{1/3}v)}{(1-v^2)^{2/3}t^{1/3}}.
\]
The $(1-v^2)$ scaling term is the only new element and simply reflects
the change of reference frame due to the nonzero velocity. When $v=0$
we recover $h_{\gamma}^{\mathrm{fluc}}$ from~(\ref{height_fluc_eqn}).

\begin{theorem}[(\cite{CFP}, Theorem 2.1(a), paraphrased)] \label{thm21}
Fix $\gamma=1$ (TASEP), $m>0$ and $\rho_-,\rho_+\in[0,1]$ such that
$\rho_-\leq\rho_+$. Then for any choices of $T>0$, $X_1,\ldots
,X_m\in
\R$ and $s_1,\ldots,s_m\in\R$, if we set $t=\e^{-3/2}T$ and
$x_k=2^{1/3}t^{2/3}X_k$:
\begin{longlist}
\item[(1)] for $v\in(2\rho_--1,2\rho_+-1)$,
%
%
\begin{equation}\label{above1}
\lim_{\e\rightarrow0}P\Biggl(\bigcap_{k=1}^{m}\{h_{\gamma,v}^{\mathrm
{fluc}}({t}/{\gamma},x) \geq2^{-1/3}(X_k^2-s_k)\}\Biggr) = P
\Biggl(\bigcap_{k=1}^{m} \{\mathcal{A}_{2}(X_k)\leq s_k\}\Biggr);\hspace*{-35pt}
\end{equation}
\item[(2)] for $\rho_-<\rho_+$ and $v=2\rho_\pm-1$,
%
%
\begin{eqnarray}\label{above2}
&&\lim_{\e\rightarrow0}P\Biggl(\bigcap_{k=1}^{m}\{h_{\gamma,v}^{\mathrm
{fluc}}({t}/{\gamma},x) \geq2^{-1/3}(X_k^2-s_k)\}\Biggr)
\nonumber
\\[-8pt]
\\[-8pt]
\nonumber
&&\qquad = P
\Biggl(\bigcap_{k=1}^{m} \{\mathcal{A}_{2\rightarrow\mathrm{BM}}(\pm
X_k)\leq
s_k\}\Biggr);
\end{eqnarray}
\item[(3)] for $\rho_-=\rho_+=\rho$ and $v=2\rho-1$,
%
%
\begin{eqnarray}\label{above3}
&&\lim_{\e\rightarrow0}P\Biggl(\bigcap_{k=1}^{m}\{h_{\gamma,v}^{\mathrm
{fluc}}({t}/{\gamma},x) \geq2^{-1/3}(X_k^2-s_k)\}\Biggr)
\nonumber
\\[-8pt]
\\[-8pt]
\nonumber
&&\qquad= P
\Biggl(\bigcap_{k=1}^{m} \{\mathcal{A}_{\mathrm{eq}}(X_k)\leq s_k+X_k^2\}\Biggr).
\end{eqnarray}
\end{longlist}
\end{theorem}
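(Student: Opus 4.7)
The plan is to exploit the determinantal structure of TASEP with two-sided Bernoulli initial data. First, I would use the Sasamoto-type reformulation of TASEP, as developed in the Borodin--Ferrari--Pr\"ahofer--Spohn framework, to express the joint law of the positions of selected particles at time $t$ as a Fredholm determinant on an appropriate $L^2$ space, with a correlation kernel $K_t$ given by a double contour integral. Since the height function $\h_\gamma(\tfrac{t}{\gamma},x)$ is in bijection with particle positions through the identity $\{\h_\gamma(\tfrac{t}{\gamma},x)\ge a\}=\{\text{particle }k \text{ is at position }\le y\}$ for appropriate $(k,y)$, the joint probability appearing on the left-hand side of \eqref{above1}--\eqref{above3} translates into the Fredholm determinant $\det(I-\chi K_t\chi)$ for an indicator $\chi$ encoding the thresholds.

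Second, I would perform steepest-descent asymptotics on $K_t$ under the scaling $t=\e^{-3/2}T$, $x_k=2^{1/3}t^{2/3}X_k$. The integrand has a phase function whose critical point is determined by $v$ and whether the Bernoulli factor on the left ($\rho_-$) or right ($\rho_+$) of the origin contributes. When $v\in(2\rho_--1,2\rho_+-1)$ the saddle sits strictly between the two Bernoulli poles and the boundary factors are exponentially suppressed, so the cube-root rescaling produces the Airy kernel and hence \eqref{above1}. When $v=2\rho_\pm-1$ the saddle collides with one of the Bernoulli poles; the contour must be deformed to pick up a finite residue contribution, and this residue/steepest-descent split produces the extra rational factor that defines the BBP-type kernel for $\mathcal{A}_{2\to\mathrm{BM}}$, giving \eqref{above2}.

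For the stationary case $\rho_-=\rho_+=\rho$, $v=2\rho-1$, the two Bernoulli poles collide simultaneously with the saddle, so neither a single residue correction nor a clean steepest descent suffices. I would follow the Baik--Ferrari--P\'ech\'e/Pr\"ahofer--Spohn approach and introduce a perturbation splitting the coincident poles, use the FS/BAC-type decomposition into a ``bulk'' part (which produces an $\mathrm{Airy}_2$-like kernel) and a rank-two boundary correction accounting for the two-sided Brownian environment, and then close the perturbation. This yields the stationary kernel whose Fredholm determinant is the $\mathcal{A}_{\mathrm{eq}}$ joint distribution. The extra $X_k^2$ shift relative to cases (1) and (2) is the standard manifestation of the fact that the characteristic of the stationary profile is flat (no macroscopic curvature), so the parabolic compensation present in the other two cases is absent.

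The hard part will be upgrading pointwise kernel convergence to trace-class convergence of the Fredholm determinants uniformly in the parameters, especially through the transition points. To handle the polynomial-in-$\xi$ growth of the deformed kernels, I would conjugate $K_t$ by a Gaussian-decay factor $e^{c\xi^2}$ to move the cost to the other slot and close an $L^2$--$L^2$ estimate. In the edge case, care must be taken so that the residue contribution (a rank-one operator) and the remaining steepest-descent kernel are estimated in trace norm separately before recombining; in the equilibrium case, the rank-two correction must be controlled uniformly as the perturbation of the coincident poles is removed. These are standard but technically delicate steps, and are the reason the original proofs in \cite{BFP} and \cite{CFP} are substantially longer than the corresponding Airy analysis of Johansson.
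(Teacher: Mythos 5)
This statement is explicitly labeled ``[\cite{CFP} Theorem 2.1 (a), paraphrased]'' --- the paper does not prove it, it cites it. There is therefore no in-paper proof to compare your attempt against. Your sketch is instead an outline of the methods of the references \cite{CFP}, \cite{BFP}, \cite{FS}, \cite{BAC}, and as such it captures the correct broad picture: determinantal formulas in the Borodin--Ferrari--Pr\"ahofer--Sasamoto framework, steepest descent under $3:2:1$ scaling, Airy$_2$ when the saddle is separated from the Bernoulli poles, a single pole collision giving the BBP-type rank-one correction at the edge, and a double collision requiring the $\rho_-\to\rho_+$ analytic-continuation/shift argument of \cite{BFP} for the stationary case.

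A few remarks on the content of your sketch, since the paper itself gives no proof to check you against. For the stationary case you describe ``a rank-two boundary correction,'' but the structure of the $\mathcal{A}_{\rm eq}$ kernel in \cite{BFP} is more delicate than a rank-two perturbation of Airy$_2$: the $\rho_-\to\rho_+$ limit of the determinant produces a $0/0$ indeterminacy that must be resolved by differentiating the perturbed formula, and the final object involves an explicit one-dimensional term outside the Fredholm determinant, not simply a finite-rank kernel correction. Your interpretation of the $X_k^2$ shift in case (3) as reflecting the flat characteristic is a reasonable heuristic, but note that the present paper attributes it simply to the convention in the definition of $\mathcal{A}_{\rm stat}$ in \cite{BFP}. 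Finally, since this theorem is TASEP-specific ($\gamma=1$) it does not use the Tracy--Widom ASEP machinery that this paper employs for the WASEP results (Theorem \ref{hfluc_thm}, Theorem \ref{WASEP_two_sided_thm}); if you want to understand how the present paper actually proves things, that is where to look, not here.
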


The definitions of the three limit processes are given in Definition
\ref{cor_and_conj_definition} of Section~\ref{definition_sec}. The
$X_k^2$ in the $\mathcal{A}_{\mathrm{eq}}$ probability reflects a
shift of
that amount which was already present in the definition of that process
originally given in~\cite{BFP} (in that paper this process was called
$\mathcal{A}_{\mathrm{stat}}$, though for our purposes we find it more
informative to call it $\mathcal{A}_{\mathrm{eq}}$).

For ASEP, where $\gamma$ is less than one but still strictly positive,
significantly less is known rigorously.\vadjust{\goodbreak} The only case where results
analogous to Theorem~\ref{thm21} have been proved is $\rho_-=0, m=1$;
though, given those results of~\cite{TW3,TW4}, it is certainly
reasonable to conjecture that Theorem~\ref{thm21} holds for all
$\gamma>0$.

As we saw in the \hyperref[sec1]{Introduction}, there exists a critical scaling for
$\gamma$ going to zero at which there arises new limits for the height
function fluctuations which correspond to the Hopf--Cole solution to
the KPZ equation. In that WASEP scaling we then have the following
theorem, very much analogous to Theorem~\ref{thm21}.

%
%
\begin{theorem}\label{WASEP_two_sided_thm}
Fix\vspace*{1pt} $\gamma=\e^{1/2}$ and $0\leq\rho_-\leq\rho_+\leq1$. Then for any
choices of $T>0$, $X\in\R$ and $s\in\R$ if we set $t=\e^{-3/2}T$ and
$x=2^{1/3}t^{2/3}X$:
\begin{longlist}
\item[(1)] For $v\in(2\rho_--1,2\rho_+-1)$,
\[
\lim_{\e\rightarrow0} P\bigl(h_{\gamma,v}^{\mathrm{fluc}}(
{t}/{\gamma},x) \geq2^{-1/3}(L_\e+X^2-s)\bigr) =F^{\mathrm
{fan}}_{(1-v^2)^2 T,X}(s)
\]
(see Definition~\ref{ACQ_formula_def}) where $L_\e=
2^{-1/3}T^{-1/3}\log(\e^{-1/2}/c)$ with $c=(2\rho
_+-1-v)^{-1}+(v-2\rho
_-+1)^{-1}$.
As $T\nearrow\infty$ we recover the right-hand side of (\ref{above1}),
$P(\mathcal{A}_{2}(X_k)\leq s)$.

\item[(2)] For $v=2\rho_\pm-1$,
\[
\lim_{\e\rightarrow0} P\bigl(h_{\gamma,v}^{\mathrm{fluc}}(
{t}/{\gamma},x) \geq2^{-1/3}(X^2-s)\bigr) =F^{\mathrm
{edge}}_{(1-v^2)^2 T,X}(s)
\]
(see Definition~\ref{main_theorem_definition}). As $T\nearrow\infty$ we
recover the right-hand side of (\ref{above2}), $P(\mathcal
{A}_{2\rightarrow\mathrm{BM}}(\pm X)\leq s)$.

\item[(3)] For $\rho_-=\rho_+=\rho$ and $v=2\rho-1$,
\[
\lim_{\e\rightarrow0} P\bigl(h_{\gamma,v}^{\mathrm{fluc}}(
{t}/{\gamma},x) \geq2^{-1/3}(X^2-s)\bigr) =F^{\mathrm{eq}}_{(1-v^2)^2 T,X}(s)
\]
for which presently no formula exists; see, however, Section \ref
{stoch_dom_sec} for some bounds. As $T\nearrow\infty$ we should recover
the right-hand side of (\ref{above3}).
\end{longlist}
\end{theorem}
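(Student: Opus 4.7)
All three cases will be handled by reducing to the situation already covered in Theorem \ref{hfluc_thm} and \cite{ACQ}. The common first ingredient is a shift-of-frame reduction: observing WASEP along the line $x = vt$ is effectively a Galilean transformation which, at the KPZ scale $\gamma = \e^{1/2}$, $t = \e^{-3/2}T$, $x = 2^{1/3}t^{2/3}X$, appears as a deterministic height shift together with a space dilation by $(1-v^2)$ and a time dilation by $(1-v^2)^2$. This is the source of the universal factor $(1-v^2)^2 T$ in the three formulas, and reduces each case to a $v = 0$ analysis with appropriately modified parameters.

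For Case 1, after the change of frame I would replace the two-sided Bernoulli initial data by an effective step (with $\rho_- = 0$) via a one-block argument together with the standard monotone coupling for ASEP. Viewed at the chosen interior velocity, the initial data on the microscopic lattice collapses, under rescaling, to an approximate delta function of total mass $c^{-1}$, where $c = (2\rho_+ - 1 - v)^{-1} + (v - 2\rho_- + 1)^{-1}$; the associated $-\log$ correction is precisely what is absorbed by the shift $L_\e$ in the centering. The Tracy--Widom step-Bernoulli formula of \cite{TW4} and the asymptotic analysis of \cite{ACQ} then deliver $F^{\mathrm{fan}}_{(1-v^2)^2 T,X}(s)$, and the $T \nearrow \infty$ reduction to $\mathcal{A}_2$ is the TASEP limit already established there.

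Case 2 extends Theorem \ref{hfluc_thm} from the special parameters $(\rho_-,\rho_+) = (0,1/2)$ to arbitrary $\rho_+ \in (0,1]$ at the right edge $v = 2\rho_+ - 1$ (the left edge follows by particle--hole duality). The key new ingredient is a coupling argument decoupling the two boundary densities: run the two-sided Bernoulli process simultaneously with the step-Bernoulli process $(0,\rho_+)$ under the same Poisson graphical construction, with configurations coinciding on $x > 0$. Because the $\rho_-$-block propagates hydrodynamically at speed $2\rho_- - 1 < v$, a finite-speed-of-propagation (second-class particle) estimate shows the discrepancy at position $vt$ vanishes at the fluctuation scale. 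With this reduction in hand, the entire machinery used to prove Theorem \ref{hfluc_thm} -- the Tracy--Widom step-Bernoulli formula, the infinite product $g(\zeta)$, the $q$-Gamma identification, and the Fredholm determinant extraction -- carries through with $\rho_+$ generic, yielding $F^{\mathrm{edge}}_{(1-v^2)^2 T,X}(s)$. The large-$T$ reduction to $\mathcal{A}_{2\to {\rm BM}}$ is Corollary \ref{cor1}.

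Case 3 is the main obstacle, since $\rho_- = \rho_+ = \rho$ falls outside the scope of any Fredholm-determinantal formula currently available. My plan is to define $F^{\mathrm{eq}}_{(1-v^2)^2 T,X}$ directly as the one-point distribution of $\mathcal{Z}^{\mathrm{eq}}$ (equivalently of $\H^{\mathrm{eq}}$), and to establish the required WASEP convergence at the level of the microscopic Hopf--Cole field: a Donsker-type invariance principle gives $Z_\e(0,\cdot) \to \exp\{-B(X)\}$ for two-sided Brownian motion, and the tightness and limit-identification framework of Theorem \ref{BG_thm} and Section \ref{BG_sec} then apply. Nondegeneracy of $F^{\mathrm{eq}}$ follows from the FKG decomposition $\mathcal{Z}^{\mathrm{eq}} = \mathcal{Z}_+ + \mathcal{Z}_-$ of Section \ref{stoch_dom_sec} combined with Case 2. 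The conjectural $T \nearrow \infty$ convergence to $\mathcal{A}_{\rm eq}$ is beyond the reach of these methods (hence the ``should'' in the statement) and would require either new exact-formula input or a delicate interchange of limits with the TASEP analysis of \cite{BFP}; I would not attempt it here.
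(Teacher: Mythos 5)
Your proposal and the paper's argument diverge at several points, and in places your route has genuine gaps that the paper avoids by choosing a different (and in fact explicitly incomplete) strategy.

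What the paper actually does is described in Remark~\ref{cavaet}: for $\rho_-=0$ and arbitrary $\rho_+$, $v$, it re-does the Tracy--Widom steepest-descent asymptotics of Section~\ref{TW_sec} with the critical point shifted to $\xi = -\frac{1+v}{1-v} - \frac{2X}{T(1-v)^2}\e^{1/2}$; for arbitrary $\rho_-,\rho_+$ but $v=0$, it instead proves (Theorem~\ref{BG_thm} and its analogues in \cite{ACQ}, \cite{BG}) that the microscopic Hopf--Cole field converges to the stochastic heat equation with the relevant initial data, and then reads off the distribution from the already-established KPZ-equation formulas. The paper emphatically does \emph{not} reduce $v\neq 0$ to $v=0$ by a change of frame, nor does it use any coupling/one-block/second-class-particle argument to move between densities. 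It also states quite plainly that the fully general case ($\rho_->0$ \emph{and} $v\neq 0$) is \emph{not} proved here: the general-velocity analogue of Theorem~\ref{BG_thm} is deferred to a forthcoming paper. Your write-up never acknowledges this, and so overstates what can actually be closed.

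On the specific steps you propose: (i) The ``Galilean transformation'' is a suggestive heuristic at the KPZ-equation level, and it does explain the $(1-v^2)^2 T$ factor and the appearance of the shear in $\h^{\mathrm{fluc}}_{\gamma,v}$, but for WASEP it is not an exact symmetry of the particle system, and the paper never invokes it---it does the $v\neq0$ asymptotics directly. (ii) A one-block estimate is the wrong tool in Case~1: one-block/two-block control the process at the hydrodynamic scale $O(t)$, whereas the reduction you need must hold at the $t^{1/3}$ fluctuation scale, and monotone coupling alone cannot supply that. Your intuition about the effective delta initial data is sound (the two-sided slopes $2\rho_\pm-1-v$ both point away from the origin in the sheared frame, producing a concentrating profile, and $L_\e$ is precisely the $\log$ of its mass, which is $\e^{1/2}c$ rather than $c^{-1}$), but this heuristic is exactly what the paper's exact-formula asymptotics encode, not something that a one-block argument would give. (iii) The second-class-particle decoupling in Case~2 is more delicate than ``the $\rho_-$-block propagates at speed $2\rho_--1<v$.'' Under the basic coupling the discrepancy set, though initially supported on $x\le 0$, consists of infinitely many second-class particles, and second-class particles in a rarefaction fan travel at the full range of characteristic speeds up to $2\rho_+-1$---which is precisely the edge speed you are observing. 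A bare finite-speed-of-propagation bound therefore does not localize the discrepancies away from $vt + O(t^{2/3})$. What one would actually need is a quantitative estimate that the \emph{density} of discrepancies near $vt$ is $o(t^{-2/3})$ (so the induced height shift is $o(t^{1/3})$); this is known in some TASEP contexts via determinantal tools, but it is far from a trivial consequence of coupling for WASEP, and the paper deliberately sidesteps it by passing to the stochastic heat equation. (iv) For Case~3 your description matches the paper's intended strategy; the one point worth stressing is that the paper's Theorem~\ref{BG_thm} is stated for $v=0$, so invoking it for $v=2\rho-1\neq0$ requires the general-velocity extension that the authors postpone.
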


The case $\rho_-=0$, $\rho_+=1$ and $v=0$ above was previously solved
in~\cite{ACQ} and~\cite{SaSp1,SaSp2,SaSp3}. It should be noted that
$F^{\mathrm{fan}}_{T,X}$ does not, in fact, depend on $X$. The
logarithmic correction $L_\e$ in the case of the fan is unique to that
case and does not have a~parallel in the analogous TASEP result.

\begin{remark}\label{cavaet}
The above theorem can be proved in two ways. For $\rho_-=0$ one can use
the formulas of~\cite{TW4} directly and extract asymptotics (as we do
for the case $\rho_+=1/2$ in Section~\ref{TW_sec}). Aside from some
small technical modifications to the proof, the only change is that for
$v\neq0$ one must center the asymptotic analysis around the point
\[
\xi= -\biggl(\frac{1+v}{1-v}\biggr) - \frac{2X}{T(1-v)^2}\e^{1/2}.
\]
In order to arrive at the full (i.e., also $\rho_->0$) theorem above,
one cannot appeal to exact formulas\vadjust{\goodbreak} because those that exist~\cite{TW5}
are not in a form for which it is known how to do asymptotics. Instead
we will prove that the height function fluctuation process converges to
the solution to the KPZ equation with initial data depending only on
the region (fan, edge or equilibrium along the characteristic). In the
case of the fan and the edge, we have exact formulas for the one-point
distributions for these KPZ solutions, in the case of the fan, obtained
from the special case $\rho_-=0,\rho_+=1$, and in the case of the edge,
obtained from the special case $\rho_-=0, \rho_+=1/2$. The equilibrium
result also follows in the same way, despite not having a formula for
$F^{\mathrm{eq}}$. We will give the general velocity version of
Theorem~\ref{BG_thm} in a forthcoming paper. Thus, strictly speaking, in this
paper we only prove the above theorem (and likewise Proposition \ref
{hydrolimitprop}) for (a) $\rho_-=0$, and general $\rho_+$ and $v$
(using asymptotic analysis of formulas of~\cite{TW4}); (b) general
$\rho
_-$ and $\rho_+$, yet $v=0$ (combining Theorem~\ref{BG_thm}\ or the
analogous theorems of~\cite{ACQ} and~\cite{BG}, along with the exact
statistics results determined herein or in~\cite{ACQ} for the KPZ
equation itself).
\end{remark}

%


%



\subsection{Kernel and contour definitions}\label{definition_sec}
Here we collect the definitions of the kernels and contours used in the
statement of the main results of this paper.\looseness=-1

\begin{definition}\label{main_theorem_definition}
The \textit{edge crossover distribution} is defined as
\[
F^{\mathrm{edge}}_{T,X}(s) = \int_{\tilde\mathcal{C}}e^{-\tilde\mu
}\frac{d\tilde\mu}{\tilde\mu}\det(I-K^{\mathrm
{edge}}_{s})_{L^2(\tilde
\Gamma_{\eta})}.
\]

The contour $\tilde\mathcal{C}$ is given as
\[
\tilde\mathcal{C}=\{e^{i\theta}\}_{\pi/2\leq\theta\leq3\pi/2}
\cup\{
x\pm i\}_{x>0}.
\]
The contours $\tilde\Gamma_{\eta}$, $\tilde\Gamma_{\zeta}$ are
given as
%
%
\begin{eqnarray}
\tilde\Gamma_{\eta}&=&\biggl\{\frac{c_3}{2}+ir\dvtx  r\in(-\infty,-1)\cup
(1,\infty
)\biggr\}\cup\tilde\Gamma_{\eta}^d,\\
\tilde\Gamma_{\zeta}&=&\biggl\{-\frac{c_3}{2}+ir\dvtx  r\in(-\infty,-1)\cup
(1,\infty)\biggr\}\cup\tilde\Gamma_{\eta}^d,
\end{eqnarray}
where $\tilde\Gamma_{\zeta}^d$ is a dimple which goes to the\vspace*{-1pt} right of
$XT^{-1/3}$ and joins with the rest of the contour, and where $\tilde
\Gamma_{\eta}^d$ is the same contour just shifted to the right by
distance~$c_3$; see Figure~\ref{new_gamma_contours}. The constant $c_3$
is defined henceforth as
\[
c_3=2^{-4/3}.
\]
The kernel $K_s^{\mathrm{edge}}$ acts on the function space
$L^2(\tilde
\Gamma_{\eta})$ through%
\begin{figure}

\includegraphics{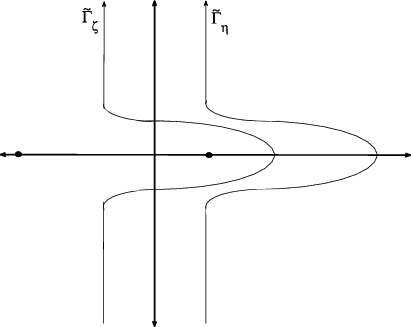}

\caption{The contours $\tilde\Gamma_{\zeta}$ and $\tilde\Gamma
_{\eta}$
extend vertically along the lines $c_3/2$ and $-c_{3}/2$ except for
a~dimple to the right of $XT^{-1/3}$ so as to avoid the poles of the
Gamma function (marked as solid black circles).}\label{new_gamma_contours}
\end{figure}
its kernel,
%
%
\begin{eqnarray}
 K_s^{\mathrm{edge}}(\tilde\eta,\tilde\eta') &=&\int_{\tilde\Gamma_{\zeta}} \exp\biggl\{-\frac
{T}{3}(\tilde
\zeta^3-\tilde\eta'^3)+sT^{1/3}(\tilde\zeta-\tilde\eta')\biggr\}
2^{1/3}
\nonumber
\\[-8pt]
\\[-8pt]
\nonumber
&&\hspace*{4pt}\quad{}\times\int_{-\infty}^{\infty} \frac{\tilde\mu e^{-2^{1/3}t(\tilde\zeta
-\tilde
\eta')}}{e^{t}-\tilde\mu}\,dt\frac{\Gamma(2^{1/3}\tilde\zeta-
2^{1/3}XT^{-1/3})}{\Gamma(2^{1/3}\tilde\eta' - 2^{1/3}XT^{-1/3})}
\frac
{d\tilde\zeta}{\tilde\zeta-\tilde\eta},
\end{eqnarray}
or, equivalently, evaluating the $t$ integral,
%
%
\begin{eqnarray}
 K_s^{\mathrm{edge}}(\tilde\eta,\tilde\eta') &=&\int
_{\tilde\Gamma_{\zeta}} \exp\biggl\{-\frac{T}{3}(\tilde\zeta^3-\tilde
\eta
'^3)+sT^{1/3}(\tilde\zeta-\tilde\eta')\biggr\}
\nonumber
\\[-8pt]
\\[-8pt]
\nonumber
 &&\hspace*{4pt}\quad{}\times \frac{\pi2^{1/3}(-\tilde
\mu
)^{-2^{1/3}(\tilde\zeta-\tilde\eta')}}{\sin(\pi2^{1/3}(\tilde
\zeta
-\tilde\eta'))}\frac{\Gamma(2^{1/3}\tilde\zeta-
2^{1/3}XT^{-1/3})}{\Gamma(2^{1/3}\tilde\eta' - 2^{1/3}XT^{-1/3})}
\frac
{d\tilde\zeta}{\tilde\zeta-\tilde\eta}.
\end{eqnarray}
$\Gamma(z)$ is the standard Gamma function, defined for $\re(z)>0$ by
$\Gamma(z) = \int_0^\infty s^{z-1}\times e^{-s}\,ds$ and
extended by analytic continuation to $\mathbb{C} -\{0,-1,-2,\ldots\}$.
To follow previous works we continue to use the letter gamma for
contours, but always
with a subscript to differentiate them from the Gamma function.
\end{definition}

\begin{definition}\label{ACQ_formula_def}
The \textit{fan crossover distribution} is defined as
\[
F^{\mathrm{fan}}_{T,X}(s) = \int_{\tilde\mathcal{C}}e^{-\tilde\mu
}\frac{d\tilde\mu}{\tilde\mu}\det(I-K^{\mathrm
{fan}}_{s})_{L^2(\tilde
\Gamma_{\eta})}.
\]
The kernel $K_s^{\mathrm{fan}}$ acts on the function space $L^2(\tilde
\Gamma_{\eta})$ through its kernel,
\begin{eqnarray*}
K_s^{\mathrm{fan}}(\tilde\eta,\tilde\eta') &=& \int_{\tilde\Gamma
_{\zeta
}} \exp\biggl\{-\frac{T}{3}(\tilde\zeta^3-\tilde\eta
'^3)+sT^{1/3}(\tilde\zeta
-\tilde\eta')\biggr\} 2^{1/3}\\
&&\hspace*{4pt}\quad{}\times \biggl (\int_{-\infty}^{\infty} \frac{\tilde\mu
e^{-2^{1/3}t(\tilde\zeta-\tilde\eta')}}{e^{t}-\tilde\mu}\,dt\biggr) \frac
{d\tilde\zeta}{\tilde\zeta-\tilde\eta},
\end{eqnarray*}
or, evaluating the inner integral, equivalently,
\begin{eqnarray*}
K_s^{\mathrm{fan}}(\tilde\eta,\tilde\eta') &=& \int_{\tilde\Gamma
_{\zeta
}} \exp\biggl\{-\frac{T}{3}(\tilde\zeta^3-\tilde\eta
'^3)+sT^{1/3}(\tilde\zeta
-\tilde\eta')\biggr\} \\
&&\quad\hspace*{4pt}{}\times\frac{\pi2^{1/3}(-\tilde\mu)^{-2^{1/3}(\tilde
\zeta
-\tilde\eta')}}{\sin(\pi2^{1/3}(\tilde\zeta-\tilde\eta'))}
\frac
{d\tilde\zeta}{\tilde\zeta-\tilde\eta}.
\end{eqnarray*}
As far as the choice of contours $\tilde\Gamma_{\zeta}$ and $\tilde
\Gamma_{\eta}$, one can use the same as above without the extra dimple
(since there are no poles to avoid now). This distribution is closely
related to the \textit{crossover distribution} of~\cite{ACQ}, though one
observes that the scalings are slightly different. As we now have a
whole class of crossover distributions we find it useful to name them
more descriptively.\looseness=1
\end{definition}

\begin{definition}\label{cor_and_conj_definition}
The Airy$_2$ process is defined in terms of finite dimensional
distributions as
\[
P\Biggl(\bigcap_{k=1}^{m}\{\mathcal{A}_2(X_k)\leq s_k\}\Biggr)=\det
(I-\chi_s K_{\mathcal{A}_2} \chi_s)_{L^2(\{X_1,\ldots, X_m\}\times
\R)},
\]
where $\chi_s(X_k,x) = \mathbf{1}_{[x>s_k]}$, and $K_{\mathcal{A}_2}$
is the extended Airy kernel
\[
K_{\mathcal{A}_2}(X,x;X',y)=
\cases{
{ \displaystyle\int_{0}^{\infty} \,dt\, e^{(X'-X)t}\Ai(t+x)\Ai(t+y)}, &
\quad $X\geq X',$\vspace*{2pt}\cr
{ -\displaystyle\int_{-\infty}^0 \,dt\, e^{(X'-X)t}\Ai(t+x)\Ai(t+y)}, & \quad $X<X'.$}
\]
The Airy$_2$ process was discovered in the PNG model~\cite{PS}. It is a
stationary process with one-point distribution given by the GUE
Tracy--Widom distribution $F_{\mathrm{GUE}}$~\cite{TW0}. An integral
representation of $K_{\mathcal{A}_2}$ can be found in Proposition~2.3
of~\cite{JDTASEP}; another form is in Definition 21 of~\cite{BFS09} in
the $M=0$ case.

We denote by $\mathcal{A}_{2\to\mathrm{BM}}$ the transition process from
Airy$_2$ to Brownian motion. It is defined in terms of finite
dimensional distributions as
\[
P\Biggl(\bigcap_{k=1}^{m}\{\mathcal{A}_{2\to\mathrm{BM}}(X_k)\leq s_k\}
\Biggr)=\det(I-\chi_s K_{\mathcal{A}_{\mathrm{BM}\to2}} \chi_s)_{L^2(\{
X_1,\ldots, X_m\}\times\R)},
\]
where $K_{\mathcal{A}_{2\to\mathrm{BM}}}$ is the rank-one perturbation
$K_{\mathcal{A}_2}$,
\begin{eqnarray*}
&&K_{\mathcal{A}_{2\to\mathrm{BM}}}(X,x;X',y)\\
&&\qquad=K_{\mathcal
{A}_2}(X,x;X',y)+\Ai(x)\biggl(-e^{(1/3)X'^3+X'y}-\int_{0}^\infty \,dt\, \Ai
(t+y)e^{-X't}\biggr).
\end{eqnarray*}
This transition process was derived in~\cite{SI04} and was shown to
arise in TASEP at the edge of the rarefaction fan in~\cite{CFP}. An
integral representation of the kernel can be found in~\cite{BFS09},
Definition 21, in the $m=1$ case. For $m=1$ this formula corresponds
with the distribution $F_1$ of~\cite{BBP}.

The definition of the process for equilibrium TASEP, $\mathcal
{A}_{\mathrm{eq}}$, is quite intricate and is given in~\cite{BFP}
where it is
actually called the $\mathcal{A}_{\mathrm{stat}}$ process. Its joint
distributions is the right-hand side of equation (1.9) in~\cite{BFP}.
\end{definition}

\section{Weakly asymmetric limit of the Tracy--Widom step Bernoulli
ASEP formula}\label{TW_sec}

In this section we will prove our main result, Theorem~\ref{hfluc_thm}.
The proof of that theorem follows by combining the proof of the main
theorem of~\cite{ACQ} (for step initial condition WASEP) with a few
lemmas to cover a new element [the $g(\zeta)$ term stated below] which
shows up for step Bernoulli initial conditions. As noted in the
\hyperref[sec1]{Introduction}, the key technical tool behind this proof is the
\textit{exact} formula for the transition probability of a single particle in
ASEP with step Bernoulli initial data~\cite{TW4}.

\begin{theorem}[(Main results of~\cite{TW4})]\label{TWmainthm}
Let $q>p$ with $q+p=1$, $\gamma=q-p$ and $\tau=p/q$. Fix $\rho_-=0$,
and for $\rho_+ \in(0,1]$, set
\[
\alpha= (1-\rho_+)/{\rho_+}.
\]
Since $\rho_-=0$ we can initially label our particles $1,2,3,\ldots$ by
setting the leftmost to be particle 1 and the second left most to be
particle 2, and so on. Let $\mathbf{x}(t,m)$ denote the location of
particle $m$ at time $t$. Then for $m>0$, $t\geq0$ and $x\in\Z$,
\cite
{TW4} gives the following exact formula:
%
%
\begin{equation}\label{TW_prob_equation}
P\bigl(\mathbf{x}(\gamma^{-1}t,m)\leq x\bigr) = \int_{S_{\tau^+}}\frac{d\mu
}{\mu}
\prod_{k=0}^{\infty} (1-\mu\tau^k)\det(I+\mu J^{\Gamma}_{\mu
})_{L^2(\Gamma_{\eta})},
\end{equation}
where $S_{\tau^+}$ is a circle centered at zero of radius strictly
between $\tau$ and 1, and where the kernel of the determinant is given by
%
%
\begin{equation}\label{J_eqn_def}
J^{\Gamma}_{\mu}(\eta,\eta')=\int_{\Gamma_{\zeta}} \exp\{\Psi
(\zeta
)-\Psi(\eta')\}\frac{f(\mu,\zeta/\eta')}{\eta'(\zeta-\eta
)}\frac{g(\eta
')}{g(\zeta)}\,d\zeta,
\end{equation}
where
%
%
\begin{eqnarray}
 f(\mu,z)&=&\sum_{k=-\infty}^{\infty} \frac{\tau
^k}{1-\tau^k\mu
}z^k,\nonumber\\
\Psi(\zeta) &=& \Lambda(\zeta)-\Lambda(\xi),\\
\Lambda(\zeta) &=& -x\log(1-\zeta) + \frac{t\zeta
}{1-\zeta
}+m\log\zeta,\nonumber\\
g(\zeta) &=& \prod_{n=0}^{\infty}( 1+\tau^n \alpha\zeta).
\end{eqnarray}
The contours are a little tricky: $\eta$ and $\eta'$ are on $\Gamma
_{\eta}$, a circle of diameter\footnote{Following~\cite{TW5}, this
means the circle is symmetric about the real axis and intersects it at
$-\alpha^{-1}+2\delta$ and $1-\delta$.} $[-\alpha^{-1}+2\delta
,1-\delta
]$ for $\delta$ small. And the $\zeta$ integral is on $\Gamma_{\zeta}$,
a circle of diameter $[-\alpha^{-1} +\dimple, 1+\dimple]$. One should
choose $\dimple$ so as to ensure that $|\zeta/\eta|\in(1,\tau^{-1})$.
This choice of contour avoids the poles of the new infinite product
which are at $-\alpha^{-1}\tau^{-n}$ for $n\geq0$. Of course we can
take $\delta$ to depend on $\e$.
\end{theorem}

\subsection{Heuristic explanation of the asymptotics of the
Tracy--Widom formula}\label{heuristic_sec}
We start by restating the result and give a heuristic explanation of
the proof. In Section~\ref{thm_proof_sec} we will give a complete proof
of these asymptotics, roughly following the method of proof of Theorem
1 of~\cite{ACQ}.

The following theorem uses different scalings so as to conform to the
notation of~\cite{ACQ}. Therefore, the resulting formula differs and we
introduce the kernel $K_a^{\csc,\Gamma}$. From the following result, by
careful scaling, one arrives at Theorem~\ref{hfluc_thm}.

\begin{theorem}[(Equivalent to Theorem~\ref{hfluc_thm} after
rescaling)]\label{thm_proof_thm}
Consider $\e>0$, $T>0$ and $X\in\R$ and set $\rho_-=0$, $\rho_+=1/2$,
$t=\e^{-3/2}T$, $x=\e^{-1}X$ and $\gamma=\e^{1/2}$. Then
\begin{eqnarray*}
\lim_{\e\rightarrow0} P\biggl(\e^{1/2}\biggl[h_{\gamma}\biggl(\frac{t}{\gamma},x\biggr)
- \frac{t}{2}\biggr] \geq-s\biggr) &=& \lim_{\e\rightarrow0} P\biggl(\mathbf
{x}_{\gamma}\biggl(\frac{t}{\gamma},m\biggr)\leq x\biggr)\\
 &=& \int_{\tilde C}e^{-\tilde
\mu
} \det(I-K_{a}^{\csc,\Gamma})_{L^2(\tilde\Gamma_{\eta})}\frac
{d\tilde\mu
}{\tilde\mu},
\end{eqnarray*}
where $a=a(s)=s+\frac{X^2}{2T}$,
\[
m=\frac{1}{2}\biggl[\e^{-1/2}\biggl(-a+\frac{X^2}{2T}\biggr) + \frac
{t}{2}+x\biggr]
\]
and the operator $K_a^{\csc,\Gamma}$ acts on the function space
$L^2(\tilde\Gamma_{\eta})$ through its kernel,
%
%
\begin{eqnarray}\label{KGammaDef}
K_a^{\csc,\Gamma}(\tilde\eta,\tilde\eta') &= &\int_{\tilde\Gamma
_{\zeta
}} \exp\biggl\{-\frac{T}{3}(\tilde\zeta^3-\tilde\eta
'^3)+2^{1/3}a(\tilde\zeta
-\tilde\eta')\biggr\} 2^{1/3}
\nonumber
\\[-8pt]
\\[-8pt]
\nonumber
&&\quad\hspace*{4pt}{}\times \biggl(\int_{-\infty}^{\infty} \frac{\tilde\mu
e^{-2^{1/3}t(\tilde\zeta-\tilde\eta')}}{e^{t}-\tilde\mu}\,dt\biggr)\frac
{\Gamma(2^{1/3}\tilde\zeta- {X}/{T})}{\Gamma(2^{1/3}\tilde
\eta' -
{X}/{T})} \frac{d\tilde\zeta}{\tilde\zeta-\tilde\eta}.
\end{eqnarray}
The contours $\tilde\mathcal{C}$, $\tilde\Gamma_{\zeta}$ and
$\tilde
\Gamma_{\eta}$ are defined in Definition~\ref{main_theorem_definition},
though for the last two contours, the dimples are modified to go to the
right of the poles of the Gamma function above (the rightmost of which
lies at $2^{-1/3}X/T$).
\end{theorem}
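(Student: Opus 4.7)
The plan is to adapt the steepest-descent analysis of the Tracy-Widom formula carried out in \cite{ACQ} for pure step initial data, inserting the new infinite product $g(\zeta)=\prod_{n\ge 0}(1+\tau^n\alpha\zeta)$ that appears for step Bernoulli data. The overall strategy is to (i) rescale $\mu$ so that the outer contour $S_{\tau^+}$ becomes the compact contour $\tilde{\mathcal C}$ carrying a factor $e^{-\tilde\mu}d\tilde\mu/\tilde\mu$; (ii) rescale $\zeta,\eta,\eta'$ near the appropriate critical point; (iii) show that each factor in the integrand of $J^{\Gamma}_\mu$ converges, on the rescaled contours, to the corresponding factor of $K^{\csc,\Gamma}_a$; and (iv) promote this pointwise convergence to convergence of the full Fredholm determinant and of the outer $\mu$-integral.

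For (i)-(ii), following \cite{ACQ} one first identifies the double critical point of the exponent $\Psi$; at leading order it sits at $\xi=-1$, with a subleading $\epsilon^{1/2}$ correction determined by $X$ and $T$, cf.\ Remark \ref{cavaet}. One then introduces the scaling $\zeta=\xi+c_0\epsilon^{1/2}\tilde\zeta$ for an appropriate constant $c_0$, so that the cubic expansion of $\Psi$ yields
\begin{equation*}
\Psi(\zeta)-\Psi(\eta')\longrightarrow -\tfrac{T}{3}(\tilde\zeta^3-\tilde\eta'^3)+2^{1/3}a(\tilde\zeta-\tilde\eta').
\end{equation*}
The Tracy-Widom prefactor $\prod_{k\ge 0}(1-\mu\tau^k)$ and the kernel factor $f(\mu,\zeta/\eta')$ are treated exactly as in \cite{ACQ}: on the rescaled $\tilde\mu$-contour the prefactor collapses to $e^{-\tilde\mu}$, while $f(\mu,\zeta/\eta')/\eta'$ passes to the $\csc$-type integral
\begin{equation*}
2^{1/3}\int_{-\infty}^\infty \frac{\tilde\mu e^{-2^{1/3}t(\tilde\zeta-\tilde\eta')}}{e^t-\tilde\mu}\,dt=\frac{\pi 2^{1/3}(-\tilde\mu)^{-2^{1/3}(\tilde\zeta-\tilde\eta')}}{\sin(\pi 2^{1/3}(\tilde\zeta-\tilde\eta'))}.
\end{equation*}

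The genuinely new ingredient is step (iii) for the factor $g(\eta')/g(\zeta)$. The key identity is the $q$-Gamma representation
\begin{equation*}
g(\zeta)=(-\alpha\zeta;\tau)_\infty=\frac{(\tau;\tau)_\infty(1-\tau)^{1-x(\zeta)}}{\Gamma_\tau(x(\zeta))},\qquad\tau^{x(\zeta)}=-\alpha\zeta,
\end{equation*}
where $\Gamma_\tau$ is the $q$-Gamma function. The $(\tau;\tau)_\infty$ factors cancel in the ratio, leaving
\begin{equation*}
\frac{g(\eta')}{g(\zeta)}=(1-\tau)^{x(\zeta)-x(\eta')}\frac{\Gamma_\tau(x(\zeta))}{\Gamma_\tau(x(\eta'))}.
\end{equation*}
With $\alpha=1$ and $\tau=(1-\epsilon^{1/2})/(1+\epsilon^{1/2})$ one has $\log\tau\sim -2\epsilon^{1/2}$, so the formula $\tau^{x(\zeta)}=-\zeta$ combined with the rescaling of $\zeta$ and the subleading shift of the critical point yields the finite limit $x(\zeta)\to 2^{1/3}\tilde\zeta - X/T$. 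The prefactor $(1-\tau)^{x(\zeta)-x(\eta')}$ tends to $1$ and the classical convergence $\Gamma_\tau\to\Gamma$ as $\tau\nearrow 1$ then produces the Gamma ratio $\Gamma(2^{1/3}\tilde\zeta-X/T)/\Gamma(2^{1/3}\tilde\eta'-X/T)$ in (\ref{KGammaDef}). Since $\Gamma$ has poles at non-positive integers, the contours $\tilde\Gamma_\zeta$ and $\tilde\Gamma_\eta$ must be deformed with a rightward dimple past $2^{-1/3}X/T$ so as to avoid these poles, while in the prelimit the constraint $|\zeta/\eta|\in(1,\tau^{-1})$ is preserved.

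For (iv), the upgrade to Fredholm determinant convergence follows the template of \cite[Section 5]{ACQ}: pointwise convergence of the kernel plus uniform tail bounds on the contours imply trace-class convergence, and dominated convergence in $\tilde\mu$ then gives the theorem. The main obstacle I anticipate is producing sharp enough tail bounds on the new factor $g(\eta')/g(\zeta)$ along the unbounded parts of the rescaled contours: unlike the $\csc$ kernel, the Gamma ratio grows at Stirling-type rates along vertical lines, so one must verify, via the $q$-Gamma representation above, that the prelimit $g(\eta')/g(\zeta)$ is controlled uniformly in $\epsilon$ by a quantity that, together with the cubic Gaussian decay from $-T\tilde\zeta^3/3$, is absolutely integrable. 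This is the step where the dimple in the contour must be carefully chosen so that the steepest-descent argument near the critical point is not spoiled by the newly introduced singularities, and where the bulk of the technical work should go.
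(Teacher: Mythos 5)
Your plan follows the paper's architecture in steps (i), (ii), and (iv), and correctly identifies the $q$-Gamma representation as the right tool for the new factor. However, step (iii) contains a genuine error at exactly the place where this proof diverges nontrivially from \cite{ACQ}: the claim that the prefactor $(1-\tau)^{x(\zeta)-x(\eta')}$ tends to $1$ is false. With $\tau\approx 1-2\e^{1/2}$ and $\alpha=1$, the rescaling gives $x(\zeta)-x(\eta')\to 2^{1/3}(\tilde\zeta-\tilde\eta')$, whose real part on the contours $\tilde\Gamma_\zeta,\tilde\Gamma_\eta$ equals $-1/2$, not $0$. Since $1-\tau\sim 2\e^{1/2}\to 0$, one has
\begin{equation*}
|(1-\tau)^{x(\zeta)-x(\eta')}| \sim (2\e^{1/2})^{-1/2} \sim \e^{-1/4},
\end{equation*}
which diverges. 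Equivalently, $(1-\tau)^{x(\zeta)-x(\eta')}=\exp\{2^{1/3}(\tilde\zeta-\tilde\eta')\log(2\e^{1/2})\}$ carries an uncancelled $\log\e$ term. Thus $g(\eta')/g(\zeta)$ alone has no finite $\e\to 0$ limit, and the argument as written does not close.

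The resolution in the paper is a cancellation \emph{between} the $g$-ratio and the $f$-factor, which means the two cannot be handled independently. Rewriting $\mu f(\mu,z)=z^{n_0}\big(\tilde\mu f(\tilde\mu,z)+O(\e^{1/2})\big)$ with $n_0=\lfloor\log(\e^{-1/2})/\log\tau\rfloor$ produces a prefactor $z^{n_0}\approx e^{-\frac14\tilde z\log\e}$ with $\tilde z=2^{4/3}(\tilde\zeta-\tilde\eta')$, and this \emph{exactly cancels} the $\log\e$ divergence in $(1-\tau)^{x(\zeta)-x(\eta')}$; the residue is the finite factor $e^{2^{1/3}\log 2\,(\tilde\zeta-\tilde\eta')}$, which shifts $a\mapsto a+\log 2$ and is removed only at the very end by the change of variables $\tilde\mu/2\mapsto\tilde\mu$ (turning the intermediate prefactor $e^{-\tilde\mu/2}$ into $e^{-\tilde\mu}$). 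So ``treating $f$ exactly as in \cite{ACQ}'' while discarding the $(1-\tau)^{x(\zeta)-x(\eta')}$ factor cannot work: in the fan case of \cite{ACQ} the $z^{n_0}$ divergence is absorbed into a $\log\e$-correction $L_\e$ to the centering (cf.\ part (1) of Theorem \ref{WASEP_two_sided_thm}), whereas the edge formula has no $L_\e$ precisely because $z^{n_0}$ and the $g$-divergence absorb each other. You need to track both divergent pieces explicitly and exhibit the cancellation — this is the content of Lemma \ref{gratio_limit} combined with the expansion of $z^{n_0}$ — rather than asserting that either one is asymptotically trivial.
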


We now proceed with the heuristic proof of the above result. Note that
given the values of $\rho_-$ and $\rho_+$, the parameter $\alpha$
defined above in Theorem~\ref{TWmainthm} is equal to 1. We will,
however, keep $\alpha$ in the calculations since one can then see
readily how to generalize to $\alpha\neq1$.

The first term in the integrand of (\ref{TW_prob_equation}) is the
infinite product $\prod_{k=0}^{\infty}(1-\mu\tau^k)$. Observe that
$\tau=q/p\approx1-2\e^{1/2}$ and that $S_{\tau^+}$, the contour on
which $\mu$ lies, is a circle centered at zero of radius between $\tau$
and 1. The infinite product is not well behaved along most of this
contour; however, we can deform the contour to one along which the
product is not highly oscillatory. Care must be taken, however, since
the Fredholm determinant has poles at every $\mu=\tau^k$. The\vadjust{\goodbreak}
deformation must avoid passing through them. As in~\cite{ACQ} observe
that if
%
%
\begin{equation}\label{mutilde_formula}
\mu= \e^{1/2}\tilde\mu,
\end{equation}
then
\[
\prod_{k=0}^{\infty}(1-\mu\tau^k) \approx e^{-\sum_{k=0}^{\infty}
\mu
\tau^k} = e^{-\mu/(1-\tau)} \approx e^{-\tilde\mu/2}.
\]

We make the $\mu\mapsto\e^{-1/2}\tilde\mu$ change of variables and
find that if we consider a $\tilde\mu$ contour
\[
\tilde\mathcal{C}_{\e} = \{e^{i\theta}\}_{\pi/2\leq\theta\leq
3\pi
/2}\cup\{x\pm i\}_{0<x\leq\e^{-1/2}-1}\cup\{\e^{-1/2}-1+iy\}_{-1<y<1},
\]
then the above approximations are reasonable. Thus the infinite product
goes to $\exp\{-\tilde\mu/2\}$.

Now we turn to the Fredholm determinant and determine a candidate for
the pointwise limit of the kernel.
The kernel $J^{\Gamma}_{\mu}(\eta,\eta')$ is given by an integral whose
integrand has four main components: an exponential
\[
\exp\{\Lambda(\zeta)-\Lambda(\eta')\};
\]
a rational function (we include the differential with this term for
scaling purposes)
\[
{d\zeta}/{\eta'(\zeta-\eta)};
\]
a doubly infinite sum
\[
\mu f(\mu,\zeta/\eta');
\]
an infinite product
\[
{g(\eta')}/{g(\zeta)}.
\]

We proceed by the method of steepest descent, so in order to determine
the region along the $\zeta$ and $\eta$ contours which affects the
asymptotics, we must consider the exponential term first. The argument
of the exponential is given by $\Lambda(\zeta)-\Lambda(\eta')$ where
\[
\Lambda(\zeta)=-x\log(1-\zeta) + \frac{t\zeta}{1-\zeta}+m\log
(\zeta),
\]
where $x$, $t$ and $m$ are as in Theorem~\ref{thm_proof_thm}. For small
$\e$, $\Lambda(\zeta)$ has a critical point in an $\e^{1/2}$
neighborhood of $-1$. For purposes of having a nice ultimate answer, we
choose to center in on the point
\[
\xi=-1-2\e^{1/2}\frac{X}{T}.
\]
We can rewrite the argument of the exponential as $(\Lambda(\zeta
)-\Lambda(\xi))-(\Lambda(\eta')-\Lambda(\xi))=\Psi(\zeta)-\Psi
(\eta')$.
The idea of extracting asymptotics for this term (which starts like
those done in~\cite{TW3} but quickly becomes more involved due to the
fact that $\tau$ tends to 1 as $\e$ goes to zero) is then to deform the
$\zeta$ and $\eta$ contours to lie along curves such that outside the
scale $\e^{1/2}$ around~$\xi$, $\re\Psi(\zeta)$ is very negative, and
$\re\Psi(\eta')$ is very positive, and hence the\vadjust{\goodbreak} contribution from
those parts of the contours is negligible. Rescaling around $\xi$ to
blow up this $\e^{1/2}$ scale, gives us the asymptotic exponential
term. This change of variables sets the scale at which we should
analyze the other three terms in the integrand for the $J$
kernel.

Returning to $\Psi(\zeta)$, we make a Taylor expansion around $\xi$ and
find that in a~neighborhood of $\xi$,
\[
\Psi(\zeta) \approx-\frac{T}{48} \e^{-3/2}(\zeta-\xi)^3 + \frac
{a}{2}\e^{-1/2}(\zeta-\xi).
\]
This suggests the following change of variables:
%
%
\begin{eqnarray}\label{change_of_var_eqn}
\tilde\zeta&=& 2^{-4/3}\e^{-1/2}(\zeta-\xi),\qquad \tilde\eta=
2^{-4/3}\e^{-1/2}(\eta-\xi),
\nonumber
\\[-8pt]
\\[-8pt]
\nonumber
 \tilde\eta'& =& 2^{-4/3}\e^{-1/2}(\eta
'-\xi),
\end{eqnarray}
after which our Taylor expansion takes the form
\[
\Psi(\tilde\zeta) \approx-\frac{T}{3} \tilde\zeta^3
+2^{1/3}a\tilde
\zeta.
\]
In the spirit of steepest descent analysis we would like the $\zeta$
contour to leave $\xi$ in a direction where this Taylor expansion is
decreasing rapidly. This is accomplished by leaving at an angle $\pm
2\pi/3$. Likewise, since $\Psi(\eta)$ should increase rapidly, $\eta$
should leave $\xi$ at angle $\pm\pi/3$. Since $\rho_+=1/2$, $\alpha=1$
which means that the $\zeta$ contour is originally on a circle of
diameter $[-1+\delta,1+\delta]$ and the $\eta$ contour on a circle of
diameter $[-1+2\delta,1-\delta]$ for some positive $\delta$ [which can
and should depend on $\e$ so as to ensure that $|\zeta/\eta|\in
(1,\tau
^{-1})$]. In order to deform these contours to their steepest descent
contours without changing the value of the determinant, great care must
be taken to avoid the poles of $f$, which occur whenever $\zeta/\eta
'=\tau^k$, $k\in\Z$, and the poles of $1/g$, which occur whenever
$\zeta=-\tau^{-n}$, $n\geq0$. We will ignore these considerations in
the formal calculation but will take them up more carefully in Section
\ref{thm_proof_sec}. The one very important consideration in this
deformation, even formally, is that we must end up with contours which
lie to the right of the poles of the $1/g$ function.

Let us now assume that we can deform our contours to curves along which
$\Psi$ rapidly decays in $\zeta$ and increases in $\eta$, as we move
along them away from~$\xi$. If we apply the change of variables in
(\ref
{change_of_var_eqn}) the straight part of our contours become infinite
rays at angles $\pm2\pi/3$ and $\pm\pi/3$ which we call $\tilde
\Gamma
_{\zeta}$ and $\tilde\Gamma_{\eta}$. Note that this is \textit
{not} the
actual definition of these contours which we use in the statement and
proof of Theorem~\ref{hfluc_thm} because of the singularity problem
mentioned above.

Applying this change of variables to the kernel of the Fredholm
determinant changes the $L^2$ space, and hence we must multiply the
kernel by the Jacobian term $2^{4/3}\e^{1/2}$. We will include this
term with the $\mu f(\mu,z)$ term and take the $\e\to0$ limit of
that product.

Before we consider that term, however, it is worth looking at the new
infinite product term $g(\eta')/g(\zeta')$. In order to do that let us
consider the following. Set
\[
q=1-r,\qquad a= \frac{\log\alpha(c-xr)}{\log q},\qquad b= \frac{\log
\alpha(c-yr)}{\log q}.
\]
Then observe that
%
%
\begin{eqnarray}\label{qgamma_eqn}
&&\prod_{n=0}^\infty\frac{1+(1-r)^n \alpha(-c+xr)}{1+(1-r)^n
\alpha(-c+yr)}\nonumber\\
&&\qquad= \frac{(q^a;q)_\infty}{(q^b;q)_\infty}
=\frac{\Gamma_q(b)}{\Gamma_q(a)} (1-q)^{b-a} = \frac{\Gamma_q(b)}{\Gamma_q(a)} e^{(b-a)\log r}
\\
&&\qquad= \frac{\Gamma_{1-r}(-r^{-1} \log(\alpha c) + c^{-1} y +
o(r))}{\Gamma
_{1-r}(-r^{-1} \log(\alpha c) + c^{-1} x+o(r))} e^{ c^{-1} (y-x) \log
r +o(r\log r) },\nonumber
\end{eqnarray}
where the $q$-Gamma function and the $q$-Pochhammer symbols are given by
\[
\Gamma_{q}(x):=\frac{(q;q)_{\infty}}{(q^x;q)_{\infty}}(1-q)^{1-x}
\]
when $|q|<1$ and
\[
(a;q)_{\infty} = (1-a)(1-aq)(1-aq^2)\cdots.
\]
The notation $o(f(r))$ above refers to a function $f'(r)$ such that
$f'(r)/f(r)\to0$ as $r\to0$. The $q$-Gamma function converges to the
usual Gamma function as $q\to1$, uniformly on compact sets; see \cite
{AAR} for more details and a statement of this result.

Now consider the $g$ terms and observe that in the rescaled variables
this corresponds with (\ref{qgamma_eqn}) with $r=2\e^{1/2}$, $c=1$
(recall $\alpha=1$ as well) and
\[
y=2^{1/3}\tilde\zeta- \frac{X}{T}, \qquad x=2^{1/3}\tilde\eta' - \frac{X}{T}.
\]
Since $\alpha c =1$ and since we are away from the poles and zeros of
the Gamma functions, we find that
%
%
\begin{equation}\label{gfinaleqn}
\frac{g(\eta')}{g(\zeta)} \rightarrow\frac{\Gamma(2^{1/3}\tilde
\zeta-{X}/{T})}{\Gamma(2^{1/3}\tilde\eta'-
{X}/{T})} \exp\{2^{1/3}(\tilde\zeta-\tilde\eta')\log(2\e
^{1/2})\}.
\end{equation}
This exponential can be rewritten as
%
%
\begin{equation}\label{exponential_term}
\exp\biggl\{\frac{\tilde z}{4} \log\e\biggr\} \exp\{2^{1/3}\log
(2)(\tilde\zeta-\tilde\eta')\},
\end{equation}
where
%
%
\begin{equation}\tilde z=2^{4/3}(\tilde\zeta-\tilde\eta').
\end{equation}
It appears that there is a problem in these asymptotics as $\e$ goes to
zero; however, we will find that this apparent divergence exactly
cancels with a similar term in the doubly infinite summation term
asymptotics. We will now show how that $\log\e$ in the exponent can be
absorbed into the $2^{4/3}\e^{1/2}\mu f(\mu,\zeta/\eta')$ term.
Recall
\[
\mu f(\mu,z) = \sum_{k=-\infty}^{\infty} \frac{\mu\tau^k}{1-\tau
^k \mu}z^k.
\]
If we let $n_0=\lfloor\log(\e^{-1/2}) /\log(\tau)\rfloor$, then
observe that
\[
\mu f(\mu,z) = \sum_{k=-\infty}^{\infty} \frac{ \mu\tau
^{k+n_0}}{1-\tau^{k+n_0}\mu}z^{k+n_0} =z^{n_0} \tau^{n_0}\mu\sum
_{k=-\infty}^{\infty} \frac{ \tau^{k}}{1-\tau^{k}\tau^{n_0}\mu}z^{k}.
\]
By the choice of $n_0$, $\tau^{n_0}\approx\e^{-1/2}$, so
\[
\mu f(\mu,z) \approx z^{n_0} \tilde\mu f(\tilde\mu,z).
\]
The discussion on the exponential term indicates that it suffices to
understand the behavior of this function only in the region where
$\zeta
$ and $\eta'$ are within a neighborhood of $\xi$ of order $\e^{1/2}$.
Equivalently, letting $z=\zeta/\eta'$, it suffices to understand
$\mu f(\mu,z) \approx z^{n_0} \tilde\mu f(\tilde\mu,z)$ for
\[
z= \frac{\zeta}{\eta'}=\frac{\xi+ 2^{4/3}\e^{1/2}\tilde\zeta
}{\xi+
2^{4/3}\e^{1/2}\tilde\eta'}\approx1-\e^{1/2}\tilde z.
\]
Let us now consider $z^{n_0}$ using the fact that $\log\tau\approx
-2\e^{1/2}$.
\[
z^{n_0} \approx(1-\e^{1/2}\tilde z)^{\e^{-1/2}({1}/{4})\log\e}
\approx e^{-({1}/{4})\tilde z \log\e}.
\]
Plugging back in the value of $\tilde z$ in terms of $\tilde\zeta$ and
$\tilde\eta'$ we see that this prefactor of~$z^{n_0}$ exactly cancels
the $\log\e$ term which came from the $g$ infinite product term.

What remains is to determine the limit of $2^{4/3}\e^{1/2}\tilde\mu
f(\tilde\mu, z)$ as $\e$ goes to zero and for $z\approx1-\e^{1/2}
\tilde z$. This limit can be found by interpreting the infinite sum as
a Riemann sum approximation for an appropriate integral. Define $t=k\e
^{1/2}$, then observe that
\[
\e^{1/2}\tilde\mu f(\tilde\mu,z) = \sum_{k=-\infty}^{\infty}
\frac{
\tilde\mu\tau^{t\e^{-1/2}}z^{t\e^{-1/2}}}{1-\tilde\mu\tau^{t\e
^{-1/2}}}\e^{1/2} \rightarrow\int_{-\infty}^{\infty} \frac{\tilde
\mu
e^{-2t}e^{-\tilde z t}}{1-\tilde\mu e^{-2t}}\,dt.
\]
This used the fact that $\tau^{t\e^{-1/2}}\rightarrow e^{-2t}$ and that
$z^{t\e^{-1/2}}\rightarrow e^{-\tilde z t}$, which hold at least
pointwise in $t$. If we change variables of $t$ to $t/2$ and multiply
the top and bottom by $e^{-t}$, then we find that
\[
2^{4/3}\e^{1/2}\mu f(\mu,\zeta/\eta') \rightarrow2^{1/3} \int
_{-\infty
}^{\infty} \frac{\tilde\mu e^{-\tilde zt/2}}{e^{t}-\tilde\mu}\,dt.
\]
As far as the final term, the rational expression, under the change of
variables and zooming in on $\xi$, the factor of $1/\eta'$ goes to $-1$
and the $\frac{d\zeta}{\zeta-\eta'}$ goes to $\frac{d\tilde\zeta
}{\tilde
\zeta-\tilde\eta'}$.

Therefore we formally find the following kernel: $-K_{a'}^{\csc,\Gamma
}(\tilde\eta,\tilde\eta')$ acting on $L^2(\tilde\Gamma_{\eta})$, where
\begin{eqnarray*}
K_{a'}^{\csc,\Gamma}(\tilde\eta,\tilde\eta')& = &\int_{\tilde
\Gamma
_{\zeta}} \exp\biggl\{-\frac{T}{3}(\tilde\zeta^3-\tilde\eta
'^3)+2^{1/3}a'(\tilde\zeta-\tilde\eta')\biggr\} 2^{1/3} \\
&&\quad\hspace*{4pt}{}\times\biggl(\int_{-\infty
}^{\infty} \frac{\tilde\mu e^{-2^{1/3}t(\tilde\zeta-\tilde\eta
')}}{e^{t}-\tilde\mu}\,dt\biggr)\frac{\Gamma(2^{1/3}\tilde\zeta-
{X}/{T})}{\Gamma(2^{1/3}\tilde\eta'-{X}/{T})} \frac
{d\tilde\zeta}{\tilde\zeta-\tilde\eta},
\end{eqnarray*}
where $a'=a+\log2$ [recall that this $\log2$ came from (\ref
{exponential_term})].

We have the identity
%
%
\begin{equation}\label{cscIntegral}
\int_{-\infty}^{\infty} \frac{\tilde\mu e^{-\tilde
zt/2}}{e^{t}-\tilde
\mu}\,dt =(-\tilde\mu)^{-\tilde z/2}\pi\csc(\pi\tilde z/2),
\end{equation}
where the branch cut in $\tilde\mu$ is along the positive real axis,
hence $(-\tilde\mu)^{-\tilde z/2} =e^{-\log(-\tilde\mu)\tilde z/2}$
where $\log$ is taken with the standard branch cut along the negative
real axis.
We may use the identity to rewrite the kernel as
%
%
\begin{eqnarray}\label{kcscgamma}
K_{a'}^{\csc,\Gamma}(\tilde\eta,\tilde\eta') &=& \int_{\tilde
\Gamma
_{\zeta}} \exp\biggl\{-\frac{T}{3}(\tilde\zeta^3-\tilde\eta
'^3)+2^{1/3}a'(\tilde\zeta-\tilde\eta')\biggr\} 2^{1/3}
\nonumber
\\[-8pt]
\\[-8pt]
\nonumber
&&\hspace*{4pt}\quad{}\times\frac{\pi
(-\tilde\mu
)^{-2^{1/3}(\tilde\zeta-\tilde\eta')}}{\sin(\pi2^{1/3}(\tilde
\zeta
-\tilde\eta'))} \frac{\Gamma(2^{1/3}\tilde\zeta-{X}/{T}
)}{\Gamma(2^{1/3}\tilde\eta'-{X}/{T})} \frac{d\tilde\zeta
}{\tilde\zeta-\tilde\eta}.
\end{eqnarray}
To make this cleaner we replace $\tilde\mu/2$ with $\tilde\mu$. Taking
into account this change of variables (it also changes the $\exp\{
-\tilde\mu/2\}$ in front of the determinant to $\exp\{-\tilde\mu\}$),
we find that our final answer is
\[
\int_{\tilde\mathcal{C}}e^{-\tilde\mu}\frac{d\tilde\mu}{\tilde
\mu}\det
(I-K_{a}^{\csc,\Gamma})_{L^2(\tilde\Gamma_{\eta})},
\]
which, up to the definitions of the contours $\tilde\Gamma_{\eta}$ and
$\tilde\Gamma_{\zeta}$, is the desired limiting formula.

It is important to note the many possible pitfalls of such a heuristic
computation:
(1) Pointwise convergence of both the prefactor infinite product and
the Fredholm determinant is not enough to prove convergence of the
$\tilde\mu$ integral; (2) the deformations of the $\eta$ and $\zeta$
contours to the steepest descent curves are invalid, as they pass
through multiple poles of the kernel, coming both from the $f$ term and
the $g$ term; (3) one has to show that the kernels converge in the
sense of trace norm as opposed to just pointwise. The Riemann sum
approximation argument can in fact be made rigorous though; in \cite
{ACQ} an alternative proof of the validity of that limit is given via
analysis of singularities and residues.\vadjust{\goodbreak}

These possible pitfalls are addressed below in Section~\ref{thm_proof_sec}.

\subsection{\texorpdfstring{Proof of Theorem \protect\ref{hfluc_thm}}
{Proof of Theorem 3}}\label{thm_proof_sec}
In this section we provide a complete proof of Theorem \ref
{thm_proof_thm}, from which one recovers Theorem~\ref{hfluc_thm} via
scaling. The proof follows the same argument to that of~\cite{ACQ}. As
a convention, $c$ (or capitalized, primed, etc., versions) will represent
a finite constant which can vary line to line, unless explicitly noted.

In Theorem~\ref{thm_proof_thm}, we have reformulated the claim of
Theorem~\ref{hfluc_thm} in terms of the weakly asymmetric simple
exclusion process with half step Bernoulli initial data. Our proof,
therefore, reduces to a rigorous asymptotic analysis of Tracy and
Widom's formula (\ref{TW_prob_equation}). That formula contains an
integral over a $\mu$ contour of a product of a prefactor infinite
product and a Fredholm determinant. The first step toward taking the
limit of this as $\e$ goes to zero is to control the prefactor, $\prod
_{k=0}^{\infty} (1-\mu\tau^k)$. Initially $\mu$ lies on a contour
$S_{\tau^+}$ which is centered at zero and of radius between $\tau$ and
1. Along this contour the partial products (i.e., product up to $N$)
form a highly oscillatory sequence, and hence it is hard to control the
convergence of the sequence.

\begin{figure}

\includegraphics{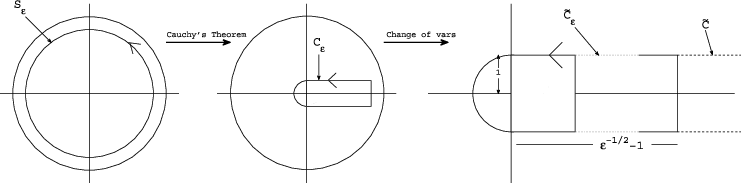}

\caption{The $S_{\e}$ contour is deformed to the $C_{\e}$ contour via
Cauchy's theorem, and then a change of variables leads to $\tilde
{C}_{\e
}$, with its infinite extension $\tilde{C}$.}\label{deform_to_c}
\end{figure}

The first step in our proof is to deform the $\mu$ contour $S_{\tau^+}$
to the long, skinny cigar-shaped contour
\[
\mathcal{C}_{\e} = \{\e^{1/2}e^{i\theta}\}_{\pi/2\leq\theta\leq
3\pi
/2}\cup\{x\pm i \e^{1/2}\}_{0<x\leq1-\e^{1/2}}\cup\{1-\e^{1/2}+\e
^{1/2}iy\}_{-1<y<1};
\]
see Figure~\ref{deform_to_c}. We orient $ \mathcal{C}_{\e}$
counter-clockwise. Notice that this new contour still includes all of
the poles at $\mu=\tau^{k}$ associated with the $f$ function in the
$J$ kernel.

In order to justify replacing $S_{\tau^+}$ by $\mathcal{C}_{\e}$ we
need the following:

\begin{lemma}\label{deform_mu_to_C}
In equation (\ref{TW_prob_equation}) we can replace the contour $S_\e$
with $\mathcal{C}_\e$ as the contour of integration for $\mu$ without
affecting the value of the integral.
\end{lemma}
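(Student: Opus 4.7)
The plan is to apply Cauchy's theorem in the variable $\mu$: if the integrand
\begin{equation*}
I(\mu) := \mu^{-1}\prod_{k=0}^{\infty}(1-\mu\tau^k)\,\det(I+\mu J^{\Gamma}_{\mu})_{L^2(\Gamma_{\eta})}
\end{equation*}
is meromorphic on an open set containing $S_\e$, $\mathcal{C}_\e$, and the region between them, and if both contours enclose precisely the same set of poles of $I(\mu)$, then the two integrals are equal. Essentially all of the work goes into locating those poles and verifying this geometric condition.

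First I would catalogue the singularities of $I(\mu)$. The factor $\mu^{-1}$ has its only pole at $0$; the prefactor $\prod_{k=0}^\infty(1-\mu\tau^k)$ is entire with simple zeros at $\mu\in\{1,\tau^{-1},\tau^{-2},\ldots\}$; and the $\mu$-dependence of $J^{\Gamma}_{\mu}$ enters only through $f(\mu,\zeta/\eta')$, which has simple poles in $\mu$ precisely at $\mu\in\{\tau^k:k\in\mathbb{Z}\}$. The zeros of the prefactor exactly cancel the poles of $f$ at $\mu\in\{1,\tau^{-1},\tau^{-2},\ldots\}$, leaving $\{0\}\cup\{\tau^k:k\geq 1\}$ as the residual poles of $I(\mu)$. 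Since $\tau=p/q<1-\e^{1/2}$ for small $\e$, every residual pole lies on the real interval $[0,1-\e^{1/2})$. An inspection of the two contours shows that $S_\e$ (a circle of radius between $\tau$ and $1$) and $\mathcal{C}_\e$ (whose left semicircle of radius $\e^{1/2}$ encloses $0$ and whose horizontal strip $|\textup{Im}(\mu)|<\e^{1/2}$ swallows the positive real segment $(0,1-\e^{1/2}]$) enclose exactly this same set; no other pole of $I(\mu)$ lies in the symmetric difference of their interiors.

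Second I would check that $I(\mu)$ is holomorphic away from these residual poles. The doubly infinite series for $f(\mu,z)$ converges absolutely and uniformly on compact sets in $(\mu,z)$ avoiding $\mu\in\{\tau^k:k\in\mathbb{Z}\}$, so long as $|z|\in(1,\tau^{-1})$; this range for $z=\zeta/\eta'$ is precisely what is enforced on the compact product $\Gamma_\zeta\times\Gamma_\eta$. Consequently $\mu\mapsto\mu J^{\Gamma}_{\mu}$ is a holomorphic family of trace-class operators (the kernel is jointly continuous in $(\mu,\eta,\eta')$ and the contour $\Gamma_\eta$ is compact), so $\det(I+\mu J^{\Gamma}_{\mu})$ is holomorphic in $\mu$ on the desired open set, and together with the entire prefactor and the explicit simple pole of $\mu^{-1}$ this establishes meromorphy of $I(\mu)$. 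The most delicate point, in my view, is the first step: verifying that every uncancelled pole $\tau^k$, $k\geq 1$, lies strictly \emph{inside} the body of $\mathcal{C}_\e$ rather than in the region between the contours. This comes down to the quantitative inequality $\tau<1-\e^{1/2}$, together with the narrowness of the cigar's vertical extent which guarantees these real poles are swept into its interior. Once this is rigorously checked, Cauchy's theorem closes the argument.
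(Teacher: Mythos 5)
Your Cauchy-theorem plan is essentially sound and arrives at the right conclusion, but you take a genuinely different and more labor-intensive route than the paper, and one of your steps is asserted without justification (though it turns out to be harmless). The claim that the simple zeros of $\prod_{k\ge 0}(1-\mu\tau^k)$ \emph{exactly cancel} the singularities of $\det(I+\mu J^{\Gamma}_{\mu})$ at $\mu\in\{1,\tau^{-1},\tau^{-2},\ldots\}$ conflates the poles of the kernel ingredient $f$ with the singularities of the Fredholm determinant: the $n$-th term of the Fredholm expansion involves an $n$-fold product of kernels, so a priori the determinant could blow up at $\mu=\tau^{-j}$ far faster than a single factor $(1-\mu\tau^j)$ can repair, and showing otherwise is nontrivial. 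Fortunately you never need this claim, since for $j\ge 0$ the points $\tau^{-j}$ satisfy $|\tau^{-j}|\ge 1$ and so lie strictly outside both $S_\e$ (radius $<1$) and $\mathcal{C}_\e\subset\{|\mu|<1\}$, hence outside the region swept by the deformation; what actually carries the argument is your second step (holomorphy of the trace-class family $\mu\mapsto\mu J^{\Gamma}_{\mu}$, hence of the determinant, away from the full set $\{\tau^k:k\in\Z\}$) together with the bound $\tau<1-\e^{1/2}$ that keeps the inner poles $\{0\}\cup\{\tau^k:k\ge 1\}$ inside both contours. The paper's proof is shorter and conceptually cleaner: it recalls that the expression $\prod_{k\ge 0}(1-\mu\tau^k)\det(I+\mu J^{\Gamma}_{\mu})$ was \emph{derived in} \cite{TW4} from, and hence equals, $\det(I-\lambda K)/\prod_{k=0}^{m-1}(1-\lambda\tau^k)$ with $\lambda=\tau^{-m}\mu$ and $K$ a fixed ($\lambda$-independent) operator. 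Since $\det(I-\lambda K)$ is entire in $\lambda$, the integrand is manifestly meromorphic with poles only at $\mu\in\{\tau,\ldots,\tau^m\}$, all inside both contours, and no discussion of the convergence domain of $f$, of trace class as a function of $\mu$, or of the singularity structure of the Fredholm determinant is required.
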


We thank the referee for pointing out a mistake in the proof of this
result in~\cite{ACQ}, and suggesting an alternative proof which we
detail in Section~\ref{proofs_sec}.

Having made this deformation of the $\mu$ contour, we now observe that
the natural scale for $\mu$ is on order $\e^{1/2}$.\vadjust{\goodbreak} With this in mind
we make the change of variables
\[
\mu= \e^{1/2}\tilde\mu.
\]

\begin{remark}
Throughout the proof of this theorem and its lemmas and propositions,
we will use the tilde to denote variables which are $\e^{1/2}$ rescaled
versions of the original, untilded variables.
\end{remark}

The $\tilde\mu$ variable now lives on the contour
\[
\tilde\mathcal{C}_{\e} = \{e^{i\theta}\}_{\pi/2\leq\theta\leq
3\pi
/2}\cup\{x\pm i\}_{0<x\leq\e^{-1/2}-1}\cup\{\e^{-1/2}-1+iy\}_{-1<y<1},
\]
which grow and ultimately approach
\[
\tilde\mathcal{C} = \{e^{i\theta}\}_{\pi/2\leq\theta\leq3\pi
/2}\cup
\{x\pm i\}_{x>0}.
\]
In order to show convergence of the integral as $\e$ goes to zero, we
must consider two things, the convergence of the integrand for $\tilde
\mu$ in some compact region around the origin on $\mathcal{\tilde{C}}$,
and the controlled decay of the integrand on $\tilde\mathcal{C}_{\e}$
outside of that compact region. This second consideration will allow us
to approximate the integral by a finite integral in $\tilde\mu$, while
the first consideration will tell us what the limit of that integral
is. When all is said and done, we will paste back in the remaining part
of the $\tilde\mu$ integral and have our answer.
With this in mind we give the following bound which is taken word for
word from Lemma 2.3 of~\cite{ACQ} and whose proof (given therein)
relies only on elementary inequalities for the logarithm.

\begin{lemma}\label{mu_inequalities_lemma}
Define two regions, depending on a fixed parameter $r\geq1$,
\begin{eqnarray*}
R_1 &=& \biggl\{\tilde\mu\dvtx  |\tilde\mu|\leq\frac{r}{\sin(\pi/10)}\biggr\},\\
R_2 &=& \biggl\{\tilde\mu\dvtx  \re(\tilde\mu)\in\biggl[\frac{r}{\tan(\pi
/10)},\e
^{-1/2}\biggr] \mbox{ and } \im(\tilde\mu)\in[-2,2]\biggr\}.
\end{eqnarray*}
$R_1$ is compact, and $R_1 \cup R_2$ contains all of the contour
$\mathcal{\tilde{C}}_{\e}$. Furthermore define the function (the
infinite product after the change of variables)
\[
p_{\e}(\tilde\mu) = \prod_{k=0}^{\infty} (1-\e^{1/2}\tilde\mu
\tau^k).
\]
Then uniformly in $\tilde\mu\in R_1$,
%
%
\begin{equation}\label{g_e_ineq1}
p_{\e}(\mu)\to e^{-\tilde\mu/2}.
\end{equation}
Also, for all $\e<\e_0$ (some positive constant) there exists a
constant $c$, such that for all $\tilde\mu\in R_2$, we have the
following tail bound:
%
%
\begin{equation}\label{g_e_ineq2}
|p_{\e}(\tilde\mu)| \leq|e^{-\tilde\mu/2}| |e^{-c\e^{1/2}\tilde
\mu^2}|.
\end{equation}
[By the choice of $R_2$, for all $\tilde\mu\in R_2$, $\re(\tilde\mu
^2)>\delta>0$ for some fixed $\delta$. The constant~$c$ can be taken to
be $1/8$.]\vadjust{\goodbreak}
\end{lemma}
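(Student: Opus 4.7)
The strategy is to take logarithms and reduce the infinite product to an asymptotic analysis of geometric sums. The key arithmetic is: since $p = \tfrac{1}{2} - \tfrac{1}{2}\e^{1/2}$ and $q = \tfrac{1}{2} + \tfrac{1}{2}\e^{1/2}$, we have $\tau = p/q = 1 - 2\e^{1/2} + O(\e)$, and hence the geometric sum
\begin{equation*}
\sum_{k\ge 0}\tau^{jk} \;=\; \frac{1}{1-\tau^j} \;=\; \frac{\e^{-1/2}}{2j}\bigl(1 + O(\e^{1/2})\bigr), \qquad j \ge 1.
\end{equation*}
This is what allows the infinite product to produce a finite, nontrivial limit: the small factor $\e^{1/2}$ inside each term balances the divergent geometric sum.

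For part (1), on the compact region $R_1$ I would use that $|\e^{1/2}\tilde\mu\tau^k| \le \e^{1/2}|\tilde\mu|$ is uniformly $O(\e^{1/2})$, so the Taylor expansion $\log(1-w) = -w - w^2/2 + O(|w|^3)$ applies term-by-term. Summing and invoking the geometric sum identity,
\begin{equation*}
\log p_\e(\tilde\mu) \;=\; -\frac{\e^{1/2}\tilde\mu}{1-\tau} - \frac{\e\,\tilde\mu^2}{2(1-\tau^2)} + O\!\left(\frac{\e^{3/2}|\tilde\mu|^3}{1-\tau^3}\right) \;=\; -\tilde\mu/2 + O(\e^{1/2}),
\end{equation*}
uniformly for $\tilde\mu\in R_1$. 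Exponentiating yields \eqref{g_e_ineq1}.

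For part (2), the difficulty is that $|\tilde\mu|$ can be as large as $\e^{-1/2}$, so for small $k$ the argument $\e^{1/2}\tilde\mu\tau^k$ is not small and the naive Taylor expansion fails. I would instead work with the real expression
\begin{equation*}
\log|p_\e(\tilde\mu)|^2 \;=\; \sum_{k\ge 0}\log\!\Bigl(1 - 2\e^{1/2}\re(\tilde\mu)\tau^k + \e|\tilde\mu|^2\tau^{2k}\Bigr),
\end{equation*}
and apply the elementary inequality $\log(1-y)\le -y - y^2/2$, valid for $y\in[0,1)$ (on $R_2$ the argument of the log lies in this range once $k$ is not too small, since $\re(\tilde\mu)>0$ dominates the $|\tilde\mu|^2\tau^{2k}$ correction). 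The linear piece summed against $\sum_k \tau^k \sim \e^{-1/2}/2$ produces $-\re(\tilde\mu)$, matching $|e^{-\tilde\mu/2}|$. The quadratic piece contributes $-\tfrac{1}{2}\bigl(2\re(\tilde\mu)\e^{1/2}\tau^k - \e|\tilde\mu|^2\tau^{2k}\bigr)^2$ per term; expanding and keeping only the dominant contribution $-2\e\re(\tilde\mu)^2\tau^{2k}$, while the $+\e|\tilde\mu|^2\tau^{2k}$ from the linear piece adds $+\e|\tilde\mu|^2\tau^{2k}$, so summing gives
\begin{equation*}
\e|\tilde\mu|^2\,\tfrac{\e^{-1/2}}{4} - 2\e\re(\tilde\mu)^2\,\tfrac{\e^{-1/2}}{4} + o(\e^{1/2}|\tilde\mu|^2) \;=\; -\tfrac{1}{4}\e^{1/2}\re(\tilde\mu^2) + o(\cdot).
\end{equation*}
Halving (since we computed $\log|p_\e|^2$) produces the stated $c = 1/8$. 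On $R_2$ one checks $\re(\tilde\mu^2) = \re(\tilde\mu)^2 - \im(\tilde\mu)^2 \ge (r/\tan(\pi/10))^2 - 4 > 0$ for $r \ge 1$, so the sign is as required.

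\textbf{Main obstacle.} The delicate point is the small-$k$ range where $\e^{1/2}|\tilde\mu|\tau^k$ is close to $1$, so that neither the Taylor expansion for $\log(1-w)$ nor the inequality $\log(1-y)\le -y-y^2/2$ applies. I would handle this by choosing a threshold $k_0(\tilde\mu) \sim -\log(\e^{1/2}|\tilde\mu|)/(2\e^{1/2})$, estimating the finitely many initial terms $k < k_0$ directly by $|1-\e^{1/2}\tilde\mu\tau^k| \le 1 + \e^{1/2}|\tilde\mu|\tau^k$ (which contributes a bounded additive error), and applying the Taylor-type bound to the infinite tail $k\ge k_0$ where all quantities are genuinely small. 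Careful bookkeeping of the threshold-error terms, to show they do not disturb the $-\tilde\mu/2$ and $-\tfrac{1}{8}\e^{1/2}\tilde\mu^2$ leading behavior uniformly on $R_2$, is the technical heart of the argument.
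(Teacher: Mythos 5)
Part (1) of your proposal is correct and matches the paper's argument, which uses the elementary bound $|\log(1-z)+z|\le|z|^2$ on $R_1$ to get a uniform $O(\e^{1/2})$ error; your Taylor expansion with the geometric-sum bookkeeping is essentially the same.

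Part (2) has a genuine gap, but not where you think it is. Your ``main obstacle'' about small $k$ is a red herring: on $R_2$, writing $y_k=2\e^{1/2}\re(\tilde\mu)\tau^k-\e|\tilde\mu|^2\tau^{2k}$, one checks that $y_k\in[0,1)$ for \emph{every} $k$ (e.g., $y_0 = 1-(1-\e^{1/2}\re\tilde\mu)^2-\e(\im\tilde\mu)^2 < 1$, and positivity follows from $\re\tilde\mu\ge r/\tan(\pi/10)$), so the scalar inequality $\log(1-y)\le -y-y^2/2$ applies throughout with no threshold argument. The real problem is that this scalar inequality is \emph{too loose} when $\re\tilde\mu\sim\e^{-1/2}$: the terms you discard as ``$o(\e^{1/2}|\tilde\mu|^2)$'' are not small there. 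Carrying out the sums exactly and comparing to the target, the difference between your upper bound and $-\tfrac12\re\tilde\mu-\tfrac18\e^{1/2}\re(\tilde\mu^2)$ is, to leading order,
\begin{equation*}
\tfrac{1}{6}\e\,\re(\tilde\mu)\,|\tilde\mu|^2 \;-\; \tfrac{1}{32}\e^{3/2}|\tilde\mu|^4,
\end{equation*}
which at $\re\tilde\mu=\e^{-1/2}$, $\im\tilde\mu=0$ equals $\bigl(\tfrac{1}{6}-\tfrac{1}{32}\bigr)\e^{-1/2}=\tfrac{13}{96}\e^{-1/2}\to+\infty$; in fact your bound fails there even with $c=0$. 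The paper instead applies the complex one-sided inequality $\re\log(1-z)\le\re(-z-z^2/2)$ for $z$ in the sector $D$ (which contains $\e^{1/2}\tilde\mu\tau^k$ for all $k$ when $\tilde\mu\in R_2$). This is strictly tighter than your real-variable bound: the difference of the two right-hand sides is $|z|^2\bigl(\re z-\tfrac14|z|^2\bigr)\ge 0$ on the relevant range, and summing the paper's version directly yields $\log|p_\e|\le-\tfrac{1+\e^{1/2}}{2}\re\tilde\mu-\tfrac{\e^{1/2}(1+\e^{1/2})^2}{8}\re(\tilde\mu^2)$, which dominates the target since $\re(\tilde\mu^2)>0$ on $R_2$. (Alternatively, your real-variable route can be repaired by retaining the next term $\log(1-y)\le-y-y^2/2-y^3/3$, which contributes $\approx-\tfrac29\e(\re\tilde\mu)^3$ and flips the sign of the excess; but the ``keep only the dominant contribution'' step as written is not a valid upper bound.)
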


We now turn our attention to the Fredholm determinant term in the
integrand. Just as we did for the prefactor infinite product in Lemma
\ref{mu_inequalities_lemma} we must establish uniform convergence of
the determinant for $\tilde\mu$ in a fixed compact region around the
origin, and a suitable tail estimate valid outside that compact region.
The tail estimate must be such that for each finite~$\e$, we can
combine the two tail estimates (from the prefactor and from the
determinant) and show that their integral over the tail part of
$\tilde\mathcal{C}_{\e}$ is small and goes to zero as we enlarge the
original compact region. For this we have the following two
propositions (the first is the most substantial and is proved in
Section~\ref{J_to_K_sec}, while the second is proved in Section \ref
{proofs_sec}).

\begin{proposition}\label{uniform_limit_det_J_to_Kcsc_proposition}
Fix $s\in\R$, $T>0$ and $X\in\R$. Then for any compact subset of
$\tilde\mathcal{C}$, we have that for all $\delta>0$, there exists an
$\e_0>0$ such that for all $\e<\e_0$ and all $\tilde\mu$ in the
compact subset,
\[
\bigl|\det(I+\e^{1/2}\tilde\mu J^{\Gamma}_{\e^{1/2}\tilde\mu
})_{L^2(\Gamma_{\eta})} - \det(I-K^{\csc,\Gamma}_{a'})_{L^2(\tilde
\Gamma
_{\eta})}\bigr|<\delta.
\]
Here $a'=a+\log2$ and $K_{a',\Gamma}^{\csc}$ is defined in equation
(\ref{KGammaDef}) and depends implicitly on $\tilde\mu$.
\end{proposition}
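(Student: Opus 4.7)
The plan is to follow the steepest descent program sketched heuristically in Section \ref{heuristic_sec}, proceeding in four stages: contour deformation, rescaling, pointwise asymptotic analysis of each of the four factors in the integrand of $J^\Gamma$, and finally an upgrade from pointwise to trace-class convergence of the kernels. This is parallel to the proof in \cite{ACQ}, with the main new element being the $g$-factor, which is treated via the $q$-Gamma identity (\ref{qgamma_eqn}).

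First I would deform the original circular contours $\Gamma_\zeta$ and $\Gamma_\eta$ to new contours that pass through the double critical point $\xi = -1 - 2\e^{1/2}X/T$ of $\Psi$, leaving it along its steepest descent directions (angles $\pm 2\pi/3$ for $\zeta$, angles $\pm\pi/3$ for $\eta$). The deformation must be done carefully: it cannot cross the poles of $f(\mu,\zeta/\eta')$ at $\zeta/\eta' = \tau^k$ (handled exactly as in \cite{ACQ}, by ensuring $|\zeta/\eta'|$ stays in $(1,\tau^{-1})$ with a small spacing $c\e^{1/2}$ between the two contours), nor the poles of $1/g(\zeta)$ at $\zeta = -\tau^{-n}$, $n \geq 0$. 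The new poles of $1/g$ force the deformed $\zeta$-contour to dip to the right of $\xi$ in a small dimple, which after rescaling becomes the dimple in $\tilde\Gamma_\zeta$ that avoids the poles of $\Gamma(2^{1/3}\tilde\zeta - X/T)$. The exponential decay on the tails of these contours, controlled by standard estimates on $\re\Psi$ (as in \cite{ACQ}, Proposition 1.6), allows the tails to be truncated with a $\delta$-small error.

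Next I would rescale via $\tilde\zeta = 2^{-4/3}\e^{-1/2}(\zeta-\xi)$, $\tilde\eta = 2^{-4/3}\e^{-1/2}(\eta-\xi)$, $\tilde\eta' = 2^{-4/3}\e^{-1/2}(\eta'-\xi)$, and analyze the four factors one by one on the rescaled contours:
\begin{enumerate}
\item[(a)] Taylor expansion of $\Psi$ about $\xi$ yields $\Psi(\zeta) - \Psi(\eta') \to -\tfrac{T}{3}(\tilde\zeta^3 - \tilde\eta'^3) + 2^{1/3}a(\tilde\zeta - \tilde\eta')$, with the error controlled uniformly on the relevant bounded portions of the rescaled contours.
\item[(b)] The rational factor $d\zeta/[\eta'(\zeta-\eta)]$ produces $-d\tilde\zeta/(\tilde\zeta-\tilde\eta)$.
\item[(c)] The ratio $g(\eta')/g(\zeta)$ is analyzed via (\ref{qgamma_eqn}) with $r=2\e^{1/2}$ and $c=1$, giving the ratio of Gamma functions $\Gamma(2^{1/3}\tilde\zeta - X/T)/\Gamma(2^{1/3}\tilde\eta' - X/T)$ together with the apparently divergent prefactor $\exp\{2^{1/3}(\tilde\zeta-\tilde\eta')\log(2\e^{1/2})\}$. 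Uniform convergence of the $q$-Gamma function to the Gamma function on compacts handles the main term.
\item[(d)] For the $f$-term, I would shift the summation index by $n_0 = \lfloor\log(\e^{-1/2})/\log\tau\rfloor$ to produce $\e^{1/2}\mu f(\mu,z) \approx z^{n_0}\tilde\mu\sum_k \tau^k/(1-\tau^{k+n_0}\mu) z^k$ with $z = \zeta/\eta'$. The factor $z^{n_0}$ behaves like $\exp\{-\tfrac14\tilde z\log\e\}$, which exactly cancels the divergent exponential from (c), leaving only the finite constant $2^{1/3}\log 2\,(\tilde\zeta-\tilde\eta')$ that gets absorbed into $a' = a+\log 2$. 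The remaining sum is a Riemann sum (with mesh $\e^{1/2}$) for the integral $\int_{-\infty}^\infty \tilde\mu e^{-2^{1/3}t(\tilde\zeta-\tilde\eta')}/(e^t - \tilde\mu)\,dt$. The Riemann sum convergence can be made rigorous by a residue argument (as in \cite{ACQ}) which identifies both the sum and the integral as evaluations of a contour integral whose difference is explicitly small.
\end{enumerate}

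Finally I would upgrade pointwise convergence to Fredholm determinant convergence. The strategy is to write $\det(I + \e^{1/2}\tilde\mu J^\Gamma_{\e^{1/2}\tilde\mu}) = \sum_n \tfrac{1}{n!}\int \det[\e^{1/2}\tilde\mu J^\Gamma(\eta_i,\eta_j)]_{i,j=1}^n\prod d\eta_i$ and verify the hypotheses of dominated convergence term-by-term, then uniformly in $n$. The exponential decay coming from the cubic $-\tfrac{T}{3}\tilde\zeta^3$ on $\tilde\Gamma_\zeta$ and $+\tfrac{T}{3}\tilde\eta^3$ on $\tilde\Gamma_\eta$ provides the integrable majorants needed to apply Hadamard's inequality and control the full series uniformly in $\e$ small and $\tilde\mu$ in the compact set. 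The main obstacle I anticipate is the deformation step in the presence of the $1/g$ poles, which accumulate near $\xi$ as $\e \to 0$; a careful accounting of the dimple, together with quantitative control on the separation between the rescaled contour and the poles of $\Gamma(2^{1/3}\tilde\zeta - X/T)$, is needed to obtain uniform bounds on the integrand that survive the $\e\to 0$ limit. Once these estimates are in place, the argument proceeds by the same mechanism as \cite{ACQ}, with the Gamma-function ratio being the sole new feature in the limit kernel.
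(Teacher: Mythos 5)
Your proposal follows the paper's overall strategy---deform, truncate tails, rescale, pointwise analysis on compacts, trace-class upgrade---and the treatment of the $g$-factor on compact parts of the contours (the $q$-Gamma identity (\ref{qgamma_eqn}) and the exact cancellation of the $\log\e$ divergence against $z^{n_0}$) is exactly Lemma \ref{gratio_limit} of the paper. The Riemann-sum treatment of the $f$-term and the Hadamard/dominated-convergence route to Fredholm convergence are legitimate alternatives to the paper's functional-equation and Hilbert--Schmidt-factorization arguments. But there is a genuine gap in the tail-truncation step: you attribute the ability to discard the parts of the contours outside an $\mathcal{O}(\e^{1/2})$-neighborhood of $\xi$ to the decay of $\re\Psi$ alone, citing the estimate already available in \cite{ACQ}. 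The integrand for $J^{\Gamma}$, however, contains the extra factor $(\zeta/\eta)^{n_0}\,g(\eta')/g(\zeta)$, whose modulus can \emph{grow} along the circular part of the contours and is not controlled by the ACQ estimates. The paper handles this with Lemma \ref{new_lemma_1}, which gives $\re\big(n_0\log(\zeta/\eta)+\log(g(\eta)/g(\zeta))\big)\le c\,(|\xi-\zeta|+|\xi-\eta|)\e^{-1}$ for \emph{any} $c>0$ (provided $l_0$ is large), paired with the second, $\e^{-1}$-rate estimate in Lemma \ref{kill_gamma_2_lemma} for $\re\Psi$, so that the exponential decay dominates the growth of the $g$-ratio. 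Without some such comparison the truncation step is unjustified, and this is the main new technical ingredient relative to \cite{ACQ}; your proposal does not address it.

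A secondary issue is your contour choice. You follow the heuristic and take the rescaled contours to be rays at angles $\pm 2\pi/3$ (for $\tilde\zeta$) and $\pm\pi/3$ (for $\tilde\eta$), but the paper explicitly warns that these are \emph{not} the contours used in the proof, and instead takes vertical lines $\re\tilde\zeta=-c_3/2$, $\re\tilde\eta=c_3/2$ with the dimple (Definitions \ref{main_theorem_definition}, \ref{change_of_var_tilde_definitions}). This is not cosmetic: the vertical choice pins $\re\big(2^{1/3}(\tilde\zeta-\tilde\eta')\big)=-1/2$, which is precisely the strip on which $\int_{-\infty}^{\infty}\tilde\mu\,e^{-2^{1/3}t(\tilde\zeta-\tilde\eta')}/(e^{t}-\tilde\mu)\,dt$ converges and equals the $\csc$ expression, and this constancy is also what makes the trace-norm bounds work when re-including the infinite portions of $\tilde\Gamma_\eta,\tilde\Gamma_\zeta$. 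On your rays $\re(\tilde\zeta-\tilde\eta')\to-\infty$ and the limit integral you write down diverges; the proposal must be corrected to the vertical-line contours before the rest of the argument can go through.
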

\begin{proposition}\label{originally_cut_mu_lemma}
There exist $c,c'>0$ and $\e_0>0$ such that for all $\e<\e_0$ and all
$\tilde\mu\in\tilde\mathcal{C}_{\e}$,
\[
\bigl|p_{\e}(\tilde\mu)\det(I+\e^{1/2}\tilde\mu J^{\Gamma}_{\e
^{1/2}\tilde\mu})_{L^2(\Gamma_{\eta})}\bigr| \leq c'e^{-c|\tilde\mu|}.
\]
\end{proposition}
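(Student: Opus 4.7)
The plan is to split the contour $\mathcal{\tilde C}_{\e}$ into the bounded region $R_1$ and the tail region $R_2$ from Lemma~\ref{mu_inequalities_lemma}, and prove the bound separately on each piece, combining in each case an estimate on the prefactor $p_\e$ with an estimate on the Fredholm determinant. The desired inequality is weakest when $|\tilde\mu|$ is small, so the strategy is to obtain honest exponential decay on the tail and then absorb $R_1$ into the constant $c'$.

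On $R_1$, which is a fixed compact set, Proposition~\ref{uniform_limit_det_J_to_Kcsc_proposition} gives uniform convergence of the determinant to a continuous function of $\tilde\mu$, hence it is uniformly bounded in $\e$ and in $\tilde\mu\in R_1$. Together with the bound $|p_\e(\tilde\mu)|\to |e^{-\tilde\mu/2}|$ from~(\ref{g_e_ineq1}), the product is uniformly bounded by a constant $M$. Choosing $c$ so that $e^{-c|\tilde\mu|}\ge e^{-c\cdot\mathrm{diam}(R_1)}$ on $R_1$ and $c'=Me^{c\cdot\mathrm{diam}(R_1)}$, the inequality on $R_1$ is immediate.

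On $R_2$, by~(\ref{g_e_ineq2}) we have $|p_\e(\tilde\mu)|\leq |e^{-\tilde\mu/2}|\,|e^{-c\e^{1/2}\tilde\mu^2}|$, and since $\re(\tilde\mu)$ is bounded below by a positive constant while $|\im(\tilde\mu)|\le 2$, this gives a bound of the form $C e^{-c_1|\tilde\mu|}\cdot e^{-c_2\e^{1/2}|\tilde\mu|^2}$. It therefore suffices to show that the Fredholm determinant is bounded by $e^{A|\tilde\mu|}$ uniformly in $\e$ small and $\tilde\mu\in R_2$, for some $A<c_1$; the remaining Gaussian factor will only improve the decay. I would apply Hadamard's inequality in the form $|\det(I+B)|\le \exp(\|B\|_1)$ with trace norm, and in turn bound $\|B\|_1$ by the Hilbert--Schmidt norm after a Cauchy--Schwarz split of the kernel into two square-integrable factors along $\Gamma_\zeta\times\Gamma_\eta$. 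The required pointwise control on the integrand of $J^{\Gamma}_{\e^{1/2}\tilde\mu}$ comes from three ingredients: (i) the steepest-descent choice of $\Gamma_\zeta,\Gamma_\eta$ so that $\re\Psi(\zeta)$ decreases and $\re\Psi(\eta')$ increases away from $\xi$ at a quantitative rate, producing Gaussian-in-contour-arclength decay in $|\exp(\Psi(\zeta)-\Psi(\eta'))|$; (ii) the bound $|\e^{1/2}\tilde\mu f(\e^{1/2}\tilde\mu,\zeta/\eta')|\le C|\tilde\mu|$ that follows from the cigar-contour choice keeping $|1-\tau^k\e^{1/2}\tilde\mu|$ bounded below uniformly in $k$, together with the geometric decay of $\tau^k|z|^k$ in the $f$-series for $|\zeta/\eta'|$ pinned near $1$ by the dimple; (iii) a uniform bound on $|g(\eta')/g(\zeta)|$ along $\Gamma_\zeta,\Gamma_\eta$, which is the new ingredient in our problem.

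The main obstacle I anticipate is exactly ingredient (iii). The $g$-quotient does not depend on $\tilde\mu$, so it only needs to be controlled uniformly in $\e$, but recall from~(\ref{qgamma_eqn})--(\ref{gfinaleqn}) that on the relevant rescaled window around $\xi$ the ratio behaves like a finite Gamma-function ratio times $\exp\{2^{1/3}(\tilde\zeta-\tilde\eta')\log(2\e^{1/2})\}$. The potentially dangerous $\log\e$ factor must be controlled by an appropriate deformation of $\Gamma_\zeta,\Gamma_\eta$ (the same dimples used in defining $\tilde\Gamma_\zeta,\tilde\Gamma_\eta$) so that $\re(\tilde\zeta-\tilde\eta')$ has a sign favorable to absorbing the $\log\e$, and by checking that once one leaves the $O(\e^{1/2})$ window around $\xi$ the $q$-Pochhammer representation of $g$ produces at most polynomial growth which is dwarfed by the Gaussian decay from $\Psi$. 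Once this uniform bound on $|g(\eta')/g(\zeta)|$ is in place, the trace-norm estimate yields $\|B\|_1\le C|\tilde\mu|^{\alpha}$ for some $\alpha<2$, and combining with the $e^{-c_2\e^{1/2}|\tilde\mu|^2}$ from $p_\e$ (which dominates any subquadratic growth for $|\tilde\mu|$ large enough that it matters) produces the desired bound $c' e^{-c|\tilde\mu|}$.
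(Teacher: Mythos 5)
Your decomposition into $R_1$/$R_2$, your use of $|\det(I+K)|\le e^{\|K\|_1}$ with a Hilbert--Schmidt factorization, and your identification of $g(\eta')/g(\zeta)$ as the new ingredient are all on the right track, and on $R_1$ the argument is fine. The gap is in ingredient (ii) and in the final bookkeeping on $R_2$. You propose to settle for $\|B\|_1\le C|\tilde\mu|^\alpha$ for some $\alpha<2$ and let the Gaussian factor $e^{-c\e^{1/2}\tilde\mu^2}$ from Lemma~\ref{mu_inequalities_lemma} absorb it. That does not work: on $\mathcal{\tilde C}_\e$ the Gaussian factor, written as $e^{-(\text{rate})|\tilde\mu|}$, has rate comparable to $\e^{1/2}|\tilde\mu|$, which is $o(1)$ for any fixed $|\tilde\mu|$ as $\e\to 0$ and is at most $O(1)$ even at the far end $|\tilde\mu|\sim\e^{-1/2}$. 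So for $|\tilde\mu|$ in a range like $[M,\e^{-1/2}]$ with $M$ fixed and large, the only decay available against $e^{\|K\|_1}$ is $e^{-|\tilde\mu|/2}$. If $\|K\|_1\le C|\tilde\mu|$ with an uncontrolled constant $C$ (let alone $C|\tilde\mu|^\alpha$ with $\alpha>1$), this is not enough: you would need to know $C<1/2$, and nothing in the naive estimate gives that. This is precisely why the paper proves the strictly sublinear bound of Lemma~\ref{final_estimate}: $|\tilde\mu f(\tilde\mu,z)|\le c|\tilde\mu|^{\alpha}/|1-z|$ with $\alpha\in(0,1)$ (in fact with $\alpha$ as small as one likes). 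Once the trace norm is $O(|\tilde\mu|^{\alpha})$ with $\alpha<1$, the bare $e^{-|\tilde\mu|/2}$ from $p_\e$ dominates $e^{c''|\tilde\mu|^{\alpha}}$ uniformly, with no need for the Gaussian term. Achieving $\alpha<1$ is the real technical content: it requires recentering the doubly infinite $f$-sum at $k^*\approx\tfrac12\e^{-1/2}\log|\tilde\mu|$, extracting $|z|^{k^*}\approx|\tilde\mu|^{\alpha}$ from $|z|=1+O(\e^{1/2})$, and controlling the remaining sum as a convergent Riemann integral whose size is $O(\log|\tilde\mu|)$ --- none of which is captured by the geometric-decay heuristic in your ingredient (ii).

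A secondary point on ingredient (iii): your plan to control $|g(\eta')/g(\zeta)|$ by itself is the wrong cut. The $\log\e$ divergence in the $g$-ratio is exactly cancelled by the $z^{n_0}$ factor one extracts from $\mu f(\mu,z)=z^{n_0}(\tilde\mu f(\tilde\mu,z)+O(\e^{1/2}))$, so the right object to bound is the combination $(\zeta/\eta')^{n_0}\,g(\eta')/g(\zeta)$. The paper's Lemma~\ref{new_lemma_1} shows this combination grows at most like $e^{c(|\xi-\zeta|+|\xi-\eta|)\e^{-1}}$ with $c$ arbitrarily small for $l$ large, which is then overwhelmed by the quantitatively stronger decay of $\re\Psi$ of order $\e^{-1}$ along the circular parts of the contours (second part of Lemma~\ref{kill_gamma_2_lemma}). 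Bounding $g(\eta')/g(\zeta)$ alone, without the $z^{n_0}$ compensation, is genuinely problematic for small $\e$.
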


This exponential decay bound on the integrand shows that, by choosing a
suitably large (fixed) compact region around zero along the contour
$\tilde\mathcal{C}_{\e}$, it is possible to make the $\tilde\mu$
integral outside of this region arbitrarily small, uniformly in $\e\in
(0,\e_0)$. This means that we may assume henceforth that $\tilde\mu$
lies in a compact subset of $\tilde\mathcal{C}$.

Now that we are on a fixed compact set of $\tilde\mu$, the first part
of Lemma~\ref{mu_inequalities_lemma} and Proposition \ref
{uniform_limit_det_J_to_Kcsc_proposition} combine to show that the
integrand converges uniformly to
\[
\frac{e^{-\tilde\mu/2}}{\tilde\mu} \det(I-K^{\csc,\Gamma
}_{a'})_{L^2(\tilde\Gamma_{\eta}),}
\]
and hence the integral converges to the integral with this integrand.

To finish the proof of the limit in Theorem~\ref{thm_proof_thm}, it is
necessary that for any $\delta$ we can find a suitably small $\e_0$
such that the difference between the two sides of the limit differ by
less than $\delta$ for all $\e<\e_0$. Technically we are in the
position of a $\delta/3$ argument. One portion of $\delta/3$ goes to
the cost of cutting off the $\tilde\mu$ contour outside of some compact
set. Another $\delta/3$ goes to the uniform convergence of the
integrand. The final portion goes to repairing the $\tilde\mu$ contour.
As $\delta$ gets smaller, the cut for the $\tilde\mu$ contour must
occur further out. Therefore the limiting integral will be over the
limit of the $\tilde\mu$ contours,\vadjust{\goodbreak} which we called $\tilde\mathcal{C}$. The final $\delta/3$ is spent on the following proposition, whose
proof is given in Section~\ref{proofs_sec}.

\begin{proposition}\label{reinclude_mu_lemma}
There exists $c,c'>0$ such that for all $\tilde\mu\in\tilde\mathcal{C}$ with \mbox{$|\tilde\mu|\geq1$},
\[
\biggl|\frac{e^{-\tilde\mu/2}}{\tilde\mu} \det(I-K^{\csc,\Gamma
}_{a})_{L^2(\tilde\Gamma_{\eta})}\biggr| \leq|c'e^{-c\tilde\mu}|.
\]
\end{proposition}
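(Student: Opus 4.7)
The plan is to extract this bound as the $\e\downarrow 0$ limit of the uniform tail estimate already established in Proposition \ref{originally_cut_mu_lemma}. Fix $\tilde\mu\in\mathcal{\tilde C}$ with $|\tilde\mu|\ge 1$. Since the contours $\mathcal{\tilde C}_\e$ exhaust $\mathcal{\tilde C}$ as $\e\downarrow 0$, one has $\tilde\mu\in\mathcal{\tilde C}_\e$ for all $\e$ small enough, and Proposition \ref{originally_cut_mu_lemma} then supplies
\begin{equation*}
\left|p_\e(\tilde\mu)\det(I+\e^{1/2}\tilde\mu J^{\Gamma}_{\e^{1/2}\tilde\mu})_{L^2(\Gamma_\eta)}\right|\le c'e^{-c|\tilde\mu|},
\end{equation*}
with $c,c'>0$ independent of $\e$ and of $\tilde\mu$. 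My strategy is then to pass to the $\e\to 0$ limit on the left, using two inputs that are essentially in hand.

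First, the singleton $\{\tilde\mu\}$ is a (trivially) compact subset of $\mathcal{\tilde C}$, so Proposition \ref{uniform_limit_det_J_to_Kcsc_proposition} identifies the limit of the Fredholm determinant as $\det(I-K^{\csc,\Gamma}_{a'})_{L^2(\tilde\Gamma_\eta)}$ with $a'=a+\log 2$; the harmless rescaling $\tilde\mu\mapsto 2\tilde\mu$ used in the heuristic computation of Section \ref{heuristic_sec} reconciles this with the form $K^{\csc,\Gamma}_a$ appearing in the proposition. Second, for the prefactor I would expand $\log p_\e(\tilde\mu)=\sum_{k\ge 0}\log(1-\e^{1/2}\tilde\mu\tau^k)$: together with $1-\tau=2\e^{1/2}+O(\e)$ and $1-\tau^2\sim 4\e^{1/2}$, the linear term telescopes to $-\e^{1/2}\tilde\mu/(1-\tau)\to -\tilde\mu/2$ while the quadratic remainder is dominated by $\e|\tilde\mu|^2/(1-\tau^2)=O(\e^{1/2}|\tilde\mu|^2)\to 0$ for fixed $\tilde\mu$. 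Hence $p_\e(\tilde\mu)\to e^{-\tilde\mu/2}$, and letting $\e\to 0$ in the displayed inequality yields
\begin{equation*}
\left|e^{-\tilde\mu/2}\det(I-K^{\csc,\Gamma}_{a})_{L^2(\tilde\Gamma_\eta)}\right|\le c'e^{-c|\tilde\mu|}.
\end{equation*}

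To finish, I would divide by $|\tilde\mu|\ge 1$ and compare $|\tilde\mu|$ with $\re(\tilde\mu)$ on $\mathcal{\tilde C}$: on the left semicircle $|\tilde\mu|=1$ while $\re(\tilde\mu)\in[-1,0]$, and on the horizontal rays $|\tilde\mu|=\sqrt{\re(\tilde\mu)^2+1}\ge\re(\tilde\mu)$; in either case $e^{-c|\tilde\mu|}\le C|e^{-c\tilde\mu}|$ after possibly shrinking $c$ and enlarging $c'$, giving the stated bound. The main work is packaged into the two earlier propositions — the $\e$-uniform exponential tail of Proposition \ref{originally_cut_mu_lemma} and the determinantal convergence on compact sets of Proposition \ref{uniform_limit_det_J_to_Kcsc_proposition} — so the present argument is a soft limit passage. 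The only genuinely new ingredient is the pointwise convergence of $p_\e$ at a fixed but possibly large $\tilde\mu$ (Lemma \ref{mu_inequalities_lemma} only records uniform convergence on the compact set $R_1$), and that is handled by the elementary log-expansion above.
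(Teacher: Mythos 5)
Your argument is correct as a proof of the stated bound, but it takes a genuinely different route from the paper's.  The paper proves Proposition \ref{reinclude_mu_lemma} \emph{directly}: it factors $K_a^{\csc,\Gamma}=AB$ with $A,B$ Hilbert--Schmidt, shows $\|A\|_2\le c$ uniformly in $\tilde\mu$ and $\|B\|_2\le c|\tilde\mu|^{1/2}$ (by pulling the factor $|(-\tilde\mu)^{-\tilde z}|=|\tilde\mu|^{-\re\tilde z}$ with $\re\tilde z$ constant along the contours out of the $\zeta$-integral, using the quadratic decay of $\re\tilde\zeta^3$ and $-\re\tilde\eta^3$ and the exponential decay of $\csc$), and then concludes $|\det(I-K_a^{\csc,\Gamma})|\le e^{c|\tilde\mu|^{1/2}}$.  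You instead take the $\e\downarrow 0$ limit of the already-uniform prelimit bound of Proposition \ref{originally_cut_mu_lemma}, using Proposition \ref{uniform_limit_det_J_to_Kcsc_proposition} (applied to the singleton $\{\tilde\mu\}$) and a direct log-expansion for $p_\e$.  This is logically valid and non-circular — the paper's proof of Proposition \ref{originally_cut_mu_lemma} does not invoke Proposition \ref{reinclude_mu_lemma}, it only uses it as a pedagogical model — but it inverts the paper's intended dependency: the paper presents Proposition \ref{reinclude_mu_lemma} as the \emph{simpler} result, which the reader is meant to digest before the harder, $\e$-dependent version.  In effect you are deducing the easy result from the hard one.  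Trade-offs: your argument is shorter and reuses work already done, but the paper's direct argument is self-contained and, as the paper notes immediately after its proof, also establishes that $K_a^{\csc,\Gamma}$ is trace class on $L^2(\tilde\Gamma_\eta)$, a fact needed elsewhere and not produced by the soft limit.

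Two smaller points.  First, Proposition \ref{uniform_limit_det_J_to_Kcsc_proposition} yields the determinant of $K_{a'}^{\csc,\Gamma}$ with $a'=a+\log 2$, which you propose to reconcile with $K_a^{\csc,\Gamma}$ by the substitution $\tilde\mu\mapsto 2\tilde\mu$.  That substitution indeed gives $K_{a'}^{\csc,\Gamma}(\tilde\mu)=K_a^{\csc,\Gamma}(2\tilde\mu)$, but it also moves $\tilde\mu$ off the contour $\mathcal{\tilde C}$, so the transferred bound lives on $2\mathcal{\tilde C}$ rather than on $\mathcal{\tilde C}$ as claimed; as written this step is loose.  In fact, comparing the statement of Proposition \ref{reinclude_mu_lemma} with the sentence just before it (``the integrand converges uniformly to $\tfrac{e^{-\tilde\mu/2}}{\tilde\mu}\det(I-K^{\csc,\Gamma}_{a'})$''), the $K_a$ in the displayed statement appears to be a typo for $K_{a'}$, and your argument cleanly proves the $K_{a'}$ version, which is what is actually needed in the $\delta/3$ argument.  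If one insists on the $K_a$ form, one should instead observe that replacing $a$ by $a'$ changes only the factor $e^{2^{1/3}a(\tilde\zeta-\tilde\eta')}$ in the kernel, whose modulus is constant since $\re(\tilde\zeta-\tilde\eta')$ is fixed along the contours; the paper's Hilbert--Schmidt estimate is therefore insensitive to this shift, whereas your soft limit needs this extra observation to close the gap.  Second, your pointwise convergence of $p_\e$ at a fixed but large $\tilde\mu$ is fine; it is exactly part one of Lemma \ref{mu_inequalities_lemma} applied with the parameter $r$ large enough that $\tilde\mu\in R_1$, so one does not even need to re-derive the log-expansion.
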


Recall that the kernel $K^{\csc,\Gamma}_{a}$ is a function of $\tilde
\mu$.
The argument used to prove this proposition immediately shows that
$K_a^{\csc,\Gamma}$ is a trace class operator on $L^2(\tilde\Gamma
_{\eta})$.

It is an immediate corollary of this exponential tail bound that for
sufficiently large compact sets of $\tilde\mu$, the cost to include the
rest of the $\tilde\mu$ contour is less than $\delta/3$. This, along
with the change of variables in $\tilde\mu$ described at the end of
Section~\ref{heuristic_sec} finishes the proof of Theorem~\ref{thm_proof_thm}.

\subsection{\texorpdfstring{Proof of Proposition \protect\ref{uniform_limit_det_J_to_Kcsc_proposition}}
{Proof of Proposition 26}}\label{J_to_K_sec}

In this section we provide all of the steps necessary to prove
Proposition~\ref{uniform_limit_det_J_to_Kcsc_proposition}. To ease
understanding of the argument we relegate more technical points to
lemmas whose proof we delay to Section~\ref{JK_proofs_sec}.

During the proof of this proposition, it is important to keep in mind
that we are assuming that $\tilde\mu$ lies in a fixed compact subset of
$\tilde\mathcal{C}$. Recall that $\tilde\mu= \e^{-1/2}\mu$. We
proceed via the following strategy to find the limit of the Fredholm
determinant as~$\e$ goes to zero. The first step is to deform the
contours $\Gamma_{\eta}$ and $\Gamma_{\zeta}$ to suitable curves along
which there exists a small region outside of which the kernel of our
operator is exponentially small. This justifies cutting the contours
off outside of this small region. We may then rescale everything so
this small region becomes order one in size. Then we show uniform
convergence of the kernel to the limiting kernel on the compact subset.
Finally we need to show that we can complete the finite contour on
which this limiting object is defined to an infinite contour without
significantly changing the value of the determinant.

Recall from Theorem~\ref{TWmainthm} that $\Gamma_{\zeta}$ is defined to
be a circle of diameter $[-1+\dimple, 1+\dimple]$, while $\Gamma
_{\eta}$
is a circle of diameter $[-1+2\dimple,1-\dimple]$. The condition
imposed on $\dimple$ is that for $\zeta$ and $\eta$ on the above
contours, $|\zeta/\eta|\leq(1,\tau^{-1})$. We take $\dimple=\e
^{1/2}/2$, and since $\tau^{-1}\approx1+2\e^{1/2}$, it is clear that
for this choice of $\dimple$, $|\zeta/\eta|\leq(1,\tau^{-1})$. The
choice of contours is also such that the poles of the infinite product
$1/g(\zeta)$, which occur at $-\tau^{-n}$ for $n\geq0$, lie to the
left of the contours. Also recall
\[
\xi= -1 - 2\e^{1/2}\frac{X}{T}.
\]
The function $f(\mu,\zeta/\eta')$ which shows up in the definition of
the kernel for $J$ has poles as every point $\zeta/\eta'=z=\tau^k$ for
$k\in\Z$.\vadjust{\goodbreak}

As long as we simultaneously deform the $\Gamma_{\zeta}$ contour as we
deform $\Gamma_{\eta}$ so as to keep $\zeta/\eta'$ away from these
poles, we may use Proposition~\ref{TWprop1} (Proposition 1 of~\cite
{TW3}), to justify the fact that the determinant does not change under
this deformation. In this way we may deform our contours to the
following modified contours $\Gamma_{\eta,l},\Gamma_{\zeta,l}$:


\begin{definition}\label{kappacontours}
Let $\Gamma_{\eta,l}$ and $\Gamma_{\zeta,l}$ be two families (indexed
by $l>0$) of simple closed contours in $\C$ defined as follows. Let
%
%
\begin{equation}\label{kappa_eqn}
\kappa(\theta) = \frac{2X}{T} \tan^2\biggl(\frac{\theta}{2}\biggr)\log
\biggl(\frac{2}{1-\cos\theta}\biggr).
\end{equation}
Both $\Gamma_{\eta,l}$ and $\Gamma_{\zeta,l}$ will be symmetric across
the real axis, so we need only define them on the top half. $\Gamma
_{\eta,l}$ begins on the real axis $-1+\e^{1/2}$ and follows a smooth,
northwesterly pointing curve and joins the vertical line with real part
$\xi+\e^{1/2}/2$ (see Figure~\ref{new_gamma_contours} for an
illustration of such a curve). It then follows the straight vertical
line for a distance $l\e^{1/2}$ and then joins the curve
%
%
\begin{equation}\label{kappa_param_eqn}
\bigl[1+\e^{1/2}\bigl(\kappa(\theta)+\alpha\bigr)\bigr]e^{i\theta}
\end{equation}
parametrized by $\theta$ from $\pi-l\e^{1/2} + O(\e)$ to $0$, and where
$\alpha= -1/2 + O(\e^{1/2})$. The small errors are necessary to make
sure that the curves join up at the end of the vertical section of the
curve. We extend this to a closed contour by reflection through the
real axis and orient it clockwise. We denote the first two parts (the
northwesterly pointing curve and vertical line), of the contour by
$\Gamma_{\eta,l}^{\mathrm{vert}}$ and the remaining, roughly circular part by
$\Gamma_{\eta,l}^{\mathrm{circ}}$. This means that $\Gamma_{\eta,l}=\Gamma
_{\eta
,l}^{\mathrm{vert}}\cup\Gamma_{\eta,l}^{\mathrm{circ}}$,
and along this contour we can think of parametrizing $\eta$ by $\theta
\in[0,\pi]$.

We define $\Gamma_{\zeta,l}$ similarly,\vspace*{1pt} except that it starts out at
$-1+\e^{1/2}/2$, joins the vertical line with real part $\xi-\e
^{1/2}/2$ and finally joins the curve given by equation (\ref
{kappa_param_eqn}) where the value of $\theta$ ranges from $\theta
=\pi
-l\e^{1/2} + O(\e)$ to $\theta=0$ and where $\alpha= 1/2 + O(\e
^{1/2})$. We similarly denote this contour by the union of $\Gamma
_{\zeta,l}^{\mathrm{vert}}$ and $\Gamma_{\zeta,l}^{\mathrm{circ}}$.
\end{definition}

By virtue of these definitions, it is clear that $\e^{-1/2}|\zeta
/\eta
'-\tau^k|$ stays bounded away from zero for all $k$, that $|\zeta
/\eta
'|$ is bounded in an closed set contained in $(1,\tau^{-1})$ for all
$\zeta\in\Gamma_{\zeta,l}$ and $\eta\in\Gamma_{\eta,l}$ and
that $\e
^{1/2}(\zeta+1)$ is bounded from zero. Therefore, for any $l>0$ we may,
by deforming both the $\eta$ and $\zeta$ contours simultaneously,
assume that our operator acts on $L^2(\Gamma_{\eta,l})$ and that its
kernel is defined via an integral along $\Gamma_{\zeta,l}$. It is
critical that we now show that, due to our choice of contours, we are
able to forget about everything except for the northwesterly pointing
curve and vertical part of the contours. To formulate this we have the
following:

\begin{definition}
Let $\chi_l^{\mathrm{vert}}$ and $\chi_l^{\mathrm{circ}}$ be projection operators acting
on $L^2(\Gamma_{\eta,l})$ which project onto $L^2(\Gamma_{\eta
,l}^{\mathrm{vert}})$ and $L^2(\Gamma_{\eta,l}^{\mathrm{circ}})$,\vadjust{\goodbreak} respectively.
Also
define two operators $J_l^{\mathrm{vert},\Gamma}$ and $J_l^{\mathrm{circ},\Gamma}$ which
act on $L^2(\Gamma_{\eta,l})$ and have kernels identical to
$J^{\Gamma
}$ [see equation (\ref{J_eqn_def})], except the $\zeta$ integral is\vspace*{1pt}
over $\Gamma_{\zeta,l}^{\mathrm{vert}}$ and $\Gamma_{\zeta,l}^{\mathrm{circ}}$, respectively.
Thus we have a family (indexed by $l>0$) of decompositions of our
operator $J$ as follows:
\[
J^{\Gamma} = J_l^{\mathrm{vert},\Gamma}\chi_{l}^{\mathrm{vert}} +J_l^{\mathrm{vert},\Gamma
}\chi
_{l}^{\mathrm{circ}}+J_l^{\mathrm{circ},\Gamma}\chi_{l}^{\mathrm{vert}}+J_l^{\mathrm{circ},\Gamma}\chi
_{l}^{\mathrm{circ}}.
\]
\end{definition}

We now show that it suffices to just consider the first part of this
decomposition ($J_l^{\mathrm{vert},\Gamma}\chi_{l}^{\mathrm{vert}}$) for sufficiently
large $l$.

\begin{proposition}\label{det_1_1_prop} Assume that $\tilde\mu$ is
restricted to a bounded subset of the contour $\tilde\mathcal{C}$.
For all $\delta>0$ there exist $\e_0>0$ and $l_0>0$ such that for all
$\e<\e_0$ and all $l>l_0$,
\[
\bigl|\det(I+\mu J^{\Gamma}_{\mu})_{L^2(\Gamma_{\eta,l})} - \det
(I+J_{l}^{\mathrm{vert},\Gamma})_{L^2(\Gamma_{\eta,l}^{\mathrm{vert}})}\bigr|<\delta.
\]
\end{proposition}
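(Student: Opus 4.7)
The plan is to exploit the steepest-descent structure of $\Psi$ built into the definition of $\Gamma_{\eta,l}$, $\Gamma_{\zeta,l}$ in order to control the three ``off-diagonal'' pieces of the decomposition
\begin{equation*}
\mu J^{\Gamma}_\mu \;=\; \mu J_l^{vert,\Gamma}\chi_l^{vert} \;+\; \mu J_l^{vert,\Gamma}\chi_l^{circ} \;+\; \mu J_l^{circ,\Gamma}\chi_l^{vert} \;+\; \mu J_l^{circ,\Gamma}\chi_l^{circ}.
\end{equation*}
Since $\det(I+\mu J_l^{vert,\Gamma}\chi_l^{vert})_{L^2(\Gamma_{\eta,l})}=\det(I+\mu J_l^{vert,\Gamma})_{L^2(\Gamma_{\eta,l}^{vert})}$, and since the Fredholm determinant is Lipschitz in trace norm on sets of uniformly bounded operators (via $|\det(I+A)-\det(I+B)|\le\|A-B\|_1\exp(1+\|A\|_1+\|B\|_1)$), it suffices to prove that each of the three remaining pieces has trace norm that is $o(1)$ as $l\to\infty$, uniformly in $\e\in(0,\e_0)$ and in $\tilde\mu$ on compact sets.

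First I would record the decay of $\Psi$ along the deformed contours. Near $\xi$, the Taylor expansion $\Psi(\zeta)\approx -\tfrac{T}{48}\e^{-3/2}(\zeta-\xi)^3+\tfrac{a}{2}\e^{-1/2}(\zeta-\xi)$ together with the vertical parametrization of $\Gamma_{\zeta,l}^{vert}$ and $\Gamma_{\eta,l}^{vert}$ (rays at angles $\pm 2\pi/3$, $\pm\pi/3$ through $\xi$, in rescaled coordinates) shows that $|e^{\Psi(\zeta)}|\le 1$ and $|e^{-\Psi(\eta')}|\le 1$ on the vertical pieces and, crucially, that at the junction with the circular part (where $|\zeta-\xi|\sim l\e^{1/2}$) one has
\begin{equation*}
|e^{\Psi(\zeta)}|\le c\,e^{-c'\,l^3},\qquad |e^{-\Psi(\eta')}|\le c\,e^{-c'\,l^3}.
\end{equation*}
On the far circular arcs parametrized by (\ref{kappa_param_eqn}) the quantity $\mathrm{Re}\,\Psi$ is designed to be sign-definite with a gap uniform in $\e$ and $l$: the logarithmic function $\kappa(\theta)$ in (\ref{kappa_eqn}) is precisely the correction that keeps the circular piece of $\Gamma_{\zeta,l}$ (respectively $\Gamma_{\eta,l}$) a level set on which $\mathrm{Re}\,\Psi<-c_0$ (respectively $>c_0$) for a fixed $c_0>0$. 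I would verify this by expanding $\Lambda(\zeta)-\Lambda(\xi)$ in $\theta$ along (\ref{kappa_param_eqn}) and checking that the $x$, $t$, $m$ pieces combine to the advertised sign.

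Next I would bound the auxiliary factors $1/\eta'(\zeta-\eta)$, $f(\mu,\zeta/\eta')$ and $g(\eta')/g(\zeta)$ uniformly on the deformed contours. The separation $|\zeta-\eta|\gtrsim\e^{1/2}$ is built into the contour construction; the condition $|\zeta/\eta'|\in(1,\tau^{-1})$ (with compact closure in that interval) keeps $f$ away from its poles and bounded by a constant uniformly in $\e$; and $g(\eta')/g(\zeta)$ is controlled by the $q$-Gamma identity (\ref{qgamma_eqn}), yielding at worst a polynomial factor in $\e^{-1/2}$ once we are a distance $\gtrsim\e^{1/2}$ from the poles $-\tau^{-n}$. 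The exponential decay of $e^{\Psi}$ or $e^{-\Psi}$ from the previous step swamps these polynomial factors. Factoring each of the three error operators in the Hilbert--Schmidt product form $J=A_1A_2$, with $A_1$ absorbing $e^{\Psi(\zeta)/2}$ together with half of the remaining factors and $A_2$ absorbing $e^{-\Psi(\eta')/2}$ together with the other half, and using $\|J\|_1\le\|A_1\|_2\|A_2\|_2$, I obtain the needed trace-norm bound $O(e^{-c'l^3})$ for each of the three pieces.

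The main obstacle I anticipate is the circular-arc estimate together with the interaction with the $g$-factor. The deformed contours are specifically tuned (via the logarithmic correction $\kappa$) to produce a uniform-in-$\e$ sign for $\mathrm{Re}\,\Psi$ over a macroscopic arc while simultaneously staying a distance $\gtrsim\e^{1/2}$ from the accumulating family of $g$-poles at $-\tau^{-n}$; verifying that both requirements are met by (\ref{kappa_eqn})--(\ref{kappa_param_eqn}) and that the resulting bound on $g(\eta')/g(\zeta)$ (together with similar controls on $f$ away from the unit circle $|\zeta/\eta'|=1$ and $|\zeta/\eta'|=\tau^{-1}$) does not degrade in $\e$ is the delicate step. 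Once this is established, summing the three trace-norm bounds and applying the Fredholm continuity estimate yields the proposition.
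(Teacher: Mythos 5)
Your overall strategy is the same as the paper's: decompose $\mu J^\Gamma$ into the four pieces indexed by $\{vert,circ\}^2$, show the three pieces involving a circular part are small in trace norm (uniformly in $\e$, $\tilde\mu$) by a Hilbert--Schmidt factorization, and conclude via the Lipschitz continuity of $\det(I+\cdot)$. That part of the proposal is sound and matches the paper.

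The gap is in your estimates for the $f$-factor and the $g$-ratio on the circular arcs, where your bounds are too optimistic in a way that matters. You claim that keeping $|\zeta/\eta'|$ in a compact subset of $(1,\tau^{-1})$ makes $f$ ``bounded by a constant uniformly in $\e$,'' and that $g(\eta')/g(\zeta)$ contributes ``at worst a polynomial factor in $\e^{-1/2}$.'' Neither is correct here. The interval $(1,\tau^{-1})$ has width $O(\e^{1/2})$, so there is no $\e$-uniform compact subinterval; the correct statement, after extracting the factor $z^{n_0}$ (with $n_0=\lfloor \log(\e^{-1/2})/\log\tau\rfloor$) via equation (\ref{nshift}), is that $\tilde\mu f(\tilde\mu,\zeta/\eta')$ is bounded by $c/|\zeta-\eta'| = O(\e^{-1/2})$, which is Lemma \ref{mu_f_polynomial_bound_lemma}. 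More importantly, the extracted factor $z^{n_0}$ together with $g(\eta')/g(\zeta)$ is \emph{not} polynomial on the circular parts: for $\eta,\zeta$ at macroscopic distance from $\xi$ it grows like $\exp\{c\,\e^{-1}(|\xi-\zeta|+|\xi-\eta|)\}$, i.e.\ exponentially of the same order $\e^{-1}$ as the decay of $\Psi$. Your reasoning that ``the exponential decay of $e^\Psi$ swamps these polynomial factors'' would then fail, because you are not actually up against polynomial factors.

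What makes the paper's argument work is a comparison of constants at the critical exponential scale $\e^{-1}$: Lemma \ref{new_lemma_1} shows the growth rate $c$ of $(\zeta/\eta)^{n_0}g(\eta)/g(\zeta)$ can be made \emph{arbitrarily small} once $l_0$ is taken large enough, while the second part of Lemma \ref{kill_gamma_2_lemma} furnishes a \emph{fixed} decay constant $C$ for $\re\Psi$ of the same order $\e^{-1}$. Choosing $l_0$ so that $c<C$ is the decisive step, and it is exactly what your proposal does not supply. A uniform bound $\re\Psi \le -c_0$ on the arcs, as you state, is far too weak to beat the exponential growth of $z^{n_0}g(\eta')/g(\zeta)$; and without first peeling off the $z^{n_0}$ from $\mu f(\mu,z)$, the $f$-factor is not bounded at all as $\e\to0$. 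Once you incorporate the $n_0$-shift and carry out the $c$-versus-$C$ comparison explicitly, the Hilbert--Schmidt factorization and Fredholm continuity steps go through as you outlined.
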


\begin{pf}
As was explained in the \hyperref[sec1]{Introduction}, if we let
%
%
\begin{equation}\label{n_0_eqn}
n_0=\lfloor\log(\e^{-1/2})/\log(\tau)\rfloor,
\end{equation}
then it follows from the invariance of the doubly infinite sum for
$f(\mu,z)$ that
%
%
\begin{equation}\label{nshift}
\mu f(\mu,z) = z^{n_0} \bigl(\tilde\mu f(\tilde\mu,z) + O(\e^{1/2})\bigr).
\end{equation}
Note that the $O(\e^{1/2})$ does not play a significant role in what
follows so we drop it.

Using the above argument and the following three lemmas (which are
proved in Section~\ref{JK_proofs_sec}), we will be able to complete the
proof of Proposition~\ref{det_1_1_prop}.

\begin{lemma}\label{kill_gamma_2_lemma}
For all $c>0$, there exist $0<l_0<\infty$ and $\e_0>0$ such that for
all $l>l_0$, $\e<\e_0$ and $\eta\in\Gamma_{\eta,l}^{\mathrm{circ}}$,
$\zeta\in
\Gamma_{\zeta,l}^{\mathrm{circ}}$,
\[
\re(\Psi(\eta))\geq c|\xi-\eta|\e^{-1/2},\qquad \re(\Psi(\zeta))\leq
-c|\xi-\zeta|\e^{-1/2}.
\]
Additionally, there exists a $C>0$, $0<l_0<\infty$ and $\e_0>0$ such
that for all $l>l_0$, $\e<\e_0$ and $\eta\in\Gamma_{\eta,l}^{\mathrm{circ}}$,
$\zeta\in\Gamma_{\zeta,l}^{\mathrm{circ}}$,
\[
\re(\Psi(\eta))\geq C|\xi-\eta|\e^{-1},\qquad \re(\Psi(\zeta))\leq
-C|\xi-\zeta|\e^{-1}.
\]
\end{lemma}

\begin{lemma}\label{mu_f_polynomial_bound_lemma}
For all $l>0$ there exist $\e_0>0$ and $c>0$ such that for all $\e<\e
_0$, $\eta'\in\Gamma_{\eta,l}$ and $\zeta\in\Gamma_{\zeta,l}$,
\[
|\tilde\mu f(\tilde\mu,\zeta/\eta')|\leq\frac{c}{|\zeta-\eta'|}.
\]
\end{lemma}

\begin{lemma}\label{new_lemma_1}
For all $c>0$ there exists $l_0>0$ and $\e_0>0$ such that for all
$l>l_0$, $\e<\e_0$ and all $\eta\in\Gamma^{\mathrm{circ}}_{\eta,l}$ and
$\zeta
\in\Gamma^{\mathrm{circ}}_{\zeta,l}$
\[
\re\Biggl(n_0\log(\zeta/\eta) + \sum_{n=0}^{\infty} \log\biggl(\frac
{1+\tau^n \eta}{1+\tau^n \zeta}\biggr)\Biggr) \leq c(|\xi-\zeta|+|\xi
-\eta|)\e^{-1}.\vadjust{\goodbreak}
\]
\end{lemma}

The $n_0$ above accounts for the $z^{n_0}$ from equation (\ref
{nshift}). Comparing the exponential (of order $\e^{-1}$) decay of the
second part of Lemma~\ref{kill_gamma_2_lemma} with the upper bound of
Lemma~\ref{new_lemma_1}, we find that since the constant in Lemma \ref
{new_lemma_1} is arbitrary, the decay of $\Psi(\eta)$ overwhelms the
possible growth of $(\zeta/\eta)^{n_0}g(\eta)/g(\eta)$. Additionally
taking into account the polynomial control of Lemma \ref
{mu_f_polynomial_bound_lemma} and the remaining term $1/(\zeta-\eta)$,
we find that for any $\delta>0$, we can find $l_0$ large enough that
$\Vert J_l^{\mathrm{vert},\Gamma}\chi_{l}^{\mathrm{circ}}\Vert _1$, $\Vert J_l^{\mathrm{circ},\Gamma}\chi
_{l}^{\mathrm{vert}}\Vert _1$ and $\Vert J_l^{\mathrm{circ},\Gamma}\chi_{l}^{\mathrm{circ}}\Vert _1$ are all
bounded by $\delta/3$. Technically, in order to show this we can factor
these various operators into a product of Hilbert--Schmidt operators
and then use the decay explained above to prove that each of the
Hilbert--Schmidt norms goes to zero (for a similar argument, see the
bottom of page 27 of~\cite{TW3}). This completes the proof of
Proposition~\ref{det_1_1_prop}.
\end{pf}

We now return to the proof of Proposition \ref
{uniform_limit_det_J_to_Kcsc_proposition}. We have successfully
restricted ourselves to considering $J_{l}^{\mathrm{vert},\Gamma}$ acting on
$L^2(\Gamma_{\eta,l}^{\mathrm{vert}})$. Having focused on the region of
asymptotically nontrivial behavior, we can now rescale and show that
the kernel converges to its limit, uniformly on the compact contour.

\begin{definition}\label{change_of_var_tilde_definitions}
Recall $c_3=2^{-4/3}$, and let
\[
\eta= \xi+ c_3^{-1}\e^{1/2}\tilde\eta, \qquad\eta' = \xi+ c_3^{-1}\e
^{1/2}\tilde\eta', \qquad\zeta= \xi+ c_3^{-1}\e^{1/2}\tilde\zeta.
\]
Under these change of variables the contours $\Gamma_{\eta,l}^{\mathrm{vert}}$
and $\Gamma_{\zeta,l}^{\mathrm{vert}}$ become
\begin{eqnarray*}
\tilde\Gamma_{\eta,l} &=& \{c_3/2+ir\dvtx r\in(-c_3l,1)\cup(1,c_3l)\}\cup
\tilde\Gamma_{\eta}^d,\\
\tilde\Gamma_{\zeta,l} &=& \{-c_3/2+ir\dvtx r\in(-c_3l,1)\cup(1,c_3l)\}
\cup
\tilde\Gamma_{\eta}^d,
\end{eqnarray*}
where $\tilde\Gamma_{\zeta}^d$ is a dimple which goes to the right of
$XT^{-1/3}$ and joins\vspace*{-1pt} with the rest of the contour, and where $\tilde
\Gamma_{\eta}^d$ is the same contour just shifted to the right by
distance~$c_3$; see Figure~\ref{new_gamma_contours}.

As $l$ increases to infinity, these contours approach their infinite versions,
\begin{eqnarray*}
\tilde\Gamma_{\eta} &=& \{c_3/2+ir\dvtx r\in(-\infty,-1)\cup(1,\infty)\}
\cup
\tilde\Gamma_{\eta}^d,\\
\tilde\Gamma_{\zeta} &=& \{-c_3/2+ir\dvtx r\in(-\infty,-1)\cup(1,\infty
)\}
\cup\tilde\Gamma_{\eta}^d.
\end{eqnarray*}
With respect to the change of variables define an operator $\tilde
J^{\Gamma}_{l}$ acting on $L^2(\tilde\Gamma_{\eta})$ via the kernel
\begin{eqnarray*}
\mu\tilde J^{\Gamma}_l (\tilde\eta,\tilde\eta') &=& c_{3}^{-1}\e^{1/2}\int_{\tilde\Gamma_{\zeta,l}} e^{\Psi(\xi+c_3^{-1} \e
^{1/2}\tilde\zeta
)-\Psi(\xi+c_3^{-1} \e^{1/2}\tilde\eta')} \\
&&\phantom{c_{3}^{-1}\e^{1/2}\int_{\tilde\Gamma_{\zeta,l}}}{}\times\frac{\mu f(\mu,
{(\xi
+c_3^{-1}\e^{1/2}\tilde\zeta)}/{(\xi+c_3^{-1}\e^{1/2}\tilde\eta
')})}{(\xi
+c_3^{-1}\e^{1/2}\tilde\eta')(\tilde\zeta-\tilde\eta)}\,d\tilde
\zeta.
\end{eqnarray*}
Finally, define the operator $\tilde\chi_l$ which projects
$L^2(\tilde
\Gamma_{\eta})$ onto $L^2(\tilde\Gamma_{\eta,l})$.
\end{definition}

It is clear that applying the change of variables, the Fredholm
determinant $\det(I+J_{l}^{\mathrm{vert},\Gamma})_{L^2(\Gamma_{\eta
,l}^{\mathrm{vert}})}$ becomes
$\det(I+\tilde\chi_l \mu\tilde J_l^{\Gamma} \tilde\chi
_l)_{L^2(\tilde
\Gamma_{\eta,l})}$.

We now state a proposition which gives, with respect to these fixed
contours $\tilde\Gamma_{\eta,l}$ and $\tilde\Gamma_{\zeta,l}$, the
limit of the determinant in terms of the uniform limit of the kernel.
Since all contours in question are finite, uniform convergence of the
kernel suffices to show trace class convergence of the operators and
hence convergence of the determinant.

Recall the definition of the operator $K_a^{\csc,\Gamma}$ given in
equation (\ref{KGammaDef}). For the purposes of this proposition,
modify the kernel so that the integration in $\zeta$ occurs now only
over $\tilde\Gamma_{\zeta,l}$ and not all of $\tilde\Gamma_{\zeta}$.
Call this modified operator~$K_{a',l}^{\csc,\Gamma}$.

\begin{proposition}\label{converges_to_kcsc_proposition}
For all $\delta>0$ there exist $\e_0>0$ and $l_0>0$ such that for all
$\e<\e_0$, $l>l_0$ and $\tilde\mu$ in our fixed compact subset of
$\tilde\mathcal{C}$,
\[
\bigl|\det(I+\tilde\chi_l \mu\tilde J_l^{\Gamma} \tilde\chi
_l)_{L^2(\tilde\Gamma_{\eta,l})} - \det(I-\tilde\chi_l
K_{a',l}^{\csc
,\Gamma}\tilde\chi_l)_{L^2(\tilde\Gamma_{\eta,l})}\bigr|< \delta,
\]
where $a'=a+\log2$.
\end{proposition}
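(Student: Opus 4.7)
The plan is to promote the heuristic analysis of Section \ref{heuristic_sec} to a rigorous uniform-convergence statement on the compact (finite) contours $\tilde\Gamma_{\eta,l}$ and $\tilde\Gamma_{\zeta,l}$. Because these contours have finite arclength, if the integrand defining the rescaled kernel $\mu\tilde J_l^{\Gamma}(\tilde\eta,\tilde\eta')$ converges uniformly in $(\tilde\eta,\tilde\eta',\tilde\zeta)$ on $\tilde\Gamma_{\eta,l}\times\tilde\Gamma_{\eta,l}\times\tilde\Gamma_{\zeta,l}$ to the integrand of $-K^{\csc,\Gamma}_{a',l}(\tilde\eta,\tilde\eta')$, then the resulting kernels converge uniformly, hence (after factoring through Hilbert--Schmidt, as on p.\ 27 of \cite{TW3}) in trace class norm, and Fredholm determinants converge. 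Uniformity in $\tilde\mu$ over a compact subset of $\mathcal{\tilde C}$ comes along for free since $\tilde\mu$ enters continuously in each factor and is bounded away from the set $\{e^{t}:t\in\R\}$, the set on which the denominator of the limit integrand $\tilde\mu/(e^t-\tilde\mu)$ vanishes.

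The second and main step is to compute the limit factor by factor. First, a Taylor expansion of $\Psi$ near $\xi=-1-2\e^{1/2}X/T$ (justified because $m$, $t$, $x$ are chosen so that $\Psi'(\xi)=O(\e)$ and $\Psi''(\xi)=O(\e^{1/2})$) gives
$\Psi(\xi+c_3^{-1}\e^{1/2}\tilde w)=-\tfrac{T}{3}\tilde w^{3}+2^{1/3}a\,\tilde w+O(\e^{1/2})$
uniformly for $\tilde w$ in a compact set, producing the cubic exponential in $K^{\csc,\Gamma}_{a',l}$. Second, using the shift identity \eqref{nshift}, one factors $\mu f(\mu,\zeta/\eta')=z^{n_{0}}\bigl(\tilde\mu f(\tilde\mu,z)+O(\e^{1/2})\bigr)$ with $z=\zeta/\eta'$; the Jacobian $c_3^{-1}\e^{1/2}$ combined with $\tilde\mu f(\tilde\mu,z)$ is a Riemann sum with mesh $\e^{1/2}\cdot\lvert\log\tau\rvert\sim 2\e$ (actually mesh $|\log\tau|$ in the $t=k\e^{1/2}$ variable, which tends to $0$ proportionally to $\e^{1/2}$), and converges to $2^{1/3}\int_{-\infty}^{\infty}\tilde\mu e^{-2^{1/3}t(\tilde\zeta-\tilde\eta')}/(e^{t}-\tilde\mu)\,dt$; uniform convergence follows from the geometric decay of the summand for $|k|\gg 1$, which dominates the Riemann-sum error. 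Third, for the $g$ ratio we apply \eqref{qgamma_eqn} with $r=2\e^{1/2}$ and convergence of the $q$-Gamma function to the ordinary Gamma function (uniform on compact sets avoiding poles \cite{AAR}); this produces the ratio $\Gamma(2^{1/3}\tilde\zeta-X/T)/\Gamma(2^{1/3}\tilde\eta'-X/T)$ appearing in \eqref{KGammaDef}, together with the extra exponential factor $\exp\{2^{1/3}(\tilde\zeta-\tilde\eta')\log(2\e^{1/2})\}$ of \eqref{gfinaleqn}. The avoidance of Gamma poles is guaranteed by the dimple portion $\tilde\Gamma_\zeta^d$ of the contour. Finally, $1/\eta'\to -1$ and $d\zeta/(\zeta-\eta)\to d\tilde\zeta/(\tilde\zeta-\tilde\eta)$.

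The main bookkeeping point, and the subtle step, is the cancellation of the apparently divergent $\log\e$ prefactors. On $\tilde\Gamma_{\eta,l}\cup\tilde\Gamma_{\zeta,l}$, writing $\tilde z=2^{4/3}(\tilde\zeta-\tilde\eta')$ and using $\log\tau=-2\e^{1/2}+O(\e)$ one has
\[
 z^{n_{0}}=\exp\!\Bigl(n_{0}\bigl(-\e^{1/2}\tilde z+O(\e)\bigr)\Bigr)=\exp\!\Bigl(-\tfrac{\tilde z}{4}\log\e\Bigr)(1+o(1)),
\]
uniformly on the compact contours, while the extra exponential from the $g$ ratio equals $\exp\{(\tilde z/2)\log 2+(\tilde z/4)\log\e\}$. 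Their product is precisely $\exp\{2^{1/3}(\tilde\zeta-\tilde\eta')\log 2\}$, which combined with the $2^{1/3}a(\tilde\zeta-\tilde\eta')$ already generated by the Taylor expansion of $\Psi$ yields exactly the exponent with $a'=a+\log 2$ in $K^{\csc,\Gamma}_{a',l}$. Bounding the three remainder terms in the product expansion uniformly on the compact contours (and using Lemma \ref{mu_f_polynomial_bound_lemma} to control the only singular factor $1/(\zeta-\eta)$, which after rescaling becomes $1/(\tilde\zeta-\tilde\eta)$ and has integrable singularities handled by the small dimple in the contours) completes the uniform kernel convergence and hence the Proposition.

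The hardest technical point is precisely this divergence cancellation: one must expand $n_{0}\log z$ to an order beyond $\e^{1/2}$ to verify that the residual error is $o(1)$ uniformly, and one must simultaneously track the error $O(\e^{1/2})$ in \eqref{nshift} and the subleading error in the $q$-Gamma asymptotic \eqref{qgamma_eqn}, showing that none of them reintroduces a divergent $\log\e$ contribution when paired with the $O(\e^{-1/2})$ prefactor $n_{0}$.
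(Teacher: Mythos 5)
Your proposal mirrors the paper's high-level strategy exactly: reduce the determinant convergence to uniform pointwise convergence of the $\zeta$-integrand on the compact contours $\tilde\Gamma_{\eta,l}\times\tilde\Gamma_{\zeta,l}$, decompose the integrand into the exponential, the $\mu f$ sum, the $g$ ratio, and the rational factor, and track the cancellation of the $\log\e$ divergences between $z^{n_0}$ and the $q$-Gamma exponential. The genuine difference is in how you justify the limit of $\e^{1/2}\tilde\mu f(\tilde\mu,z)$ (the paper's Lemma~\ref{muf_compact_sets_csc_limit_lemma}): you argue by a truncated Riemann sum with geometrically decaying tails, whereas the paper splits $f=g_++g_-$, uses the functional equation of Lemma~\ref{f_functional_eqn_lemma}, identifies the limits with Hurwitz--Lerch $\Phi$ functions, and then invokes Vitali's theorem together with an a~priori bound to extend to all $\tilde\mu$ off $\mathbb R_+$. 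Both routes are legitimate; indeed the paper itself remarks that the Riemann-sum approach ``can in fact be made rigorous'' (and its Lemma~\ref{final_estimate} carries out precisely this analysis in the harder tail regime where $\tilde\mu$ ranges over all of $\tilde{\mathcal C}_\e$). For the present lemma, where $\tilde\mu$ is confined to a fixed compact subset of $\tilde{\mathcal C}$ bounded away from $\mathbb R_+$, the denominators $1-\tau^k\tilde\mu$ are uniformly bounded below, so your route is substantially simpler than the general-$\tilde\mu$ version and is a reasonable trade against the analytic-continuation machinery. One slip worth correcting: you write that the Taylor expansion is ``justified because \dots\ $\Psi'(\xi)=O(\e)$ and $\Psi''(\xi)=O(\e^{1/2})$,'' but this contradicts the very expansion you then state, which has a nonvanishing linear term $2^{1/3}a\,\tilde w$. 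The correct orders (as in the paper's proof of Lemma~\ref{compact_eta_zeta_taylor_lemma}) are $\Lambda'(\xi)=\tfrac{a}{2}\e^{-1/2}+O(1)$ and $\Lambda''(\xi)=O(\e^{-1/2})$; it is only after multiplying by the appropriate powers of $(\zeta-\xi)=O(\e^{1/2})$ that the linear term survives as $2^{1/3}a\tilde\zeta$ and the quadratic contribution vanishes as $O(\e^{1/2})$. Similarly, citing Lemma~\ref{mu_f_polynomial_bound_lemma} to control $1/(\tilde\zeta-\tilde\eta)$ is a misattribution (that lemma bounds the $f$ sum, not the rational factor, and the dimple is there to dodge Gamma poles, not a singularity of $1/(\tilde\zeta-\tilde\eta)$, which is already bounded by the horizontal separation $c_3$ of the contours); but these are expository slips, not gaps in the logic.
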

\begin{pf}
The proof of this proposition relies on showing the uniform convergence
of the kernel of $\mu\tilde J_{l}^{\Gamma}$ to the kernel of
$K_{a',l}^{\csc,\Gamma}$, which suffices because of the compact
contour. Furthermore, since the $\zeta$ integration is itself over a
compact set, it suffices to show uniform convergence of this integrand.
The two lemmas stated below will imply such uniform convergence and
hence complete this proof.

First, however, recall that $\mu f(\mu,z) = z^{n_0} (\tilde\mu
f(\tilde
\mu,z) + O(\e^{1/2}))$ where $n_0$ is defined in equation (\ref
{n_0_eqn}). We are interested in having $z=\zeta/\eta'$, which, under
the change of variables can be written as
\[
z=1-\e^{1/2}\tilde z +O(\e), \qquad\tilde z = c_3^{-1}(\tilde\zeta
-\tilde\eta')=2^{4/3}(\tilde\zeta-\tilde\eta').
\]
Therefore, since $n_0= -\frac{1}{2}\log(\e^{-1/2})\e^{-1/2}+ O(1)$ it
follows that
\[
z^{n_0} = \exp\{-2^{1/3}(\tilde\zeta-\tilde\eta')\log(\e
^{-1/2})\}\bigl(1+o(1)\bigr).
\]
This expansion still contains an $\e$, and hence the argument blows up
as $\e$ goes to zero. This exactly counteracts the asymptotics of the
ratio of $g(\eta')/g(\zeta)$ due to the following:
\begin{lemma}\label{gratio_limit}
For all $l>0$ and all $\delta>0$ there exists $\e_0>0$, such that for
all $\tilde\eta'\in\tilde\Gamma_{\eta,l}$ and $\tilde\zeta\in
\tilde
\Gamma_{\zeta,l}$, we have for $0<\e\leq\e_0$,
\[
\biggl|\exp\{n_0\log(\zeta/\eta')\}\frac{g(\eta')}{g(\zeta)} - \frac
{\Gamma(2^{1/3}\tilde\zeta-{X}/{T})}{\Gamma(2^{1/3}\tilde\eta
'-{X}/{T})} e^{2^{1/3}\log(2) (\tilde\zeta-\tilde\eta')}
\biggr|<\delta.
\]
\end{lemma}

This is combined with the following two lemmas, and all three are
proved in Section~\ref{JK_proofs_sec}.\vadjust{\goodbreak}
\begin{lemma}\label{compact_eta_zeta_taylor_lemma}
For all $l>0$ and all $\delta>0$ there exists $\e_0>0$ such that for
all $\tilde\eta'\in\tilde\Gamma_{\eta,l}$ and $\tilde\zeta\in
\tilde
\Gamma_{\zeta,l}$ we have for $0<\e\le\e_0$,
\[
\biggl|\bigl(\Psi(\tilde\zeta)-\Psi(\tilde\eta')\bigr) - \biggl(-\frac
{T}{3}(\tilde\zeta^3-\tilde\eta'^3) + 2^{1/3}a(\tilde\zeta-\tilde
\eta
)\biggr)\biggr|<\delta.
\]
Similarly we have
\[
\bigl|e^{\Psi(\tilde\zeta)-\Psi(\tilde\eta')} - e^{-
({T}/{3})(\tilde
\zeta^3-\tilde\eta'^3) + 2^{1/3}a(\tilde\zeta-\tilde\eta')}
\bigr|<\delta.
\]
\end{lemma}

\begin{lemma}\label{muf_compact_sets_csc_limit_lemma}
For all $l>0$ and all $\delta>0$, there exists $\e_0>0$ such that for
all $\tilde\eta'\in\tilde\Gamma_{\eta,l}$ and $\tilde\zeta\in
\tilde
\Gamma_{\zeta,l}$ we have for $0<\e\le\e_0$,
\[
\biggl|\e^{1/2}\tilde\mu f\biggl(\tilde\mu, \frac{\xi+c_3^{-1}\e
^{1/2}\tilde\zeta}{\xi+c_3^{-1}\e^{1/2}\tilde\eta'}\biggr) - \int
_{-\infty}^{\infty} \frac{\tilde\mu e^{-2^{1/3}t(\tilde\zeta
-\tilde\eta
')}}{e^{t}-\tilde\mu}\,dt\biggr|<\delta.
\]
\end{lemma}

The integral above in Lemma~\ref{muf_compact_sets_csc_limit_lemma}
converges since our choices of contours for $\tilde\zeta$ and $\tilde
\eta'$ ensure that $\re(-2^{1/3}(\tilde\zeta-\tilde\eta'))=1/2$.
Note that the above integral also has a representation (\ref
{cscIntegral}) in terms of the cosecant function. This provides the
analytic extension of the integral to all $\tilde z\notin2\Z$ where
$\tilde{z}=2^{4/3}(\tilde\zeta-\tilde\eta')$ (note, however, that
we do
not require use of this analytic extension due to our choice of contours).

Finally, the sign change in front of the kernel of the Fredholm
determinant comes from the $1/\eta'$ term which, under the change of
variables converges uniformly to $-1$. The reason why $a'=a+\log2$
arises here is due to the term $e^{2^{1/3}\log(2) (\tilde\zeta
-\tilde
\eta')}$ from Lemma~\ref{gratio_limit}. This proves the desired result.
\end{pf}

Having successfully taken the $\e$ to zero limit, all that now remains
is to paste the rest of the contours, $\tilde\Gamma_{\eta}$ and
$\tilde
\Gamma_{\zeta}$, to their abbreviated versions, $\tilde\Gamma_{\eta,l}$
and $\tilde\Gamma_{\zeta,l}$. To justify this we must show that the
inclusion of the rest of these contours does not significantly affect
the Fredholm determinant.
Just as in the proof of Proposition~\ref{det_1_1_prop} we have three
operators which we must re-include at provably small cost. Each of
these operators, however, can be factored into the product of Hilbert--Schmidt
operators and then an analysis similar to that done following
Lemma~\ref{mu_f_polynomial_bound_lemma} (see, in particular, pages
27--28 of~\cite{TW3}) shows that because $\re(\tilde\zeta^3)$ grows
like $|\tilde\zeta|^2$ along $\tilde\Gamma_{\zeta}$ (and likewise but
opposite for $\eta'$), there is sufficiently strong exponential decay
to show that the trace norms of these three additional kernels can be
made arbitrarily small by taking $l$ large enough.

This last estimate completes the proof of Proposition \ref
{uniform_limit_det_J_to_Kcsc_proposition}.

\subsection{Technical lemmas, propositions and proofs}
\subsubsection{Properties of Fredholm determinants}\label{pre_lem_ineq_sec}

Before beginning the proofs of the propositions and lemmas, we give the
definitions and some important properties for Fredholm determinants,
trace class operators and\vadjust{\goodbreak} Hilbert--Schmidt operators. For a more
complete treatment of this theory see, for example,~\cite{BS:book}.

Consider a (separable) Hilbert space $\Hi$ with bounded linear
operators $\mathcal{L}(\Hi)$. If $A\in\mathcal{L}(\Hi)$, let
$|A|=\sqrt
{A^*A}$ be the unique positive square-root. We say that $A\in\mathcal
{B}_1(\Hi)$, the trace class operators, if the trace norm
$\Vert A\Vert _1<\infty$. Recall that this norm is defined relative to an
orthonormal basis of $\Hi$ as $\Vert A\Vert _1:= \sum_{n=1}^{\infty} (e_n,|A|e_n)$.
This norm is well defined and does not depend on the choice of
orthonormal basis $\{e_n\}_{n\geq1}$. For $A\in\mathcal{B}_1(\Hi)$,
one can then define the trace $\tr A:=\sum_{n=1}^{\infty} (e_n,A
e_n)$. We say that $A\in\mathcal{B}_{2}(\Hi)$, the Hilbert--Schmidt
operators, if the Hilbert--Schmidt norm $\Vert A\Vert _2:= \sqrt{\tr
(|A|^2)}<\infty$.


For $A\in~\mathcal{B}_1(\Hi)$ we can also define a Fredholm determinant
$\det(I+~A)_{\Hi}$. Consider $u_i\in\Hi$, and define the tensor
product $u_1\otimes\cdots\otimes u_n$ by its action on $v_1,\ldots,
v_n \in\Hi$ as
\[
u_1\otimes\cdots\otimes u_n (v_1,\ldots, v_n) = \prod_{i=1}^{n} (u_i,v_i).
\]
Then $\bigotimes_{i=1}^{n}\Hi$ is the span of all such tensor products.
There is a vector subspace of this space which is known as the
alternating product
\[
\bigwedge^n(\Hi) = \Biggl\{h\in\bigotimes_{i=1}^{n} \Hi\dvtx  \forall\sigma
\in
S_n, \sigma h =-h\Biggr\},
\]
where $\sigma u_1\otimes\cdots\otimes u_n = u_{\sigma(1)}\otimes
\cdots\otimes u_{\sigma(n)}$.
If $e_1,\ldots,e_n$ is a basis for $\Hi$, then $e_{i_1}\wedge\cdots
\wedge e_{i_k}$ for $1\leq i_1<\cdots<i_k\leq n$ form a basis of
$\bigwedge^n(\Hi)$.
Given an operator $A\in\mathcal{L}(\Hi)$, define
\[
\Gamma^n(A)(u_1\otimes\cdots\otimes u_n):= Au_1\otimes\cdots
\otimes Au_n.
\]
Note that any element in $\bigwedge^n(\Hi)$ can be written as an
antisymmetrization of tensor products. Then it follows that $\Gamma
^n(A)$ restricts to an operator from $\bigwedge^n(\Hi)$ into
$\bigwedge
^n(\Hi)$. If $A\in~\mathcal{B}_1(\Hi)$, then $\tr\Gamma
^{(n)}(A)\leq
\Vert A\Vert _1^n/n!$, and
we can define
\[
\det(I+~A)= 1 + \sum_{k=1}^{\infty} \tr\bigl(\Gamma^{(k)}(A)\bigr).
\]
As one expects, $\det(I+~A)=\prod_j (1+\lambda_j)$ where $\lambda_j$
are the eigenvalues of $A$ counted with algebraic multiplicity (Theorem
XIII.106,~\cite{RS:book}).

\begin{lemma}[(Chapter 3 in~\cite{BS:book})]\label{fredholm_continuity_lemma}
$A\mapsto\det(I+A)$ is a continuous function on $\mathcal{B}_1(\Hi)$.
Explicitly,
\[
|\det(I+A)-\det(I+B)|\leq\Vert A-B\Vert _{1}\exp(\Vert A\Vert _1+\Vert B\Vert _1+1).
\]
If $A\in\mathcal{B}_1(\Hi)$ and $A=BC$ with $B,C\in\mathcal
{B}_2(\Hi
)$, then
\[
\Vert A\Vert _1\leq\Vert B\Vert _2\Vert C\Vert _2.\vadjust{\goodbreak}
\]
For $A\in\mathcal{B}_1(\Hi)$,
\[
|\det(I+A)|\leq e^{\Vert A\Vert _1}.
\]
If $A\in\mathcal{B}_2(\Hi)$ with kernel $A(x,y)$, then
\[
\Vert A\Vert _2 = \biggl(\int|A(x,y)|^2 \,dx \,dy\biggr)^{1/2}.
\]
\end{lemma}

\begin{lemma}\label{projection_pre_lemma}
If $K$ is an operator acting on a contour $\Sigma$, and $\chi$ is a
projection operator unto a subinterval of $\Sigma$, then
\[
\det(I+K\chi)_{L^2(\Sigma,\mu)}=\det(I+\chi K\chi)_{L^2(\Sigma
,\mu)}.
\]
\end{lemma}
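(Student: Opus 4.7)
The plan is to deduce this identity from two standard facts about trace-class operators: the idempotence $\chi^2=\chi$ of a projection, and Sylvester's identity $\det(I+AB)=\det(I+BA)$, which holds whenever $AB$ and $BA$ are trace class (see, e.g., Lemma \ref{fredholm_continuity_lemma} and the surrounding theory). Since $K\chi$ is trace class by hypothesis (so that the Fredholm determinant on the left makes sense), setting $A=K\chi$ and $B=\chi$ yields $AB=K\chi^2=K\chi$ and $BA=\chi K\chi$; the latter is trace class because the trace-class operators form a two-sided ideal in $\mathcal{L}(\Hi)$. Applying Sylvester's identity then gives
\begin{equation*}
\det(I+K\chi)_{L^2(\Sigma,\mu)}=\det(I+(K\chi)\chi)_{L^2(\Sigma,\mu)}=\det(I+\chi(K\chi))_{L^2(\Sigma,\mu)}=\det(I+\chi K\chi)_{L^2(\Sigma,\mu)},
\end{equation*}
which is what we wanted.

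Alternatively, if one prefers a direct kernel-level verification, the plan is to expand both Fredholm determinants as series of traces of exterior powers: for any trace-class operator $A$ with kernel $A(x,y)$,
\begin{equation*}
\det(I+A)=\sum_{n=0}^{\infty}\frac{1}{n!}\int_{\Sigma^n}\det\bigl[A(x_i,x_j)\bigr]_{i,j=1}^{n}\,d\mu(x_1)\cdots d\mu(x_n).
\end{equation*}
Writing $\chi$ as multiplication by the indicator $\chi_I$ of the relevant subinterval $I\subset\Sigma$, the $n$-th order term for $K\chi$ has integrand $\prod_{j}\chi_I(x_j)\det[K(x_i,x_j)]$ (factoring $\chi_I(x_j)$ out of column $j$), while for $\chi K\chi$ it has integrand $\prod_{j}\chi_I(x_j)^2\det[K(x_i,x_j)]$. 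Since $\chi_I$ takes values in $\{0,1\}$, $\chi_I(x_j)^2=\chi_I(x_j)$, so the two integrands agree term-by-term and the sums coincide.

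There is no real obstacle here, as both approaches are immediate consequences of standard facts already in play in Section \ref{pre_lem_ineq_sec}; the only minor point to check is the trace-class status of $\chi K\chi$, which follows from $K\chi$ being trace class together with the ideal property of $\mathcal{B}_1(\Hi)$.
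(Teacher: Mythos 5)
The paper states Lemma \ref{projection_pre_lemma} without proof in Section \ref{pre_lem_ineq_sec}, treating it as a standard fact about Fredholm determinants. Your first argument, applying the cyclicity/Sylvester identity $\det(I+AB)=\det(I+BA)$ with $A=K\chi$ and $B=\chi$, together with $\chi^2=\chi$ and the ideal property of $\mathcal{B}_1(\Hi)$, is exactly the standard proof one would give here, and it is correct; the second, direct expansion via the Fredholm series, is an equally valid (more hands-on) alternative. Since the paper supplies no proof, there is nothing to compare against beyond confirming that your argument correctly fills the omitted step.
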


In performing steepest descent analysis on Fredholm determinants, the
following proposition allows one to deform contours to descent curves.

\begin{lemma}[(Proposition 1 of~\cite{TW3})]\label{TWprop1}
Suppose $s\to\Gamma_s$ is a deformation of closed curves, and a kernel
$L(\eta,\eta')$ is analytic in a neighborhood of $\Gamma_s\times
\Gamma
_s\subset\C^2$ for each $s$. Then the Fredholm determinant of $L$
acting on $\Gamma_s$ is independent of $s$.
\end{lemma}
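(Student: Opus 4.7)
The plan is to expand the Fredholm determinant as its convergent trace series and show that each term is individually invariant under the deformation by a standard Cauchy's theorem argument, with uniform control from Hadamard's inequality to legitimize the termwise conclusion.

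First, I would write
\begin{equation*}
\det(I+L)_{L^2(\Gamma_s)} = \sum_{n=0}^{\infty} \frac{1}{n!} \int_{\Gamma_s}\cdots\int_{\Gamma_s} \det\bigl[L(\eta_i,\eta_j)\bigr]_{i,j=1}^{n}\, d\eta_1\cdots d\eta_n.
\end{equation*}
For each fixed $n$, the integrand $(\eta_1,\ldots,\eta_n)\mapsto \det[L(\eta_i,\eta_j)]$ is a holomorphic function of several complex variables in an open neighborhood $U^n\subset\C^n$ of $\Gamma_s^n$, since $L$ is assumed analytic in a neighborhood $U\supset\Gamma_s$ of $\Gamma_s\times\Gamma_s$. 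I would then argue that the $n$-fold integral is invariant under small perturbations of $\Gamma_s$ by applying Cauchy's theorem one variable at a time: fix $\eta_2,\ldots,\eta_n$ on their respective contours and deform only $\eta_1$, using that the resulting one-variable integrand is holomorphic on a tubular neighborhood of the closed curve $\Gamma_s$; iterate this for each coordinate. Because the deformation $s\mapsto \Gamma_s$ is continuous and the neighborhood of analyticity is open, one can cover the parameter interval by finitely many small sub-intervals on which the deformation stays inside $U$, so the pointwise-in-$s$ derivative of each term vanishes and each multiple integral is constant in $s$.

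Second, I would justify exchanging the $s$-independence of each term with $s$-independence of the sum by establishing uniform convergence. On any compact sub-interval $[s_0,s_1]$, the deformation $\Gamma_s$ stays within a compact set on which $L$ is bounded, say $|L(\eta,\eta')|\le M$ and $\Gamma_s$ has length at most $\ell$. By Hadamard's inequality, $|\det[L(\eta_i,\eta_j)]_{i,j=1}^n|\le n^{n/2}M^n$, so the $n$-th term of the series is bounded in absolute value by $n^{n/2}M^n\ell^n/n!$, which is summable and independent of $s\in[s_0,s_1]$. Dominated convergence (or just the Weierstrass $M$-test for the uniformly convergent sum of continuous functions of $s$) then gives that the full Fredholm determinant is a constant function of $s$.

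The only real subtlety, and the step I would be most careful about, is the multivariable Cauchy argument: one must check that as $\eta_1$ sweeps across the homotopy from $\Gamma_{s}$ to $\Gamma_{s+\delta}$, the other variables $\eta_2,\ldots,\eta_n$ remain on a contour for which the kernel is jointly holomorphic in $\eta_1$ in a neighborhood of the homotopy. This is where the hypothesis that $L$ be analytic in a neighborhood of $\Gamma_s\times\Gamma_s$ (as a subset of $\C^2$), rather than merely separately analytic along each factor, is essential: it ensures that a single open set $U\subset\C$ works for all configurations of the other variables simultaneously. Once that is in place, everything else is routine.
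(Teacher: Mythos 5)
Your proof is correct and follows the standard argument: expand the Fredholm determinant as the absolutely convergent series of multiple contour integrals, apply Cauchy's theorem coordinate-by-coordinate to each term (using joint holomorphy of the kernel near $\Gamma_s\times\Gamma_s$ to justify each single-variable deformation while the others remain fixed), and invoke Hadamard's bound $|\det[L(\eta_i,\eta_j)]|\le n^{n/2}M^n$ for the uniform domination that legitimizes the termwise conclusion. The paper does not supply its own proof but cites Proposition 1 of \cite{TW3}, whose proof is exactly this argument, so your proposal reproduces the intended reasoning.
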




\subsubsection{\texorpdfstring{Proofs from Section \protect\ref{thm_proof_sec}}
{Proofs from Section 3.2}}\label{proofs_sec}
We now turn to the proofs of the previously stated lemmas and propositions.
\begin{pf*}{Proof of Lemma \protect\ref{deform_mu_to_C}}
We thank the referee for suggesting the following simple proof of this
result. The lemma follows from Cauchy's theorem once we show that for
fixed $\varepsilon$, the expression $\prod_{k=0}^{\infty} (1-\mu
\tau^k)\det
(I+\mu J^{\Gamma}_{\mu})$ is analytic in $\mu$ between $S_\e$ and
$\mathcal{C}_\e$ (note that we now include a subscript $\mu$ on $J$ to
emphasize the dependence of the kernel on $\mu$). However, this
expression was derived from and is equivalent to
\[
\frac{\det(I-\lambda K)}{\prod_{k=0}^{m-1}(1-\lambda\tau^k)},
\]
where $\lambda= \tau^{-m} \mu$ and $K$ is the operator (1) of \cite
{TW4}. The operator $K$ does not depend on $\lambda$ and is thus is an
entire function of $\lambda$. Therefore the only singularities are at
$\lambda=1,\ldots, \tau^{-m+1}$, which correspond to $\mu=\tau
,\ldots,
\tau^{m}$. None of these singularities are between the two contours;
thus the desired result follows.
\end{pf*}

\begin{pf*}{Proof of Lemma \protect\ref{mu_inequalities_lemma}}
We prove this with the scaling parameter $r=1$ as the general case
follows in a similar way.
Consider
\[
\log(g_{\e}(\tilde\mu))=\sum_{k=0}^{\infty} \log(1-\e
^{1/2}\tilde\mu
\tau_\e^k).
\]
We have $\sum_{k=0}^{\infty} \varepsilon^{1/2} \tau^k = \frac12(1
+ \e
^{1/2}c_\e)$ where $c_\e=O(1)$. So for $\tilde\mu\in R_1$ we have
\begin{eqnarray*}
\biggl|\log(g_{\e}(\tilde\mu))+\frac{\tilde\mu}2(1+ \e
^{1/2}c_\e)\biggr|
&=& \Biggl|\sum_{k=0}^{\infty}\log(1-\varepsilon^{1/2}\tilde\mu\tau^k)+
\varepsilon^{1/2}\tilde\mu\tau^k\Biggr|\\
&\leq& \sum_{k=0}^{\infty} |\log(1-\varepsilon^{1/2}\tilde\mu
\tau^k)+
\varepsilon^{1/2}\tilde\mu\tau^k|\\
&\leq& \sum_{k=0}^{\infty} |\varepsilon^{1/2}\tilde\mu
\tau
^k|^2 = \frac{\e|\tilde\mu|^2}{1-\tau^2}=
\frac{\e^{1/2} |\tilde\mu|^2}{4-4\e^{1/2}}\\
&\leq& c\e
^{1/2}|\tilde\mu|^2
\leq c' \e^{1/2}.
\end{eqnarray*}
The second inequality uses the fact that for $|z|\leq1/2$,
$|\log(1-z)+z|\leq|z|^2$. Since $\tilde\mu\in R_1$ it follows that
$|z|=\e^{1/2}|\tilde\mu|$ is bounded by $1/2$ for small enough~$\e$.
The constants here are finite and do not depend on any of the
parameters. This proves equation (\ref{g_e_ineq1}) and shows that the
convergence is uniform in $\tilde\mu$ on $R_1$.

We now turn to the second inequality, equation (\ref{g_e_ineq2}).
Consider the region
\[
D=\biggl\{z\dvtx \arg(z)\in\biggl[-{\frac{\pi}{10}},{\frac
{\pi
}{10}}\biggr]\biggr\}\cap\biggl\{z\dvtx \im(z)\in\biggl(-{\frac
{1}{10}},{
\frac{1}{10}}\biggr)\biggr\}\cap\{z\dvtx \re(z)\leq1\}.
\]
For all $z\in D$,
%
%
\begin{equation}\label{ineq2}
\re\bigl(\log(1-z)\bigr)\leq\re(-z-z^2/2).
\end{equation}
For $\tilde\mu\in R_2$, it is clear that $\e^{1/2}\tilde\mu\in D$.
Therefore, using (\ref{ineq2}),
\begin{eqnarray*}
\re(\log(g_{\e}(\tilde\mu))) &=& \sum_{k=0}^{\infty
}\re[\log
(1-\varepsilon^{1/2}\tilde\mu\tau^k)]\\
&\leq& \sum_{k=0}^{\infty} \bigl(-\re[\e^{1/2}\tilde\mu\tau^k]-\re
[(\e
^{1/2}\tilde\mu\tau^k)^2/2]\bigr)\\
&\leq& -\re(\tilde\mu/2)-\frac{1}{8}\e^{1/2}\re(\tilde
\mu^2).
\end{eqnarray*}
This proves equation (\ref{g_e_ineq2}). Note that from the definition
of $R_2$ we can calculate the argument of $\tilde\mu$, and we see that
$|\arg\tilde\mu|\leq\arctan(2\tan(\frac{\pi}{10}))<\frac{\pi}{4}$
and $|\tilde\mu|\geq r\geq1$. Therefore $\re(\tilde\mu^2)$ is positive
and bounded away from zero for all $\tilde\mu\in R_2$.
\end{pf*}

\begin{pf*}{Proof of Proposition \protect\ref{originally_cut_mu_lemma}}
This proof proceeds in a similar manner to the proof of Proposition
\ref
{reinclude_mu_lemma}; however, since in this case we have to deal with
$\e$ going to zero and changing contours, it is, by necessity, a little
more complicated. For this reason we encourage readers to first study
the simpler proof of Proposition~\ref{reinclude_mu_lemma}.\vadjust{\goodbreak}

In that proof we factor our operator into two pieces. Then, using the
decay of the exponential term, and the control over the size of the
$\csc$ term, we are able to show that the Hilbert--Schmidt norm of the
first factor is finite and that for the second factor it is bounded by
$|\tilde\mu|^{\alpha}$ for $\alpha<1$ (we show it for $\alpha=1/2$
though any $\alpha>0$ works, just with constant getting large as
$\alpha
\searrow0$). This gives an estimate on the trace norm of the operator,
which, by exponentiating, gives an upper bound $e^{c|\tilde\mu
|^{\alpha
}}$ on the size of the determinant. This upper bound is beat by the
exponential decay in $\tilde\mu$ of the prefactor term $p_{\e}$.

For the proof of Proposition~\ref{originally_cut_mu_lemma}, we do the
same sort of factorization of our operator into $AB$, where here,
\[
A(\zeta,\eta)=\frac{e^{c'\Psi(\zeta)}}{\zeta-\eta}
\]
with $0<c'<1$ fixed, and
\[
B(\eta,\zeta) = e^{-c'\Psi(\zeta)}e^{\Psi(\zeta)-\Psi(\eta)}\mu
f(\mu
,\zeta/\eta)\frac{g(\eta')}{g(\zeta)}\frac{1}{\eta}.
\]
We must be careful in keeping track of the contours on which these
operators act. It will be convenient for this proof to move the
contours for $\eta$ and $\zeta$ to contours $\Gamma_{\eta,l}^{0}$ and
$\Gamma_{\zeta,l}^{0}$ which are defined in the same manner as
$\Gamma
_{\eta,l}$ and $\Gamma_{\zeta,l}$ in Definition~\ref{kappacontours}.
The difference, however, is that these new contours start at $-1+\e
^{1/2}$ (resp., $-1+\e^{1/2}/2$) and go straight up for distance $l\e
^{1/2}$ before joining $\kappa(\theta)$ with $\alpha= - 1/2 +O(\e
^{1/2})$ [resp., $\alpha= 1/2 +O(\e^{1/2})$]. The purpose of this is to
avoid the necessity of creating a dimple in the contours, thus allowing
us to apply Lemma~\ref{final_estimate} in the form it was stated in
\cite{ACQ}. Changing the location of this vertical portion of our
contours does not affect the Taylor expansion we performed since we can
still center our rescaled variables at $\xi$, and all of those results
were valid in a compact region with respect to the rescaled variables.

Now using the estimates of Lemmas~\ref{kill_gamma_2_lemma} and \ref
{compact_eta_zeta_taylor_lemma}, we compute that $\Vert A\Vert _2<\infty$
(uniformly in $\e<\e_0$ and, trivially, also in $\tilde\mu$). Here we
calculate the Hilbert--Schmidt norm using Lemma \ref
{fredholm_continuity_lemma}. Intuitively this norm is uniformly bounded
as~$\e$ goes to zero because, while the denominator blows up as badly
as $\e^{-1/2}$, the numerator is roughly supported only on a region of
measure $\e^{1/2}$ (owing to the exponential decay of the exponential
when $\zeta$ differs from $\xi$ by more than order $\e^{1/2}$).

We wish to control $\Vert B\Vert _2$ now. Using the discussion before Lemma
\ref
{kill_gamma_2_lemma} we may rewrite $B$ as
\[
B(\eta,\zeta) = e^{-c\Psi(\zeta)}e^{\Psi(\zeta)-\Psi(\eta
)}\tilde\mu
f(\tilde\mu,\zeta/\eta) \biggl(\frac{\zeta}{\eta}\biggr)^{n_0}\frac
{g(\eta')}{g(\zeta)}\frac{1}{\eta}.
\]
Lemmas~\ref{kill_gamma_2_lemma} and~\ref{compact_eta_zeta_taylor_lemma}
show that
\[
\bigl|e^{-c\Psi(\zeta)}e^{\Psi(\zeta)-\Psi(\eta)}\bigr| \leq e^{-\e
^{-1}C(|\xi
-\eta|+|\xi-\zeta|)}\vadjust{\goodbreak}
\]
for some fixed constant $C>0$. On the other hand, Lemma \ref
{new_lemma_1} shows that
\[
\biggl|\biggl(\frac{\zeta}{\eta}\biggr)^{n_0}\frac{g(\eta')}{g(\zeta)}\biggr| \leq
e^{-\e^{-1}C'(|\xi-\eta|+|\xi-\zeta|)}
\]
for \textit{any} constant $C'$. In particular, we can take $C'<C$ which
shows that all of the term (besides the $f$ term) decay exponentially fast.

The final ingredient in proving our proposition is, therefore, control
of $|\tilde\mu f(\tilde\mu, z)|$ for $z=\zeta/\eta'$. We break it up
into two regions of $\eta',\zeta$: The first (1)~when $|\eta'-\zeta
|\leq c$ for a very small constant $c$ and the second (2) when $|\eta
'-\zeta|> c$. We will compute $\Vert B\Vert _2$ as the square root of
%
%
\begin{equation}\label{case1case2}
\int_{\eta,\zeta\in\mathrm{Case\ (1)}} |B(\eta,\zeta)|^2 \,d\eta
\,d\zeta
+ \int_{\eta,\zeta\in\mathrm{Case\ (2)}} |B(\eta,\zeta)|^2 \,d\eta
\,d\zeta.
\end{equation}
We will show that the first term can be bounded by $c''|\tilde\mu
|^{2\alpha}$ for any $\alpha<1$, while the second term can be bounded
by a large constant. As a result $\Vert B\Vert _2\leq c''|\tilde\mu|^{\alpha}$
which is exactly as desired since then $\Vert AB\Vert _1\leq e^{c''|\tilde\mu
|^{\alpha}}$; see Lemma~\ref{fredholm_continuity_lemma}.

Consider case (1) where $|\eta'-\zeta|\leq c$ for a constant $c$ which
is positive but small (depending on $T$). One may easily check from the
definition of the contours that $\e^{-1/2}(|\zeta/\eta|-1)$ is
contained in a compact subset of $(0,2)$. In fact, $\zeta/\eta'$ almost
exactly lies along the curve $|z|=1+\e^{1/2}$ and in particular (by
taking $\e_0$ and $c$ small enough) we can assume that $\zeta/\eta$
never leaves the region bounded by $|z|=1+(1\pm r)\e^{1/2}$ for any
fixed $r<1$. Let us call this region $R_{\e,r}$. Then we have:
\begin{lemma}\label{final_estimate}
Fix $\e_0$ and $r\in(0,1)$. Then for all $\e<\e_0$, $\tilde\mu\in
\tilde\mathcal{C}_{\e}$ and \mbox{$z\in R_{\e,r}$},
\[
|\tilde\mu f(\tilde\mu,z)| \leq c|\tilde\mu|^{\alpha}/|1-z|
\]
for some $\alpha\in(0,1)$, with $c=c(\alpha)$ independent of $z$,
$\tilde\mu$ and $\e$.
\end{lemma}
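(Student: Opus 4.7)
The plan is to compare the sum $\tilde\mu f(\tilde\mu,z)=\sum_{k\in\mathbb{Z}} \tau^k\tilde\mu z^k/(1-\tau^k\tilde\mu)$ to its continuum analogue $\pi(-\tilde\mu)^{-\tilde z/2}\csc(\pi\tilde z/2)$ from identity~\eqref{cscIntegral}, where I set $\tilde z:=(1-z)/\epsilon^{1/2}$. Since $\tau = 1-2\epsilon^{1/2}+O(\epsilon)$, the sum behaves like $(-\log\tau)^{-1}$ times a Riemann integral, giving heuristically
$$\tilde\mu f(\tilde\mu,z) \;\approx\; \frac{\pi}{2\epsilon^{1/2}}\,(-\tilde\mu)^{-\tilde z/2}\csc(\pi\tilde z/2)\cdot(1+o(1)),$$
which is a quantitative strengthening of the pointwise convergence already proved in Lemma~\ref{muf_compact_sets_csc_limit_lemma}, now needed uniformly in $z\in R_{\epsilon,r}$ and $\tilde\mu\in\mathcal{\tilde C}_\epsilon$. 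To make this rigorous I would express the series as a contour integral of $\pi\cot(\pi s)\tau^s\tilde\mu z^s/(1-\tau^s\tilde\mu)$ around the integer lattice and deform to $\pm i\infty$, collecting residues at the zeros $s_\ell=(\log\tilde\mu^{-1}+2\pi i\ell)/\log\tau$ of the denominator. The $\ell=0$ residue produces the leading $\csc$-type term; the remaining residues decay because $|z^{s_\ell}|\lesssim e^{-c|\ell|/\epsilon^{1/2}}$ (since $|\im s_\ell|\gtrsim|\ell|/\epsilon^{1/2}$), so they sum to a correction dominated by the main term.

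Once the leading asymptotic is in hand, the bound reduces to showing
$$c\epsilon^{-1/2}\,|\tilde\mu|^{-\re\tilde z/2}\,|\csc(\pi\tilde z/2)| \;\le\; c'|\tilde\mu|^\alpha/|1-z|$$
uniformly on the relevant parameter range. Writing $z=(1+a\epsilon^{1/2})e^{i\phi}$ with $a\in(1-r,1+r)$, I would split into three cases. \emph{(a)} When $|\phi|$ is small enough that $|\tilde z|$ is bounded, one checks $\re(\tilde z)\in[-(1+r),0]$ and $|\csc(\pi\tilde z/2)|\le 2/|\tilde z|$ (the condition $|z|>1$ keeps $\tilde z$ safely away from $\pm 2,\pm 4,\ldots$). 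Then $|\tilde\mu|^{-\re\tilde z/2}\le|\tilde\mu|^{(1+r)/2}$, so choosing $\alpha=(1+r)/2\in(0,1)$ and using $|\tilde z|=|1-z|/\epsilon^{1/2}$ delivers the bound. \emph{(b)} When $|\im\tilde z|$ is large, $|\csc|$ is exponentially small in $|\im\tilde z|$, which beats the polynomial factor $1/|1-z|$. \emph{(c)} When $\re(\tilde z)$ is large positive (which happens for $|\phi|$ close to $\pi$), the factor $|\tilde\mu|^{-\re\tilde z/2}$ becomes super-polynomially small for $|\tilde\mu|$ bounded below on $\mathcal{\tilde C}_\epsilon$, absorbing the growth of the other factors.

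The main technical obstacle I expect is Step~1: producing a quantitative residue representation with uniform control on the remainder across all of $R_{\epsilon,r}\times\mathcal{\tilde C}_\epsilon$. The essential point is that $\cot(\pi s_0)$ has its own poles at integers, so one must verify that $s_0=\log\tilde\mu^{-1}/\log\tau$ stays a definite distance from $\mathbb{Z}$ uniformly in $\epsilon$. This is precisely why $\mathcal{\tilde C}_\epsilon$ was shaped as a cigar avoiding the original poles $\mu=\tau^k$, and extracting a uniform lower bound on $\mathrm{dist}(s_0,\mathbb{Z})$ from this geometric avoidance is where most of the work lies; once achieved, the regime-by-regime analysis in Step~2 is routine and only requires elementary trigonometric estimates.
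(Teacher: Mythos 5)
Your proposal takes a genuinely different route from the paper's. The paper recenters the sum at $k^*=\lfloor\tfrac12\epsilon^{-1/2}\log|\mu|\rfloor$, factors out $|\tilde\mu|^{1/2}$, splits the remainder into a middle block $|k|\le\epsilon^{-1/2}$ and two tails, controls the tails by Abel summation, and controls the middle block by a direct term-by-term Riemann-sum comparison. You instead want to re-derive the $\csc$ asymptotic via a residue expansion and then bound the leading term. There are, however, two gaps that seem fatal as written.

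First, the claimed uniform multiplicative approximation by $\tfrac{\pi}{2\epsilon^{1/2}}(-\tilde\mu)^{-\tilde z/2}\csc(\pi\tilde z/2)$ does not hold on all of $\mathcal{\tilde C}_\epsilon$. The $\csc$ expression carries a branch cut along $\tilde\mu\in[0,\infty)$ (this is remarked right after (\ref{cscIntegral})), and the cigar contour $\mathcal{\tilde C}_\epsilon$ crosses that axis at $\tilde\mu=\epsilon^{-1/2}-1$ on its right vertical segment. The discrete sum $\tilde\mu f(\tilde\mu,z)$ is analytic in $\tilde\mu$ there, whereas the $\csc$ formula jumps across the cut by a term of the same order as itself, so the relative error is $O(1)$ near the crossing, not $o(1)$. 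Moreover, near the right end of the cigar $\omega=\tilde\mu\tau^{k^*}$ comes within $\epsilon^{1/2}$ of $1$, and the paper's middle-block analysis shows the discrete sum then acquires an extra $\log|\tilde\mu|$ relative to the would-be $\csc$ leading term; absorbing that logarithm is precisely why the lemma requires $\alpha>1/2$ rather than $\alpha=1/2$, so your leading-order claim skips the actual content of the lemma. Upgrading the pointwise-on-compacts convergence of Lemma~\ref{muf_compact_sets_csc_limit_lemma} to a uniform statement on all of $\mathcal{\tilde C}_\epsilon$ is not a routine tightening; it is the whole problem. Second, the residue-decay assertion $|z^{s_\ell}|\lesssim e^{-c|\ell|/\epsilon^{1/2}}$ is false for real $z$. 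One has $|z^s|=e^{\re s\,\log|z|-\im s\,\arg z}$, and since $s_\ell-s_0=2\pi i\ell/\log\tau$ is purely imaginary ($\log\tau$ being real), $\re s_\ell$ is independent of $\ell$; hence $|z^{s_\ell}|$ is constant in $\ell$ whenever $\arg z=0$. Together with $|\cot(\pi s_\ell)|\to 1$ as $|\im s_\ell|\to\infty$, the residue series does not converge absolutely and the deformation to $\pm i\infty$ does not close. Real $z$ --- for instance $z=1+\epsilon^{1/2}$, the first case the paper's proof treats --- is thus unreachable by this route, and repairing both points appears to drive you right back to the term-by-term Riemann-sum comparison the paper performs.
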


\begin{remark}
By changing the value of $\alpha$ in the definition of $\kappa(\theta)$
(which then goes into the definition of $\Gamma^{0}_{\eta,l}$ and
$\Gamma^{0}_{\zeta,l}$) and also focusing the region $R_{\e,r}$ around
$|z|=1+2\alpha\e^{1/2}$, we can take $\alpha$ arbitrarily small in the
above lemma at a cost of increasing the constant $c=c(\alpha)$ (the
same also applies for Proposition~\ref{reinclude_mu_lemma}). The
$|\tilde\mu|^{\alpha}$ comes from the fact that $(1+2\alpha\e
^{1/2})^{({1}/{2})\e^{-1/2}\log|\tilde\mu|} \approx|\tilde\mu
|^{\alpha}$. Another remark is that the proof below can be used to
provide an alternative proof of Lemma \ref
{muf_compact_sets_csc_limit_lemma} by studying the convergence of the
Riemann sum directly rather than by using functional equation
properties of $f$ and the analytic continuations.
\end{remark}

We complete the ongoing proof of Proposition \ref
{originally_cut_mu_lemma} and then return to the proof of the above lemma.

Case (1) is now done since we can estimate the first integral in
equation (\ref{case1case2}) using Lemma~\ref{final_estimate} and the
exponential decay\vadjust{\goodbreak} of the exponential term outside of $|\eta'-\zeta
|=O(\e
^{1/2})$. Therefore, just as with the $A$ operator, the $\e^{-1/2}$
blowup of $|\tilde\mu f(\tilde\mu,\zeta/\eta')|$ is countered by the
decay of the other terms, and we are just left with a large constant
time $|\tilde\mu|^{\alpha}$.

Turning to case (2) we need to show that the second integral in
equation (\ref{case1case2}) is bounded uniformly in $\e$ and $\tilde
\mu
\in\tilde C_{\e}$. This case corresponds to $|\eta'-\zeta|>c$ for some
fixed but small constant $c$. Since $\e^{-1/2}(|\zeta/\eta|-1)$ stays
bounded in a compact set, using an argument almost identical to the
proof of Lemma~\ref{mu_f_polynomial_bound_lemma} we can show that
$|\tilde\mu f(\tilde\mu,\zeta/\eta)|$ can be bounded by $C|\tilde
\mu
|^{C'}$ for positive yet finite constants $C$ and $C'$. The important
point here is that there is only a finite power of $|\tilde\mu|$. Since
$|\tilde\mu|<\e^{-1/2}$ this means that this term can blow up at most
polynomially in $\e^{-1/2}$. On the other hand we know that the
combination of the other terms decay exponentially fast like $e^{-\e
^{-1}c}$ for some small yet finite constant $c$. Hence the second
integral in equation (\ref{case1case2}) goes to zero.\looseness=-1

We now return to the proof of Lemma~\ref{final_estimate}, which will
complete the proof of Proposition~\ref{originally_cut_mu_lemma}.

\begin{pf*}{Proof of Lemma \protect\ref{final_estimate}}
We will prove the desired estimate for $z$ with $|z|=1+\e^{1/2}$. The
proof for general $z\in R_{\e,r}$ follows similarly.
Recall that
\[
\tilde\mu f(\tilde\mu,z) = \sum_{k=-\infty}^{\infty} \frac
{\tilde\mu
\tau^k}{1-\tilde\mu\tau^k} z^k.
\]

We will recenter doubly infinite sum at around the value
\[
k^*=\bigl\lfloor\tfrac{1}{2}\e^{-1/2} \log|\mu|\bigr\rfloor.
\]
This is motivated by the fact that for $\im\tilde\mu=\pm1$, and large
real part, the denominator is approximately minimized when $\tau
^{k}=1/|\tilde\mu|$, corresponding to $k\approx k^*$. This centering
results in
\[
\tilde\mu f(\tilde\mu,z) = \sum_{k=-\infty}^{\infty} \frac
{\tilde\mu
\tau^{k^*}\tau^k}{1-\tilde\mu\tau^{k^*} \tau^k} z^{k^*}z^k.
\]
By the definition of $k^*$,
\[
|z|^{k^*}= |\tilde\mu|^{1/2}\bigl(1+O(\e^{1/2})\bigr).
\]
Thus we find that
\[
|\tilde\mu f(\tilde\mu,z)| = |\tilde\mu|^{1/2}\Biggl|\sum_{k=-\infty
}^{\infty} \frac{\kapi\tau^k}{1-\kapi\tau^k} z^k\Biggr|,
\]
where
\[
\kapi=\tilde\mu\tau^{k^*}
\]
and is roughly on the unit circle except for a small dimple near 1. To
be more precise, due to the rounding in the definition of $k^*$ the
$\kapi$ is not exactly on the unit circle; however we have
\[
|1-\kapi|>\e^{1/2}, \qquad |\kapi|-1=O(\e^{1/2}).\vadjust{\goodbreak}
\]
The section of $\tilde\mathcal{C}_{\e}$ in which $\tilde\mu=\e
^{-1/2}-1+iy$ for $y\in(-1,1)$ corresponds to $\kapi$ lying along a
small dimple around $1$ (and still respects $|1-\kapi|>\e^{1/2}$). We
call the curve on which $\kapi$ lies $\Omega$.

We can bring the $|\tilde\mu|^{1/2}$ factor to the left and split the
summation into three parts, so that $|\tilde\mu|^{-1/2}|\tilde\mu
f(\tilde\mu,z)|$ equals
\[ 
 \Biggl|\sum_{k=-\infty}^{-\e^{-1/2}} \frac{\kapi\tau^k}{1-\kapi\tau
^k} z^k+ \sum_{k=-\e^{-1/2}}^{\e^{-1/2}} \frac{\kapi\tau
^k}{1-\kapi\tau
^k} z^k+ \sum_{k=\e^{-1/2}}^{\infty} \frac{\kapi\tau^k}{1-\kapi
\tau^k}
z^k\Biggr|.\vspace*{-1pt}
\]
We will control each of these term separately. The first and the third
are easiest. Consider
\[
\Biggl|(z-1)\sum_{k=-\infty}^{-\e^{-1/2}} \frac{\kapi\tau^k}{1-\kapi
\tau
^k} z^k\Biggr|.\vspace*{-1pt}
\]
We wish to show this is bounded by a constant which is independent of
$\tilde\mu$ and $\e$. Summing by parts the argument of the absolute
value can be written as\looseness=-1
\[
\frac{\kapi\tau^{-\e^{-1/2}+1}}{1-\kapi\tau^{-\e
^{-1/2}+1}}z^{-\e
^{-1/2}+1}+(1-\tau)\sum_{k=-\infty}^{-\e^{-1/2}}\frac{\kapi\tau
^k}{(1-\kapi\tau^k)(1-\kapi\tau^{k+1})}z^k.\vspace*{-1pt}
\]\looseness=0
We have $\tau^{-\e^{-1/2}+1} \approx e^2$ and $|z^{-\e
^{-1/2}+1}|\approx e^{-1}$ (where $e\sim2.718$). The denominator of
the first term is therefore bounded away from zero. Thus the absolute
value of this term is bounded by a constant. For the second term of
(\ref{three_term_eqn}) we can bring the absolute value inside the summation to get
\[
|1-\tau|\sum_{k=-\infty}^{-\e^{-1/2}}\biggl|\frac{\kapi\tau
^k}{(1-\kapi
\tau^k)(1-\kapi\tau^{k+1})}\biggr||z|^k.\vspace*{-1pt}
\]
The term $|\frac{\kapi\tau^k}{(1-\kapi\tau^k)(1-\kapi\tau
^{k+1})}|$ stays bounded above by a constant times the value at
$k=-\e^{-1/2}$. Therefore, replacing this by a constant, we can sum in
$|z|$ and we get $\frac{|z|^{-\e^{-1/2}}}{1-1/|z|}$. The numerator, as
noted before, is like $e^{-1}$ but the denominator is like $\e
^{1/2}/2$. This is canceled by the term $1-\tau=O(\e^{1/2})$ in front.
Thus the absolute value is bounded.

The argument for the third term of equation (\ref{three_term_eqn})
works in the same way, except rather than multiplying by $|1-z|$ and
showing the result is constant, we multiply by $|1-\tau z|$. This is,
however, sufficient since $|1-\tau z|$ and $|1-z|$ are effectively the
same for $z$ near 1 which is where our desired bound must be shown carefully.

We now turn to the middle term in equation (\ref{three_term_eqn}),
which is more difficult.
We will show that
\[
\Biggl|(1-z)\sum_{k=-\e^{-1/2}}^{\e^{-1/2}} \frac{\kapi\tau^k}{1-\kapi
\tau^k} z^k\Biggr|=O(\log|\tilde\mu|).\vadjust{\goodbreak}
\]
This is of smaller order than $|\tilde\mu|$ raised to any positive real
power and thus finishes the proof. For the sake of simplicity, we will
first show this with $z=1+\e^{1/2}$. The general argument for points
$z$ of the same radius and nonzero angle is very similar, as we will
observe at the end of the proof.
For the special choice of $z$, the prefactor $1-z=\e^{1/2}$.

The idea, as mentioned in the heuristic proof, is to show that this sum
is well approximated by a Riemann sum. In fact, the argument below can
be used to make that formal observation rigorous, and thus provides an
alternative method to the complex analytic approach we take in the
proof of Lemma~\ref{muf_compact_sets_csc_limit_lemma}. The sum we have
is given by
%
%
\begin{eqnarray}\label{three_term_eqn}\qquad
\e^{1/2} \sum_{k=-\e^{-1/2}}^{\e^{-1/2}}\frac{\kapi\tau
^k}{1-\kapi\tau
^k} z^k = \e^{1/2} \sum_{k=-\e^{-1/2}}^{\e^{-1/2}}\frac{\kapi
(1-\e
^{1/2}+O(\e))^k}{1-\kapi(1-2\e^{1/2}+O(\e))^k},
\end{eqnarray}
where we have used the fact that $\tau z = 1-\e^{1/2} + O(\e)$.
If $k=t\e^{-1/2}$ then the sum is close to a Riemann sum for
%
%
\begin{equation}\label{integral_equation_sigma}
 \int_{-1}^{1}\frac{\kapi e^{-t}}{1-\kapi e^{-2t}} \,dt.
\end{equation}
We use this formal relationship to prove that the sum in equation (\ref
{three_term_eqn}) is $O(\log|\tilde\mu|)$. We do this in a few steps.
The first step is to consider the difference between each term in our
sum and the analogous term in a Riemann sum for the integral. After
estimating the difference we show that this can be summed over $k$ and
gives us a finite error. The second step is to estimate the error of
this Riemann sum approximation to the actual integral. The final step
is to note that
\[
\int_{-1}^{1} \frac{\kapi e^{-t}}{1-\kapi e^{-2t}}\,dt \sim|\log
(1-\kapi
)|\sim\log|\tilde\mu|
\]
for $\kapi\in\Omega$ (in particular where $|1-\kapi|>\e^{1/2}$). Hence
it is easy to check that it is smaller than any power of $|\tilde\mu|$.

A single term in the Riemann sum for the integral looks like
$
\e^{1/2} {\kapi e^{-k\e^{1/2}}}/\break {(1-\kapi e^{-2k\e^{1/2}})}
$. Thus we are interested in estimating
%
%
\begin{equation}\label{estimating_eqn}
\e^{1/2}\biggl|\frac{\kapi(1-\e^{1/2}+O(\e))^k}{1-\kapi(1-2\e
^{1/2}+O(\e))^k} - \frac{\kapi e^{-k\e^{1/2}}}{1-\kapi e^{-2k\e^{1/2}}}
\biggr|.
\end{equation}
We claim that there exists $C<\infty$, independent of $\varepsilon$ and
$k$ satisfying \mbox{$k\e^{1/2}\leq1$}, such that the previous line is
bounded above by
%
%
\begin{equation}\label{giac}
 \frac{Ck^2\e^{3/2}}{(1-\kapi+\kapi2k\e^{1/2})}+\frac{Ck^3\e
^{2}}{(1-\kapi+\kapi2k\e^{1/2})^2}.
\end{equation}
To prove that (\ref{estimating_eqn})${}\le{}$(\ref{giac}) we expand the
powers of $k$ and the exponentials. For the numerator and denominator
of the first term inside of the\vadjust{\goodbreak} absolute value in (\ref
{estimating_eqn}), we have $\kapi(1-\e^{1/2}+O(\e))^k= \kapi-\kapi
k\e
^{1/2} + O(k^2\e)$ and
\begin{eqnarray*}
&&1-\kapi\bigl(1-2\e^{1/2}+O(\e)\bigr)^k \\
&&\qquad =  1-\kapi+\kapi2k\e^{1/2}
-\kapi2k^2\e+O(k\e)+O(k^3\e^{3/2})\\
&&\qquad =
(1-\kapi+\kapi2k\e^{1/2})\biggl(1 - \frac{\kapi2k^2\e+O(k\e)+O(k^3\e
^{3/2})}{1-\kapi+\kapi2k \e^{1/2}}\biggr).
\end{eqnarray*}
Using $1/(1-z)=1+z+O(z^2)$ for $|z|<1$, we see that
\begin{eqnarray*}
&&\frac{\kapi(1-\e^{1/2}+O(\e))^k}{1-\kapi(1-2\e^{1/2}+O(\e))^k}\\
&&\qquad=\frac{\kapi-\kapi k\e^{1/2} + O(k^2\e)}{1-\kapi+\kapi
2k\e
^{1/2}}\biggl(1+\frac{\kapi2k^2\e+O(k\e)+O(k^3\e^{3/2})}{1-\kapi
+\kapi2k \e^{1/2}}\biggr)\\
&&\qquad =
\bigl(\kapi-\kapi k \e^{1/2} + O(k^2\e)\bigr)\\
&&\qquad\quad{}\times \bigl(1-\kapi
+\kapi2k \e^{1/2} + \kapi2k^2\e+O(k\e)+O(k^3\e^{3/2})
\bigr)\\
&&\qquad\quad{}/{(1-\kapi+\kapi2k\e^{1/2})^2}.
\end{eqnarray*}
Likewise, the second term from equation (\ref{estimating_eqn}) can be
similarly estimated and shown to be
\begin{eqnarray*}
&&\frac{\kapi e^{-k\e^{1/2}}}{1-\kapi e^{-2k\e^{1/2}}}\\
&&\qquad= \frac{
(\kapi-\kapi k\e^{1/2} + O(k^2\e))(1-\kapi+\kapi2k \e^{1/2}
+ \kapi2k^2\e+O(k^3\e^{3/2}))}{(1-\kapi+\kapi2k\e^{1/2})^2}.
\end{eqnarray*}
Taking the difference of these two terms, and noting the cancellation
of a number of the terms in the numerator, gives (\ref{giac}).

To see that the error in (\ref{giac}) is bounded after the summation
over $k$ in the range $\{-\e^{-1/2},\ldots, \e^{-1/2}\}$, note that
this gives
\begin{eqnarray*}
&&\e^{1/2}\sum_{k=-\e^{-1/2}}^{\e^{1/2}}\frac{(2k\e
^{1/2})^2}{1-\kapi
+\kapi(2k\e^{1/2})}+\frac{(2k\e^{1/2})^3}{(1-\kapi+\kapi(2k\e^{1/2}))^2}
\\
&&\qquad\sim\int_{-1}^{1}\frac{(2t)^2}{1-\kapi+\kapi2t} +\frac
{(2t)^3}{(1-\kapi+\kapi2t)^2} \,dt.
\end{eqnarray*}
The Riemann sums and integrals
are easily shown to be convergent for our $\kapi$ which lies on
$\Omega
$, which is roughly the unit circle, and avoids the point 1 by distance
$\e^{1/2}$.

Having completed this first step, we now must show that the Riemann sum
for the integral in equation (\ref{integral_equation_sigma}) converges
to the integral. This uses the following estimate:
%
%
\begin{eqnarray}\label{riemann_approx_max}
\sum_{k=-\e^{-1/2}}^{\e^{-1/2}} \e^{1/2} \max_{(k-1/2)\e
^{1/2}\leq
t\leq(k+1/2)\e^{1/2}} \biggl| \frac{\kapi e^{-k\e^{1/2}}}{1-\kapi
e^{-2k\e^{1/2}}} - \frac{\kapi e^{-t}}{1-\kapi e^{-2t}}\biggr|\le C.\hspace*{-35pt}
\end{eqnarray}
To show this, observe that for $t\in\e^{1/2}[k-1/2,k+1/2]$, we can
expand the second fraction as
%
%
\begin{equation}\label{sec_frac}
\frac{\kapi e^{-k\e^{1/2}}(1+O(\e^{1/2}))}{1-\kapi e^{-2k\e
^{1/2}}(1-2l\e^{1/2}+O(\e))},
\end{equation}
where $l\in[-1/2,1/2]$.
Factoring the denominator as
%
%
\begin{equation}
(1-\kapi e^{-2k\e^{1/2}})\biggl(1+ \frac{\kapi e^{-2k\e^{1/2}}(2l\e
^{1/2}+O(\e))}{1-\kapi e^{-2k\e^{1/2}}}\biggr),
\end{equation}
we can use $1/(1+z)=1-z+O(z^2)$ (valid since $|1-\kapi e^{-2k\e
^{1/2}}|>\e^{1/2}$ and $|l|\leq1$) to rewrite (\ref{sec_frac}) as
\[
\frac{\kapi e^{-k\e^{1/2}}(1+O(\e^{1/2}))(1- {\kapi e^{-2k\e
^{1/2}}(2l\e^{1/2}+O(\e))}/{(1-\kapi e^{-2k\e^{1/2}})} )}{1-\kapi
e^{-2k\e^{1/2}}}.
\]
Canceling terms in this expression with the terms in the first part of
equation (\ref{riemann_approx_max}), we find that we are left with
terms bounded by
\[
\frac{O(\e^{1/2})}{1-\kapi e^{-2k\e^{1/2}}} + \frac{O(\e
^{1/2})}{(1-\kapi e^{-2k\e^{1/2}})^2}.
\]
These must be summed over $k$ and multiplied by the prefactor $\e
^{1/2}$. Summing over~$k$ we find that these are approximated by the integrals
\[
\e^{1/2}\int_{-1}^{1}\frac{1}{1-\kapi+\kapi2t}\,dt,\qquad \e^{1/2}\int
_{-1}^{1}\frac{1}{(1-\kapi+\kapi2t)^2}\,dt,
\]
where $|1-\kapi|>\e^{1/2}$. The first integral has a logarithmic
singularity at \mbox{$t=0$} which gives $|\log(1-\kapi)|$ which is clearly
bounded by a constant time $|\log\e^{1/2}|$ for \mbox{$\kapi\in\Omega$}. When
multiplied by $\e^{1/2}$ this term is clearly bounded in~$\e$.
Likewise, the second integral diverges like $|1/(1-\kapi)|$ which is
bounded by $\e^{-1/2}$, and again multiplying by the $\e^{1/2}$ factor
in front shows that this term is bounded. This proves the Riemann sum
approximation.

This estimate completes the proof of the desired bound when $z=1+\e
^{1/2}$. The general case of $|z|=1+\e^{1/2}$ is proved along a similar
line by letting $z= 1+\rho\e^{1/2}$ for $\rho$ on a suitably defined
contour such that $z$ lies on the circle of radius $1+\e^{1/2}$. The
prefactor is no longer $\e^{1/2}$ but rather now $\rho\e^{1/2}$ and all
estimates must take into account $\rho$. However, going through this
carefully one finds that the same sort of estimates as above hold, and
hence the theorem is proved in general.\vadjust{\goodbreak}
\end{pf*}

This lemma completes the proof of Proposition
\ref{originally_cut_mu_lemma}.
\end{pf*}

\begin{pf*}{Proof of Proposition \protect\ref{reinclude_mu_lemma}}
We will focus on the growth of the absolute value of the determinant.
Recall that if $K$ is trace class, then $|\det(I+K)|\leq\exp
{\Vert K\Vert _1}$. Furthermore, if $K$ can be factored into the product $K=AB$
where $A$ and $B$ are Hilbert--Schmidt, then $\Vert K\Vert _1\leq
\Vert A\Vert _2\Vert B\Vert _2$. We will demonstrate such a factorization and follow
this approach to control the size of the determinant.

Define $A\dvtx L^2(\tilde\Gamma_{\zeta})\rightarrow L^2(\tilde\Gamma
_{\eta
})$ and $B\dvtx L^2(\tilde\Gamma_{\eta})\rightarrow L^2(\tilde\Gamma
_{\zeta
})$ via the kernels $A(\tilde\zeta,\tilde\eta) = \frac{e^{-|\im
(\tilde
\zeta)|}}{\tilde\zeta-\tilde\eta}$ and
\[
B(\tilde\eta,\tilde\zeta) =  e^{|\im
(\tilde
\zeta)|}e^{-({T}/{3})(\tilde\zeta^3-\tilde\eta^3)+a\tilde z}
2^{1/3}\frac{\pi(-\tilde\mu)^{\tilde z}}{\sin(\pi\tilde z)} \frac
{\Gamma(2^{1/3}\tilde\zeta- 2^{1/3} XT^{-1/3})}{\Gamma
(2^{1/3}\tilde
\eta' - 2^{1/3} XT^{-1/3})},
\]
%
where $\tilde z = 2^{1/3}(\tilde\zeta-\tilde\eta)$. Notice that we have
put the factor $e^{-|\im(\tilde\zeta)|}$ into the $A$ kernel and
removed it from the $B$ contour. The point of this is to help control
the~$A$ kernel, without significantly impacting the norm of the $B$ kernel.

Consider first $\Vert A\Vert _2$ which is given by
\[
\Vert A\Vert _2^2 = \int_{\tilde\Gamma_{\zeta}}\int_{\tilde\Gamma_{\eta}}
\,d\tilde\zeta \,d\tilde\eta\frac{e^{-2|\im(\tilde\zeta)|}}{|\tilde
\zeta
-\tilde\eta|^2}.
\]
The integral in $\tilde\eta$ converges and is independent of $\tilde
\zeta$ (recall that $|\tilde\zeta-\tilde\eta|$ is bounded away from
zero) while the remaining integral in $\tilde\zeta$ is clearly
convergent; it is exponentially small as $\tilde\zeta$ goes away from
zero along $\tilde\Gamma_{\zeta}$. Thus $\Vert A\Vert _{2}<c$ with no
dependence on $\tilde\mu$ at all.

We now turn to computing $\Vert B\Vert _2$. First consider the cubic term
$\tilde\zeta^3$. The contour~$\tilde\Gamma_{\zeta}$ is
parametrized by
$-\frac{c_3}{2} + c_3 i r$ for $r\in(-\infty,\infty)$, that is, a
straight up and down line just to the left of the $y$ axis.\vspace*{1pt} By plugging
this parametrization in and cubing it, we see that $\re(\tilde\zeta^3)$
behaves like $|\im(\tilde\zeta)|^2$. This is crucial; even though our
contours are parallel and only differ horizontally by a small distance,
their relative locations lead to very different behavior for the real
part of their cube. For~$\tilde\eta$ on the right of the $y$ axis, the
real part still grows quadratically, however, with a negative sign.
This is important because this implies that $|e^{-({T}/{3})(\tilde
\zeta^3-\tilde\eta^3)}|$ behaves like the exponential of the real part
of the argument, which is to say, like
\[
e^{-({T}/{3})(|\im(\tilde\zeta)|^2+|\im(\tilde\eta)|^2)}.
\]
Turning to the $\tilde\mu$ term, observe that
\[
|(-\tilde\mu)^{-\tilde z}| = e^{\re[(\log|\tilde\mu|+i\arg
(-\tilde
\mu))(-\re(\tilde z)-i\im(\tilde z))]}
= e^{-\log|\tilde\mu| \re(\tilde z) +\arg(-\tilde\mu)\im(\tilde
z)}.
\]
The $\csc$ term behaves, for large $\im(\tilde z)$, like $e^{-\pi
|\im
(\tilde z)|}$. Finally, we must show that the Gamma functions have
sub-quadratic growth on vertical lines. This follows from Corollary
1.4.4 of~\cite{AAR} which states that for\vadjust{\goodbreak} $x=a+ib$ and $a_1\leq a\leq
a_2$, as $|b|\rightarrow\infty$
%
%
\begin{equation}
|\Gamma(a+ib)| = \sqrt{2\pi} |b|^{a-1/2} e^{-\pi|b|/2} \bigl(1+O(1/|b|)\bigr).
\end{equation}
Putting all these estimates together gives that for $\tilde\zeta$ and
$\tilde\eta$ far from the origin on their respective contours,
$|B(\tilde\eta,\tilde\zeta)|$ behaves like the following product of
exponentials:
\begin{eqnarray*}
&& e^{|\im(\tilde\zeta)|} e^{-({T}/{3})(|\im(\tilde\zeta)|^2+|\im
(\tilde
\eta)|^2)}\\
&&\qquad{}\times  e^{-\log|\tilde\mu|\re(\tilde z) + \arg(-\tilde\mu
)\im
(\tilde z) - \pi|\im(\tilde z)| - \pi2^{1/3}(|\im(\tilde\zeta
)|-|\im
(\tilde\eta')|)}.
\end{eqnarray*}
Now observe that, due to the location of the contours, $-\re(\tilde z)$
is constant and less than one (in fact equal to $1/2$ by our choice of
contours). Therefore we may factor out the term $e^{-\log|\tilde\mu
|\re
(\tilde z)} = |\tilde\mu|^\alpha$ for $\alpha=1/2<1$.

The Hilbert--Schmidt norm of what remains is clearly finite and
independent of $\tilde\mu$. This is just due to the strong exponential
decay from the quadratic terms $-\im(\zeta)^2$ and $-\im(\eta)^2$ in
the exponential. Therefore we find that $\Vert B\Vert _2\leq c|\tilde\mu
|^\alpha
$ for some constant $c$.

This shows that $\Vert K_a^{\csc,\Gamma}\Vert _1$ behaves like $|\tilde\mu
|^{\alpha}$ for $\alpha<1$. Using the bound that $|\det(I+K_a^{\csc
,\Gamma})|\leq e^{\Vert K_a^{\csc,\Gamma}\Vert }$, we find that $|\det
(I+K_a^{\csc,\Gamma})|\leq e^{|\tilde\mu|^{\alpha}}$. Comparing
this to
$e^{-\tilde\mu}$ we have our desired result. Note that the proof also
shows that $K_a^{\csc,\Gamma}$ is trace class.
\end{pf*}

\subsubsection{\texorpdfstring{Proofs from Section \protect\ref{J_to_K_sec}}
{Proofs from Section 3.3}}\label{JK_proofs_sec}
\mbox{}
\begin{pf*}{Proof of Lemma \protect\ref{kill_gamma_2_lemma}}
We may expand $\Psi(\eta)$ into powers of $\e$ with the expression for
$\eta$ in terms of $\kappa(\theta)$ from (\ref{kappa_eqn}) with
$\alpha
=-1/2$ (similarly $1/2$ for the $\zeta$ expansion). Doing this we find
%
%
\begin{eqnarray}\label{psi_real_eqn}
\re(\Psi(\eta)) &=&\e^{-1/2}\biggl( -{\frac14} \e
^{-1/2}T\alpha\cot^2\biggl(\frac{\theta}{2}\biggr) + {\frac18}T
[\alpha+\kappa(\theta)]^2 \cot^2\biggl(\frac{\theta}{2}\biggr) \biggr)
\nonumber
\\[-8pt]
\\[-8pt]
\nonumber
&&{}+ O(1).
\end{eqnarray}
We must show that everything in the parenthesis above is bounded below
by a positive constant times $|\eta-\xi|$ for all $\eta$ which start at
roughly angle $l\e^{1/2}$. Equivalently we can show that the terms in
the parenthesis behave bounded below by a positive constant times $|\pi
-\theta|$, where $\theta$ is the polar angle of $\eta$.

The second part of this expression is clearly positive regardless of
the value of~$\alpha$. What this suggests is that we must show (in
order to also be able to deal with $\alpha=1/2$ corresponding to the
$\zeta$ estimate) that for $\eta$ starting at angle $l\e^{1/2}$ and
going to zero, the first term dominates (if $l$ is large enough).

To see this we first note that since $\alpha=-1/2$, the first term is
clearly positive and dominates for $\theta$ bounded away from $\pi$.
This proves the inequality for any range of $\eta$ with $\theta$
bounded from $\pi$. Now note that for $\theta$ near $\pi$,
\begin{eqnarray*}
\cot^2\biggl(\frac{\theta}{2}\biggr) &\approx&{\frac{1}{4}}(\pi
-\theta
)^2,\qquad
\tan^2\biggl(\frac{\theta}{2}\biggr)\approx{4}(\pi-\theta)^{-2},\\
\log^2\biggl({\frac{2}{1-\cos(\theta)}}\biggr) &\approx&{\frac{1}{16} }(\pi-\theta)^4.
\end{eqnarray*}
We may expand the square in the second term in (\ref{psi_real_eqn}) and
use the above expressions to find that for some suitable constant $C>0$
(which depends on $X$ and $T$ only), we have
\[
\re(\Psi(\eta)) = \e^{-1/2}\bigl(-{\tfrac1{16} }\e^{-1/2}
T\alpha(\pi-\theta)^2 + C(\pi-\theta)^2\bigr) + O(1).
\]
Since, for some constant $c$, $c^{-1} |\xi-\eta| \leq(\pi-\theta
)\leq
c |\xi-\eta|$, the second part of the lemma follows from the above equation.
Now use the fact that $\pi-\theta\geq l\e^{1/2}$ to give
%
%
\begin{equation}\label{eqn_g}
\re(\Psi(\eta)) = \e^{-1/2}\biggl(-{\frac1{16} }lT\alpha
(\pi
-\theta) +{\frac{X^2}{8T}}(\pi-\theta)^2\biggr) + O(1).
\end{equation}
Since $\pi-\theta$ is bounded by $\pi$, we see that taking $l$ large
enough, the first term always dominates for the entire range of $\theta
\in[0,\pi-l\e^{1/2}]$. Therefore since $\alpha=-1/2$, we find that we
have have the desired lower bound in $\e^{-1/2}$ and $|\pi-\theta|$ for
the first part of the lemma.

Turn now to the bound for $\re(\Psi(\zeta))$. In the case of the
$\eta$
contour we took $\alpha=-1/2$; however, since we now are dealing with
the $\zeta$ contour we must take $\alpha=1/2$. This change in the sign
of $\alpha$, and the argument above shows that equation~(\ref{eqn_g})
implies the desired bound for $\re(\Psi(\zeta))$, for $l$ large enough.
\end{pf*}

Before proving Lemma~\ref{mu_f_polynomial_bound_lemma} we record the
following key lemma on the meromorphic extension of $\mu f(\mu,z)$.
Recall that $\mu f(\mu,z)$ has poles at $\mu= \tau^j$, $j\in\Z$.

\begin{lemma}\label{f_functional_eqn_lemma}
For $\mu\neq\tau^j$, $j\in\Z$, $\mu f(\mu,z)$ is analytic in $z$ for
$1<|z|<\tau^{-1}$ and extends analytically to all $z\neq0$ or $\tau^k$
for $k\in\Z$. This extension is given by first writing $\mu f(\mu,z) =
g_+(z)+g_-(z)$ where
\[
g_+(z)=\sum_{k=0}^{\infty} \frac{\mu\tau^kz^k}{1-\tau^k\mu},\qquad
g_-(z)=\sum_{k=1}^{\infty} \frac{\mu\tau^{-k}z^{-k}}{1-\tau
^{-k}\mu},
\]
and where $g_+$ is now defined for $|z|<\tau^{-1}$, and $g_-$ is
defined for $|z|>1$. These functions satisfy the following two
functional equations which imply the analytic continuation:
\[
g_+(z)=\frac{\mu}{1-\tau z}+\mu g_+(\tau z), \qquad g_-(z)=\frac
{1}{1-z}+\frac{1}{\mu}g_-(z/\tau).
\]
By repeating this functional equation, we find that
\begin{eqnarray*}
g_+(z)&=&\sum_{k=1}^{N}\frac{\mu^k}{1-\tau^k z}+\mu^N g_+(\tau^N
z),\\
g_-(z)&=&\sum_{k=0}^{N-1}\frac{\mu^{-k}}{1-\tau^{-k}z}+\mu
^{-N}g_-(z\tau^{-N}).
\end{eqnarray*}
\end{lemma}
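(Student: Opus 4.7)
The plan is to verify that the splitting $\mu f(\mu,z) = g_+(z) + g_-(z)$ makes sense on the annulus $1 < |z| < \tau^{-1}$ (which is nonempty since $\tau < 1$), establish the two functional equations by direct algebraic manipulation, and then use them to propagate $g_\pm$ to the whole punctured plane, reading off the pole set $\{\tau^k : k \in \Z\}$ along the way.

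First I would verify convergence. For $|z| < \tau^{-1}$, since $1-\tau^k\mu \to 1$ as $k \to \infty$ (and is bounded away from $0$ uniformly in $k$ provided $\mu \neq \tau^{-j}$, but here $\mu \neq \tau^j$), the general term of $g_+$ is $O(|\tau z|^k)$, so the series converges absolutely and defines a holomorphic function on $|z| < \tau^{-1}$, $z \neq \tau^{-k}$ ($k \geq 1$). For $g_-$, writing it as $\sum_{k\geq 1} z^{-k}\cdot \mu\tau^{-k}/(1-\tau^{-k}\mu)$ and noting that $\mu\tau^{-k}/(1-\tau^{-k}\mu) \to -1$ as $k \to \infty$, the tail behaves like $-\sum z^{-k}$, convergent for $|z| > 1$. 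On the annulus $1<|z|<\tau^{-1}$ one splits the doubly infinite sum defining $\mu f(\mu,z)$ into $k\geq 0$ and $k<0$ pieces; after reindexing $k \mapsto -k$ in the second piece and multiplying numerator and denominator by $-\tau^{-k}/\mu$ (or more directly just factoring), one recovers exactly $g_+ + g_-$.

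Next I would derive the functional equations by collapsing a telescoping combination. For $g_+$,
\begin{equation*}
g_+(z) - \mu g_+(\tau z) \;=\; \sum_{k\geq 0} \frac{\mu\tau^k z^k(1-\mu\tau^k)}{1-\mu\tau^k} \;=\; \sum_{k\geq 0}\mu(\tau z)^k \;=\; \frac{\mu}{1-\tau z},
\end{equation*}
valid wherever both $g_+(z)$ and $g_+(\tau z)$ are defined. For $g_-$, an analogous computation gives
\begin{equation*}
g_-(z) - \mu^{-1} g_-(z/\tau) \;=\; \sum_{k\geq 1} z^{-k}\cdot\frac{\mu\tau^{-k}-1}{1-\mu\tau^{-k}} \;=\; -\sum_{k\geq 1} z^{-k} \;=\; \frac{1}{1-z}
\end{equation*}
for $|z|>1$.

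Now I would iterate to build the analytic extension. The relation $g_+(z) = \mu/(1-\tau z) + \mu g_+(\tau z)$ defines $g_+(z)$ whenever $g_+(\tau z)$ is defined and $z \neq \tau^{-1}$. Starting from the initial domain $|z|<\tau^{-1}$ and iterating $N$ times produces
\begin{equation*}
g_+(z) \;=\; \sum_{k=1}^{N}\frac{\mu^k}{1-\tau^k z} \;+\; \mu^N g_+(\tau^N z),
\end{equation*}
valid for $|z|<\tau^{-N-1}$ away from the simple poles $z = \tau^{-1},\dots,\tau^{-N}$. Since $N$ is arbitrary, $g_+$ extends to a meromorphic function on $\C\setminus\{0\}$ with simple poles exactly at $\{\tau^{-k}:k\geq 1\}$. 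The parallel argument for $g_-$ extends it to $\C\setminus\{0\}$ with simple poles at $\{\tau^{k}:k\geq 0\}$, and the two extensions agree with $\mu f(\mu,z)$ on the original annulus by analytic continuation. Consistency of the two step-$N$ formulas on their overlapping domains of definition is automatic from the functional equations themselves, so there is no real obstacle; the whole argument is bookkeeping about absolute convergence and summing geometric series, with the only thing to watch being that one never tries to evaluate $g_\pm$ at a pole during the iteration.
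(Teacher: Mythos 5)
Your proposal is correct and takes essentially the same route as the paper: both prove the $g_+$ functional equation by the same termwise algebraic cancellation (the paper writes $\tfrac{1}{1-\mu\tau^k}=1+\tfrac{\mu\tau^k}{1-\mu\tau^k}$ inside the sum, while you directly compute $g_+(z)-\mu g_+(\tau z)$; these are the identical manipulation). The paper only verifies $g_+$ explicitly and notes $g_-$ follows similarly, whereas you carry out both and spell out the convergence and iteration steps, but the core argument is the same.
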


\begin{pf}
We prove the $g_+$ functional equation, since the $g_-$ one follows
similarly. Observe that
\begin{eqnarray*}
 g_+(z) & =& \sum_{k=0}^{\infty} \mu(\tau z)^k \biggl( 1+\frac
{1}{1-\mu\tau^k} -1\biggr) \\
&= &\frac{\mu}{1-\tau z}+\sum_{k=0}^{\infty}
\frac{\mu^2\tau^k}{1-\mu\tau^k} (\tau z)^k = \frac{\mu}{1-\tau
z} + \mu
g_+(\tau z),
\end{eqnarray*}
which is the desired relation.
\end{pf}

\begin{pf*}{Proof of Lemma \protect\ref{mu_f_polynomial_bound_lemma}}
Recall that $\tilde\mu$ lies on a compact subset of $\tilde\mathcal{C}$ and hence that $|1-\tilde\mu\tau^k|$ stays bounded from below as
$k$ varies. Also observe that due to our choices of contours for $\eta
'$ and $\zeta$, $|\zeta/\eta'|$ stays bounded in $(1,\tau^{-1})$. Write
$z=\zeta/\eta'$. Split $\tilde\mu f(\tilde\mu,z)$ as $g_+(z)+g_-(z)$
(see Lemma~\ref{f_functional_eqn_lemma} above), and we see that
$g_+(z)$ is bounded by a constant time $1/(1-\tau z)$, and likewise
$g_-(z)$ is bounded by a constant time $1/(1-z)$. Writing this in terms
of $\zeta$ and $\eta'$ again we have our desired upper bound.
\end{pf*}

\begin{pf*}{Proof of Lemma \protect\ref{new_lemma_1}}
Observe that $n_0\approx\frac{1}{4}\e^{-1/2}\log\e$, and hence due to
the choice of the contours for $\eta$ and $\zeta$, $\re(\log(\zeta
/\eta
))=\mathcal{O}(\e^{1/2})$. This implies that this first term diverges
in $\e$ like $\log\e$ which is clearly beaten by $\e^{-1/2}$ (which is
a clear lower bound for the right-hand side for the choice of contours).

Now consider the infinite sum of logarithms. The closer that $1+\tau
^n\zeta$ gets to zero, the worse the sum, so let us assume that the
denominator is smaller than the numerator. Then due to the restriction
on where $\eta$ and $\zeta$ lie on their respective contours, we are
assured of having $|(1+\tau^n \eta)/(1+\tau^n\zeta)|$ bounded above by
a constant times $\e^{-1/2}\tau^n |\zeta-\eta|$. This constant goes to
zero as $l_0$ increases. Summing in~$n$ then gives an upper bound of
$c|\zeta-\eta|\e^{-1}$, and the triangle inequality completes the proof.
\end{pf*}

\begin{pf*}{Proof of Lemma \protect\ref{gratio_limit}}
The discussion from equations (\ref{qgamma_eqn}) to (\ref{gfinaleqn})
is rigorous, and, for $\tilde\eta'$ and $\tilde\zeta$ in the fixed
compact regions of their contours, we have uniform control over the
error in $\e$ and uniform convergence of the $q$-Gamma functions to
standard Gamma functions. This suffices to prove the lemma.\vadjust{\goodbreak}
\end{pf*}

\begin{pf*}{Proof of Lemma \protect\ref{compact_eta_zeta_taylor_lemma}}
Recall that we have defined
\[
m=\frac{1}{2}\biggl[\e^{-1/2}\biggl(-a+\frac{X^2}{2T}\biggr)+\frac{1}{2}t+x\biggr].
\]
The argument now amounts to a Taylor series expansion with control over
the remainder term. Let us start by recording the first four
derivatives of~$\Lambda(\zeta)$.
\begin{eqnarray*}
\Lambda(\zeta) &=& -x\log(1-\zeta)+\frac{t\zeta}{1-\zeta} +
m\log
\zeta,\\
\Lambda'(\zeta) &=& \frac{x}{1-\zeta}+\frac{t}{(1-\zeta)^2}+\frac
{m}{\zeta},\\
\Lambda''(\zeta) &=& \frac{x}{(1-\zeta)^2}+\frac{2t}{(1-\zeta
)^3}-\frac{m}{\zeta^2},\\
\Lambda'''(\zeta) &=& \frac{2x}{(1-\zeta)^3}+\frac{6t}{(1-\zeta
)^4}+\frac{2m}{\zeta^3},\\
\Lambda''''(\zeta) &=& \frac{6x}{(1-\zeta)^4}+\frac{24 t}{(1-\zeta
)^5} -\frac{6m}{\zeta^4}.
\end{eqnarray*}
We Taylor expand $\Psi(\zeta)=\Lambda(\zeta)-\Lambda(\xi)$ around
$\xi$
and then expand in $\e$ as $\e$ goes to zero and find that
\begin{eqnarray*}
\Lambda'(\xi) &=& \frac{a}{2}\e^{-1/2} + O(1),\\
\Lambda''(\xi) &=& O(\e^{-1/2}),\\
\Lambda'''(\xi) &=& \frac{-T}{8} \e^{-3/2} +O(\e^{-1}),\\
\Lambda''''(\xi) &=& O(\e^{-3/2}).
\end{eqnarray*}
A Taylor series remainder estimate shows then that
\begin{eqnarray*}
&&\biggl|\Psi(\zeta) - \biggl[\Lambda'(\xi)(\zeta-\xi
)+{\frac1{2!}}\Lambda''(\xi)(\zeta-\xi)^2 +
{
\frac1{3!}}\Lambda'''(\xi)(\zeta-\xi)^3\biggr]\biggr|\\
&&\qquad \leq\sup_{t\in
B(\xi,|\zeta-\xi|)} {\frac1{4!}}|\Lambda''''(t)|
|\zeta-\xi|^4,
\end{eqnarray*}
where $B(\xi,|\zeta-\xi|)$ denotes the ball around $\xi$ of radius
$|\zeta-\xi|$. Now considering the scaling we have that $\zeta-\xi=
c_3^{-1}\e^{1/2}\tilde\zeta$, so that when we plug this in along with
the estimates on derivatives of $\Lambda$ at $\xi$, we find that the
equation above becomes
\[
\biggl|\Psi(\zeta) - \biggl[2^{1/3}a\tilde\zeta-\frac{T}{3}\tilde\zeta
^3 \biggr]\biggr|=O(\e^{1/2}).
\]
The above estimate therefore proves the desired first claimed result.\vadjust{\goodbreak}

The second result follows readily from $|e^z-e^w|\leq|z-w|\max\{
|e^z|,|e^w|\}$ and the first result, as well as the boundedness of the
limiting integrand.~%
\end{pf*}

\begin{pf*}{Proof of Lemma \protect\ref{muf_compact_sets_csc_limit_lemma}}
Expanding in $\e$ we have that
\[
z=\frac{\xi+c_{3}^{-1}\e^{1/2}\tilde\zeta}{\xi+c_{3}^{-1}\e
^{1/2}\tilde\eta'} = 1-\e^{1/2}\tilde z + O(\e),
\]
where the error is uniform for our range of $\tilde\eta'$ and $\tilde
\zeta$ and where
$
\tilde z = c_{3}^{-1}(\tilde\zeta-\tilde\eta')$.
We now appeal to the functional equation for $f$, explained in Lemma
\ref{f_functional_eqn_lemma}.\vspace*{1pt} We wish to study $\e^{1/2}g_{+}(z)$ and
$\e^{1/2}g_{-}(z)$ as $\e\searrow0$ and show that they converge
uniformly to suitable integrals. First consider the $g_{+}$ case.
Let us, for the moment, assume that $|\tilde\mu|<1$. We know that
$|\tau z|<1$, thus for any $N\geq0$, we have
\[
\e^{1/2} g_{+}(z) = \e^{1/2}\sum_{k=1}^{N} \frac{\tilde\mu
^k}{1-\tau^k
z} + \e^{1/2}\tilde\mu^N g_{+}(\tau^N z).
\]
Since, by assumption, $|\tilde\mu|<1$, the first sum is the partial sum
of a convergent series. Each term may be expanded in $\e$. Noting that
\[
1-\tau^k z = 1-\bigl(1-2\e^{1/2}+O(\e)\bigr)\bigl(1-\e^{1/2}\tilde z +O(\e)\bigr) =
(2k+\tilde z)\e^{1/2} +kO(\e),
\]
we find that
\[
\e^{1/2}\frac{\tilde\mu^k}{1-\tau^k z} = \frac{\tilde\mu
^k}{2k+\tilde
z} + k O(\e^{1/2}).
\]
The last part of the expression for $g_{+}$ is bounded in $\e$, and
thus we end up with the following asymptotics:
\[
\e^{1/2} g_{+}(z) = \sum_{k=1}^{N} \frac{\tilde\mu^k}{2k+\tilde z} +
N^2 O(\e^{1/2}) + \tilde\mu^N O(1).
\]
It is possible to choose $N(\e)$ which goes to infinity, such that $N^2
O(\e^{1/2}) = o(1)$. Then for any fixed compact set contained in $\C
\setminus\{-2,-4,-6,\ldots\}$ we have uniform convergence of this
sequence of analytic functions to some function, which is necessarily
analytic and equals
\[
\sum_{k=1}^{\infty} \frac{\tilde\mu^k}{2k+\tilde z}.
\]
This expansion is valid for $|\tilde\mu|<1$ and for all $\tilde z\in
\C
\setminus\{-2,-4,-6,\ldots\}$.

Likewise for $\e^{1/2}g_{-}(z)$, for $|\tilde\mu|>1$ and for $\tilde
z\in\C\setminus\{-2,-4,-6,\ldots\}$, we have uniform convergence to
the analytic function
\[
\sum_{k=-\infty}^{0} \frac{\tilde\mu^k}{2k+\tilde z}.\vadjust{\goodbreak}
\]

We now introduce the Hurwitz--Lerch transcendental function and relate
some basic properties of it which can be found in~\cite{SC:2001s}.
\[
\Phi(a,s,w) = \sum_{k=0}^{\infty} \frac{a^k}{(w+k)^s}
\]
for $w>0$ real and either $|a|<1$ and $s\in\C$ or $|a|=1$ and $\re
(s)>1$. For $\re(s)>0$ it is possible to analytically extend this
function using the integral formula
\[
\Phi(a,s,w) = \frac{1}{\Gamma(s)} \int_0^{\infty} \frac
{e^{-(w-1)t}}{e^t-a} t^{s-1} \,dt,
\]
where additionally $a\in\C\setminus[1,\infty)$ and $\re(w)>0$.

We can express our series in terms of this function as
\[
\sum_{k=1}^{\infty} \frac{\tilde\mu^k}{2k+\tilde z} = \frac
{1}{2}\tilde
\mu\Phi(\tilde\mu, 1, 1+\tilde z/2),\qquad
\sum_{k=-\infty}^{0} \frac{\tilde\mu^k}{2k-\tilde z} = -\frac
{1}{2}\Phi
(\tilde\mu^{-1}, 1, -\tilde z/2).
\]
These two functions can be analytically continued using the integral
formula onto the same region where $\re(1+\tilde z/2)>0$ and $\re
(-\tilde z/2)>0$, that is, where $\re(\tilde z/2)\in(-1,0)$.
Additionally the analytic continuation is valid for all $\tilde\mu$ not
along $\R^+$.

We wish now to use Vitali's convergence theorem to conclude that
$\tilde
\mu f(\tilde\mu, z)$ converges uniformly for general $\tilde\mu$ to the
sum of these two analytic continuations. In order to do that we need a
priori boundedness of $\e^{1/2}g_+$ and $\e^{1/2}g_-$ for compact
regions of $\tilde\mu$ away from $\R^+$. This can be shown directly as
follows. By assumption on $\tilde\mu$ we have that $|1-\tau^k \tilde
\mu
|>c^{-1}$ for some positive constant $c$. Consider $\e^{1/2}g_+$ first.
\[
|\e^{1/2}g_+(z)| \leq\e^{1/2}\tilde\mu\sum_{k=0}^{\infty} \frac
{|\tau z|^k}{|1-\tau^k \tilde\mu|} \leq c\e^{1/2} \frac{1}{1-|\tau z|}.
\]
We know that $|\tau z|$ is bounded to order $\e^{1/2}$ away from $1$,
and therefore this shows that $|\e^{1/2}g_+(z)|$ has an upper bound
uniform in $\tilde\mu$. We can do a similar computation for $\e
^{1/2}g_-(z)$ and find the same result, this time using that $|z|$ is
bounded to order $\e^{1/2}$ away from $1$.

As a result of this a priori boundedness uniform in $\tilde\mu$, we
have that for compact sets of $\tilde\mu$ away from $\R^+$, uniformly
in $\e$, $\e^{1/2}g_+$ and $\e^{1/2}g_-$ are uniformly bounded as
$\e$
goes to zero. Therefore Vitali's convergence theorem implies that they
converge uniformly to their analytic continuation.

Now observe that
\[
\frac{1}{2}\tilde\mu\Phi(\tilde\mu,1, 1+\tilde z/2) = \frac
{1}{2}\int_0^{\infty} \frac{\tilde\mu e^{-\tilde z t/2}}{e^t-\tilde
\mu
}\,dt,
\]
and
\[
-\frac{1}{2}\Phi(\tilde\mu^{-1}, 1,-\tilde z/2)= -\frac{1}{2}\int
_0^{\infty} \frac{e^{-(-\tilde z/2 -1)t}}{e^{t} -1/\tilde{\mu}}\,dt=
\frac
{1}{2}\int_{-\infty}^{0} \frac{\tilde\mu e^{-\tilde z
t/2}}{e^{t}-\tilde\mu}\,dt.
\]
Therefore we can combine these as a single integral
\[
\frac{1}{2}\int_{-\infty}^{\infty} \frac{\tilde\mu e^{-\tilde z
t/2}}{e^t-\tilde\mu}\,dt = \frac{1}{2}\int_0^{\infty} \frac{\tilde
\mu
s^{-\tilde z/2}}{s-\tilde\mu} \frac{ds}{s}.
\]
The first of the above equations proves the lemma, and for an
alternative expression we use the second of the integrals (which
followed from the change of variables $e^t=s$), and thus, on the region
$\re(\tilde z/2)\in(-1,0)$ this integral converges and equals
\[
{\tfrac{1}{2}}\pi(-\tilde\mu)^{-\tilde z} \csc(\pi
\tilde z/2)
\]
which is analytic for $\tilde\mu\in\C\setminus[0,\infty)$ and for all
$\tilde z\in\C\setminus2\Z$. Therefore it is the analytic
continuation of our asymptotic series.
\end{pf*}

\section{KPZ equation limit of WASEP}\label{BG_sec}
\mbox{}
\begin{pf*}{Proof of Theorem \protect\ref{BG_thm}}
First let us describe in simple terms the dynamics in $T$ of $Z_\e
(T,X)$ defined in (\ref{scaledhgt}). To ease the presentation, we will
now drop all subscripts $\e$. There is a deterministic part, and there
are random jumps. The jumps are at rates
\[
r_-(x)=\varepsilon^{-2}q\bigl(1-\eta(x)\bigr)\eta(x+1)= \tfrac14\eps
^{-2}q\bigl(1-\hat\eta
(x)\bigr)\bigl(1+\hat\eta(x+1)\bigr)
\]
to $e^{-2\lambda}Z$ and
\[
r_+(x)=\varepsilon^{-2}p\eta(x)\bigl(1-\eta(x+1)\bigr)=\tfrac14\eps
^{-2}p\bigl(1+\hat\eta
(x)\bigr)\bigl(1-\hat\eta(x+1)\bigr)
\]
to $e^{2\lambda}Z$, independently at each site $X\in\eps\mathbb Z$. We
write it as follows:
\[
dZ = \Omega Z \,dT + ( e^{-2\lambda}-1) Z \,dM_-
+ ( e^{2\lambda}-1) Z \,dM_+,
\]
where
\[
\Omega= \eps^{-2}\nu+ ( e^{-2\lambda}-1) r_-
+ ( e^{2\lambda}-1)r_+
\]
and $dM_\pm(T,X) = dP_\pm(T,X) -r_\pm(X) \,dT$ where
$P_-(T,X), P_+(T,X)$, $X\in\eps\mathbb{Z}$ are independent Poisson
processes running at
rates $r_-(T,X), r_+(T,X)$. Let
\[
D = 2\sqrt{pq} = 1-\tfrac12 \eps+ \mathcal{O} (\eps^2)
\]
and $\Delta= \Delta_\eps$ be the $\eps\mathbb Z$ Laplacian, $\Delta
f(X) = \eps^{-2}(f(X+\eps) -2f(X) + f(X-\eps))$. We also have
\[
{\tfrac12} D\Delta Z = {\tfrac12} \eps^{-2}
D\bigl( e^{-\lambda\hat\eta(x+1)} -2 + e^{\lambda\hat\eta(x)} \bigr) Z.
\]
The parameters have been carefully chosen so that
\[
\Omega= {\tfrac12} \eps^{-2}D\bigl( e^{-\lambda\hat\eta(x+1)}
-2 + e^{\lambda\hat\eta(x)} \bigr).
\]
%
We can do this because the four cases, corresponding to the four
possibilities for $\hat\eta(x),\hat\eta(x+1)$: $11$, $(-1)(-1)$,
$1(-1)$, $(-1)1$, give four equations in three unknowns:
\begin{eqnarray*}
11 & &\quad {{\tfrac12}} \eps^{-2} D(
e^{-\lambda} -2 + e^{\lambda} )= \eps^{-2}\nu;
\\
(-1)(-1) & &\quad {{\tfrac12}} \eps^{-2} D ( e^{\lambda
} -2 + e^{-\lambda} )= \eps^{-2}\nu;
\\
1(-1) & &\quad {{\tfrac12}} \eps^{-2} D ( e^{\lambda}
-2 + e^{\lambda} )= \eps^{-2}\nu+ ( e^{2\lambda}-1)\e^{-2}p;
\\
(-1)1 & &\quad {{\tfrac12}} \eps^{-2} D( e^{-\lambda}
-2 + e^{-\lambda} )= \eps^{-2}\nu+ ( e^{-2\lambda}-1)\e^{-2}q.
\end{eqnarray*}
But luckily, the first two equations are the same, so it is actually
three equations in three unknowns. This is the G\"artner transformation
\cite{G}. The
solution is $\lambda= \frac12 \log(q/p)$, $D = 2\sqrt{pq}$, $\nu
= p+q - 2\sqrt{pq}$.

Hence~\cite{G,BG},
%
%
\begin{equation}\label{sde7}
dZ_\eps= {{\tfrac12}} D_\eps\Delta_\eps Z_\eps \,dT +
Z_\eps
\,dM_\eps,
\end{equation}
where
\[
dM_\eps(X)= ( e^{-2\lambda_\eps}-1) \,dM_-(X)
+ ( e^{2\lambda_\eps}-1) \,dM_+(X)
\]
are martingales in $T$ with
\[
d\langle M_\eps(X),M_\eps(Y)\rangle= \eps^{-1}\mathbf{1}(X=Y)
b_\eps
\bigl(\tau_{-[\eps^{-1}X]}\eta\bigr) \,dT,
\]
where $\tau_x\eta(y) = \eta(y-x)$ and
\[
b_\eps(\eta)
=1- \hat\eta(1) \hat\eta(0)+ \hat{b}_\eps(\eta),
\]
where
\begin{eqnarray*}
\hat{b}_\eps(\eta) & = & \eps^{-1}\bigl\{ \bigl[ p\bigl(
(e^{-2\lambda_\eps}-1)^2-4\eps\bigr) + q\bigl( (e^{2\lambda_\eps}-1)^2-4\eps\bigr)\bigr]
\\
&&\hspace*{21pt}{} + [q(e^{-2\lambda_\eps}-1)^2-p(e^{2\lambda_\eps}-1)^2 ]\bigl(\hat
\eta
(1)- \hat\eta(0)\bigr) \\
&&\hspace*{22pt}{} - [q(e^{-2\lambda_\eps}-1)^2+p(e^{2\lambda
_\eps
}-1)^2 -\eps]\hat\eta(1) \hat\eta(0)\bigr\}
.
\end{eqnarray*}
Clearly $b_\eps\ge0$. It is easy to check that there is a $C<\infty$
such that
%
%
\begin{equation}
\label{ppp}0\le\hat{b}_\eps\le C\eps^{1/2}
\end{equation}
and, for sufficiently small $\eps>0$,
\[
0\le b_\eps\le3.
\]
We have the following bound on the initial data. For each $p=1,2,\ldots
$ there exists $C_p<\infty$ such that for all $X\in\mathbb{R}$,
%
%
\begin{equation}\label{init}
E[ Z^{p}_\ep(0,X)]\le e^{C_p X}.
\end{equation}
For any $\delta>0$ let $\mathscr{P}^\delta_\e$ denote the
distribution of
$Z_\ep(T,X)$, $T\in[\delta,\infty)$, as measure on $D[[\delta
,\infty),
C(\mathbb{R})]$ where $D$ means the right continuous paths with left
limits, with the topology of uniform convergence on compact sets.
In~\cite{BG}, Section 4, it is shown that if (\ref{init}) holds, then,
for any $\delta>0$, $\mathscr{P}^\delta_\e$, $\e>0$, are tight. The
limit points are consistent as $\delta\searrow0$, and from the
integral version of (\ref{sde7}),
\begin{eqnarray*}
{Z}_\eps(T,X) & = & \eps\sum_{Y\in\eps\mathbb{Z}} p_\eps(T,X-Y)
Z_\eps(0,Y)
\\
&&{} + \int_0^T \eps\sum_{Y\in\eps\mathbb{Z}} p_\eps
(T-S,X-Y) Z_\eps(S,Y)\,dM_\eps(S,Y),
\end{eqnarray*}
where $p_\eps(T,X) $ are the\vspace*{-1pt} transition probabilities for the
continuous time random
walk with generator $ {{\frac12}}D \Delta_\eps$,
normalized so that
$
p_\eps(T,X) \to p(T,X) = \frac{ e^{-X^2/ 2T} }{\sqrt{2\pi T}}
$, and we have
\[
\lim_{T\rightarrow0} \lim_{\e\to0} E\biggl[ \biggl( Z_\eps(T,X)-\eps\sum
_{Y\in\eps\mathbb{Z}} p_\eps(T,X-Y) Z_\eps(0,Y)\biggr)^2 \biggr]= 0
\]
so that the initial data (\ref{half_brownian}) hold under the limit
$\mathscr{P}$. Finally, we need to identify the limit of
the martingale terms. Recall the key estimate in~\cite{BG} which,
translated to our context, says that for any $0<\delta<T_0<\infty$ and
$\rho>0$,
there are $C_1,C_2>0$ such that for all $\delta\le S< T\le T_0$ and
$\e>0$,
%
%
\begin{eqnarray}\label{gradbd}
&&E \bigl[\bigl| E\bigl[ \bigl(Z_\e(T, X+\e)-Z_\e(T, X)\bigr) \bigl(Z_\e(T, X)-Z_\e(T, X-\e)\bigr) |
\mathscr
{F}(S)\bigr]\bigr| \bigr]
\nonumber
\\[-8pt]
\\[-8pt]
\nonumber
&&\qquad\le\e^{1/2 -\rho} |T-S|^{-1/2} e^{ a |X| }.
\end{eqnarray}
Again, this is only proved using (\ref{init}), and extends without
change to the present context. Let us briefly recall why such a thing
is true. It is well known in the theory of stochastic partial
differential equations that the solutions of a stochastic heat equation
will be H\"older $1/2 -\rho$ in space, for any $\rho>0$. This is proved
using only the integral version of the equation and
$L^p$ bounds on the initial data. Hence it extends in a standard way to
a discretization such as (\ref{sde7}) of such an equation,
as long as we have (\ref{init}), with constants independent of $\e$.

Let $\varphi$ be a smooth test function on $\mathbb{R}$. We hope to
show that under $\mathscr{P}$,
\[
N_T(\varphi):= \int_{\mathbb{R}} \varphi(X) Z(T,X) \,dX -
{
\frac12} \int_0^T \int_{\mathbb{R}}\varphi''(X) Z(S,X) \,dX \,dS
\]
and
%
%
\begin{equation}\label{Lambdamart}
\Lambda_T(\varphi):=N_T(\varphi)^2 - \frac{1}{2} \int_0^T \int
_{\mathbb{R}}\varphi^2 (X) Z^2(S,X) \,dX \,dS
\end{equation}
are local martingales. Because we also have $E[Z^2(T,X)] \le e^{C|X|}$
for all $T>0$, we have uniqueness of the corresponding martingale\vadjust{\goodbreak}
problem, following Section 5 of~\cite{BG} and Theorem 2.2 of~\cite{BC}.
That $N_T$ is a local martingale under $\mathscr{P}$ follows from
(\ref
{sde7}) which says that microscopically,
\[
N_{T,\e}(\varphi):= \int_{\mathbb{R}} \varphi(X) Z_\e
(T,X) \,dX - \frac{1}{2} \int_0^T \int_{\mathbb{R}}D_{\e}\Delta_\e
\varphi
(X) Z_\e(S,X) \,dX \,dS
\]
is a martingale under $\mathscr{P}_\e$. The key point is to identify
the quadratic variation, that is, the martingale $\Lambda_T(\varphi)$.
Microscopically we have that
\[
\Lambda_{T,\e}(\varphi):=N_{T,\e}(\varphi)^2
- {\frac12} \int_0^T \e\sum_{\e\mathbb{Z}}\varphi^2 (X)
b_\e(S,X)Z_\e^2(S,X) \,dX \,dS.
\]
Following the argument in Section 4 of~\cite{BG},
shows that (\ref{gradbd}) suffices to prove that
\[
{\frac12} \int_0^T \e\sum_{\e\mathbb{Z}}\varphi^2
(X)\hat
\eta_\e(S,X+\e)\hat\eta_\e(S,X)Z_\e^2(S,X) \,dX \,dS \to0
\]
in $\mathscr{P}_\e$ probability. Together with (\ref{ppp}) this shows
that $\Lambda_T(\varphi)$ defined in (\ref{Lambdamart}) is
a martingale under $\mathscr{P}$, which completes the proof.
\end{pf*}

\section{Manipulations and asymptotics of the edge crossover
distributions}\label{manipSec}
We now consider various asymptotics and properties of the edge
crossover distributions.

\subsection{\texorpdfstring{Large $T$ asymptotics of the edge crossover distributions (proof of Corollary~\protect\ref{cor1})}
{Large T asymptotics of the edge crossover distributions (proof of Corollary 5)}}\label{BBP_proof_sec}
The proof of Corollary~\ref{cor1} proceeds similarly to that of
Corollary 3 of~\cite{ACQ}.
The first step is to cut the $\tilde\mu$ contour off outside of a
compact region around the origin. Proposition 18 of~\cite{ACQ} (with
the modifications explained in Section~\ref{thm_proof_sec}) shows that
for a fixed $T$, the tail of the $\tilde\mu$ integrand is exponentially
decaying in $\tilde\mu$, and a quick inspection of the proof shows that
increasing $T$ just serves to speed up this decay. This shows that we
can cut off the infinite tails of the $\tilde\mathcal{C}$ contour at
cost which goes to zero as the cut occurs further out.

From now on we may assume that $\tilde\mu$ lies on a compact region
along $\tilde\mathcal{C}$. If we plug in the scalings for space as
$2^{1/3}T^{2/3}X$ and fluctuations as $2^{-1/3}T^{1/3}s$ and make the
change of variables that $\tilde\zeta=T^{-1/3}\zeta,\tilde\eta
=T^{-1/3}\eta,$ and $\tilde\eta'=T^{-1/3}\eta'$, then we find that the
integrand in the kernel for $K^{\mathrm{edge}}_{a}$ can be written as
%
%
\begin{eqnarray}\label{scaled_kernel_eqn}
\qquad&&\exp\biggl\{-\frac{1}{3}(\zeta^3-\eta'^3)+\bigl(s+X^2)(\zeta-\eta')\biggr\} \frac
{\pi
2^{1/3}T^{-1/3}(-\tilde\mu)^{-2^{1/3}T^{-1/3}(\zeta-\eta')}}{\sin
(\pi
2^{1/3}T^{-1/3}(\zeta-\eta'))}
\nonumber
\\[-8pt]
\\[-8pt]
\nonumber
&&\qquad{}\times\frac{\Gamma(2^{1/3}T^{-1/3}(\zeta-
X))}{\Gamma(2^{1/3}T^{-1/3}(\eta' - X))} \frac{d\zeta}{\zeta-\eta}.
\end{eqnarray}
The change of variables scales the contours $\tilde\Gamma_{\zeta}$ and
$\tilde\Gamma_{\eta}$ by a factor of $T^{1/3}$. These contours,
however, can be deformed back to their original form by a combination
of Cauchy's theorem and Proposition 1 of~\cite{TW3},which says that as
long as we do not pass any poles in the kernel, we can deform the
contours on which an operator acts without changing the value of the
Fredholm determinant.

Let $\Gamma_{\zeta}$ and $\Gamma_{\eta}$ be these rescaled contours.
The only requirement on these contours is that they look like those
given in Figure~\ref{new_gamma_contours} and that they both go to the
right of the right most pole of the Gamma functions which occurs at $X$.

We now claim that with the scalings described above $\det(I-K^{\mathrm
{edge}}_{a})_{L^2(\tilde\Gamma_{\eta})}$ converges, uniformly in
$\tilde
\mu$, to $\det(I-K_{a})_{L^2(\Gamma_{\eta})}$ where $K_{a}$ has kernel
$K_{a}(\eta,\eta')$ given by
%
%
\begin{eqnarray}\label{Ka_eqn}
K_{a}(\eta,\eta') &=& \int_{\Gamma_{\zeta}}\frac{\exp\{-
({1}/{3})(\zeta
^3-\eta'^3)+s(\zeta-\eta')+X^2(\zeta-\eta')\}}{(\zeta-\eta
)(\zeta-\eta
')}
\nonumber
\\[-8pt]
\\[-8pt]
\nonumber
&&\hspace*{4pt}\quad{}\times\frac{\eta'-X}{\zeta-X}\,d\zeta.
\end{eqnarray}
The claim follows exactly as in the proof of Corollary 3 of~\cite{ACQ}
and relies on the fact that in the scalings present in (\ref
{scaled_kernel_eqn}), the first fraction converges for compact sets of~$\zeta$
and $\eta'$ to $1/(\zeta-\eta')$, while the second fraction
converges to $(\eta'-X)/(\zeta-X)$. Convergence for compact sets of
$\zeta$ and $\eta'$ is enough since the exponentials provide sufficient
decay for the necessary trace class convergence of operators.

Now we use the method of~\cite{TW3} to factor (\ref{Ka_eqn}) into a
product of three operators $\operatorname{ABC}$ and then reorder as $\operatorname{BCA}$ without
changing the value of the determinant. Observe that given our choice of
contours, $\re(\zeta-\eta')<0$ and hence
\[
\frac{\exp\{s(\zeta-\eta')\}}{\zeta-\eta'} = \int_{s}^{\infty}
\exp\{
x(\zeta-\eta')\}\,dx.
\]
Inserting this into (\ref{Ka_eqn}) we find that $K_a=\operatorname{ABC}$ where
$A\dvtx L^2(s,\infty)\rightarrow L^2(\Gamma_{\eta})$, $B\dvtx L^2(\Gamma
_{\zeta
})\rightarrow L^2(s,\infty)$ and $C\dvtx L^2(\Gamma_{\eta})\rightarrow
L^2(\Gamma_{\zeta})$ and are given by their kernels
%
%
\begin{eqnarray}
A(\eta,x)&=& \exp\biggl\{\frac{1}{3}\eta^3-(x+X^2)\eta\biggr\}(\eta-X),\\
B(x,\zeta)&=& \exp\biggl\{\frac{1}{3}\zeta^3-(x+X^2)\zeta\biggr\},\\
C(\zeta,\eta)&=& \frac{1}{(\zeta-\eta)(\zeta-X)}.
\end{eqnarray}
Reordering does not change the value of the determinant, and we are
left with an operator $\operatorname{BCA}(x,y)$ acting on $L^2(s,\infty)$ with kernel
\begin{eqnarray*}
\operatorname{BCA}(x,y) &=& -\int_{\Gamma_{\zeta}}\int_{\Gamma_{\eta}} \frac{\exp
\{
-\zeta^3/3+\eta^3/3+y\zeta-x\eta+X^2(\zeta-\eta)\}}{\zeta-\eta
}\\
&&\phantom{-\int_{\Gamma_{\zeta}}\int_{\Gamma_{\eta}}}{}\times\frac
{\eta-X}{\zeta-X}\,d\eta \,d\zeta.
\end{eqnarray*}
Shifting the $x$ and $y$ contours by $X^2$ and using
$
\frac{\eta-X}{\zeta-X}= 1+ \frac{\eta-\zeta}{\zeta-X},
$
we have an operator $\operatorname{BCA}$ acting on $L_2(s-X^2,\infty)$ with kernel
\[
\operatorname{BCA}(x,y) = -\int_{\Gamma_{\zeta}}\int_{\Gamma_{\eta}} \frac{\exp
\{
-\zeta^3/3+\eta^3/3+y\zeta-x\eta\}}{\zeta-\eta}\biggl(1+ \frac{\eta
-\zeta
}{\zeta-X}\biggr)\,d\eta \,d\zeta.
\]
Expanding the multiplication, the first term corresponds to the
$\mathrm{Airy_2}$ kernel,
\[
K_{A_2}(x,y)= \int_0^\infty\Ai(t+x)\Ai(t+y)\,dt.
\]
The second term is
\[
\int_{\Gamma_{\zeta}}\int_{\Gamma_{\eta}} \frac{\exp\{-\zeta
^3/3+\eta
^3/3+y\zeta-x\eta\}} {\zeta-X}\,d\eta \,d\zeta.
\]
We can factor this into the $\eta$ and $\zeta$ integrals separately.
The $\eta$ integral gives $\Ai(x)$. The $\zeta$ integral can be
evaluated as follows. First recall that due to the dimple in the
definition of $\tilde\Gamma_{\zeta}$, $\zeta-X$ is on a contour which
lies to the right of the origin. Make the change variables to let
$Z=\zeta-X$, which gives for the $\zeta$ integral,
\[
e^{-X^3/3+Xy}\int dZ e^{-Z^3/3-bZ^2+cZ}\frac{1}{Z},
\]
where $b=X$ and $c=-X^2+y$.
We now appeal to Lemma 31(B) of Appendix (A) of~\cite{BFP} which
states that [recall we have absorbed factors of $(2\pi i)^{-1}$ into
our $dZ$]
%
%
\begin{equation}\label{BFP_Appendix_A}
\int dZ e^{-Z^3/3-bZ^2+cZ}\frac{1}{Z} = -e^{-({2}/{3})b^3-bc}\int
_0^{\infty} \,dt \Ai(b^2+c+t)e^{-bt}.
\end{equation}
The integral in the above equation, however, is over a contour which is
to the left of the origin, where as our integral is to the right. This
is easily fixed by deforming through the pole at $Z=0$ which gives
\[
e^{-X^3/3+Xy}\biggl(1+\int dZ e^{-Z^3/3-bZ^2+cZ}\frac{1}{Z}\biggr),
\]
where the $Z$ integral is now to the left of the origin. Applying (\ref
{BFP_Appendix_A}) we are left with
\begin{eqnarray*}
&&e^{-X^3/3+Xy}\biggl(1- e^{X^3/3-Xy}\int_0^\infty \,dt \Ai(y+t)e^{-Xt}
\biggr) \\
&&\qquad= e^{-X^3/3+Xy}-\int_0^\infty \,dt \Ai(y+t)e^{-Xt}.
\end{eqnarray*}
Thus the final kernel is
\[
K_{\mathcal{A}_{2}}(x,y) + \Ai(x)\biggl(e^{-X^3/3+Xy}-\int_0^\infty \,dt
\Ai(y+t)e^{-Xt}\biggr),
\]
which one can recognize from Definition~\ref{cor_and_conj_definition}
of Section~\ref{definition_sec} as
\[
K_{\mathcal{A}_{2\to\mathrm{BM}}}(X,x;X,y).
\]

\subsection{Alternative forms for the edge crossover
distributions}\label{kernel_manip_sec}
In this section we develop an alternative formula for the edge
crossover distribution. Our starting point is equation (\ref
{kcscgamma}) for $K_{a}^{\csc,\Gamma}$. This can be transformed into
$K_{s}^{\mathrm{edge}}$\vspace*{1pt} by taking $a=2^{-1/3}T^{1/3}s$ and
$X=2^{1/3}T^{2/3}X'$. We will stick to the original form, however.
Recalling the equation, we have a kernel
\begin{eqnarray*}
&&\int_{\tilde\Gamma_{\zeta}} \exp\biggl\{-\frac{T}{3}(\tilde\zeta
^3-\tilde
\eta'^3)+2^{1/3}a(\tilde\zeta-\tilde\eta')\biggr\}2^{1/3}\biggl(\int
_{-\infty}^{\infty} \frac{\tilde\mu e^{-2^{1/3}t(\tilde\zeta-
\tilde
\eta')}}{ e^t - \tilde\mu}\,dt \biggr)\\
&&\qquad{}\times \frac{\Gamma(2^{1/3}\tilde\zeta-
{X}/{T})}{\Gamma(2^{1/3}\tilde\eta' - {X}/{T})}\frac
{d\tilde
\zeta}{\tilde\zeta-\tilde\eta}.
\end{eqnarray*}
For $\re(z)<0$ we have
\[
\int_a^{\infty} e^{xz} \,dx= -\frac{e^{az}}{z},
\]
which, noting that $\re(\tilde\zeta-\tilde\eta)<0$, we may apply
to the
above kernel to get
\begin{eqnarray*}
&&\hspace*{-5pt}-2^{2/3}\int_{\tilde\Gamma_{\zeta}}\int_{-\infty}^{\infty}\int
_{a}^{\infty} \!\!\exp\biggl\{-\frac{T}{3}(\tilde\zeta^3-\tilde\eta
'^3)-2^{1/3}a\tilde\eta'\biggr\} \frac{\tilde\mu e^{-2^{1/3}t(\tilde
\zeta- \tilde\eta')}}{ e^t - \tilde\mu} e^{2^{1/3}[(a-x)\tilde
\eta
+x\tilde\zeta] }\\
&&\hspace*{-5pt}\phantom{-2^{2/3}\int_{\tilde\Gamma_{\zeta}}\int_{-\infty}^{\infty}\int
_{a}^{\infty} }{}\times  \frac{\Gamma(2^{1/3}\tilde\zeta-
{X}/{T})}{\Gamma
(2^{1/3}\tilde\eta' - {X}/{T})}\,dx \,dt \,d\tilde\zeta.
\end{eqnarray*}
This kernel can be factored as a product $\operatorname{ABC}$, where
%
%
\begin{eqnarray}
A\dvtx L^2(a,\infty)&\rightarrow& L^2(\tilde\Gamma_{\eta}),\qquad B\dvtx L^2(\tilde
\Gamma_{\zeta})\rightarrow L^2(a,\infty),
\nonumber
\\[-8pt]
\\[-8pt]
\nonumber
 C\dvtx L^2(\tilde\Gamma_{\eta
})&\rightarrow& L^2(\tilde\Gamma_{\zeta}),
\end{eqnarray}
and the operators are given by their kernels
\begin{eqnarray*}
 A(\tilde\eta,x) &=& e^{2^{1/3}(a-x)\tilde\eta},\qquad  B(x,\tilde\zeta
) = e^{2^{1/3}x \tilde\zeta}, \\
C(\tilde\zeta,\tilde\eta) &=& -2^{2/3}\int_{-\infty}^{\infty}
\exp
\biggl\{-\frac{T}{3}(\tilde\zeta^3-\tilde\eta^3)-2^{1/3}a\tilde\eta\biggr\}
\frac{\tilde\mu e^{-2^{1/3}t(\tilde\zeta- \tilde\eta')}}{ e^t -
\tilde\mu}\\
&&\hspace*{27pt}\qquad{}\times\frac{\Gamma(2^{1/3}\tilde\zeta- {X}/{T})}{\Gamma
(2^{1/3}\tilde\eta- {X}/{T})}\,dt.
\end{eqnarray*}
Since $\det(I-\operatorname{ABC}) = \det(I-\operatorname{BCA})$, we consider $\operatorname{BCA}$ acting on
$L^2(a,\infty)$ with kernel
%
\begin{eqnarray*}
&&\int_{-\infty}^{\infty} \frac{\tilde\mu \,dt}{e^{-t} - \tilde\mu}
\biggl\{
2^{2/3}\int_{\Gamma_{\tilde\zeta}}\int_{\Gamma_{\tilde\eta}}
\exp\biggl\{-\frac{T}{3}(\tilde\zeta^3-\tilde\eta^3)+2^{1/3}(x+t)\tilde
\zeta-2^{1/3}(y+t)\tilde\eta\biggr\}\\
&&\hspace*{204pt}\qquad{}\times\frac{\Gamma(2^{1/3}\tilde\zeta-
{X}/{T})}{\Gamma(2^{1/3}\tilde\eta- {X}/{T})}\,d\tilde\eta
\,d\tilde\zeta\biggr\}.
\end{eqnarray*}
Recall the two integral formulas
%
%
\begin{eqnarray}
\Gamma(z) &=& \int_0^\infty s_1^{z-1}e^{-s_1}\,ds_1,\\
\frac1{\Gamma(z)} &=& -\frac{1}{2\pi i}\int_C(-s_2)^{-z} e^{-s_2}\,ds_2,
\end{eqnarray}
where $C$ is counterclockwise from $\infty$ to $\infty$ going around
$\R
_+$ (the branch of the logarithm is cut along $\R_+$). Both equations
holds for $\re(z)>0$ and can be analytically extended using the
functional equation for the Gamma function. We can rewrite
\begin{eqnarray*}
&&\int_{\Gamma_{\tilde\zeta}}\int_{\Gamma_{\tilde\eta}}
\exp\biggl\{-\frac{T}{3}(\tilde\zeta^3-\tilde\eta^3)+2^{1/3}(x+t)\tilde
\zeta-2^{1/3}(y+t)\tilde\eta\biggr\}\\
&&\phantom{\int_{\Gamma_{\tilde\zeta}}\int_{\Gamma_{\tilde\eta}}}{}\times\frac{\Gamma(2^{1/3}\tilde\zeta-
{X}/{T})}{\Gamma(2^{1/3}\tilde\eta- {X}/{T})}\,d\tilde\eta
\,d\tilde\zeta
\\
&&\qquad= -\frac{1}{2\pi i}\int_C\int_0^\infty\int_{\Gamma_{\tilde\zeta
}}\int
_{\Gamma_{\tilde\eta}}
\exp\biggl\{-\frac{T}{3}(\tilde\zeta^3-\tilde\eta^3)+2^{1/3}(x+t +\log
s_1)\tilde\zeta\\
&&\hspace*{175pt}\qquad{}-2^{1/3}\bigl(y+t+\log(-s_2)\bigr)\tilde\eta\biggr\}
\\
&&\hspace*{95pt}\quad\qquad{}\times d\tilde\eta \,d\tilde\zeta s_1^{-1- {X}/{T} }(-s_2)^{-
{X}/{T} }e^{-s_1-s_2}\,ds_1\,ds_2.
\end{eqnarray*}
Using the formula for the Airy function given by
\[
\Ai(r) = \int_{\tilde\Gamma_{\zeta}} \exp\biggl\{-\frac{1}{3}z^3 +rz\biggr\} \,dz,
\]
we find that our kernel equals
\begin{eqnarray*}
&&-\frac{1}{2\pi i}2^{2/3}T^{-2/3}\int_C\int_0^\infty\int_{-\infty
}^{\infty}\frac{\tilde\mu}{\tilde\mu- e^{-t}} \Ai
\bigl(T^{-1/3}2^{1/3}(x+t+\log s_1)\bigr)\\
&&\hspace*{108pt}\qquad{}\times \Ai\bigl(T^{-1/3}2^{1/3}\bigl(y+t+\log
(-s_2)\bigr)\bigr)\,dt
\\
&&\hspace*{108pt}\qquad{}\times s_1^{-1- {X}/{T} }(-s_2)^{- {X}/{T} }e^{-s_1-s_2}\,ds_1\,ds_2.
\end{eqnarray*}
Note that the formula is only making sense as an integral for $X>0$ and
has to be extended by analytic continuation to other values.

We can also write it in compact form as follows.
Define the Gamma transformed Airy function
%
%
\begin{equation}\label{ag}
{\operatorname{Ai}}^{\Gamma}(a,b,c) = \int_{\tilde\Gamma_{\zeta}} \exp
\biggl\{-\frac
{1}{3}z^3 +az\biggr\} \Gamma( bz+c) \,dz
\end{equation}
and the inverse Gamma transformed Airy function
%
%
\begin{equation}\label{aig}
{\operatorname{Ai}}_{\Gamma}(a,b,c) = \int_{\tilde\Gamma_{\eta}} \exp\biggl\{
\frac
{1}{3}z^3 -az\biggr\} \frac{1}{\Gamma( bz+c)} \,dz.
\end{equation}
These are automatically well defined, as long the contours are such
that for $a,b,c$ they avoid the poles of the Gamma function. Plugging
these in we get
%
%
\begin{eqnarray}
&&\int_{\Gamma_{\tilde\zeta}}\int_{\Gamma_{\tilde\eta}}
\exp\biggl\{-\frac{T}{3}(\tilde\zeta^3-\tilde\eta^3)+2^{1/3}(x+t)\tilde
\zeta-2^{1/3}(y+t)\tilde\eta\biggr\}
\nonumber
\\[-8pt]
\\[-8pt]
\nonumber
&&\hspace*{9pt}\qquad{}\times\frac{\Gamma(2^{1/3}\tilde\zeta-
{X}/{T})}{\Gamma(2^{1/3}\tilde\eta- {X}/{T})}\,d\tilde\eta
\,d\tilde\zeta\\
&&\qquad=T^{-2/3} {\operatorname{Ai}}^{\Gamma}\biggl(\kappa_T^{-1}(x+t),\kappa
_T^{-1},- \frac
{X}{T}\biggr) {\operatorname{Ai}}_{\Gamma}\biggl(\kappa_T^{-1}(y+t),\kappa_T^{-1},-
\frac{X}{T}\biggr),
\end{eqnarray}
where $\kappa_T=2^{-1/3}T^{1/3}$. Thus we find
%
%
\begin{eqnarray}\label{manipulated}
K_{X,T,\tilde\mu}(x,y)&=&\kappa_T^{-2}\int_{-\infty}^{\infty} \frac
{\tilde
\mu \,dt}{e^{-t} - \tilde\mu} {\operatorname{Ai}}^{\Gamma}\biggl(\kappa
_T^{-1}(x+t),\kappa
_T^{-1},- \frac{X}{T}\biggr)
\nonumber
\\[-8pt]
\\[-8pt]
\nonumber
&&{}\times{\operatorname{Ai}}_{\Gamma}\biggl(\kappa_T^{-1}(y+t)
,\kappa
_T^{-1},- \frac{X}{T}\biggr).
\end{eqnarray}

\subsection{Airy Gamma asymptotics}

In this section we obtain some asymptotics of the Gamma transformed
Airy functions (\ref{ag}) and
(\ref{aig}) which will prove useful in Section~\ref{upper_tail_sec}. We
start by observing some bounds on Gamma functions. Recall the
functional equation
%
%
\begin{equation}\label{functeqn}
\Gamma(z+1) = z\Gamma(z)
\end{equation}
and the bound (\cite{AS}, equation (6.1.26))
%
%
\begin{equation}\label{absbdd}
|\Gamma(x+iy)| \leq|\Gamma(x)|.\vadjust{\goodbreak}
\end{equation}
The following two asymptotic bounds are standard; see~\cite{AAR}, equations (1.4.3)
and~(1.4.4), respectively. For $\delta>0$ and $|\arg z|\leq\pi-\delta$,
%
%
\begin{equation}\label{stirling}
\Gamma(z) = \sqrt{2\pi} z^{z-1/2}e^{-z} \bigl(1+o(1)\bigr)\qquad \mbox{as }
|z|\to\infty.
\end{equation}

When $z=x+iy$ and $x_1\leq x\leq x_2$ and $|y|\to\infty$, then
%
%
\begin{equation}\label{imgdecay}
|\Gamma(z)| = \sqrt{2\pi} |y|^{x-1/2} e^{-({\pi}/{2}) |y|}
\bigl(1+O(1/|y|)\bigr),
\end{equation}
where the constant implied by $O(1/|y|)$ depends only on $x_1$ and $x_2$.

\begin{lemma}\label{arcboundlemma}
There exists a constant $C>0$ such that for all $r>0$ and all
$z=e^{i\theta}$ for $\theta\in~[-3\pi/4,3\pi/4]$, we have
%
%
\begin{equation}
\Gamma(rz) \leq C\Gamma(r).
\end{equation}
\end{lemma}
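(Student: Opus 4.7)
The plan is to split $r \in (0,\infty)$ into three regimes---small, intermediate, and large---and handle each by a different tool. First observe that for $\theta \in [-3\pi/4, 3\pi/4]$ the point $rz = re^{i\theta}$ stays in the sector $|\arg w| \leq 3\pi/4$, which is uniformly bounded away from the poles $\{0, -1, -2, \ldots\}$ of $\Gamma$, so $|\Gamma(re^{i\theta})|/\Gamma(r)$ is a well-defined continuous function of $(r,\theta) \in (0,\infty) \times [-3\pi/4, 3\pi/4]$. By compactness, it suffices to show that the ratio is bounded as $r \to 0^+$ and as $r \to \infty$, uniformly in $\theta$.

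For the small-$r$ regime, I would apply the functional equation (\ref{functeqn}) to write $\Gamma(rz) = \Gamma(1+rz)/(rz)$ and $\Gamma(r) = \Gamma(1+r)/r$. Since $|z| = 1$,
\begin{equation*}
\frac{|\Gamma(rz)|}{\Gamma(r)} \;=\; \frac{|\Gamma(1+rz)|}{\Gamma(1+r)},
\end{equation*}
and as $r \to 0^+$ the right-hand side tends to $|\Gamma(1)|/\Gamma(1) = 1$ uniformly in $\theta$ by continuity of $\Gamma$ at $1$, so the ratio is bounded on a neighborhood of $r = 0$.

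For large $r$, I would apply Stirling's formula (\ref{stirling}). Writing $\log(rz) = \log r + i\theta$ and expanding $(rz - 1/2)\log(rz)$, a short calculation gives
\begin{equation*}
\log\bigl|\Gamma(re^{i\theta})\bigr| - \log\Gamma(r) \;=\; r\bigl[(\cos\theta - 1)\log r + (1 - \cos\theta) - \theta\sin\theta\bigr] + O(1)
\end{equation*}
uniformly in $\theta \in [-3\pi/4, 3\pi/4]$ as $r \to \infty$. For $r \geq e$ one has $\log r \geq 1$ and $\cos\theta - 1 \leq 0$, so $(\cos\theta - 1)\log r \leq \cos\theta - 1$ and the bracket is bounded above by
\begin{equation*}
(\cos\theta - 1) + (1 - \cos\theta) - \theta\sin\theta \;=\; -\theta\sin\theta \;\leq\; 0,
\end{equation*}
since $\theta$ and $\sin\theta$ share a sign on $[-3\pi/4, 3\pi/4]$. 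This gives the desired uniform upper bound on $\log|\Gamma(re^{i\theta})| - \log\Gamma(r)$ for all $r \geq e$, and the intermediate range is covered by compactness.

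The subtlety to check carefully is the regime where $\theta$ is close to $0$ and $r$ is moderately large: here both $(\cos\theta - 1)\log r$ and $-\theta\sin\theta$ degenerate simultaneously, so one must verify that no stray unbounded term survives. The argument above sidesteps this by using the crude bound $(\cos\theta - 1)\log r \leq \cos\theta - 1$ for $r \geq e$, which absorbs the $(1 - \cos\theta)$ term without any Taylor expansion in $\theta$; all remaining work is uniform control of the $O(1)$ error in Stirling's formula on the closed sector, which follows from (\ref{stirling}) applied with $\delta = \pi/4$.
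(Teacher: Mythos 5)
Your proof is correct, but it takes a genuinely different route for the large-$r$ regime. The paper's proof is entirely elementary: it repeatedly applies the functional equation $\Gamma(z+1)=z\Gamma(z)$ to shift the real part of $rz$ into a fixed compact window, then invokes the monotonicity-type bound $|\Gamma(x+iy)|\le\Gamma(x)$ from (\ref{absbdd}), keeping track of the accumulated factors $1/(z\cdots(z+k-1))$, all of which are bounded away from zero because $rz$ stays in the sector $|\arg w|\le 3\pi/4$. Your argument handles the small-$r$ case identically (via one application of (\ref{functeqn})), but replaces the functional-equation bookkeeping for $r$ large by a direct comparison through Stirling's formula (\ref{stirling}): the expansion
\[
\log\bigl|\Gamma(re^{i\theta})\bigr|-\log\Gamma(r)=r\bigl[(\cos\theta-1)\log r+(1-\cos\theta)-\theta\sin\theta\bigr]+O(1),
\]
together with the observations $\log r\ge1$, $\cos\theta-1\le0$, and $\theta\sin\theta\ge0$ on $[-3\pi/4,3\pi/4]$, is correct and the $O(1)$ term is uniform on the closed sector since $3\pi/4<\pi$. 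What the paper's method buys is complete independence of Stirling asymptotics, using only the two named elementary facts; what your method buys is that it makes transparent \emph{why} the inequality holds for large $r$ (the log-ratio is in fact bounded above by a uniform constant, with the leading $r\log r$ term being strictly favorable off the real axis) and it avoids the somewhat delicate case-by-case accounting of the number $k$ of functional-equation applications and its interaction with the position of the minimum of $\Gamma$ near $1.46$. Both are valid; yours is cleaner for large $r$ at the cost of invoking a heavier tool.
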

\begin{pf}
Consider separately the three cases: (1) when $r\leq1/2$, (2) when
$r\in(1/2,2)$ and (3) and when $r\geq2$. For case (1), when $r\leq
1/2$ we may apply the functional equation (\ref{functeqn}) once, giving
$\Gamma(z) = \Gamma(z+1)/z$. Since $|z|\leq1/2$, it is clear that
$|z+1|\geq1/2$. Therefore, using (\ref{absbdd}) we find that
%
%
\begin{equation}
|\Gamma(z)| \leq\frac{\Gamma(x)}{|z|}\leq\frac{C'}{|z|},
\end{equation}
where $x$ is real and bounded in $[1/2,3/2]$, and hence the bound
$\Gamma(x)\leq C'$ for $C'=\Gamma(3/2)$. Since $\Gamma(r) \approx c/r$
for $r\leq1/2$ it follows that we can express the bound $C'/|z|$ in
terms of $C\Gamma(r)$ for an appropriate constant $C$, as desired.

Turning to case (2), when $r\in(1/2,2)$ we may apply the functional
equation $k$ times so that $1/2\leq\re(z+k) \leq r$. Here $k$ is at
least 1, but at most 3. This shows that
%
%
\begin{equation}\label{kfold}
\Gamma(z) = \frac{\Gamma(z+k)}{z\cdots(z+k-1)}.
\end{equation}
Since $\arg(z)\in(-3\pi/4,3\pi/4)$ it follows that $|z+k|\geq c$
for a
fixed positive constant. Thus, taking absolute values in the above
equation and using this bound, we find
%
%
\begin{equation}
|\Gamma(z)| \leq C|\Gamma(z+k)| \leq C\Gamma(r),
\end{equation}
where $C$ is a fixed constant (bounded by $1/c^3$).

Turning to case (3), when $r\geq2$, we may apply the functional
equation $k$ times so that $r-1\leq\re(z+k)\leq r$. Here $k\geq3$,
and since $\arg(z)\in(-3\pi/4,3\pi/4)$, it follows that $|z+k| \geq
1$. Thus taking absolute values in (\ref{kfold}), we have
%
%
\begin{equation}
|\Gamma(z)| \leq|\Gamma(z+k)| \leq\Gamma(r),
\end{equation}
as desired.
\end{pf}

Here is another, rather weak inequality we will use.
\begin{lemma}\label{1overr}
There exists a constant $C>0$ such that for all $r>0$, we have
%
%
\begin{equation}
|\Gamma(re^{3i\pi/4 })| \leq C/r.
\end{equation}
\end{lemma}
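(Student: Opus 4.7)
The plan is to split the range of $r$ into three pieces and handle each separately. For small $r$, the bound $C/r$ exhibits a simple pole at the origin, which is precisely the pole of $\Gamma$ at $0$; this suggests using the functional equation to extract the pole.

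First I would treat the region $r \leq 1$. By the functional equation (\ref{functeqn}),
\begin{equation*}
\Gamma(re^{3i\pi/4}) = \frac{\Gamma(re^{3i\pi/4}+1)}{re^{3i\pi/4}},
\end{equation*}
so $|\Gamma(re^{3i\pi/4})| = |\Gamma(re^{3i\pi/4}+1)|/r$. Since $re^{3i\pi/4}+1$ ranges over a compact subset of $\mathbb{C}$ that stays a positive distance away from the nonpositive integers (the poles of $\Gamma$), the numerator is bounded by some constant $C_1$, giving the bound $|\Gamma(re^{3i\pi/4})| \leq C_1/r$ on $(0,1]$.

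Next I would handle the region $r \geq R$ for some fixed large $R$. Since $|\arg(re^{3i\pi/4})| = 3\pi/4 < \pi$, Stirling's asymptotic (\ref{stirling}) applies. Writing $z = re^{3i\pi/4}$ and using $\log z = \log r + 3i\pi/4$, a direct computation gives
\begin{equation*}
|z^{z-1/2} e^{-z}| = e^{\re((z-1/2)\log z) - \re z} = r^{-r/\sqrt{2}-1/2} \, e^{-(r/\sqrt{2})(3\pi/4 - 1)}.
\end{equation*}
Since $3\pi/4 - 1 > 0$, this decays faster than any polynomial in $r$, so for $R$ taken large enough one has $|\Gamma(re^{3i\pi/4})| \leq 1/r$ for all $r \geq R$.

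Finally, on the intermediate compact interval $r \in [1,R]$, the function $r \mapsto |\Gamma(re^{3i\pi/4})|$ is continuous (no poles are encountered along the ray) and hence bounded by some $C_2$. Since $r \geq 1$, we have $|\Gamma(re^{3i\pi/4})| \leq C_2 \leq C_2 R / r$. Taking $C = \max(C_1, 1, C_2 R)$ yields the claimed bound uniformly in $r>0$. None of the three steps presents a real obstacle; the only modest care is to verify that the ray $\{re^{3i\pi/4} : r>0\}$ stays uniformly away from the poles of $\Gamma$, which is clear since it makes an angle $3\pi/4$ with the positive real axis and only the origin is close to a pole.
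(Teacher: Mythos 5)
Your proof is correct, and it follows the same two-regime strategy as the paper (a $1/r$ bound near the origin from the simple pole, plus rapid decay for large $r$), but the mechanism you use for large $r$ is genuinely different. The paper applies the functional equation $k$ times to translate $z = re^{3i\pi/4}$ into the vertical strip $-1/2 \leq \re z \leq 1/2$, observes that $|z+k| \geq 1$ throughout since $\arg z = 3\pi/4$, and then invokes the vertical-line asymptotics $|\Gamma(x+iy)| \sim \sqrt{2\pi}|y|^{x-1/2}e^{-\pi|y|/2}$ (equation (\ref{imgdecay})) to get decay. You instead apply Stirling's formula (\ref{stirling}) directly along the ray and compute that $|z^{z-1/2}e^{-z}| = r^{-r/\sqrt{2}-1/2}e^{-(r/\sqrt{2})(3\pi/4 - 1)}$, which is super-exponentially small since $3\pi/4 - 1 > 0$. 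Both arguments exploit the same underlying fact (the ray lies in the left half-plane and away from the pole lattice, so $\Gamma$ decays there), but yours avoids the bookkeeping with the $k$-fold functional equation at the cost of the compact-interval patching step in the middle, while the paper's version stays closer in spirit to its companion Lemmas \ref{arcboundlemma} and \ref{inversegammabound}, which also proceed by reduction to a fixed strip. Either is adequate for the (admittedly weak) bound being claimed.
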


\begin{pf}
For $r\leq2$ this follows immediately from Lemma~\ref{arcboundlemma}.
For $r\geq2$, set $z=re^{3i\pi/4 }$. We may apply the functional
equation (\ref{functeqn}) $k$ times so that $-1/2 \leq\re(z+k)\leq
1/2$. Here $k\geq3$, and since $\arg(z)=3\pi/4$, it follows that
$|z+k| \geq1$. Thus taking absolute values in (\ref{kfold}) we have
%
%
\begin{equation}
|\Gamma(z)| \leq|\Gamma(z+k)|.
\end{equation}
We may now apply Lemma~\ref{imgdecay} with $x_1=-1/2$ and $x_2=1/2$.
This clearly implies the desired decay (actually much stronger decay
than needed).
\end{pf}

\begin{lemma}\label{inversegammabound}
For all constants $c>\pi/2$, there exist $C>0$ such that for all $z$
with $\re(z)\geq0$,
%
%
\begin{equation}\label{gammaimaginarybound}
|{1}/{\Gamma(z)}| \leq C e^{c |z|}.
\end{equation}
\end{lemma}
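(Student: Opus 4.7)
The plan is to obtain the estimate directly from Stirling's formula (\ref{stirling}), which is available uniformly on the right half-plane (it is covered by $|\arg z| \leq \pi - \delta$ with $\delta = \pi/2$). Since $1/\Gamma$ is entire, it is bounded on any compact subset of $\C$, so the substance of the lemma concerns the behaviour for $|z|$ large. My strategy is therefore to derive the inequality for $|z| \geq r_1$ with some $r_1 = r_1(c)$ and then to absorb the remaining compact region into the constant $C$.

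First I would parametrize $z = re^{i\theta}$ with $r = |z|$ and $\theta \in [-\pi/2,\pi/2]$, and take the real part of Stirling. This gives
\begin{equation*}
\log|\Gamma(z)| = \bigl(r\cos\theta - \tfrac{1}{2}\bigr)\log r - r\bigl(\theta\sin\theta + \cos\theta\bigr) + O(1)
\end{equation*}
uniformly in $\theta$ as $r \to \infty$. Consequently,
\begin{equation*}
\log\bigl|1/\Gamma(z)\bigr| \leq -\bigl(r\cos\theta - \tfrac{1}{2}\bigr)\log r + r f(\theta) + O(1), \qquad f(\theta) := \theta\sin\theta + \cos\theta.
\end{equation*}

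The key step is the extremal analysis of $f$. Differentiating gives $f'(\theta) = \theta\cos\theta$, which is nonpositive on $[-\pi/2,0]$ and nonnegative on $[0,\pi/2]$. Hence $f$ attains its minimum on $[-\pi/2,\pi/2]$ at $\theta = 0$ with $f(0)=1$, and its maximum at the endpoints $\theta = \pm \pi/2$ with $f(\pm \pi/2) = \pi/2$. This means $r f(\theta) \leq (\pi/2)\,r$ on the entire right half-plane. For the logarithmic correction, when $r\cos\theta \geq 1/2$ and $r \geq 1$ the term $-(r\cos\theta - 1/2)\log r$ is nonpositive, while otherwise it is bounded by $(1/2)\log r$. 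In either case it is $o(r)$. Combining, for all $z$ in the right half-plane with $r \geq 1$,
\begin{equation*}
\log\bigl|1/\Gamma(z)\bigr| \leq (\pi/2)\,r + \tfrac{1}{2}\log r + O(1).
\end{equation*}

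Given any $c > \pi/2$, the slack $(c - \pi/2)r$ dominates the logarithmic correction once $r$ exceeds some $r_1 = r_1(c)$, so the claimed bound holds there with some constant. On the complementary compact region $\{\re z \geq 0,\ |z| \leq r_1\}$, $|1/\Gamma(z)|$ is bounded by some $M$ since $1/\Gamma$ is entire, and there the bound $|1/\Gamma(z)| \leq M e^{c|z|}$ holds trivially. Choosing $C = \max(M,1)$ finishes the proof. The only real content is the extremal computation $\max_{\theta \in [-\pi/2,\pi/2]} f(\theta) = \pi/2$, which is precisely what makes $\pi/2$ the sharp threshold in the statement; everything else is routine bookkeeping with Stirling.
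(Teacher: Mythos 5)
Your proof is correct, and it takes a genuinely different route from the paper. The paper's argument splits the right half-plane according to whether $|\im z| < 1$ or $|\im z| \geq 1$: in the first case it combines boundedness of $1/\Gamma$ on the bounded rectangle $\{0 \leq \re z \leq 1,\ |\im z| < 1\}$ with the functional equation (\ref{functeqn}) to push out in $\re z$; in the second case it combines the vertical-line decay estimate (\ref{imgdecay}) on the strip $0\leq \re z \leq 1$ with, again, the functional equation. Your proof instead applies the Stirling asymptotic (\ref{stirling}) directly and uniformly on the sector $|\arg z| \leq \pi/2$, reducing the bound to the elementary extremal statement $\max_{|\theta| \leq \pi/2} (\theta\sin\theta + \cos\theta) = \pi/2$ (attained at $\theta = \pm\pi/2$, with minimum $1$ at $\theta = 0$). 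One advantage of your route is that it makes the threshold $\pi/2$ transparent as the worst-case value of an explicit function, realized precisely on the imaginary axis; in the paper's argument that constant is hidden inside the asymptotics cited in (\ref{imgdecay}). The bookkeeping in your proof is also sound: the logarithmic correction $-(r\cos\theta - \tfrac12)\log r$ is nonpositive whenever $r\cos\theta \geq 1/2$ and $r\geq 1$, and is at most $\tfrac12\log r$ otherwise, so for any $c > \pi/2$ the slack $(c-\pi/2)r$ absorbs it for $r$ beyond some $r_1(c)$, while the compact region $\{\re z \geq 0,\ |z|\leq r_1\}$ is handled by entirety of $1/\Gamma$. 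Both proofs are valid; yours is arguably the cleaner one.
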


\begin{pf}
This is established in two parts. For $z$ such that $|\im(z)|<1$, this
follows from functional equation (\ref{functeqn}) and the boundedness
of $1/\Gamma(z)$ for $0\leq\re(z)\leq1$ and $|\im(z)|<1$. Similarly,
for $z$ such that $|\im(z)|\geq1$ we may first consider such $z$ which
also satisfy $0\leq\re(z)\leq1$. By (\ref{imgdecay}), these $z$
satisfy (\ref{gammaimaginarybound}). This bound can be then be extended
to all $z$ with $|\im(z)|\geq1$ and $\re(z)\geq0$ by the functional
equation.
\end{pf}

Recall the definitions (\ref{ag}) and (\ref{aig}) of the Gamma
transformed and inverse Gamma transformed Airy functions.
In the following lemma constants may change line to line.
\begin{lemma}\label{lemma49}
Fix $b>0$. Then:
\begin{enumerate}[(1)]
\item[(1)] There exists a constant $C$ such that for $a\geq0$,
%
%
\begin{equation}\label{lemeqn1}
|{\operatorname{Ai}}^{\Gamma}(a,b,0)| \leq C\bigl((1+|a|)^{-1}\Gamma
\bigl(b(1+|a|)^{-1}\bigr) +b^{-1}\bigr).
\end{equation}
\item[(2)] For any $\e>0$ there exists a constant $C$ such that for all
$a\leq0$,
%
%
\begin{eqnarray}\label{lemeqn2}
&&|{\operatorname{Ai}}^{\Gamma}(a,b,0)|
\nonumber
\\[-8pt]
\\[-8pt]
\nonumber
&&\qquad\leq C\biggl((1+|a|)^{-1}\Gamma
\bigl(b(1+|a|)^{-1}\bigr) + \frac{1}{b(1+|a|)^{3/2}} + (1+|a|)^{\e}b^{-1-\e}
\biggr).
\end{eqnarray}
\item[(3)] There exists a constant $C$ such that for all $a\geq0$,
%
%
\begin{equation}\label{lemeqn3}
|{\operatorname{Ai}}_{\Gamma}(a,b,0)| \leq Ce^{-({2}/{3})a^{3/2}} (1+|a|)^{-1/4}.
\end{equation}
\item[(4)] There exists a constant $C$ such that for all $a\leq0$ and all
$c>\pi/2$,
%
%
\begin{equation}\label{lemeqn4}
|{\operatorname{Ai}}_{\Gamma}(a,b,0)| \leq C e^{c b |a|^{1/2}}.\vadjust{\goodbreak}
\end{equation}
\end{enumerate}
\end{lemma}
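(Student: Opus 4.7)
The plan is to establish all four bounds via contour deformations of $\tilde\Gamma_\zeta$ and $\tilde\Gamma_\eta$ toward the saddle points of the phase function $\phi(z)=\pm(z^3/3-az)$, combined with the $\Gamma$-function estimates of Lemmas \ref{arcboundlemma}--\ref{inversegammabound}. In every case the cubic exponential provides the dominant decay; the factor $1/\Gamma(bz)$ contributes at most $Ce^{cb|z|}$ on contours in $\re(z)\geq 0$ (Lemma \ref{inversegammabound}), while the factor $\Gamma(bz)$ has a simple pole at $z=0$ with residue $1/b$ (from the Laurent expansion $\Gamma(bz)=(bz)^{-1}+O(1)$).

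I would begin with parts (3) and (4), which concern $\text{Ai}_\Gamma$ and are cleaner since $1/\Gamma$ is entire. For (3) with $a\geq 0$, deform $\tilde\Gamma_\eta$ to a vertical contour through the real saddle $z_0=\sqrt{1+a}$; then $|\exp(z^3/3-az)|$ is bounded by $e^{-\frac{2}{3}a^{3/2}-\sqrt{1+a}\,y^2}$ up to constants, and Gaussian integration in $y$ produces the Airy prefactor $e^{-\frac{2}{3}a^{3/2}}(1+a)^{-1/4}$, while $|1/\Gamma(bz)|\leq Ce^{cb|z|}$ is absorbed into the constants. For (4) with $a\leq 0$, the saddles of $z^3/3-az$ lie at $\pm i\sqrt{|a|}$ on the imaginary axis; deform $\tilde\Gamma_\eta$ to pass through them along steepest-descent rays at angle $\pi/4$, staying in $\re(z)\geq 0$ throughout. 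The oscillatory saddle contribution is of order $(1+|a|)^{-1/4}$, and $|1/\Gamma(bz)|\leq Ce^{cb\sqrt{|a|}}$ at the saddles, yielding (\ref{lemeqn4}).

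For parts (1) and (2), I would first straighten the dimple of $\tilde\Gamma_\zeta$ into a vertical line at $\re(z)=-\delta$ for some $\delta>0$. Since the original dimple lies to the right of $z=0$ while the straightened line lies to its left, this crosses the simple pole of $\Gamma(bz)$ at the origin, producing (with $2\pi i$ absorbed into $dz$) the $b^{-1}$ residue term appearing in both bounds. For (1) with $a\geq 0$, choosing $\delta=(1+|a|)^{-1}$ keeps $b\delta$ bounded away from positive integers; the reflection formula $\Gamma(w)\Gamma(1-w)=\pi/\sin(\pi w)$ bounds $|\Gamma(b(-\delta+iy))|$ near $y=0$ by $C\Gamma(b(1+|a|)^{-1})$, (\ref{imgdecay}) provides the rapid Stirling decay for large $|y|$, and combining with $e^{-a\delta}\leq 1$ and Gaussian integration in $y$ yields the first term of (\ref{lemeqn1}).

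The hard part will be (2): for $a\leq 0$ the saddles of $-z^3/3+az$ sit at $\pm i\sqrt{|a|}$ close to the straightened contour, so a further deformation past these saddles is needed, navigating the poles $z=-k/b$ of $\Gamma(bz)$ at the negative integers. The term $1/(b(1+|a|)^{3/2})$ arises as a saddle-point contribution, with $\Gamma(bz)\sim 1/b$ near $z=0$ weighting the $(1+|a|)^{-3/4}$ steepest-descent factor; the term $(1+|a|)^{\e}b^{-1-\e}$ interpolates between the straightened-contour bound and the saddle bound in the crossover regime $b\sqrt{|a|}\sim 1$, with $\e$ playing the role of a Young-inequality exponent that balances the two competing estimates. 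Assembling these three sources into (\ref{lemeqn2}) while verifying uniformity across all regimes of $(a,b)$ with $a\leq 0$ is the main combinatorial obstacle.
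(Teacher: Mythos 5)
Your overall plan---contour deformation plus the $\Gamma$-function estimates of Lemmas \ref{arcboundlemma}--\ref{inversegammabound}---is the right shape, and it is essentially what the paper does, but the specific contours you propose for parts (1) and (3) lose too much, and part (2) as written is a heuristic rather than a proof.

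In part (1), after collecting the residue, the remaining integral along the vertical line $\re(z)=-\delta$ still sees the full singularity $\Gamma(bz)\sim 1/(bz)$. With $\delta=(1+|a|)^{-1}$ the contour sits at distance $\delta$ from the pole, so near $y=0$ one has $|\Gamma(b(-\delta+iy))|\sim 1/(b\sqrt{\delta^2+y^2})$; integrating this against the Gaussian $e^{-\delta y^2}$ (and with $e^{-a\delta}\asymp e^{-1}$ for large $a$, so no help from that factor) produces an extra $\log\bigl((1+a)/b\bigr)$, not $O(b^{-1})$. Your coarser step---bounding $|\Gamma|$ on the line uniformly by $C\Gamma(b\delta)$ and multiplying by the Gaussian width $\delta^{-1/2}$---is worse still, giving $\sim(1+a)^{1/2}\Gamma\bigl(b(1+a)^{-1}\bigr)$, which overshoots the stated $(1+a)^{-1}\Gamma\bigl(b(1+a)^{-1}\bigr)$ by $(1+a)^{3/2}$. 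The paper avoids this entirely by \emph{not} crossing the pole: after rescaling $z=a^{1/2}\tilde z$, the contour is an arc of radius $a^{-3/2}$ around the origin joined to rays at angles $\pm 3\pi/4$. The arc contributes $O(a^{-1}\Gamma(ba^{-1}))$ via Lemma \ref{arcboundlemma} (its shrinking \emph{length}, not a residue, produces the prefactor $(1+|a|)^{-1}$), while on the rays Lemma \ref{1overr} gives $|\Gamma(bz)|\le C/(b|z|)$ which, paired with the exponential decay and the lower bound $|z|\ge a^{-1}$, yields exactly $O(b^{-1})$ with no logarithm. Part (2) is similar but uses a five-piece contour; the $(1+|a|)^{\e}b^{-1-\e}$ term comes from the concrete estimate $\int_{ri}^{\infty i}|\Gamma(z)|\,dz=O(r^{-\e})$ applied to the imaginary-axis segments of the contour, not from any interpolation or Young-exponent balance, and your ``crossover regime'' narrative would be hard to make rigorous as stated.

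In part (3), the bound $|1/\Gamma(bz)|\le Ce^{cb|z|}$ from Lemma \ref{inversegammabound} cannot be ``absorbed into the constants'' on a vertical line through the saddle: already at $y=0$ it contributes a factor $e^{cb\sqrt{a}}$, which depends on $a$ and is unbounded, so you would only obtain $|{\rm Ai}_\Gamma|\le C e^{-\frac23 a^{3/2}+cb\sqrt{a}}(1+a)^{-1/4}$, strictly weaker than (\ref{lemeqn3}). The underlying issue is that on a vertical line $1/|\Gamma|$ genuinely grows like $e^{\pi|y|/2}$, so the crude exponential bound of Lemma \ref{inversegammabound} is the best available there. The paper instead takes rays through the saddle at angles $\pm\pi/3$: along those rays $|\arg(bz)|\le\pi/3$ and $\re(bz)\ge ba^{1/2}$, so Stirling's formula (\ref{stirling}) gives $|1/\Gamma(bz)|$ bounded by an absolute constant, and the clean Airy decay $e^{-\frac23 a^{3/2}}(1+a)^{-1/4}$ follows unimpaired. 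Part (4) is essentially the paper's argument (imaginary-axis segment joined to $\pm\pi/4$ rays with Lemma \ref{inversegammabound}) and is fine.
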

\begin{pf}
The lemma is intended to give decay bounds as $|a|$ grows. We can split
consideration up into $|a|\geq1$ and $|a|\leq1$. For $|a|\leq1$
direct inspection of the integrals reveals the above claimed bounds.
What follows, therefore, deals with the $|a|\geq1$ bounds. We will
establish these bounds in terms of their dependence on $|a|$. However,
in order to state the results of the lemma for all $a$, we then make
the modification of replacing $|a|$ by $1+|a|$ which, up to constants,
does not affect the validity of the bounds.

To prove equation (\ref{lemeqn1}) change variables $z\mapsto
a^{1/2}\tilde{z}$ and the deform the image of the contour $\tilde
\Gamma
_{\zeta}$ so that it is given by three pieces: a ray coming from
$e^{-3i\pi/4 }\infty$ to a positively oriented arc of the circle
centered at the origin of radius $a^{-3/2}$ (from angle $-3\pi/4$ to
$3\pi/4$) and finally a ray going toward $e^{3i\pi/4}\infty$. The
integral now is
\[
{\operatorname{Ai}}^{\Gamma}(a,b,0) = a^{1/2}\int\exp\biggl\{-a^{3/2}\biggl(\frac
{1}{3}\tilde
{z}^3 +\tilde{z}\biggr)\biggr\} \Gamma(ba^{1/2}\tilde{z}) \,d\tilde{z}.
\]

Let us first consider the integral along the circular arc. Along this
arc, $\re(-a^{3/2}(\frac{1}{3}\tilde{z}^3 +\tilde{z}))$ is of order 1.
For the Gamma function, we may apply Lemma~\ref{arcboundlemma} to see
that $|\Gamma(ba^{1/2}\tilde{z})|\leq C\Gamma(ba^{-1})$. Since the arc
has length of order $a^{-3/2}$, we find that (recalling the $a^{1/2}$
prefactor above) the contribution of the circular arc is like
$Ca^{-1}\Gamma(ba^{-1})$.


We must consider the two rays, though by symmetry it suffices to
consider just one. The argument of the exponential can be bounded in
real part by $-a^{3/2}\tilde{z}$ and the Gamma can be
bounded (Lemma~\ref{1overr}) by $\frac{C}{ba^{1/2}|\tilde{z}|}$. Thus
the integral along the ray may be bounded above by the following real integral:
\[
a^{1/2}\int_{a^{-3/2}}^{\infty} e^{-ca^{3/2}r} \frac{C
\,dr}{ba^{1/2}r} =
\frac{C'}{b}.
\]
Thus the contributions of the integrals along the rays are like $C/b$,
and hence the bound is established.

To prove equation (\ref{lemeqn2}) replace $a$ by $-\tilde{a}$ and
change variables $z\mapsto\tilde{a}^{1/2}\tilde{z}$, giving
\[
{\operatorname{Ai}}^{\Gamma}(a,b,0) = \tilde{a}^{1/2}\int\exp\biggl\{-\tilde
{a}^{3/2}\biggl(\frac{1}{3}\tilde{z}^3 +\tilde{z}\biggr)\biggr\} \Gamma(b\tilde
{a}^{1/2}\tilde{z}) \,d\tilde{z}.
\]
For the contour in the resulting integral choose the following: a ray
coming from $e^{-3i\pi/4}\infty$ to $-i$, a line segment from $-i$ to
$-\tilde{a}^{-3/2}i$, a positively oriented arc of the circle centered
at the origin of radius $\tilde{a}^{-3/2}$ going from $-\tilde
{a}^{-3/2}i$ to $\tilde{a}^{-3/2}i$, a~line segment from $\tilde
{a}^{-3/2}i$ to $i$ and a ray from $i$ to $e^{3i\pi/4}\infty$.

As before, it is easy to show that the integral on the arc is bounded
by $C\tilde{a}^{-1}\Gamma(b\tilde{a}^{-1})$ for some constant $C>0$.
The integral on the rays can be easily bounded as well. Just as in the
proof of Lemma~\ref{1overr} we may establish that along the ray
%
%
\begin{equation}
\Gamma(b\tilde{a}^{1/2}\tilde{z}) \leq\frac{C}{b\tilde
{a}^{1/2}|\tilde
{z}|}\leq\frac{C}{b\tilde{a}^{1/2}},\vadjust{\goodbreak}
\end{equation}
where the last inequality is from the fact that $|\tilde{z}\geq1$
along the contour. Thus the integral along the ray is bounded by
%
%
\begin{equation}
\frac{C}{b} \int\exp\biggl\{-\tilde{a}^{3/2}\biggl(\frac{1}{3}\tilde{z}^3
+\tilde
{z}\biggr)\biggr\} \,d\tilde{z}.
\end{equation}
This, in turn, can be bounded by the real integral
%
%
\begin{equation}
\frac{C}{b} \int_0^{\infty} \exp\{-c\tilde{a}^{3/2}r\} \,dr \leq
\frac
{C'}{b \tilde{a}^{3/2}}
\end{equation}
for some constant $C'>0$. Thus the integral along the rays are bounded
by $C/(b\tilde{a}^{3/2})$.

It remains to control the integral along the line segments. By symmetry
it suffices to consider just the segment from $\tilde{a}^{-3/2}i$ to
$i$. By the triangle inequality we can consider the norm of the
integrand, and along this contour the exponential term is uniformly of
norm 1. Thus the integral along the line segment is bounded by
%
%
\begin{equation}\label{fulllineinteg}
\tilde{a}^{1/2}\int_{\tilde{a}^{-3/2}i}^{i} |\Gamma(b\tilde
{a}^{1/2}\tilde{z})| \,d\tilde{z} \leq b^{-1}\int_{b\tilde
{a}^{-1}i}^{\infty i} |\Gamma(z)| \,dz.
\end{equation}
The integral
%
%
\begin{equation}\label{riinteg}
\int_{ri}^{\infty i} |\Gamma(z)| \,dz = O(r^{-\e})
\end{equation}
for any $\e$. To check this fact consider the two cases $r\leq1$ and
$r\geq1$. If $r\leq1$, then split the integral (\ref{riinteg}) into
two parts---from $ri$ to $i$ and $i$ to $\infty i$. On the first part,
observe that $|\Gamma(z)|\leq C/|z|$ [as follows from Lemma~\ref
{arcboundlemma} and the small $r$ asymptotics of $\Gamma(r)$]. This
bounding integral with $1/|z|$ can be evaluated to equal $\log(1/r)$
which is bounded by $Cr^{-\e}$ for any $\e$ (here $C$ depends on $\e$
though). The remaining integral from $i$ to $\infty i$ is easily
bounded by a constant by using the asymptotics of (\ref{imgdecay}).
Thus we find the desired bound claimed in (\ref{riinteg}). When $r\geq
1$, the asymptotics of (\ref{imgdecay}) easily yields the desired bound.

Using the estimate of (\ref{riinteg}) we may bound (\ref
{fulllineinteg}) and conclude the desired bound of $C|a|^{\e}b^{-1-\e}$.

To prove equation (\ref{lemeqn3}), change variables $z\mapsto
a^{1/2}\tilde{z}$ and choose the integration contour to be the
following: a ray from $e^{-i\pi/3}\infty$ to $1$ and then a ray from
$1$ to $e^{i\pi/3}\infty$. Along these contours, the reciprocal of the
Gamma function is bounded by a constant [this can be seen by applying
the Stirling's formula asymptotics~(\ref{stirling})], and hence can be
removed, leaving us with the standard asymptotic analysis for the Airy
function; thus follows the formula.

To prove equation (\ref{lemeqn4}), replace $a$ by $-\tilde{a}$ and
change variables $z\mapsto\tilde{a}^{1/2}\tilde{z}$, giving
\[
{\operatorname{Ai}}_{\Gamma}(a,b,0) = \tilde{a}^{1/2}\int\exp\biggl\{\tilde
{a}^{3/2}\biggl(\frac{1}{3}\tilde{z}^3 +\tilde{z}\biggr)\biggr\} \frac{1}{\Gamma
(b\tilde
{a}^{1/2}\tilde{z})} \,d\tilde{z}.
\]
We may choose the integration contour to be the following: a ray from
$e^{-i\pi/4} \infty$ to $-i$, a line segment from $-i$ to $i$ and a ray
from $i$ to $e^{i\pi/4}\infty$. Let us consider the integral along the
line segment. We may bound the reciprocal of the Gamma function on the
imaginary axis by Lemma~\ref{inversegammabound}. From this bound and
the fact that along this line segment, $\re(\frac{1}{3}\tilde{z}^3
+\tilde{z})=0$, one easily bounds the integral by $Ce^{c b\tilde
{a}^{1/2}}$ (note that the $\tilde{a}^{1/2}$ prefactor can be absorbed
into the exponential term through the constant $c$). It remains to
establish a similar upper bound on the integral over the two rays. By
symmetry we can consider only the ray going from $i$ to $e^{i\pi/4}$.
The bound of Lemma~\ref{inversegammabound} is valid as long as $\re
(w)\geq0$, and thus we may substitute it into the integrand. This
yields a bound of (we have absorbed the $\tilde{a}^{1/2}$ as above)
%
%
\begin{equation}
C \int\exp\biggl\{\tilde{a}^{3/2}\biggl(\frac{1}{3}\tilde{z}^3 +\tilde{z}\biggr)\biggr\} e^{c
b \tilde{\alpha}^{1/2}\tilde{z}} \,d\tilde{z},
\end{equation}
with the integral over the ray in question. Due to the rapid decay of
$\re(\frac{1}{3}\tilde{z}^3 +\tilde{z})$ along the ray, it is now easy
to show that this is (just as with the integral along the line segment)
bounded by $Ce^{c b\tilde{a}^{1/2}}$, where still we are assuming
$c>\pi
/2$. Combining these bounds yields the claimed formula (\ref{lemeqn4}).
\end{pf}
%
\subsection{Upper tail of $F_{T,0}^{\mathrm{edge}}$}\label{upper_tail_sec}

Start with the formula
%
%
\begin{equation}\label{kedge}
1- F_{T,0}^{\mathrm{edge}} ( s) =- \int_{\tilde\mathcal{C}}e^{-\tilde\mu
}\frac{d\tilde\mu}{\tilde\mu}[
\det(I-\tilde{K}_{T,\tilde\mu} )- \det I],
\end{equation}
where the determinants are evaluated on $L^2(s,\infty)$, and
\[
\tilde{K}_{T,\tilde\mu}(x,y)=\int_{-\infty}^{\infty} \frac
{\tilde\mu
\,dt}{e^{-\kappa_T t} - \tilde\mu} {\operatorname{Ai}}^{\Gamma}(x+t
,\kappa
_T^{-1},0) {\operatorname{Ai}}_{\Gamma}(y+t,\kappa_T^{-1},0)
\]
is obtained by rescaling $K_{X,T,\tilde\mu}$, given in (\ref{manipulated}).
First of all note that
%
%
\begin{equation}
\det(I-\tilde{K}_{T,\tilde\mu} )= \det(I- A)\qquad \mbox{where }
A=U^{-1} \tilde{K}_{T,\tilde\mu} U
.
\end{equation}
We will use this with
%
%
\begin{equation}
Uf(x) = (x^4+1)^{-1/2} f(x).
\end{equation}
We also make use of the fact that if $A=A_1A_2$ for $A_1$ and $A_2$
Hilbert--Schmidt, then
$A$ is trace-class with
%
%
\begin{equation}\label{trbd}\qquad
|\det(I+A)-\det I |\le\|A\|_1 e^{ \|A\|_1 +1} \le\|A_1\|_2 \|A_2\|_2
e^{\|A_1\|_2 \|A_2\|_2 +1}.
\end{equation}

We factor $A=A_1A_2$ where $A_1\dvtx L^2(\R)\to L^2(s,\infty)$,
$A_2\dvtx L^2(s,\infty)\to L^2(\R)$ are defined by their integral kernels as
%
%
\begin{eqnarray}\label{aone}
A_1(x,t)&=& {\operatorname{Ai}}^{\Gamma}(x+t,\kappa
_T^{-1},0)(x^4+1)^{-1/2}(t^4+1)^{-1/2}
, \\\label{atwo}
A_2(t,y)&=&\frac{\tilde\mu \,dt}{e^{-\kappa_T t} - \tilde\mu}
{\operatorname{Ai}}_{\Gamma}(y+t,\kappa_T^{-1},0)(x^4+1)^{1/2}(t^4+1)^{1/2}.
\end{eqnarray}
We estimate the Hilbert--Schmidt norms with the aid of Lemma \ref
{lemma49}. In what follows, constants are denoted by upper and lower
case c and can change value from line to line. For
$A_1$ we use the bound
%
%
\begin{equation}
|{\operatorname{Ai}}^{\Gamma}(a,b,0)| \leq C |b|^{-3/2}\sqrt{ |a|^2+1},
\end{equation}
which holds for all $a,b$ from Lemma~\ref{lemma49}. We thus have
%
%
\begin{eqnarray}\label{aonebd}
\qquad\| A_1\|^2_{2} &\le& CT\int_{-\infty}^\infty \,dt \int_{-\infty}^\infty \,dx
(|x+t|^2+1)(x^4+1)^{-1}(t^4+1)^{-1} \,dx\,dt
\nonumber
\\[-8pt]
\\[-8pt]
\nonumber
 &\le& CT.
\end{eqnarray}
For $A_2$ we start with the bound
%
%
\begin{equation}
\biggl|\frac{\tilde\mu}{e^{-\kappa_T t} - \tilde\mu} \biggr|\le C|\tilde
{\mu} |(e^{\kappa_T t}\wedge1),
\end{equation}
as follows from the fact that the contour on which $\tilde{\mu}$ lies
is bounded from the positive real axis by a uniform distance. Then we
use (\ref{lemeqn3}) and (\ref{lemeqn4}) to get
\begin{eqnarray*}
\| A_2\|^2_{2} & \le& C |\tilde\mu|^2 \biggl(\int_{s}^\infty \,dx \int
_{-x}^\infty \,dt (e^{2\kappa_T t}\wedge1 ) e^{-4/3 |x+t|^{3/2} }
(x^4+1)(t^4+1) \\
& &\hspace*{32pt} {}+C\int_{s}^\infty \,dx \int^{-x}_{-\infty} \,dt
(e^{2\kappa_T t}\wedge1 )e^{-4\kappa_T^{-1}|x+t|^{1/2}}
(x^4+1)(t^4+1)\biggr) \\
&:=& C|\tilde\mu|^2(I_1 +I_2).
\end{eqnarray*}

Look at $I_2$ first. We are only interested in $s\gg1$, so $-x\le-s
\le0$, and therefore $e^{2\kappa_T t}<1$ so we
have
\begin{eqnarray*}
I_2 &=& \int_{s}^\infty(x^4+1) \,dx \int^{-x}_{-\infty} \,dt
e^{2\kappa_Tt-4\kappa_T^{-1}\sqrt{-x-t}} (t^4+1)\\
&\le&\int_{s}^\infty(x^4+1) \,dx e^{-2\kappa_T x}\bigl( (x^4+1) + C\kappa
_T^{-20}\bigr)e^{(1/2)\kappa_T^{-3}}.
\end{eqnarray*}
Assuming $T\ge T_0>0$, we have
\[
I_2 \le C s^8 e^{-2\kappa_Ts}.
\]
Now for $I_1$, we write it as $I_1=I_3 + I_4$ where $I_3$ denotes the
$t$ integration from $-x$ to $-x/2$, and $I_4$ denotes the $t$
integration from $-x/2$ to $\infty$.
Now
\[
I_3 \le\int_{s}^\infty \,dx \int_{-x}^{-x/2} \,dt e^{2\kappa_T
t}(x^4+1)(t^4+1)\ \le c s^8 e^{-\kappa_T s}
\]
and
\[
I_4 \le\int_{s}^\infty \,dx \int_{-x/2}^{\infty} \,dt e^{-4/3
|x+t|^{3/2} } (x^4+1)(t^4+1)\le c s^8 e^{-c s^{3/2}}.
\]
Note that the $c$ in the last exponent can be computed to be about
$4\cdot3^{-1}\cdot2^{-3/2}$, but it is not optimal,
so we do not pursue it. We obtain
%
%
\begin{equation}\label{atwobd}
\| A_2\|^2_{2} \le C|\tilde\mu|^2 s^8(e^{-2\kappa_T s}+e^{-c s^{3/2}}).
\end{equation}

In summary, combining (\ref{trbd}), (\ref{aonebd}), (\ref{atwobd}) we
obtain a bound
%
%
\begin{equation}\label{}\qquad
|\det(I-\tilde{K}_{T,\tilde\mu} )-\det I|\le C|\tilde\mu| s^4
T^{1/2}(e^{-cT^{1/3} s}+e^{-c s^{3/2}})e^{(1/2)|\tilde\mu|},
\end{equation}
as long as
$C s^4 T^{1/2}(e^{-cT^{1/3} s}+e^{-c s^{3/2}})\le1/2$.
Note that $\bar{c}=| \int_{\tilde\mathcal{C}}e^{-\tilde\mu+
(1/2)|\tilde\mu|}\times |\tilde\mu|\frac{d\tilde\mu}{\tilde\mu}|\le10$.
From (\ref{kedge}), we obtain with a new constant $\tilde{C}=10C$,
%
%
\begin{equation}
|1- F_{T,0}^{\mathrm{edge}} ( s) |\le\tilde{C}s^4 T^{1/2}(e^{-cT^{1/3}
s}+e^{-c s^{3/2}}),
\end{equation}
whenever the right-hand side is less than or equal to $5$. But since
the left-hand side is less than or equal to $1$, the inequality holds
for any $s$.


\subsection{Proof of small $T$ edge crossover distribution
behavior}\label{small_t}
\mbox{}
\begin{pf*}{Proof of Proposition \protect\ref{small_T_prop}}
We have
\begin{eqnarray*}
&&E[ I_n^2(T,X)] \\
&&\qquad=\int_{\Delta'_n(T)} \int_{\mathbb R^{n+1}}\prod_{i=0}^n
p^2(T_{i+1}-T_{i}, X_{i+1}-X_{i}) e^{2X_0}\mathbf{1}_{X_0\ge0}
\,dX_0\prod_{i=1}^{n}\,dT_i \,dX_i.
\end{eqnarray*}
Rescaling $T_i=T\tilde{T}_i$ and $X_i= \sqrt{T}\tilde{X}_i$, we see that
\[
E[ I_n^2(T,X)] \sim C_n T^{{n}/{2}}
\]
with $C_n\sim C\sqrt{n!}$. This shows that as $T\searrow0$,
\[
E\bigl[\bigl(Z(T,X) - I_0(T,X)-I_1(T,X)\bigr)^2\bigr] \le CT.
\]
Note that $I_0(T,X)$ is just the solution of the heat equation with
initial data $Z(0,X)$. Let $\mathscr{F}(0)$ be the $\sigma$-field
generated by $Z(0,X)= \exp\{- B(X)\}\mathbf{1}_{X\ge0}$, $X\ge0$.
Given $\mathscr{F}(0)$,
$I_1(T,X)$ is Gaussian, with mean zero and covariance
%
\begin{eqnarray*}
&&\operatorname{Cov} (I_1(T,X), I_1(T,Y)|\mathscr{F}(0))\\
&&\qquad =
\int_0^T\int_{-\infty}^\infty\biggl( \int_0^\infty
p(T-T_1,X-X_1)p(T_1,X_1-X_0) e^{-B(X_0)}\,dX_0\biggr)\\
&&\qquad\quad{}\times\biggl( \int_0^\infty
p(T-T_1,Y-X_1)p(T_1,X_1-X'_0) e^{-B(X'_0)}\,dX'_0\biggr)\,dX_1\,dT_1
\\
&&\qquad = \int_0^\infty\int_0^\infty T^{-1/2}\Psi(
T^{-1/2}X,T^{-1/2}Y,T^{-1/2}X_0, T^{-1/2}X_0') \\
&&\hspace*{49pt}\qquad{}\times
e^{-B(X_0)-B(X_0')}\,dX_0\,dX_0'.
\end{eqnarray*}
Hence
if we let
\[
\mathcal{Z}^{\mathrm{init}}(T,X)= T^{-1/4}I_1(T,T^{1/2}X),
\]
it has the desired properties.
\end{pf*}

\section*{Acknowledgments}
We would like to thank Percy Deift, Craig Tracy, Harold Widom and
Herbert Spohn for their interest in, and support for this project. IC
wishes to thank G\'{e}rard Ben Arous, Patrik Ferrari and Antonio
Auffinger for discussions on TASEP and KPZ. We thank our first referee
for helpful input and a simplified (and corrected) version of the proof
of Lemma~\ref{deform_mu_to_C}. We also thank William Stanton for
noticing a few errors in an earlier version of this article.

%
%

%


\printaddresses

\end{document}